\newtheorem{definition}{Definition}
\theoremstyle{remark}
\newtheorem{remark}{Remark}
\theoremstyle{theorem}
\newtheorem{theorem}{Theorem}
\theoremstyle{theorem}
\newtheorem{lemma}{Lemma}
\newtheorem{corollary}{Corollary}
\newtheorem{assumption}{Assumption}
\renewcommand{\d}{\textup{div}}
\renewcommand{\o}{\Omega}
\newcommand{\s}{\mathbb{S}}
\newcommand{\ts}{\tilde{S}}
\renewcommand{\k}{\mathbb{K}}
\renewcommand{\j}{\tilde{{J}}}
\newcommand{\h}{\hat{\partial}}
\renewcommand{\S}{\boldsymbol{{S}}}
\newcommand{\J}{\tilde{\boldsymbol{J}}}
\newcommand{\tS}{\tilde{\boldsymbol{S}}}
\newcommand{\p}{\textbf}
\newcommand{\f}{\boldsymbol}
\renewcommand\t[1]{\boldsymbol{\mathcal{#1}}}
\newcommand\norm[1]{\left\lVert#1\right\rVert}
\newcommand\n[1]{\left\lVert#1\right\rVert}
\begin{document}
	
	\title{Mixed Isogeometric Discretizations for Planar Linear Elasticity}
	
	\author{Jeremias Arf and Bernd Simeon \thanks{E-mail: \texttt{\{arf, simeon\}@mathematik.uni-kl.de, \noindent \newline \noindent \ \ \emph{2020  Mathematics Subject Classification. \ 65N30; 65N12} , \\ \emph{Keywords.}\textup{Linear Elasticity, B-splines, Mixed Formulation. }}}}	   
\affil{TU Kaiserslautern, Germany}

\date{}
\maketitle
	
	\begin{abstract}
		In this article we suggest two  discretization methods based on isogeometric analysis (IGA) for  planar linear elasticity.  On the one hand, we apply the well-known  ansatz of weakly imposed symmetry for  the stress tensor and obtain a well-posed mixed formulation.  Such modified mixed problems have been already studied by different authors. But we concentrate on the exploitation of IGA  results  to handle also  curved boundary geometries.		
		On the other hand, we consider the more complicated situation of strong symmetry, i.e. we discretize the mixed weak form determined by the so-called Hellinger-Reissner variational principle. We show the existence of suitable approximate fields  leading to an inf-sup stable saddle-point problem. 
		For both discretization approaches we prove convergence statements  and in case of weak symmetry we illustrate the approximation behavior by means of several numerical experiments.

	\end{abstract}
	\section{Introduction}

In this article we identify a bounded Lipschitz domain $\o \subset \mathbb{R}^{2}$  with some elastic body that is loaded according to some body force $\f{f}$. The  deformation of the body can be expressed through the displacement field $\f{u} \colon \o \rightarrow \mathbb{R}^2$ that measures the difference between current configuration $\o + \f{u}$ and initial configuration $\o$. Governing equations  guaranteeing a physically reasonable description of the deformation process are  given by the theory of linear elasticity and read:
\begin{align}
	\label{linear_elas_1}
	\f{A} \f{\sigma} &= \f{\varepsilon}(\f{u}) \hspace{0.2cm}  \textup{and} \\
	\nabla \cdot \f{\sigma} &= \f{f} \hspace{0.1cm} \textup{in} \ \ \o,
	\label{linear_elas_2}
\end{align}
where $\f{\varepsilon}(\f{u})= \frac{1}{2} (\nabla\f{u}+\nabla \f{u}^T) $ denotes the symmetric gradient that  corresponds to the mechanical strain within  the infinitesimal strain theory. Further, $ \f{A}= \f{A}(\f{x}) \colon \mathbb{S} \rightarrow\mathbb{S}, \ \f{x} \in \o $ is the  compliance tensor which comprises the material  properties and $\f{\sigma} \colon  \o \rightarrow \mathbb{S}$ is the symmetric stress tensor, where $\mathbb{S}$ stands for the space of symmetric matrices in $\mathbb{R}^{2 \times 2}$. We note that the linear relation between strain and stress as given in equation \eqref{linear_elas_1} is the key assumption in the linearized framework.  A classical conservation of momentum argument relates the  stress tensor and the force $\f{f}$ and as a result one obtains equation \eqref{linear_elas_2}. An important special case are homogeneous and isotropic materials. Then the compliance tensor simplifies to $$\f{A}\f{\sigma} = \frac{1}{2 \mu} \Big( \f{\sigma}- \frac{\lambda}{2\lambda+2\mu} \textup{tr}(\f{\sigma}) \f{I}\Big),$$ and the material properties are condensed in form of  so-called Lamé parameters $\lambda \geq0, \ \mu > 0$. In all sections below we assume $\f{A}$ to be symmetric, positive definite and bounded w.r.t. the $L^2$-norm.  Actually, we have to specify some boundary conditions to complete the model, where we first consider the easy case of pure displacement conditions, i.e. we have $\f{u}= \f{u}_D $ on $\Gamma \coloneqq  \partial \o$ for some given  $\f{u}_D$. Supposing the invertibility of $\f{A}$ one can obtain a  formulation which  depends only on the displacement variable. A  combination of \eqref{linear_elas_1} and \eqref{linear_elas_2} yields \begin{equation}
	\label{primal_formulation_Dirichlet}
	\nabla \cdot (\f{C} \f{\varepsilon}(\f{u})) = \f{f} ,  \ \ \ \f{C} \coloneqq \f{{A}}^{-1} , 
\end{equation} with $\f{C}$ as the  elasticity tensor.  
The standard weak form  of the mentioned equation in case of homogeneous boundary conditions   is the primal formulation: \begin{equation}
	\label{eq_primal_zero_BC}
	\textup{Find} \ \ \f{u}\in \f{H}_0^1(\o)  \ \ \textup{s.t.} \ \  \langle \f{C} \f{\varepsilon}(\f{u}), \f{\varepsilon}(\f{v}) \rangle = \langle \f{f}, \f{v}\rangle,  \ \ \forall \f{v} \in \f{H}_0^1(\o),
\end{equation}  with $\f{H}_0^1(\o)$ denoting the  Sobolev space  with square-integrable weak derivatives, square- \\ integrable component functions and zero boundary values. Further $\langle \cdot, \cdot \rangle $ denotes the $L^2$ inner product. Unfortunately,  the primal formulation  becomes unstable for incompressible problems, i.e. if $\lambda \rightarrow \infty$. Another point is the non-locality of stress-strain relations in more general material situations. Consequently, the primal weak formulation is not optimal also in view of generalizability. This is why we  find in literature   mixed formulations  that seek for stress and displacement simultaneously.  A standard mixed formulation reads: 

Find $\f{\sigma} \in \boldsymbol{\mathcal{{H}}}(\Omega,\textup{div},\s), $  $\f{u} \in \boldsymbol{L}^2(\Omega)$ s.t. 
\begin{alignat}{4}
	\label{weak_form_strong_symmetry_contiuous_1}
	&\langle \p{A} \boldsymbol{\sigma} , \boldsymbol{\tau} \rangle \ + \ &&\langle   \f{u} , \nabla \cdot \boldsymbol{\tau} \rangle \  \color{black}  &&= \langle \boldsymbol{\tau} \cdot \p{n};  \f{u}_D \rangle_{\Gamma}, \ \hspace{0.4cm} &&\forall \boldsymbol{\tau} \in \boldsymbol{\mathcal{{H}}}(\Omega,\textup{div},\s), \\
	&\langle \nabla \cdot \boldsymbol{\sigma} , \f{v} \rangle &&  &&= \langle \f{f},\f{v}\rangle, \ \hspace{0.4cm} && \forall \f{v} \in \boldsymbol{L}^2(\Omega). \nonumber
\end{alignat}
Above $\langle \boldsymbol{\tau} \cdot \p{n};  \f{u}_D \rangle_{\Gamma}$ denotes the duality pairing determined by the normal trace which corresponds to the  $L^2(\Gamma)$ inner product on the boundary if $\t{\tau}$ is regular enough. Furthermore, we wrote  $\boldsymbol{\mathcal{{H}}}(\Omega,\textup{div},\s) $ for the space of square-integrable symmetric matrix fields which have a square-integrable divergence and $\f{L}^2(\o)$ denotes the space of square-integrable vector fields.
 As explained in \cite{Arnold2007MixedFE},   background of this saddle-point problem  is the \emph{Hellinger-Reissner} functional where  $(\f{u},\f{\sigma})$ is given as the corresponding unique critical point.  Regarding the discretization of the mixed weak system  there are two main ways which are frequently used. On the one hand, one can try to define  discrete   spaces $\t{V}_h \subset \t{H}(\d,\o,\mathbb{S}), \ \f{V}_h \subset \f{L}^2(\o)$ to approximate the stress tensor and the displacement. However, the choice of suitable subspaces is not trivial. To obtain a well-posed numerical method the discretized saddle-point problem has to satisfy \emph{Brezzi}'s stability conditions (see \cite{Brezzi}). It becomes apparent that the symmetry condition and divergence compatibility for $\t{V}_h$ is hard to achieve simultaneously, especially  in case of more complex computational domains $\o$. Methods that follow the strong symmetry ansatz can be found e.g. in \cite{Arnold_rectangle_1} and \cite{Arnold2002MixedFE_1}. However,   the mentioned discretizations are quite complicated. For example  in \cite{Arnold2002MixedFE_1} one needs at least $24$ degrees of freedom per triangle element.
 Therefore, on the other hand authors in several articles (e.g. \cite{Arnold2007MixedFE,Rettung,Arnold2015,Falk}) study a modified mixed problem which guarantees symmetry only in a variational sense. Such formulations with weakly imposed symmetry  are related to the de Rham complex and the elegant theory of Finite Element Exterior Calculus, see \cite{Arnold2007MixedFE} and \cite{Falk}, but require an  additional Lagrange multiplier. 
 
 In this article we shall consider both ways of discretization, namely strong and weak symmetry conservation. Our aim is to show that there are discrete spaces satisfying the Brezzi conditions and thus lead to well-posedness. In contrast to classical Finite Elements (FEM) based on simplicial decompositions or quadrilateral elements we use B-splines and results from Isogeometric Analysis (IGA) to define test and trial fields. Advantages of IGA are the possibility to consider  geometries with curved boundaries  and the fact that the global regularity of the B-splines can be increased efficiently.   Such special features are useful if the method should be applicable in more complex scenarios. Although our ansatz for strongly imposed symmetry  is only valid in the so-called single-patch case, it is  capable to handle domains with curved boundaries. Nevertheless, for the weak symmetry approach the generalization to multi-patch situations is possible.

The paper is structured as follows. In Section \ref{sec_pre} we introduce briefly some mathematical notation and basic notions in the context of IGA. 
Next, in Section \ref{section_weak_form_weak_symmetry} we explain the mixed formulation of  linear elasticity with weakly imposed symmetry and  consider spline spaces that are suitable for discretization in planar domains. In particular, we establish a corresponding error estimate. Afterwards, in Section \ref{section_strong} we face the $2D$ case with strong symmetry, where we define likewise proper discrete spaces.  Section  \ref{sec_numerics} is dedicated to  numerical examples to validate  the theoretical statements, where we restrict ourselves to the  weak symmetry case. A  detailed numerical study of the strong symmetry ansatz  should be the issue of a further article.  We close  with a short conclusion in Section \ref{sec_conc}.

	\section{Mathematical preliminaries and notation}
	\label{sec_pre}
	\subsection{Mathematical notation}
	In the section we introduce some notation and define several spaces. \\
	For some bounded Lipschitz domain $D \subset \mathbb{R}^d$ we write for the standard Sobolev spaces $H^0(D)=L^2(D)$ and $\ H^k(D), \ k \in \mathbb{N}$, where $L^2(D)$ stands for the Hilbert space of square-integrable functions endowed with the inner product $\langle \cdot , \cdot \rangle$. The norms $\norm{\cdot}_{H^k(D)}$ denote the classical Sobolev norms in $H^k(D)$. In case of vector or matrix fields we can define Sobolev spaces by requiring the component functions to be in suitable Sobolev spaces. To distinguish latter case from the scalar-valued one, we use a bold-type notation. For example we have for $\boldsymbol{v} \coloneqq (v_1, v_2)^T, \ \f{v} \in \boldsymbol{H}^k(D) \colon \Leftrightarrow \ v_i \in H^k(\Omega), \, \forall i  $ and $\f{M} \coloneqq \big(M_{ij}\big), \ \f{M} \in \t{H}^k(D) \colon \Leftrightarrow \ M_{ij} \in H^k(\Omega), \, \forall i,j$ and we define the norms
	\begin{align*}
		\norm{\boldsymbol{v}}_{\boldsymbol{H}^k(D)}^2 \coloneqq \sum_i \norm{v_i}_{H^k(D)}^2, \ \ \ \  \ \ \ \norm{\f{M}}_{\t{H}^k(D)}^2 \coloneqq \sum_{i,j} \norm{M_{ij}}_{{H}^k(D)}^2.
	\end{align*}
	We note that the inner product $\langle \cdot , \cdot \rangle$ introduces straightforwardly an inner product on $\boldsymbol{L}^2(D)= \f{H}^0(D)$ and $ \ \t{L}^2(D)=\t{H}^0(\o)$ which will we denote with $\langle \cdot , \cdot \rangle$, too. Now let us consider vector-valued mappings and set
	\small
	\begin{alignat*}{3}
		\boldsymbol{H}(D,\textup{curl}) &\coloneqq \{ \f{v} \in \boldsymbol{L}^2(D) \ | \ \nabla \times \f{v} \in \boldsymbol{L}^2(D)  \}, \  \hspace{0.3cm} && \norm{ \f{v}}_{\boldsymbol{H}(D,\textup{curl})}^2  \coloneqq \norm{\f{v}}_{\boldsymbol{L}^2(D)}^2 + \norm{\nabla \times \f{v}}_{\boldsymbol{L}^2(D)}^2,\\
		\boldsymbol{H}(D,\textup{div}) &\coloneqq \{ \f{v} \in \boldsymbol{L}^2(D) \ | \ \nabla \cdot \f{v} \in {L}^2(D)  \}, \   && \norm{ \f{v}}_{\boldsymbol{H}(D,\textup{div})}^2  \coloneqq \norm{\f{v}}_{\boldsymbol{L}^2(D)}^2 + \norm{\nabla \cdot \f{v}}_{{L}^2(D)}^2.
	\end{alignat*}
	\normalsize
	Above we wrote $\nabla$  for the classical nabla operator. 
	The definition for $ \boldsymbol{H}(D,\textup{div}) $ and the corresponding norm can be generalized to the matrix setting, namely $ 
	 \ \t{H}(D,\textup{div})$, by a row-wise definition. In particular,  the divergence $\nabla \cdot$ acts  row-wise, too. 	On  $\boldsymbol{H}(D,\textup{div})$ and thus on  $\t{H}(D,\textup{div})$  one can introduce a  \emph{normal trace operator} $\gamma_n$; see Lemma \ref{lemma_trace_theorem}. If $\f{\sigma} \in \t{H}(D,\textup{div})$,  we write $\langle \f{\sigma} \cdot \f{n} ;  \cdot \rangle_{\Gamma} $ for the underlying duality pairing , where $\f{n}$ stands for the outer unit normal to the domain boundary. We denote the space of square-integrable symmetric and skew-symmetric matrix fields with $\t{L}^2(D,\mathbb{S})$ and  $\t{L}^2(D,\mathbb{K})$ and later the next operators become useful.
	\begin{align*}
		\textup{Skew}(q) &\coloneqq \begin{pmatrix}
			0 & -q \\ q & 0
		\end{pmatrix} \in \t{L}^2(D,\mathbb{K}),\ \ \ q \in L^2(D),  \\
		\textup{SYM}(s_1, s_2,s_3 ) &\colon D  \rightarrow \mathbb{R}^{2 \times 2} , \ \ \f{x} \mapsto  \begin{pmatrix}
			s_1(\f{x}) & s_2(\f{x})  \\
			s_2(\f{x}) & s_3(\f{x})  
		\end{pmatrix},
	\end{align*}
	if $s_i \colon D \rightarrow \mathbb{R}$. For   vector spaces we  likewise use the formal notation $\textup{SYM}()$, e.g. \\ 
	$\textup{SYM}(L^2(D), \dots ,L^2(D) ) = \t{L}^2(D,\s)$. 
	Besides we define
	\begin{alignat*}{3}
		\t{H}(D,\d,\mathbb{S}) &\coloneqq \{ \f{S} \in \t{L}^2(D,\mathbb{S}) \ | \ \nabla \cdot  \f{S} \in \f{L}^2(D) \}.
	\end{alignat*}
	Further, for a matrix $\f{M}=(M_{ij})$ we use an upper index $\f{M}^j$ to denote the $j$-th column and a lower index $\f{M}_i$ for the $i$-th row. With $C^k(D)$ we denote the space of continuous functions which have continuous derivatives up to order $k$.  If we have vector or matrix fields we utilize again the notation $C^k(D)$ and mean that all components are $C^k$-regular.
	Finally, we introduce the $2D$ curl operator through $\textup{curl}(v) \coloneqq (\partial_2v,-\partial_1 v)^T, \ v \in H^1(D)$.
	
	After stating some basic notation we proceed  with the consideration of  B-splines.

	\subsection{B-splines}
	\label{section:splines}
	Here, we state a short overview of B-spline functions, spaces respective, and some related basic results. \\
	Following \cite{IGA1,IGA3} for a brief
	exposition, we call an  increasing sequence of real numbers $\Xi \coloneqq \{ \xi_1 \leq  \xi_2  \leq \dots \leq \xi_{n+p+1}  \}$ for some $p \in \mathbb{N}$   \emph{knot vector}, where we assume  $0=\xi_1=\xi_2=\dots=\xi_{p+1}, \ \xi_{n+1}=\xi_{n+2}=\dots=\xi_{n+p+1}=1$, and call such knot vectors $p$-open. 
	Further, the multiplicity of the $j$-th knot is denoted by $m(\xi_j)$ and we require $m(\xi_j) \leq p+1$.
	Then  the univariate B-spline functions $\widehat{B}_{j,p}(\cdot)$ of degree $p$ corresponding to a given knot vector $\Xi$ are defined recursively by the \emph{Cox-DeBoor formula}:
	\begin{align}
		\widehat{B}_{j,0}(\zeta) \coloneqq \begin{cases}
			1, \ \ \textup{if}  \ \zeta \in [\xi_{j},\xi_{j+1}) \\
			0, \ \ \textup{else},
		\end{cases}
	\end{align}
	\textup{and if }  $p \in \mathbb{N}_{\geq 1} \ \textup{we set}$ 
	\begin{align}
		\widehat{B}_{j,p}(\zeta)\coloneqq \frac{\zeta-\xi_{j}}{\xi_{j+p}-\xi_j} \widehat{B}_{j,p-1}(\zeta)  +\frac{\xi_{j+p+1}-\zeta}{\xi_{j+p+1}-\xi_{j+1}} \widehat{B}_{j+1,p-1}(\zeta),
	\end{align}
	where one puts $0/0=0$ to obtain  well-definedness. The knot vector $\Xi$ without knot repetitions is denoted by $\{ \psi_1, \dots , \psi_m \}$. \\
	The multivariate extension of the last spline definition is achieved by a tensor product construction. In other words, we set for a given tensor knot vector   $\boldsymbol{\Xi} \coloneqq \Xi_1 \times   \dots \times \Xi_d $, where the $\Xi_{l}=\{ \xi_1^{l}, \dots , \xi_{n_l+p_l+1}^{l} \}, \ l=1, \dots , d$ are $p_l$-open,   and a given \emph{degree vector}   $\f{p} \coloneqq (p_1, \dots , p_d)$ for the multivariate case
	\begin{align}
		\widehat{B}_{\f{i},\f{p}}(\boldsymbol{\zeta}) \coloneqq \prod_{l=1}^{d} \widehat{B}_{i_l,p_l}(\zeta_l), \ \ \ \ \forall \, \f{i} \in \mathit{\mathbf{I}}, \ \  \boldsymbol{\zeta} \coloneqq (\zeta_1, \dots , \zeta_d),
	\end{align}
	with  $d$ as  the underlying dimension of the parametric domain $\widehat{\Omega} \coloneqq  (0,1)^d$ and $\textup{\p{I}}$ the multi-index set $\textup{\p{I}} \coloneqq \{ (i_1,\dots,i_d) \  | \  1\leq i_l \leq n_l, \ l=1,\dots,d  \}$.\\
	B-splines  fulfill several properties and for our purposes the most important ones are:
	\begin{itemize}
		\item If  for all internal knots the multiplicity satisfies $1 \leq m(\xi_j) \leq m \leq p , $ then the B-spline basis functions $\widehat{B}_{i,p}({\xi})$ are globally $C^{p-m}$-continuous. Therefore we define  in this case the regularity integer $r \coloneqq p-m$. Obviously, by the product structure, we get splines $\widehat{B}_{\f{i},\f{p}}(\boldsymbol{\zeta})$ which are $C^{r_l}$-smooth  w.r.t. the $l$-th coordinate direction if the internal multiplicities fulfill $1 \leq m(\xi_j^l) \leq m_l  \leq p_l, \ r_l \coloneqq p_l-r_l, \ \forall l \in 1, \dots , d$ in the multivariate case.  In case of $r_i = -1$ we have discontinuous splines w.r.t. the $i$-th coordinate direction. 
		\item The B-splines $ \{\widehat{B}_{\f{i},\f{p}} \ | \ \ \f{i} \in \p{I} \}$ are linearly independent.
		\item For continuous univariate splines $\widehat{B}_{i,p}, \ p \geq 1$ we have
		\begin{align}
			\label{eq:soline_der}
			\partial_{\zeta} \widehat{B}_{i,p}(\zeta) = \frac{p}{\xi_{i+p}-\xi_i}\widehat{B}_{i,p-1}(\zeta) +  \frac{p}{\xi_{i+p+1}-\xi_i}\widehat{B}_{i+1,p-1}(\zeta),
		\end{align} 
		with $\widehat{B}_{1,p-1}(\zeta)\coloneqq \widehat{B}_{n+1,p-1}(\zeta) \coloneqq 0$.
		\item  The support of the spline $\widehat{B}_{i,p} $ is the interval $[\xi_i,\xi_{i+p+1}]$. Moreover,  the knots $\psi_j$ define a subdivision of the interval $(0,1)$ and for each element $I = (\psi_j,\psi_{j+1})$ we find a $i$ with $(\psi_j,\psi_{j+1})= (\xi_i,\xi_{i+1})$ and write $\tilde{I} \coloneqq  (\xi_{i-p},\xi_{i+p+1})$ for the so-called \emph{support extension}.
	\end{itemize}
	The space spanned by all univariate splines $\widehat{B}_{i,p}$ corresponding to  given knot vector, degree $p$ and global regularity $r$  is denoted by $$S_p^r \coloneqq \textup{span}\{ \widehat{B}_{i,p} \ | \ i = 1,\dots , n \}.$$
	For the multivariate case we just define the spline space as the product space $$S_{p_1, \dots , p_d}^{r_1,\dots,r_d} \coloneqq S_{p_1}^{r_1} \otimes \dots \otimes S_{p_d}^{r_d} = \textup{span} \{\widehat{B}_{\f{i},\f{p}} \ | \  \f{i} \in \mathit{\mathbf{I}}  \}$$ of proper univariate spline spaces.\\
	To define discrete spaces based on splines we require a  mapping $\f{F} \colon \widehat{\Omega} \rightarrow \mathbb{R}^d$ which parametrizes the computational domain $\o \coloneqq \f{F}(\widehat{\o})$.
	The knots stored in the knot vector $  \boldsymbol{\Xi} $, corresponding to  the underlying  splines, determine a mesh in the parametric domain $\widehat{\Omega} $, namely  $\widehat{M} \coloneqq \{ K_{\f{j}}\coloneqq (\psi_{j_1}^1,\psi_{j_1+1}^1 ) \times \dots \times (\psi_{j_{d}}^{d},\psi_{j_{d}+1}^{d} ) \ | \  \f{j}=(j_1,\dots,j_{d}), \ \textup{with} \ 1 \leq j_i <N_i\},$ and
	with ${\boldsymbol{\Psi}}= \{\psi_1^1, \dots, \psi _{N_1}^1\}  \times \dots \times \{\psi_1^{d}, \dots, \psi _{N_{d}}^{d}\}$  \  \textup{as  the knot vector} \ ${\boldsymbol{\Xi}}$ \ 
	\textup{without knot repetitions}.   
	The image of this mesh under the mapping $\f{F}$, i.e. $\mathcal{M} \coloneqq \{{\f{F}}(K) \ | \ K \in \widehat{M} \}$, gives us a mesh structure in the physical domain. By inserting knots without changing the parametrization  we can refine the mesh, which is the concept of $h$-refinement; see \cite{IGA2,IGA1}.
	For a mesh $\mathcal{M}$ we define the global mesh size $h \coloneqq \max\{h_{\mathcal{K}} \ | \ \mathcal{K} \in \mathcal{M} \}$, where for $\mathcal{K} \in \mathcal{M}$ we denote with $h_{\mathcal{K}} \coloneqq \textup{diam}(\mathcal{K}) $ the \emph{element size}.
	\begin{assumption}{(Regular mesh)}\\
		\label{assum_reg_mesh}
		There exists a constant $c_u $ independent from the mesh size such that $h_{\mathcal{K}} \leq h \leq c_u \, h_{\mathcal{K}}$ for all mesh elements $\mathcal{K} \in \mathcal{M}$. 
		Let the parametrization mapping $\f{F}$ be a diffeomorphism and let the restrictions of $\f{F}$ to the closure of each mesh element be smooth.
	\end{assumption}

	\subsection{Isogeometric de Rham spaces}
	\label{subsec:iag_spaces}
	Here and in the following we have $\o = \f{F}(\widehat{\o})$,  where $\f{F}$ meets the conditions in Assumption \ref{assum_reg_mesh} and $\f{J} \coloneqq D\f{F}$ stands for the Jacobian of $\f{F}$.
	
	Basis for the design of stable FEM discretizations of the mixed linear elasticity formulation with weakly imposed symmetry  is the connection between the de Rham complexes 
	
	\begin{tikzpicture}
		\hspace*{-0.4cm}
		\node at (-1.5,1.4) {$\mathbb{R}$}; 
		
		\node at (0,1.4) {$H^1(\Omega)$};
		
		\node at (3,1.4) {$\f{H}(\Omega,\textup{curl})$};

		\node at (6.5,1.4) {$\boldsymbol{H}(\Omega,\textup{div})$};

		\node at (9.5,1.4) {$L^2(\Omega)$};

		\node at (1.3,1.7) {$\nabla$};
		
		\node at (-0.99,1.6) {$\iota$};
		\node at (4.7,1.7) {$\nabla \times $};

		\node at (8.2,1.7) {$\nabla \cdot $};

		\draw[->] (-1.2,1.4) to (-0.7,1.4);
		\draw[->] (0.8,1.4) to (1.8,1.4);
		
		\draw[->] (4.2,1.4) to (5.3,1.4);
		
		\draw[->] (7.7,1.4) to (8.75,1.4);
		\draw[->] (10.2,1.4) to (10.9,1.4);
		\node at (10.55,1.7) {$0$};
		
		\node at (12.15,1.4) {$\{0\}  ,$ in $3D$,};

		\hspace*{1.5cm}

					\node at (0,1.4-1) {$H^1(\Omega)$};			
		\node at (3,1.4-1) {$\boldsymbol{H}(\Omega,\textup{div})$};
		\node at (6.05,1.4-1) {$L^2(\Omega)$};			
		\node at (8.8,1.4-1) {$\{0 \}, \ $  in $2D$,};						
		\node at (1.3,1.7-1) {$\textup{curl}$};			
		\node at (4.7,1.7-1) {$\nabla \cdot $};			
		\node at (7.1,1.7-1) {$0 $};			
		\draw[->] (0.8,1.4-1) to (1.8,1.4-1);			
		\draw[->] (4.2,1.4-1) to (5.3,1.4-1);			
		\draw[->] (6.8,1.4-1) to (7.5,1.4-1);			
		\draw[->] (-1.6,1.4-1) to (-0.72,1.4-1);			
		\node at (-1.09,1.65-1) {$\iota$};			
		\node at (-2.25,1.4-1) {$\mathbb{R}$};
	\end{tikzpicture}\\	
	and the so-called elasticity complex with weak symmetry. For more information we refer to  \cite{Arnold2007MixedFE} and \cite{Falk}.
	Therefore, a structure-preserving discretization of the de Rham chain utilizing proper spline spaces might be a reasonable starting point for the derivation of numerical methods. A detailed discussion on underlying  isogeometric discrete differential forms in $3D$ can be found in \cite{Buffa2011IsogeometricDD} and below we state very shortly some important results from the last reference. 
	
	Define the pullbacks
	\begin{alignat}{3}
		\label{eq_def_pull_1}
	\mathcal{Y}_1(q)& \coloneqq  q \circ \f{F}, &&q \in L^2(\o),\\
	\mathcal{Y}_2(\f{v})& \coloneqq  \f{J}^T \big(\f{v} \circ \f{F}), && \f{v} \in \f{L}^2(\o), \\
	\mathcal{Y}_3(\f{v}) &\coloneqq  \textup{det}(\f{J}) \f{J}^{-1} \big(\f{v} \circ \f{F} \big), \ \ \ \ && \f{v} \in \f{L}^2(\o), \ \ 	\label{eq_def_pull_3} \\
	\mathcal{Y}_4(q)&\coloneqq  \textup{det}(\f{J}) \big(q \circ \f{F} \big), && q \in L^2(\o). 	\label{eq_def_pull_4}
	\end{alignat}
	
	Then, for  $\Omega \subset \mathbb{R}^3, \ p_i>r_i \geq 0$,  the spaces 
	\begin{alignat*}{3}
		V_{h,1}^3 &\coloneqq  \mathcal{Y}_1^{-1}(\widehat{V}^3_{h,1}) \ \ \  \textup{with}  	\hspace{0.3cm}&&\widehat{V}^3_{h,1} \coloneqq  S_{p_1,p_2,p_3}^{r_1,r_2,r_3} ,\nonumber \\
		\boldsymbol{V}_{h,2}^3 &\coloneqq \mathcal{Y}_2^{-1}(\widehat{\boldsymbol{V}}^3_{h,2})\ \ \  \textup{with}  	\hspace{0.3cm} &&\widehat{\boldsymbol{V}}^3_{h,2} \coloneqq  \big(S_{p_1-1,p_2,p_3}^{r_1-1,r_2,r_3} \times S_{p_1,p_2-1,p_3}^{r_1,r_2-1,r_3} \times S_{p_1,p_2,p_3-1}^{r_1,r_2,r_3-1} \big)^T  
		 ,&&  \nonumber\\
		\boldsymbol{V}_{h,3}^3 &\coloneqq \mathcal{Y}_3^{-1}(\widehat{\boldsymbol{V}}^3_{h,3})\ \ \  \textup{with}  	\hspace{0.3cm} &&\widehat{\boldsymbol{V}}^3_{h,3} \coloneqq  \big(S_{p_1,p_2-1,p_3-1}^{r_1,r_2-1,r_3-1} \times S_{p_1-1,p_2-2,p_3-1}^{r_1-1,r_2,r_3-1} \times S_{p_1-1,p_2-1,p_3}^{r_1-1,r_2-1,r_3} \big)^T   
		 &&   ,\nonumber \\
		V_{h,4}^3 &\coloneqq  \mathcal{Y}_4^{-1}(\widehat{V}^3_{h,4}) \ \ \  \textup{with}  	\hspace{0.3cm}&&\widehat{V}^3_{h,4} \coloneqq  S_{p_1-1,p_2-1,p_3-1}^{r_1-1,r_2-1,r_3-1},\nonumber 
	\end{alignat*}
	form an exact subcomplex of the de Rham chain, i.e. there are  continuous projections 
	\begin{align*}
		{\Pi}^3_{h,i} \colon L^2(\Omega) \rightarrow {V}_{h,i}^3,\  i \in \{1,4\} \ \ \ \textup{and} \ \ \ {\Pi}^3_{h,i} \colon \boldsymbol{L}^2(\Omega) \rightarrow {\boldsymbol{V}}_{h,i}^3, \ i \in \{2,3 \},
	\end{align*} such that we obtain the  commutative diagram
	
	\begin{figure}[H]
		\centering
		\begin{tikzpicture}
			\node at (0,1.4) {$H^1(\Omega)$};
			
			\node at (3,1.4) {$\boldsymbol{H}(\Omega,\textup{curl})$};

			\node at (6.5,1.4) {$\boldsymbol{H}(\Omega,\textup{div})$};

			\node at (9.5,1.4) {${L}^2(\Omega)$};

			\node at (1.3,1.7) {$\nabla$};

			\node at (4.7,1.7) {$\nabla \times $};

			\node at (8.2,1.7) {$\nabla \cdot $};

			\draw[->] (0.8,1.4) to (1.8,1.4);

			\draw[->] (4.2,1.4) to (5.3,1.4);
			
			\draw[->] (7.7,1.4) to (8.75,1.4);
			\draw[->] (10.2,1.4) to (10.9,1.4);
			\draw[->] (-1.6,1.4) to (-0.72,1.4);
			\node at (10.55,1.7) {$0$};
			\node at (-1.09,1.65) {$\iota$};
			\node at (11.45,1.4) {$\{0\}$};
			\node at (-2.25,1.4) {$\mathbb{R}$};

			\draw[->] (3,1.1) -- (3,0.2);
			\draw[->] (-2.25,1.1) -- (-2.25,0.2);
			\draw[->] (11.45,1.1) -- (11.45,0.2);
			\draw[->] (0,1.1) -- (0,0.2);
			\draw[->] (6.5,1.1) -- (6.5,0.2);
			\draw[->] (9.5,1.1) -- (9.5,0.2);

			\node at (0,-0.1) {${V}_{h,1}^{3}$};
			
			\node at (3,-0.1) {$\boldsymbol{V}_{h,2}^{3}$};

			\node at (6.5,-0.1) {$\boldsymbol{V}_{h,3}^{3}$};

			\node at (9.5,-0.1) {${V}_{h,4}^{3}$};

			\node at (1.3,0.2) {${\nabla}$};

			\node at (4.7,0.2) {${\nabla} \times $};

			\node at (8.2,0.2) {${\nabla} \cdot $};

			\draw[->] (0.8,-0.1) to (1.8,-0.1);

			\draw[->] (4.2,-0.1) to (5.3,-0.1);
			
			\draw[->] (7.7,-0.1) to (8.75,-0.1);
			\draw[->] (10.2,-0.1) to (10.9,-0.1);
			\draw[->] (-1.6,-0.1) to (-0.72,-0.1);
			\node at (10.55,0.2) {$0$};
			\node at (-1.09,0.15) {$\iota$};
			\node at (11.57,-0.1) {$\{0\} \ .$};
			\node at (-2.25,-0.1) {$\mathbb{R}$};
			
			\node[left] at (-2.2,0.7) { \small $\textup{id}$};	
			\node[left] at (0.05,0.7) { \small $\Pi_{h,1}^{3}$};
			\node[left] at (3.05,0.7) { \small $\Pi_{h,2}^{3}$};
			\node[left] at (6.55,0.7) { \small $\Pi_{h,3}^{3}$};
			\node[left] at (9.55,0.7) { \small $\Pi_{h,4}^{3}$};
			\node[left] at (11.45,0.7) { \small $\textup{id}$};
		\end{tikzpicture}
	\end{figure} \noindent
	Due to the smoothness properties of the splines  the discrete spaces are indeed subspaces of the continuous pendants.	For reasons of simplification let us assume in the following $ p\coloneqq p_1=p_2=p_3$ and $r \coloneqq r_1 =r_2=r_3$.   Then, as shown in  \cite{Buffa2011IsogeometricDD}, we can define the above projections in such a way that the next approximation estimates are valid.
	
	\begin{lemma}
		\label{lemma:spline_approx_estimates_3D}
		Let $0  \leq  k \leq p, \ 0 \leq r \leq p-1$. Then, assuming sufficient regularity for $\phi$ and $\f{v}$, it is
		\begin{alignat*}{3}
			& \norm{{\phi}-\Pi^3_{h,1}{\phi}}_{H^l} \leq C  h^{k-l+1} \ \norm{{\phi}}_{H^{k+1}}, \ l \in \{0,1\}, \\
			& \norm{{\f{v}}-\Pi^3_{h,2}{\f{v}}}_{\f{H}(\textup{curl})} \leq C  h^{k} \ \norm{{\f{v}}}_{\f{H}^k(\textup{curl})}, \hspace{0.5cm}\\
			& \norm{{\f{v}}-\Pi^3_{h,3}{\f{v}}}_{\f{H}(\textup{div})} \leq C  h^{k} \ \norm{{\f{v}}}_{\f{H}^k(\textup{div})},\\
			& \norm{{\phi}-\Pi^3_{h,4}{\phi}}_{{L}^{2}} \leq C  h^{k} \ \norm{{\phi}}_{{H}^k}.
		\end{alignat*}
		We use the notation  $\norm{{\f{v}}}_{\f{H}^k(\textup{curl})}^2 \coloneqq \norm{ {\f{v}}}_{\f{H}^k}^2 + \norm{ \nabla \times {\f{v}}}_{\f{H}^k}^2$ and $\norm{{\f{v}}}_{\f{H}^k(\textup{div})}^2 \coloneqq \norm{ {\f{v}}}_{\f{H}^k}^2 + \norm{ \nabla \cdot {\f{v}}}_{{H}^k}^2$.	
	\end{lemma}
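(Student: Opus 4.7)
The plan is to reduce the estimates to the parametric domain $\widehat{\Omega}$, establish them there by one-dimensional spline arguments combined with a tensor-product construction, and then transfer everything back through the pullbacks $\mathcal{Y}_i$. First I would construct univariate quasi-interpolant projectors $\widehat{\pi}_{p,r}\colon L^2(0,1)\to S_p^r$ that are locally $L^2$-stable on each support-extension element, reproduce all polynomials of degree $\leq p$, and satisfy the key commutation
\begin{equation*}
\partial_\zeta\, \widehat{\pi}_{p,r} \;=\; \widehat{\pi}_{p-1,r-1}\, \partial_\zeta .
\end{equation*}
Such projectors exist via dual-basis constructions of Schumaker--Lyche type, as explained in \cite{Buffa2011IsogeometricDD}. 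Local stability together with polynomial reproduction gives, by a Bramble--Hilbert argument on each element and its support extension, univariate estimates of the form $\norm{\phi-\widehat{\pi}_{p,r}\phi}_{H^l(I)} \leq C\, h_I^{k+1-l} \norm{\phi}_{H^{k+1}(\widetilde{I})}$ for $0\le l \le k+1$, with a constant depending only on $p$ and the mesh-shape constant from Assumption \ref{assum_reg_mesh}.

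Next I would assemble the univariate operators into tensor-product projections $\widehat{\Pi}_{h,i}^{3}$ onto the spaces $\widehat{V}_{h,i}^{3}$. The product structure, together with the one-dimensional commutation, immediately yields a commuting diagram on $\widehat{\Omega}$ for the parametric operators $\nabla$, $\nabla\times$, $\nabla\cdot$. Summing the univariate estimates over tensor elements delivers the $L^2$-type bounds: order $h^{k+1}$ for $\widehat{\Pi}_{h,1}^{3}$ and order $h^{k}$ for the component-wise approximation by the reduced-degree spaces. The graph-norm parts of the $\f{H}(\textup{curl})$ and $\f{H}(\textup{div})$ estimates then follow from the commutation at no extra cost, for instance
\begin{equation*}
\n{\nabla\cdot(\f{v}-\widehat{\Pi}_{h,3}^{3}\f{v})}_{L^2} \;=\; \n{\nabla\cdot\f{v}-\widehat{\Pi}_{h,4}^{3}(\nabla\cdot\f{v})}_{L^2} \;\le\; C\, h^k\, \n{\nabla\cdot\f{v}}_{H^k},
\end{equation*}
and analogously for curl.

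Finally I would set $\Pi_{h,i}^{3}\coloneqq \mathcal{Y}_i^{-1}\widehat{\Pi}_{h,i}^{3}\mathcal{Y}_i$ and transfer the estimates to $\Omega$. The pullbacks in \eqref{eq_def_pull_1}--\eqref{eq_def_pull_4} are precisely the covariant and contravariant Piola transforms that intertwine parametric and physical de Rham operators, so the parametric commuting diagram induces the physical one. Under Assumption \ref{assum_reg_mesh}, $\f{F}$ is a piecewise smooth diffeomorphism, hence each $\mathcal{Y}_i$ and its inverse are bounded between the relevant Sobolev, $\f{H}(\textup{curl})$ and $\f{H}(\textup{div})$ spaces with constants independent of $h$, and pulling the parametric estimates through yields the claim.

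The step I expect to be the principal obstacle is the construction of the univariate projectors: one must simultaneously secure local $L^2$-stability, polynomial reproduction up to degree $p$, and \emph{exact} commutation with $\partial_\zeta$ landing in the reduced-regularity space $S_{p-1}^{r-1}$. Standard global dual projectors lose locality, while naive local quasi-interpolants lose the commutation; reconciling these requires the delicate Schumaker--Lyche dual-functional construction of \cite{Buffa2011IsogeometricDD}. Once this ingredient is granted, the tensor product, the commuting-diagram argument for the graph norms, and the Piola transfer are essentially bookkeeping.
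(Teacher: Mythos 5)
Your proposal is correct and follows essentially the same route as the paper, which does not reproduce the argument but simply cites \cite{Buffa2011IsogeometricDD} (Theorems 5.3, 5.4 and Remark 5.1); the construction you outline — locally stable, polynomial-reproducing, derivative-commuting univariate quasi-interpolants of Schumaker--Lyche type, tensor-product assembly, graph-norm control via the commuting diagram, and transfer by the pullbacks $\mathcal{Y}_i$ under Assumption \ref{assum_reg_mesh} — is precisely the proof given in that reference. No substantive difference to report.
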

	\begin{proof}
		See \cite[Theorem 5.3,Theorem 5.4, Remark 5.1]{Buffa2011IsogeometricDD}.
	\end{proof}
	
	Here and in the rest of the article $C<\infty$ denotes a constant only depending on the domain $\Omega, \ \f{F}$ and the polynomial degrees as well as on the regularity parameter $r$ and may vary at different occurrences.

	In fact, we shall study the $2D$ elasticity equation for which the planar de Rham chain  will be useful  to handle the weak symmetry case. Nevertheless, we mentioned the classical isogeometric de Rham spaces in $\mathbb{R}^3$ above in order to  justify the next statements without giving a detailed proof. Namely, it is easy to check that the procedure in  \cite{Buffa2011IsogeometricDD} can be adapted  to the $2D$ setting.   
	With similar steps and proofs 
	one can show the existence of  another commutative diagram  
	\begin{figure}[h!]
		\begin{center}
			\begin{tikzpicture}
				\node at (0,1.4) {$H^1(\Omega)$};			
				\node at (3,1.4) {$\boldsymbol{H}(\Omega,\textup{div})$};
				\node at (6.5,1.4) {$L^2(\Omega)$};			
				\node at (9.5,1.4) {$\{0 \}$};						
				\node at (1.3,1.7) {$\textup{curl}$};			
				\node at (4.7,1.7) {$\nabla \cdot $};			
				\node at (8.2,1.7) {$0 $};			
				\draw[->] (0.8,1.4) to (1.8,1.4);			
				\draw[->] (4.2,1.4) to (5.3,1.4);			
				\draw[->] (7.7,1.4) to (8.75,1.4);			
				\draw[->] (-1.6,1.4) to (-0.72,1.4);			
				\node at (-1.09,1.65) {$\iota$};			
				\node at (-2.25,1.4) {$\mathbb{R}$};			
				\draw[->] (3,1.1) -- (3,0.2);
				\draw[->] (-2.25,1.1) -- (-2.25,0.2);			
				\draw[->] (0,1.1) -- (0,0.2);
				\draw[->] (6.5,1.1) -- (6.5,0.2);
				\draw[->] (9.5,1.1) -- (9.5,0.2);			
				\node at (0,-0.1) {${V}_{h,1}^{2}$};			
				\node at (3,-0.1) {$\boldsymbol{V}_{h,2}^{2}$};		
				\node at (6.5,-0.1) {${V}_{h,3}^{2}$};			
				\node at (9.62,-0.1) {$\{0 \} \ ,$};			
				\node at (1.3,0.2) {${\textup{curl}}$};			
				\node at (4.7,0.2) {${\nabla} \cdot $};			
				\node at (8.2,0.2) {$0 $};		
				\draw[->] (0.8,-0.1) to (1.8,-0.1);		
				\draw[->] (4.2,-0.1) to (5.3,-0.1);
				\draw[->] (7.7,-0.1) to (8.75,-0.1);		
				\draw[->] (-1.6,-0.1) to (-0.72,-0.1);			
				\node at (-1.09,0.15) {$\iota$};			
				\node at (-2.25,-0.1) {$\mathbb{R}$};			
				\node[left] at (-2.2,0.7) { \small $\textup{id}$};	
				\node[left] at (0.05,0.7) { \small $\Pi_{h,1}^{2}$};
				\node[left] at (3.05,0.7) { \small $\Pi_{h,2}^{2}$};
				\node[left] at (6.55,0.7) { \small $\Pi_{h,3}^{2}$};
				\node[left] at (9.55,0.7) { \small $\textup{id}$};
			\end{tikzpicture}
		\end{center}
		\caption{There is a IGA discretization of the planar de Rham complex preserving the chain structure.}
		\label{Fig:diagram2}
	\end{figure} \\ \noindent
	where we have the spaces   \begin{alignat*}{3}
		V_{h,1}^2 &\coloneqq  \mathcal{Y}_1^{-1}(\widehat{V}^2_{h,1}) \ \ \  \textup{with}  	\hspace{0.3cm}&&\widehat{V}^2_{h,1} \coloneqq  S_{p,p}^{r,r},  \\
		\boldsymbol{V}_{h,2}^2 &\coloneqq \mathcal{Y}_3^{-1}(\widehat{\boldsymbol{V}}^2_{h,2})\ \ \  \textup{with}  	\hspace{0.3cm} &&\widehat{\boldsymbol{V}}^2_{h,2} \coloneqq  \big(S_{p,p-1}^{r,r-1} \times S_{p-1,p}^{r-1,r}  \big)  
		,\nonumber\\
		V_{h,3}^2 &\coloneqq  \mathcal{Y}_4^{-1}(\widehat{V}^2_{h,3}) \ \ \  \textup{with}  	\hspace{0.3cm}&&\widehat{V}^2_{h,3} \coloneqq  S_{p-1,p-1}^{r-1,r-1},  \ \ \ 
	\end{alignat*}
	and there are compatible $L^2$-continuous projections $\Pi_{h,i}^2$ onto the respective spaces. One notes the same notation for the pullbacks in  $2D$ and  mean the straight-forward adaption of \eqref{eq_def_pull_1}, \ \eqref{eq_def_pull_3} and \eqref{eq_def_pull_4} to the two-dimensional setting. An important aspect showing the appropriateness of  the $\mathcal{Y}_i$ are the properties \begin{align}
		\label{eq_trans_compa}
		\widehat{\nabla} \cdot \mathcal{Y}_3(\f{v}) = \mathcal{Y}_4(\nabla \cdot \f{v}) , \ \ \ \ \widehat{\textup{curl}}(\mathcal{Y}_1(\phi)) = \mathcal{Y}_3(\textup{curl}(\phi)),
	\end{align}
	which can be proven easily. The    notation "  $\widehat{ \ \cdot \ }$ " indicates  that the  differentiation is  w.r.t. the parametric coordinates. 
	Analogously to the $3D$ case one can derive approximation estimates for the mentioned spaces. 
	\begin{lemma}
		\label{lemma_approx_spline_spaces_2D}
		Let $0  \leq k \leq p, \ 0 \leq r \leq p-1$. The  compatible projections  $\Pi_{h,i}^2$  satisfy the estimates
		\begin{alignat*}{3}
			& \norm{{\phi}-\Pi^2_{h,1}{\phi}}_{H^{l}} \leq C  h^{k-l+1} \ \norm{{\phi}}_{H^{k+1}}, \ l \in \{0,1\}, \\
			& \norm{{\f{v}}-\Pi^2_{h,2}{\f{v}}}_{\f{H}(\textup{div})} \leq C  h^{k} \ \norm{\f{v}}_{\f{H}^k(\textup{div})}, \hspace{0.5cm}\\
			& \norm{\phi-\Pi^2_{h,3}{\phi}}_{{L}^{2}} \leq C  h^{k} \ \norm{\phi}_{{H}^k},
		\end{alignat*}	
		assuming sufficient regularity.
	\end{lemma}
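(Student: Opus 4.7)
The plan is to follow the template of the proof of Lemma \ref{lemma:spline_approx_estimates_3D} in \cite{Buffa2011IsogeometricDD}, which proceeds in two stages: first constructing commuting quasi-interpolation operators on the parametric domain $\widehat{\Omega}$, then transferring them to the physical domain via the pullbacks $\mathcal{Y}_i$. Each stage admits a direct $2D$ adaptation since all essential ingredients are either univariate (and thus identical) or tensor-product constructions that simply drop one factor.

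First, on $\widehat{\Omega}=(0,1)^2$ I would define quasi-interpolants $\widehat{\Pi}^2_{h,i}$ onto $\widehat{V}^2_{h,1}$, $\widehat{\boldsymbol{V}}^2_{h,2}$ and $\widehat{V}^2_{h,3}$ as tensor products of the univariate dual-functional spline projectors $\widehat{\pi}_{p,r}\colon L^2(0,1)\to S_p^r$ and $\widehat{\pi}_{p-1,r-1}\colon L^2(0,1)\to S_{p-1}^{r-1}$ used in \cite{Buffa2011IsogeometricDD}. These univariate operators are designed so that $\partial_\zeta \widehat{\pi}_{p,r}=\widehat{\pi}_{p-1,r-1}\,\partial_\zeta$, which, combined with the tensor-product spaces introduced in Section \ref{section:splines}, makes the lower row of Figure \ref{Fig:diagram2} commute with $\widehat{\textup{curl}}$ and $\widehat{\nabla}\cdot$. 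The element-wise approximation properties of $\widehat{\pi}_{p,r}$ on support-extensions of mesh elements then yield, by standard Bramble--Hilbert arguments on the parametric domain, estimates of the form
\[
\norm{\widehat{\phi}-\widehat{\Pi}^2_{h,1}\widehat{\phi}}_{H^l(\widehat{\Omega})} \leq C\, h^{k-l+1}\,\norm{\widehat{\phi}}_{H^{k+1}(\widehat{\Omega})},
\]
and analogous $h^k$-bounds for $\widehat{\Pi}^2_{h,2}$ and $\widehat{\Pi}^2_{h,3}$ in the $\boldsymbol{L}^2$- and $L^2$-norm, together with a matching bound on $\widehat{\nabla}\cdot(\widehat{\boldsymbol{v}}-\widehat{\Pi}^2_{h,2}\widehat{\boldsymbol{v}})$ obtained by commuting the divergence with the projector.

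Second, I would define the physical-domain projections by conjugation,
\[
\Pi^2_{h,1}\coloneqq \mathcal{Y}_1^{-1}\widehat{\Pi}^2_{h,1}\mathcal{Y}_1,\qquad \Pi^2_{h,2}\coloneqq \mathcal{Y}_3^{-1}\widehat{\Pi}^2_{h,2}\mathcal{Y}_3,\qquad \Pi^2_{h,3}\coloneqq \mathcal{Y}_4^{-1}\widehat{\Pi}^2_{h,3}\mathcal{Y}_4,
\]
so that the compatibility identities \eqref{eq_trans_compa} upgrade the parametric commuting diagram to the physical one. Under Assumption \ref{assum_reg_mesh}, the diffeomorphism $\boldsymbol{F}$ is piecewise smooth with bounded Jacobian and inverse Jacobian, hence the pullbacks $\mathcal{Y}_1,\mathcal{Y}_3,\mathcal{Y}_4$ induce norm equivalences $\norm{\cdot}_{H^k(\widehat{\Omega})}\simeq \norm{\cdot}_{H^k(\Omega)}$ with constants depending only on $\boldsymbol{F}$ and $k$. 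Combining these equivalences with the parametric estimates of step one, and using \eqref{eq_trans_compa} once more to translate $\widehat{\nabla}\cdot$-estimates into physical $\nabla\cdot$-estimates for the $\boldsymbol{H}(\textup{div})$ bound, produces the three stated inequalities with constants of the asserted dependence.

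The main obstacle is the first step: the commuting property severely constrains the admissible choice of dual functionals defining the quasi-interpolants, and verifying both the commutation and the optimal local approximation order simultaneously is the delicate part of \cite{Buffa2011IsogeometricDD}. However, the $2D$ construction is obtained by literally dropping the third tensor factor, so no new idea is required; once the parametric projectors are in place, the transfer to $\Omega$ via $\mathcal{Y}_1,\mathcal{Y}_3,\mathcal{Y}_4$ and the extraction of the $h$-powers from the Bramble--Hilbert-type parametric bounds are entirely routine. This is why the authors content themselves with referring to \cite[Theorems~5.3, 5.4]{Buffa2011IsogeometricDD} rather than reproducing the argument.
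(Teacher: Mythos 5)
Your proposal is correct and follows essentially the same route as the paper, which simply cites \cite[Theorems 5.3, 5.4]{Buffa2011IsogeometricDD} and asserts that the $2D$ adaptation is analogous; you have merely made explicit the two stages (commuting tensor-product quasi-interpolants on $\widehat{\Omega}$, then transfer to $\Omega$ via $\mathcal{Y}_1$, $\mathcal{Y}_3$, $\mathcal{Y}_4$ and the relations \eqref{eq_trans_compa}) that the authors leave implicit. Your assignment of pullbacks to the three spaces matches the paper's definitions of $V_{h,1}^2$, $\boldsymbol{V}_{h,2}^2$, $V_{h,3}^2$, so no gap remains.
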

	\begin{proof}
		With similar proofs like for the $3D$ de Rham chain this can be shown with the  results from \cite{Buffa2011IsogeometricDD}. 
	\end{proof}

	At the end of this short section  we want to mention the Taylor-Hood type isogeometric
	spaces; see e.g. \cite{Buffa2010IsogeometricAN}. They are defined through
	\begin{align*}
		\boldsymbol{W}_{h}^{TH}(p,r) \coloneqq \big(	\boldsymbol{V}_{h}^{TH},	V_{h}^{TH} \big) \coloneqq \big( (S_{p,p}^{r,r}  \times S_{p,p}^{r,r} )\circ \f{F}^{-1},  S_{p-1,p-1}^{r,r}  \circ \f{F}^{-1} \big), \ \ p-1 > r>=0.
	\end{align*}
	Actually, latter spaces fit not directly to discrete differential forms. Nevertheless, the space pair will prove useful later and 
	utilizing  \cite[Theorem 3.2]{IGA3} or Lemma \ref{lemma_approx_spline_spaces_2D}  we see the existence of a  projection $\Pi_{h}^{TH} \colon L^2(\o) \rightarrow V_{h}^{TH}$ with approximation result
	\begin{equation}
		\label{eq_approx_th}
		\norm{q -\Pi_{h}^{TH}q}_{L^2} \leq C  h^{k} \norm{q}_{H^k}, \ \ p \geq k  \geq 0.
	\end{equation}
	
	After the introduction of elementary notions and spline spaces we look at the mixed weak formulation for linear elasticity with weak symmetry below.

	\section{Mixed formulation with weakly imposed symmetry}
	\label{section_weak_form_weak_symmetry}
	To obtain a mixed weak formulation for the elasticity system that does not require test spaces with strong symmetry, i.e. subspaces of  $\t{H}(\o,\d,\mathbb{S}) $, one can use the weak symmetry ansatz utilizing a Lagrange multiplier as introduced in  \cite{Veubeke} and applied several times e.g. in  \cite{Arnold84}  or \cite{Arnold2007MixedFE}: \emph{Find}	$\boldsymbol{\sigma} \in \boldsymbol{\mathcal{{H}}}(\Omega,\textup{div}), \ \f{u} \in \boldsymbol{L}^2(\Omega)$ and $\f{p} \in \boldsymbol{\mathcal{{L}}}^2(\Omega,\mathbb{K})$ s.t. 
	\begin{alignat}{5}
		\label{weak_form_wek_symmetry_contiuous}
		&\langle \f{A} \boldsymbol{\sigma} , \boldsymbol{\tau} \rangle \ + \ &&\langle \f{u}, \nabla \cdot \boldsymbol{\tau}  \rangle \  + \ && \langle \f{p}, \boldsymbol{\tau} \rangle  \color{black}  &&= \langle \boldsymbol{\tau} \cdot \f{n}; \f{u}_D \rangle_{\Gamma}, \ \hspace{0.4cm} &&\forall \boldsymbol{\tau} \in \boldsymbol{\mathcal{{H}}}(\Omega,\textup{div}), \nonumber\\
		&\langle \nabla \cdot \boldsymbol{\sigma} , \f{v} \rangle &&  && &&= \langle \f{f},\f{v}\rangle, \ \hspace{0.4cm} && \forall \f{v} \in \boldsymbol{L}^2(\Omega), \\
		&\langle \boldsymbol{\sigma}, \f{q} \rangle &&  &&  &&= 0,  \ \hspace{0.4cm} &&\forall \f{q} \in \boldsymbol{\mathcal{{L}}}^2(\Omega,\mathbb{K}), \nonumber
	\end{alignat}
	where $\f{u}_D \in \f{H}^{1/2}(\Gamma), \ \Gamma \coloneqq \partial \o,$ determines  the displacement boundary data and $\f{f} \in \f{L}^2(\o).$  For an explanation of  boundary Sobolev spaces we refer the reader to  \cite[Chapter 2]{steinbach}. 
	Starting point for the  derivation of latter system is the standard mixed weak formulation \eqref{weak_form_strong_symmetry_contiuous_1}.
	Then the symmetry condition is dropped and replaced by the requirement 
	\begin{equation*}
		\langle \t{\sigma} , \f{s} \rangle  = 0 , \ \  \forall \f{s} \in \t{L}^2(\o, \mathbb{K}), 
	\end{equation*}	 
	which is equivalent to 
	$$  \langle  s_i , \sigma_{ij}-\sigma_{ji} \rangle =0, \  \ \forall s_i \in L^2(\Omega), \ \ j\neq i.$$
	Latter constraint is incorporated by means of a Lagrange multiplier
	and this explains  the additional variable $\f{p}$  as well as the last line in system \eqref{weak_form_wek_symmetry_contiuous}. It is easy to see the equivalence of the two formulations \eqref{weak_form_strong_symmetry_contiuous_1} and  \eqref{weak_form_wek_symmetry_contiuous} in the continuous setting, where the Lagrange multiplier is related to the solution $(\f{\sigma},\f{u})$ by $\f{p}= -\textup{Skew}\big(\frac{1}{2}(\partial_2u_1 - \partial_1 u_2) \big) $.  The well-posedness of  \eqref{weak_form_wek_symmetry_contiuous} in  $2D$ is shown in \cite{Arnold2015} and the consideration of the weak symmetry formulation is in some sense  justified. Main advantage of the new form is the obtained flexibility for discretization approaches compared to \eqref{weak_form_strong_symmetry_contiuous_1}, but the price to pay is the increased number of degrees of freedom. At least in $3D$ and for fine meshes this might become problematic within numerical computations.
	
	Nevertheless, in the following we want to face the conforming discretization of \eqref{weak_form_wek_symmetry_contiuous} in the two-dimensional setting and thus we have to define proper test function spaces $\t{W}_{h} \coloneqq \t{V}_{h,2}^w \times \f{V}_{h,3}^w \times \t{V}_{h,3}^w \subset  \t{H}(\o,\d) \times \f{L}^2(\Omega) \times \t{L}^2(\o,\k)$ and then we solve the discrete problem: \\
	\emph{Find} $[\f{\sigma}_h,\f{u}_h,\f{p}_h]  \in \t{V}_{h,2}^w \times \f{V}_{h,3}^w \times \t{V}_{h,3}^w $ such that	
	\begin{alignat}{5}
		\label{dis_weak_form_weak_symmetry_contiuous}
		&\langle \f{A} \boldsymbol{\sigma}_h , \boldsymbol{\tau}_h \rangle \ + \ &&\langle \f{u}_h, \nabla \cdot \boldsymbol{\tau}_h \rangle \  + \ && \langle   \f{p}_h, \boldsymbol{\tau}_h \rangle  \color{black}  &&= \langle \boldsymbol{\tau}_h \cdot \f{n} ; \f{u}_D\rangle_{\Gamma}, \ \hspace{0.4cm} &&\forall \boldsymbol{\tau}_h \in \t{V}_{h,2}^w, \nonumber\\
		&\langle \nabla \cdot \boldsymbol{\sigma}_h , \f{v}_h \rangle &&  && &&= \langle \f{f},\f{v}_h\rangle, \ \hspace{0.4cm} && \forall \f{v}_h \in \f{V}_{h,3}^w, \\
		&\langle \boldsymbol{\sigma}_h, \f{q}_h \rangle &&  &&  &&= 0,  \ \hspace{0.4cm} &&\forall \f{q}_h \in \t{V}_{h,3}^w. \nonumber
	\end{alignat}
	
	The index $w$  should indicate that we are in the framework of weak symmetry.
	Then the actual test spaces will be constructed through the spline based spaces defined in Section \ref{subsec:iag_spaces}.
	Before we  specify the discretization method we state the fundamental conditions that have to be satisfied in order to get a stable saddle-point problem \eqref{dis_weak_form_weak_symmetry_contiuous}. The application of Brezzi's stability criterion for saddle-point problems to the mixed formulation (cf. \cite[Section 2]{Arnold2007MixedFE} and \cite{Brezzi}) implies the stability requirements: 
	\begin{itemize}
		\item[(S1)]  It holds $\norm{\f{\tau}_h}_{\t{H}(\d)}^2 \leq C_{S1} \langle \f{A}\f{\tau}_h, \f{\tau}_h \rangle   \ $ for all $\f{\tau}_h \in \t{V}_{h,2}^w $ satisfying $0=\langle \nabla \cdot \f{\tau}_h, \f{v}_h  \rangle$ and $0=\langle  \f{\tau}_h, \f{q}_h  \rangle,$ $ \ \forall (\f{v}_h, \f{q}_h) \in \ \f{V}_{h,3}^w \times \t{V}_{h,3}^w$ and some constant $C_{S1}$.
		\item[(S2)] For all $ (\f{v}_h, \f{q}_h) \in \ \f{V}_{h,3}^w \times \t{V}_{h,3}^w $, there exists a $ \f{0} \neq \f{\tau}_h \in \t{V}_{h,2}^w$ s.t. \\
		{ $	\langle \f{\tau}_h, \f{q}_h \rangle + \langle \nabla \cdot \f{\tau}_h, \f{v}_h \rangle \geq C_{S2} \norm{\f{\tau}_h}_{\t{H}(\d)} \big(\norm{\f{v}_h}_{\f{L}^2}+ \norm{\f{q}_h}_{\t{L}^2} \big)$} for a constant $C_{S2}$.
	\end{itemize}
	If the conditions (S1) and (S2) are fulfilled we get automatically well-posedness and moreover  the next theorem with results from \cite{Arnold2015} and \cite{Rettung}  applies. More precisely, we refer to  Theorem 3.1 in \cite{Rettung}, \cite[Section 2]{Arnold2015} and \cite[Remark 1]{Arnold2015}.
	
	\begin{theorem}
		\label{Theorem_2D_stability}
		Let the test spaces fulfill (S1) and (S2). Then the problem \eqref{dis_weak_form_weak_symmetry_contiuous} has a unique solution. Further, we get the quasi-optimal estimate 
		\begin{equation}
			\label{eq_quasi-optimal_weak}
			\norm{[\f{\sigma}-\f{\sigma}_h,\f{u}-\f{u}_h,\f{p}-\f{p}_h]}_\mathcal{B} \leq C \norm{[\f{\sigma}-\f{\tau}_h,\f{u}-\f{v}_h,\f{p}-\f{q}_h]}_\mathcal{B}  \ \ \forall [\f{\tau}_h,\f{v}_h,\f{q}_h] \in \t{W}_{h}
		\end{equation} between the exact solution $(\f{\sigma},\f{u},\f{p})$ of \eqref{weak_form_wek_symmetry_contiuous} and the solution to \eqref{dis_weak_form_weak_symmetry_contiuous},\\	where $\norm{[\f{\sigma},\f{u},\f{p}]}_\mathcal{B}^2 \coloneqq \norm{\f{\sigma}}_{\t{H}(\d)}^2 + \norm{\f{u}}_{\f{L}^2}^2 + \norm{\f{p}}_{\t{L}^2}^2$ and $C$ depends only  on $C_{S1}, C_{S2}$ and $\f{A}$.\\
		The discretization is inf-sup stable, meaning we have a constant $C_{\mathcal{B}}>0$ independent from $h$  s.t.
		\begin{equation*}
			\underset{[\f{\sigma}_h,\f{u}_h,\f{p}_h] \in \t{W}_h \textbackslash \{\f{0}\}}{\inf} \hspace{0.2cm} \underset{[\f{\tau}_h,\f{v}_h,\f{q}_h] \in \t{W}_h}{\sup}\frac{\mathcal{B}([\boldsymbol{\tau}_h,\f{v}_h,\f{q}_h],[\boldsymbol{\sigma}_h,\f{u}_h,\f{p}_h])}{\norm{[\f{\tau}_h,\f{v}_h,\f{q}_h]}_\mathcal{B} \ \norm{[\f{\sigma}_h,\f{u}_h,\f{p}_h]}_\mathcal{B} } \geq C_{\mathcal{B}},
		\end{equation*}
		with $	\mathcal{B}([\f{\sigma},\f{u},\f{p}],[\f{\tau},\f{v},\f{q}]) \coloneqq \langle \f{A} \boldsymbol{\sigma} , \boldsymbol{\tau} \rangle \ + \langle \f{u}, \nabla \cdot \boldsymbol{\tau}  \rangle \  + \langle \f{p}, \boldsymbol{\tau} \rangle + \langle \nabla \cdot \boldsymbol{\sigma} , \f{v} \rangle+ \langle \boldsymbol{\sigma} , \f{q} \rangle$ denoting the underlying bilinear form of \eqref{dis_weak_form_weak_symmetry_contiuous}.
		Besides, if  the constant identity matrix $\f{I}$ is an element of $ \t{V}_{h,2}^w$, then the constant $C_{\mathcal{B}}$ and the constant $C$ in  \eqref{eq_quasi-optimal_weak} are independent of the Lam\'e coefficient $\lambda$.
	\end{theorem}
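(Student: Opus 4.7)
The plan is to recast the discrete problem \eqref{dis_weak_form_weak_symmetry_contiuous} as a standard two-field Brezzi-type saddle-point system and then invoke the abstract Babu\v{s}ka--Brezzi theory. To this end, set
\begin{equation*}
a(\f{\sigma},\f{\tau}) \coloneqq \langle \f{A}\f{\sigma},\f{\tau}\rangle, \qquad b(\f{\tau},(\f{v},\f{q})) \coloneqq \langle \f{v}, \nabla\cdot\f{\tau}\rangle + \langle \f{q},\f{\tau}\rangle,
\end{equation*}
with trial space $\t{V}_{h,2}^w \subset \t{H}(\o,\d)$ and multiplier space $\f{V}_{h,3}^w \times \t{V}_{h,3}^w \subset \f{L}^2(\o)\times \t{L}^2(\o,\k)$ equipped with the norm $\norm{(\f{v},\f{q})}^2 = \norm{\f{v}}_{\f{L}^2}^2 + \norm{\f{q}}_{\t{L}^2}^2$. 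Continuity of $a$ follows from boundedness of $\f{A}$, and continuity of $b$ with constant $\leq 1$ is immediate from Cauchy--Schwarz combined with the definition of the $\t{H}(\d)$-norm.

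Next, I would observe that assumption (S1) is precisely the discrete kernel coercivity $a(\f{\tau}_h,\f{\tau}_h) \geq C_{S1}^{-1}\norm{\f{\tau}_h}_{\t{H}(\d)}^2$ on $\{\f{\tau}_h \in \t{V}_{h,2}^w \mid b(\f{\tau}_h,(\f{v}_h,\f{q}_h))=0\ \forall (\f{v}_h,\f{q}_h)\}$, while assumption (S2) is exactly the discrete inf-sup condition for $b$ on $\t{V}_{h,2}^w \times (\f{V}_{h,3}^w \times \t{V}_{h,3}^w)$. With both Brezzi conditions in place and continuity established, the abstract theorem (see Theorem~3.1 in \cite{Rettung} and Section~2 of \cite{Arnold2015}) yields existence and uniqueness of $[\f{\sigma}_h,\f{u}_h,\f{p}_h]$, together with the quasi-optimal estimate \eqref{eq_quasi-optimal_weak} where the constant depends only on the continuity and stability constants of $a$ and $b$. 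The inf-sup stability of the full bilinear form $\mathcal{B}$ on $\t{W}_h$ is a standard consequence: given $[\f{\sigma}_h,\f{u}_h,\f{p}_h]$, one constructs a test triple by combining $\f{\sigma}_h$ itself (using (S1) to control $a$ on the kernel) with a corrector coming from (S2) that controls the $\f{L}^2$-norms of $\f{u}_h$ and $\f{p}_h$; the resulting lower bound gives $C_{\mathcal{B}}$ depending only on $C_{S1}$, $C_{S2}$ and $\f{A}$.

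The delicate point, and what I expect to be the main obstacle, is the $\lambda$-independence claim. The form $\langle \f{A}\f{\sigma},\f{\sigma}\rangle$ degenerates as $\lambda\to\infty$ along the $\text{tr}(\f{\sigma})$-direction, so the kernel-coercivity constant in (S1) threatens to blow up. The idea, following \cite[Remark~1]{Arnold2015}, is that when $\f{I}\in \t{V}_{h,2}^w$ one may split any $\f{\tau}_h$ into a mean-trace part $c\f{I}$ with $c = \frac{1}{2|\o|}\int_\o \text{tr}(\f{\tau}_h)$ and a remainder with vanishing mean trace; on the remainder $\langle \f{A}\cdot,\cdot\rangle$ remains uniformly coercive in $\lambda$, while $c$ is controlled separately through the inf-sup condition by testing against $\f{I}$ (whose divergence vanishes, so the decomposition interacts cleanly with the kernel structure). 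Propagating this $\lambda$-uniform bound through the Brezzi machinery yields constants $C_{\mathcal{B}}$ and $C$ that are $\lambda$-independent.
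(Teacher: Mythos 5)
Your proposal is correct and follows essentially the same route as the paper, which proves this theorem simply by observing that (S1) and (S2) are Brezzi's conditions for the two-field saddle-point reformulation and then citing Theorem~3.1 of \cite{Rettung} together with \cite[Section 2]{Arnold2015} and \cite[Remark 1]{Arnold2015} (the latter for the $\lambda$-independence via the mean-trace splitting you describe). Your write-up merely makes explicit the standard abstract machinery that those references carry out.
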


	After our problem model is clarified we define  spaces for the two-dimensional setting yielding a stable discretization.

	\subsection{Choice of discrete spaces}
	\label{section_discretization}
	
	The  choice of suitable test spaces for $\Omega \subset \mathbb{R}^2$  satisfying the discrete inf-sup condition is determined by the following lemma that corresponds to  Lemma 3.2 in   \cite{Rettung}. It gives us a helpful guideline how to satisfy the conditions (S1) and (S2).

	\begin{lemma} 
		\label{Lemma_Arnold_15}
		Let    $\f{W}_h^1 \subset \f{H}(\Omega,\textup{div}), \ W_h^2 \subset L^2(\Omega)$ be a pair of spaces  such that 
		\begin{align*}
			\underset{p_h \in W_h^2 \textbackslash \{0 \}}{\inf} \   \underset{\f{v}_h \in \f{W}_h^1 }{\sup}	\ \frac{ \langle \nabla \cdot \f{v}_h, p_h \rangle }{ \norm{\f{v}_h}_{\f{H}(\d)} \norm{p_h}_{L^2} } \geq C_P > 0, 
		\end{align*}
		and let  $\f{W}^0_h \subset \boldsymbol{H}^1(\Omega), \ W_h^3 \subset L^2(\Omega)$ be a second pair of discrete spaces with 
		\begin{align*}
			\underset{q_h \in W_h^3\textbackslash \{0 \}}{\inf} \ \underset{\f{w}_h \in \f{W}_h^0 }{\sup} \ 	\frac{ \langle \nabla \cdot \f{w}_h, q_h \rangle }{ \norm{\f{w}_h}_{\boldsymbol{H}^1}  \norm{q_h}_{L^2}} \geq C_S > 0, 
		\end{align*}
		and assume the  constants $0<C_P, C_S$ to be independent of the underlying mesh size $h$.\\
		If  $  \textup{curl}(w_{h,i})  \in \f{W}_h^1 , \forall \f{w}_h=(w_{h,1},w_{h,2})^T \in \f{W}^0_h  $ and $\nabla \cdot \f{W}_h^1 \subset W_h^2$ 
		then \begin{equation}
			\label{eq_theorem_choice}
			\boldsymbol{\mathcal{V}}^w_{h,2} \coloneqq \big( \boldsymbol{W}_h^1 \times \boldsymbol{W}_h^1 \big)^T, \ \ \f{{V}}_{h,3}^w \coloneqq \big(W_h^2 \times W_h^2\big)^T  \ \ and \ \ \t{V}_{h,3}^w \coloneqq \{ \textup{Skew}(q)  \ |  \ q \in W_h^3  \}
		\end{equation}
		fulfill the conditions (S1) and (S2). \footnote{$\big( \boldsymbol{W}_h^1 \times \boldsymbol{W}_h^1 \big)^T$ denotes  the space of matrix fields whose  rows are elements of $ \boldsymbol{W}_h^1 $. }
	\end{lemma}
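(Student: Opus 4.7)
The plan is to verify Brezzi's two stability conditions (S1) and (S2) separately, exploiting the two scalar inf-sup pairs and the two compatibility hypotheses. Condition (S1) is almost immediate: if $\f{\tau}_h \in \t{V}_{h,2}^w$ sits in the kernel, its row-wise divergence $\nabla\cdot\f{\tau}_h$ lies in $\f{V}_{h,3}^w$ thanks to the assumption $\nabla\cdot\f{W}_h^1 \subset W_h^2$, so testing against $\f{v}_h = \nabla\cdot\f{\tau}_h$ forces $\nabla\cdot\f{\tau}_h = \f{0}$. Then $\|\f{\tau}_h\|_{\t{H}(\d)}^2 = \|\f{\tau}_h\|_{\t{L}^2}^2$ and (S1) reduces to the assumed $L^2$-coercivity of $\f{A}$.

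The real work lies in (S2). Given $\f{v}_h = (v_1,v_2)^T \in \f{V}_{h,3}^w$ and $\f{q}_h = \textup{Skew}(q) \in \t{V}_{h,3}^w$, I would construct the test field as a linear combination $\f{\tau}_h = \alpha\,\f{\tau}_h^{(1)} + \beta\,\f{\tau}_h^{(2)}$ handling the two contributions separately. For $\f{\tau}_h^{(1)}$, the first inf-sup combined with $\nabla\cdot\f{W}_h^1 \subset W_h^2$ yields a genuine bounded right inverse $R\colon W_h^2 \to \f{W}_h^1$ for the discrete divergence (the identity $\langle\nabla\cdot R p - p, q\rangle = 0$ for all $q \in W_h^2$ combined with $\nabla\cdot Rp \in W_h^2$ forces $\nabla\cdot Rp = p$). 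Row by row I pick preimages $\f{w}_i \coloneqq R v_i \in \f{W}_h^1$ of the components and let $\f{\tau}_h^{(1)}$ be the matrix with rows $\f{w}_1,\f{w}_2$. Then $\nabla\cdot\f{\tau}_h^{(1)} = \f{v}_h$ and $\|\f{\tau}_h^{(1)}\|_{\t{H}(\d)} \leq C\,\|\f{v}_h\|_{\f{L}^2}$.

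For $\f{\tau}_h^{(2)}$, the second inf-sup delivers a Fortin-type lift: some $\f{w}_h = (w_1,w_2)^T \in \f{W}_h^0$ with $\langle \nabla\cdot\f{w}_h, q\rangle = \|q\|_{L^2}^2$ and $\|\f{w}_h\|_{\f{H}^1} \leq C\,\|q\|_{L^2}$. I define $\f{\tau}_h^{(2)}$ as the matrix with rows $\textup{curl}(w_1)$ and $\textup{curl}(w_2)$, which sits in $\t{V}_{h,2}^w$ by the curl-compatibility hypothesis. A direct computation using $\textup{curl}(w) = (\partial_2 w, -\partial_1 w)^T$ shows that each row is divergence-free, hence $\nabla\cdot\f{\tau}_h^{(2)} = \f{0}$, and that $(\f{\tau}_h^{(2)})_{21} - (\f{\tau}_h^{(2)})_{12} = \partial_2 w_2 + \partial_1 w_1 = \nabla\cdot\f{w}_h$. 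Consequently $\langle \f{\tau}_h^{(2)}, \f{q}_h\rangle = \int (\nabla\cdot\f{w}_h)\,q\,dx = \|q\|_{L^2}^2$ and $\|\f{\tau}_h^{(2)}\|_{\t{H}(\d)} \leq C\,\|q\|_{L^2}$. Assembling, the target pairing becomes $\alpha\|\f{v}_h\|^2 + \beta\|q\|^2 + \alpha\langle\f{\tau}_h^{(1)},\f{q}_h\rangle$; a Young inequality on the cross-term with $\beta/\alpha$ taken large enough absorbs it into the $\beta\|q\|^2$ contribution, and together with the norm bound $\|\f{\tau}_h\|_{\t{H}(\d)} \leq C(\|\f{v}_h\|+\|\f{q}_h\|)$ this delivers (S2) with an $h$-independent constant.

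The main obstacle is the careful setup of the two Fortin lifts. The first one requires promoting the scalar inf-sup inequality to a pointwise identity $\nabla\cdot R p = p$, which crucially uses the inclusion $\nabla\cdot\f{W}_h^1 \subset W_h^2$. The second Fortin lift cannot do the same since $\nabla\cdot\f{W}_h^0 \subset W_h^3$ is \emph{not} assumed; fortunately, only the weaker projection identity $\langle\nabla\cdot R q, q\rangle = \|q\|_{L^2}^2$ is needed for the skew-part computation, and this follows from the standard finite-dimensional Babu\v{s}ka--Brezzi argument. The remaining bookkeeping (the explicit divergence and skew-part computations for a matrix of curls, and the Young-inequality balancing of $\alpha$ and $\beta$) is routine.
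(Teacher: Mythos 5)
Your proof is correct and follows essentially the same route as the argument the paper delegates to Lemma 3.2 of its reference \cite{Rettung}: promote the first inf-sup condition plus the inclusion $\nabla\cdot\f{W}_h^1\subset W_h^2$ to an exact bounded divergence lift for the displacement block, use the curl of a Stokes-stable velocity field (which is automatically divergence-free and whose skew part equals $\nabla\cdot\f{w}_h$) to control the Lagrange multiplier block, and balance the two lifts with Young's inequality; condition (S1) reduces to $\nabla\cdot\f{\tau}_h=\f{0}$ on the discrete kernel exactly as you describe. No gaps.
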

	\begin{proof}
		Compare Lemma 3.2 in \cite{Rettung}.
	\end{proof}

	Now we define the isogeometric discrete  spaces meeting the requirements of Lemma \ref{Lemma_Arnold_15} for the two-dimensional setting. In view of the mentioned lemma and the definitions in Section \ref{subsec:iag_spaces} we choose:
	\begin{align}
		\label{choice_1}
		\f{W}_h^1 & \coloneqq \f{V}_{h,2}^2 ,\\
			\label{choice_2}
		W_h^2 & \coloneqq  V_{h,3}^2.
	\end{align}
	And we use a Taylor-Hood  pair, namely
	\begin{align}
		\label{choice_3}
		\f{W}^0_h  \times W^3_h& \coloneqq \f{W}_{h}^{TH}(p,r)=\big(	\boldsymbol{V}_{h}^{TH},	V_{h}^{TH} \big). 
	\end{align}
	
	For the above choices of the discrete spaces  the following two  lemmas resulting from \cite{IGA3} and \cite{IGA1} apply.
	\begin{lemma}
		\label{lemma_B1}
		There exists a constant $0<C_P$ independent of $h$ s.t. 
		\begin{align}
			\underset{p_h \in V_{h,3}^2 \textbackslash \{0 \}}{\inf}  \ \ \underset{\f{v}_h \in \f{V}_{h,2}^2}{\sup}	\frac{ \langle \nabla \cdot \f{v}_h, p_h \rangle }{ \norm{\f{v}_h}_{\f{H}(\d)} \ \norm{p_h}_{L^2} } \geq  C_P > 0 
		\end{align}			
	\end{lemma}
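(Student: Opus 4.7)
The plan is to verify the inf-sup condition via Fortin's criterion: it suffices to construct a projection $\Pi^F \colon \f{H}(\o,\d) \to \f{V}_{h,2}^2$ that is uniformly bounded in the $\f{H}(\d)$-norm and that satisfies the commutation $\nabla \cdot \Pi^F \f{v} = \Pi_{h,3}^2(\nabla \cdot \f{v})$. Combined with the surjectivity of the continuous divergence $\nabla \cdot \colon \f{H}(\o,\d) \to L^2(\o)$, this yields the discrete inf-sup constant. Note that this is precisely the setting where the IGA de Rham chain in Figure \ref{Fig:diagram2} was designed to apply.

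First, I would take $\Pi^F \coloneqq \Pi_{h,2}^2$ from that diagram. By the commutativity of Figure \ref{Fig:diagram2}, $\nabla \cdot \Pi_{h,2}^2 \f{v} = \Pi_{h,3}^2 (\nabla \cdot \f{v})$ holds for every $\f{v} \in \f{H}(\o,\d)$. Uniform boundedness is then a one-line computation: since $\Pi_{h,2}^2$ and $\Pi_{h,3}^2$ are both $L^2$-stable (uniformly in $h$) by the construction recalled before Lemma \ref{lemma_approx_spline_spaces_2D}, one has
\begin{equation*}
\norm{\Pi_{h,2}^2 \f{v}}_{\f{H}(\d)}^2 = \norm{\Pi_{h,2}^2 \f{v}}_{\f{L}^2}^2 + \norm{\Pi_{h,3}^2(\nabla \cdot \f{v})}_{L^2}^2 \leq C \bigl( \norm{\f{v}}_{\f{L}^2}^2 + \norm{\nabla \cdot \f{v}}_{L^2}^2 \bigr) = C\, \norm{\f{v}}_{\f{H}(\d)}^2.
\end{equation*}
Secondly, given $p_h \in V_{h,3}^2 \subset L^2(\o)$, I would lift it by solving the auxiliary Poisson problem $-\Delta u = p_h$ in $\o$ with $u=0$ on $\Gamma$ and setting $\f{v}\coloneqq -\nabla u$. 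Then $\f{v} \in \f{H}^1(\o) \subset \f{H}(\o,\d)$, $\nabla \cdot \f{v} = p_h$, and by the standard $H^1$ a priori estimate on the Lipschitz domain $\o$ we obtain $\norm{\f{v}}_{\f{H}(\d)} \leq C\, \norm{p_h}_{L^2}$.

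Combining both ingredients, set $\f{v}_h \coloneqq \Pi_{h,2}^2 \f{v} \in \f{V}_{h,2}^2$. Then $\nabla \cdot \f{v}_h = \Pi_{h,3}^2 p_h = p_h$, since $p_h \in V_{h,3}^2$ is already in the range of the projection. Therefore
\begin{equation*}
\sup_{\f{w}_h \in \f{V}_{h,2}^2} \frac{\langle \nabla \cdot \f{w}_h, p_h \rangle}{\norm{\f{w}_h}_{\f{H}(\d)}} \geq \frac{\langle \nabla \cdot \f{v}_h, p_h \rangle}{\norm{\f{v}_h}_{\f{H}(\d)}} = \frac{\norm{p_h}_{L^2}^2}{\norm{\Pi_{h,2}^2 \f{v}}_{\f{H}(\d)}} \geq \frac{\norm{p_h}_{L^2}^2}{C\, \norm{\f{v}}_{\f{H}(\d)}} \geq \frac{\norm{p_h}_{L^2}}{C'},
\end{equation*}
which gives the claimed inf-sup lower bound with $C_P \coloneqq 1/C'$ independent of $h$. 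The most delicate point is really the existence of the commuting, $L^2$-stable projection $\Pi_{h,2}^2$, but this is exactly what Lemma \ref{lemma_approx_spline_spaces_2D} together with the construction of \cite{Buffa2011IsogeometricDD} provides; everything else is bookkeeping. No boundary conditions are imposed on $\f{V}_{h,2}^2$, so the unconstrained surjectivity of the continuous divergence is precisely what is needed here.
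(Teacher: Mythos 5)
Your argument is correct and is essentially the paper's own proof: the paper likewise reduces the inf-sup condition to the existence of the $L^2$-stable commuting projections of the planar de Rham diagram in Fig.~\ref{Fig:diagram2} and then invokes the standard Fortin-operator argument from the IGA literature, which is exactly what you have written out explicitly. The only cosmetic slip is the claim $\f{v}=-\nabla u \in \f{H}^1(\o)$, which need not hold on a Lipschitz domain without $H^2$ regularity, but it is not needed since $\f{v}\in\f{H}(\o,\textup{div})$ follows directly from $\nabla\cdot\f{v}=p_h\in L^2(\o)$.
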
 
	\begin{proof}
		First one notes the commutativity of the  diagram in  Fig. \ref{Fig:diagram2} and the $L^2$-boundedness of the corresponding projections $\Pi_{h,i}^2$, meaning there is a structure-preserving discretization of the planar de Rham complex. As already mentioned, this can be seen  by an adaption of the discretization approach in \cite{Buffa2011IsogeometricDD} to the planar de Rham complex. Consequently, we can easily adapt the elegant proof of  Theorem 2 in \cite[pp. 155-193]{IGA1} to our  two-dimensional de Rham chain. 
	\end{proof}

	\begin{lemma}
		\label{lemma_B2}
		Let  $p>=2, \ r=0$ and $\Gamma_B \subsetneq \partial \Omega$, where $\Gamma_B$ is the union of full mesh edges. Then there exists $h_{max}>0$, which depends only on $\f{F}$, and a constant $0<C_S$ independent of $h$ s.t. 
		\begin{align}
			\underset{q_h \in V_{h}^{TH} \textbackslash \{0\}}{\inf} \  \  \underset{\f{w}_h \in \f{V}_{h}^{TH} \cap \f{H}_{\Gamma_B}^1(\o) }{\sup}	 \ \frac{ \langle \nabla \cdot \f{w}_h, q_h \rangle }{ \norm{\f{w}_h}_{\boldsymbol{H}^1} \ \norm{q_h}_{L^2} } \geq C_S > 0,  \ \ \forall h \leq h_{max} .
		\end{align}	We use the abbreviation $\f{H}_{\Gamma_B}^1(\o) \coloneqq \{ \f{v} \in\f{H}^1(\o) \ | \ \f{v}=\f{0} \ \textup{on} \ \Gamma_B \}$
	\end{lemma}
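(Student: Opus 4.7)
The plan is to invoke Fortin's criterion: it suffices to exhibit a bounded linear operator $\Pi_h \colon \f{H}^1_{\Gamma_B}(\o) \to \f{V}_{h}^{TH} \cap \f{H}^1_{\Gamma_B}(\o)$ that is $\f{H}^1$-stable with a constant independent of $h$ and satisfies the Fortin commutation $\langle \nabla\cdot(\f{w} - \Pi_h \f{w}), q_h\rangle = 0$ for every $q_h \in V_{h}^{TH}$. Combined with the continuous surjectivity of $\nabla\cdot \colon \f{H}^1_{\Gamma_B}(\o) \to L^2(\o)$, which holds because $\Gamma_B \subsetneq \partial\o$ and $\o$ is Lipschitz (standard Bogovskii/\v{N}ecas lifting on a Lipschitz domain with a nontrivial Neumann piece), this immediately yields the discrete inf-sup inequality with a uniform constant $C_S$.

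First I would transport the problem to the parametric domain $\widehat{\o} = (0,1)^2$ via the pullbacks from Section \ref{subsec:iag_spaces}. Under Assumption \ref{assum_reg_mesh} the mapping $\f{F}$ induces norm equivalences between $\f{H}^1(\o)$ / $L^2(\o)$ and their parametric counterparts with constants depending only on $\f{F}$, so the construction and stability of $\Pi_h$ can be carried out for the spaces $\widehat{\f{V}}^{TH}_h = (S_{p,p}^{0,0})^2$ and $\widehat{V}^{TH}_h = S_{p-1,p-1}^{0,0}$ on the Cartesian mesh of $\widehat{\o}$ induced by $\boldsymbol{\Psi}$. With $r=0$, these spline spaces coincide with the classical $Q_p/Q_{p-1}$ Taylor-Hood pair on a quadrilateral mesh, for which the inf-sup analysis is well established.

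On the parametric side I would build the Fortin operator in two stages. Stage one is a Scott-Zhang or Clément-type quasi-interpolation $I_h$ acting from $\f{H}^1_{\widehat{\Gamma}_B}$ to $(S_{p,p}^{0,0})^2 \cap \f{H}^1_{\widehat{\Gamma}_B}$, which is $\f{H}^1$-stable with an $h$-uniform constant and preserves the essential boundary values on $\widehat{\Gamma}_B$; such interpolants for $C^0$ tensor-product splines are standard, cf.\ \cite{IGA3}. Stage two is a local correction $B_h(\f{w} - I_h \f{w})$ using macro-element bubbles in $(S_{p,p}^{0,0})^2$ vanishing on the edges of each macro-element, chosen so as to annihilate the remaining pressure test moments. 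A Verf\"urth-type macro-element argument, partitioning the mesh into fixed-size patches (e.g.\ $2 \times 2$), gives a local inf-sup estimate that makes the bubble correction well-defined and $h$-uniformly stable.

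The main obstacle is the parametric macro-element inf-sup: one has to verify that on a reference patch the bubble space in $(S_{p,p}^{0,0})^2$ has enough degrees of freedom to control the locally restricted $Q_{p-1}$ pressure modulo constants. This is exactly where the assumption $p \geq 2$ enters, since $p=1$ leaves no interior bubbles available. Patches adjacent to $\partial \widehat{\o} \setminus \widehat{\Gamma}_B$ are easier because no essential condition is imposed there, whereas patches meeting $\widehat{\Gamma}_B$ must be assembled respecting the vanishing trace; the condition $h \leq h_{max}$ is what guarantees that the macro-element decomposition can be carried out compatibly with the fixed boundary portion $\Gamma_B$ (which is assumed to be a union of full mesh edges, hence resolved by the parametric mesh once $h$ is small enough).
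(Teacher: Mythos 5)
Your proposal is mathematically sound, but it is worth pointing out that the paper does essentially none of this work itself: its entire proof of Lemma \ref{lemma_B2} is a citation of Theorem 5.2 in \cite{IGA3} (specialized to a constant weight function), plus two remarks — that $|\cdot|_{H^1}$ and $\norm{\cdot}_{H^1}$ are equivalent on $\f{H}^1_{\Gamma_B}(\o)$ when $\Gamma_B \neq \emptyset$, and that Assumption \ref{assum_reg_mesh} holds. What you have written is, in effect, a reconstruction of the proof that the paper outsources: the cited result is itself established by the macroelement technique for the isogeometric Taylor--Hood pair, so your Fortin-operator route (Scott--Zhang-type quasi-interpolant preserving the essential condition on $\Gamma_B$, followed by a local bubble correction on macroelements, combined with the continuous surjectivity of $\nabla\cdot$ on $\f{H}^1_{\Gamma_B}(\o)$ for $\Gamma_B \subsetneq \partial\o$) is the same underlying argument, just spelled out rather than referenced. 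Two small points of divergence: first, with continuous $S^{0,0}_{p-1,p-1}$ pressures the macroelement step only controls pressures modulo local constants, and the global patching of those constants (Stenberg's second step, or equivalently the continuous inf-sup) deserves explicit mention rather than being folded into the bubble construction; second, your attribution of the restriction $h \leq h_{\max}$ to the compatibility of the macroelement decomposition with $\Gamma_B$ is not quite what the paper intends — since $\Gamma_B$ is assumed to be a union of full mesh edges it is resolved at every refinement level, and the paper states that $h_{\max}$ depends only on $\f{F}$, i.e.\ it originates in controlling the distortion of the geometry map (the weight function in \cite{IGA3}) on macroelements. Neither issue breaks the argument.
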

	\begin{proof}
		This follows  from Theorem 5.2 in \cite{IGA3} and its proof for the case of a constant weight function, where one notes the equivalence of the Sobolev semi-norm $| \cdot |_{H^1}$ and the standard Sobolev norm $\norm{\cdot}_{H^1}$ in case of zero Dirichlet boundary conditions on a part of the boundary. Further one observes Assumption \ref{assum_reg_mesh}. 
	\end{proof}	
	And  we have the next  auxiliary result.
	\begin{lemma}
		\label{Lemma_2D_spaces}
		Let $p > r+1 \geq 1$. \\ It holds  
		$\f{V}_{h,2}^2 \subset \f{H}(\Omega,\textup{div}), \ V_{h,3}^2 \subset L^2(\Omega), \ \f{V}_{h}^{TH} \subset \boldsymbol{H}^1(\Omega), \ V_{h}^{TH} \subset L^2(\Omega)$. Further, if $(w_{h,1},w_{h,2})^T \in \f{V}_h^{TH}$, then $\textup{curl}(w_{h,i}) \subset \f{V}_{h,2}^2 $ as well as $\nabla \cdot \f{V}_{h,2}^2 \subset V_{h,3}^2$.
	\end{lemma}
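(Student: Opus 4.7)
The plan is to reduce every claim to the parametric domain via the pullbacks $\mathcal{Y}_1,\mathcal{Y}_3,\mathcal{Y}_4$ and then verify the assertion on univariate/bivariate B-spline spaces by tracking degrees and regularities. The assumption $p>r+1\geq 1$ gives in particular $r\geq 0$ and $p\geq 2$, so the splines in $S^{r,r}_{p,p}$ are globally $C^0$ (hence in $H^1$), while those in $S^{r-1,r-1}_{p-1,p-1}$ are at worst discontinuous piecewise polynomials on a bounded domain (hence in $L^2$). Under Assumption \ref{assum_reg_mesh} the pullbacks $\mathcal{Y}_i$ are isomorphisms between the corresponding continuous Sobolev spaces on $\widehat{\Omega}$ and $\Omega$, so it suffices to check the parametric statements.

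For the first block of inclusions I would argue as follows. For $\f{V}_h^{TH}$ and $V_h^{TH}$ the components lie in $S^{r,r}_{p,p}$ respectively $S^{r,r}_{p-1,p-1}$ composed with $\f{F}^{-1}$; since $r\geq 0$, both sit in $H^1(\widehat{\Omega})\subset L^2(\widehat{\Omega})$, and composition with the smooth diffeomorphism $\f{F}^{-1}$ preserves these memberships, giving $\f{V}_h^{TH}\subset \f{H}^1(\Omega)$ and $V_h^{TH}\subset L^2(\Omega)$. For $V_{h,3}^2$ the bivariate spline in $S^{r-1,r-1}_{p-1,p-1}$ is a bounded piecewise polynomial, so $\mathcal{Y}_4^{-1}$ maps it into $L^2(\Omega)$. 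For $\f{V}_{h,2}^2$ I would check that each parametric component lies in $L^2(\widehat{\Omega})$ (clear) and compute the parametric divergence to see that it lies in $L^2(\widehat{\Omega})$ as well, using the subset result proved next.

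Next I would establish the divergence compatibility $\nabla\cdot \f{V}_{h,2}^2\subset V_{h,3}^2$. On the parametric side, if $\widehat{\f v}=(\widehat v_1,\widehat v_2)\in S^{r,r-1}_{p,p-1}\times S^{r-1,r}_{p-1,p}$, then differentiating coordinate-wise reduces both degree and regularity by one in the chosen direction, so $\partial_1 \widehat v_1,\partial_2 \widehat v_2\in S^{r-1,r-1}_{p-1,p-1}$ and consequently $\widehat{\nabla}\cdot\widehat{\f v}\in\widehat V_{h,3}^2$. The identity $\widehat{\nabla}\cdot\mathcal{Y}_3(\f v)=\mathcal{Y}_4(\nabla\cdot \f v)$ from \eqref{eq_trans_compa} then yields $\nabla\cdot\f v=\mathcal{Y}_4^{-1}(\widehat{\nabla}\cdot\widehat{\f v})\in V_{h,3}^2$, which simultaneously establishes $\f{V}_{h,2}^2\subset \f{H}(\Omega,\textup{div})$.

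For the curl statement, given $(w_{h,1},w_{h,2})^T\in\f{V}_h^{TH}$, each component satisfies $\mathcal{Y}_1(w_{h,i})\in S^{r,r}_{p,p}=\widehat V_{h,1}^2$, so $w_{h,i}\in V_{h,1}^2$. Computing the parametric curl yields $\widehat{\textup{curl}}(\mathcal{Y}_1 w_{h,i})=(\partial_2 \widehat w_{h,i},-\partial_1 \widehat w_{h,i})^T\in S^{r,r-1}_{p,p-1}\times S^{r-1,r}_{p-1,p}=\widehat{\f V}_{h,2}^2$, by the same degree/regularity drop. The transformation rule $\widehat{\textup{curl}}(\mathcal{Y}_1(\phi))=\mathcal{Y}_3(\textup{curl}(\phi))$ from \eqref{eq_trans_compa} then gives $\textup{curl}(w_{h,i})=\mathcal{Y}_3^{-1}(\widehat{\textup{curl}}(\mathcal{Y}_1 w_{h,i}))\in\f{V}_{h,2}^2$, completing the proof. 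The only nontrivial point is confirming that the degree/regularity bookkeeping for derivatives of tensor-product splines is valid under the hypothesis $p>r+1\geq 1$; this is the spline identity $\partial_\zeta S^s_q\subset S^{s-1}_{q-1}$ in the relevant direction, which is immediate from \eqref{eq:soline_der}. No further obstacle is anticipated.
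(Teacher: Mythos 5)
Your proposal is correct and follows exactly the route the paper intends: the paper's proof is a one-line appeal to "the regularity properties of B-splines and \eqref{eq_trans_compa}; see Fig.~\ref{Fig:diagram2}", and your argument simply fills in the details of that appeal (the $C^r$-continuity of splines with $r\geq 0$ for the Sobolev inclusions, the degree/regularity drop $\partial_\zeta S_q^s\subset S_{q-1}^{s-1}$ for the divergence and curl compatibilities, and the pullback identities to transfer everything to the physical domain). No gap; nothing further is needed.
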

	\begin{proof}
		This  is clear by the regularity properties of B-splines and \eqref{eq_trans_compa}; see  Fig.  \ref{Fig:diagram2}.
	\end{proof}

	Hence, using  the last three lemmas and applying Lemma \ref{Lemma_Arnold_15}, we see directly the next result:
	
	\begin{theorem}[Discrete spaces for weak symmetry]
	If $p\geq 2, r=0$ and if we  define the test spaces according to \eqref{eq_theorem_choice} with \eqref{choice_1} - \eqref{choice_3}. Then, the corresponding discretized weak form with weakly imposed symmetry is well-posed for $h$ small enough.
	\end{theorem}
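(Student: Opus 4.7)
The plan is to reduce the theorem to a direct application of Lemma \ref{Lemma_Arnold_15} followed by Theorem \ref{Theorem_2D_stability}; essentially all the machinery has been assembled in the preceding lemmas, so the work consists in verifying that the assumptions $p\geq 2$ and $r=0$ make every prerequisite fit and in identifying the correct choice of boundary set when invoking the Stokes inf-sup result.

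First I would verify the structural hypotheses of Lemma \ref{Lemma_Arnold_15}. Since $p\geq 2$ and $r=0$ give $p>r+1\geq 1$, Lemma \ref{Lemma_2D_spaces} applies and delivers, on the one hand, the conforming inclusions $\f{V}_{h,2}^2\subset\f{H}(\o,\d)$, $V_{h,3}^2\subset L^2(\o)$, $\f{V}_h^{TH}\subset\f{H}^1(\o)$, $V_h^{TH}\subset L^2(\o)$, and on the other hand the two algebraic compatibility properties required in Lemma \ref{Lemma_Arnold_15}: $\textup{curl}(w_{h,i})\in\f{V}_{h,2}^2$ for every $(w_{h,1},w_{h,2})^T\in\f{V}_h^{TH}$, together with $\nabla\cdot\f{V}_{h,2}^2\subset V_{h,3}^2$. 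This takes care of the compatibility part of the hypothesis of Lemma \ref{Lemma_Arnold_15}.

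Next I would supply the two inf-sup conditions. The inf-sup for the pair $(\f{W}_h^1,W_h^2)=(\f{V}_{h,2}^2,V_{h,3}^2)$ is exactly the content of Lemma \ref{lemma_B1} and provides a constant $C_P>0$ independent of $h$. For the Stokes-type inf-sup for the Taylor-Hood pair $(\f{W}_h^0,W_h^3)=(\f{V}_h^{TH},V_h^{TH})$, Lemma \ref{Lemma_Arnold_15} needs the supremum taken over the full, unrestricted $\f{H}^1$-conforming space $\f{V}_h^{TH}$. I would therefore invoke Lemma \ref{lemma_B2} with the trivial choice $\Gamma_B=\emptyset$, which is a valid empty union of mesh edges and satisfies $\Gamma_B\subsetneq\partial\o$. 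With this choice one has $\f{H}_{\Gamma_B}^1(\o)=\f{H}^1(\o)$, so the supremum in Lemma \ref{lemma_B2} is actually taken over the whole Taylor-Hood space, yielding the required bound with constant $C_S>0$ for all $h\leq h_{\max}$; this is where the "$h$ small enough" assumption enters.

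Having verified the two inf-sup conditions and the two compatibility conditions, Lemma \ref{Lemma_Arnold_15} gives the Brezzi-type stability properties (S1) and (S2) for the discrete spaces defined via \eqref{eq_theorem_choice} with \eqref{choice_1}--\eqref{choice_3}. Theorem \ref{Theorem_2D_stability} then translates (S1) and (S2) into existence, uniqueness, inf-sup stability and quasi-optimality of the discrete problem \eqref{dis_weak_form_weak_symmetry_contiuous}, which is the claimed well-posedness. I do not anticipate a genuine obstacle here: the only subtlety is matching the formulations of the Stokes inf-sup in Lemma \ref{Lemma_Arnold_15} (no boundary condition on velocities) and Lemma \ref{lemma_B2} (velocities vanishing on a prescribed $\Gamma_B$), which is reconciled by the trivial choice $\Gamma_B=\emptyset$; everything else is bookkeeping of the parameters $p,r$ across the quoted lemmas.
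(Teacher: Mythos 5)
Your route is the paper's route: check the conformity and compatibility hypotheses via Lemma \ref{Lemma_2D_spaces}, feed Lemma \ref{lemma_B1} and Lemma \ref{lemma_B2} into Lemma \ref{Lemma_Arnold_15} to obtain (S1)--(S2), and conclude with Theorem \ref{Theorem_2D_stability}; the paper's own proof is exactly this one-line assembly.

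The one step I would not accept as written is the invocation of Lemma \ref{lemma_B2} with $\Gamma_B=\emptyset$. Although the bare statement ``$\Gamma_B\subsetneq\partial\o$, a union of full mesh edges'' formally tolerates the empty union, the proof of that lemma rests on the equivalence of $|\cdot|_{H^1}$ and $\norm{\cdot}_{H^1}$ on $\f{H}^1_{\Gamma_B}(\o)$, i.e.\ on a Friedrichs inequality that fails when $\Gamma_B=\emptyset$ (constants lie in the kernel of the seminorm), and the underlying Theorem 5.2 of \cite{IGA3} is a Stokes inf-sup with Dirichlet data on a genuine boundary part. The reconciliation you are looking for is simpler and does not need the degenerate case: choose any nonempty $\Gamma_B$ consisting of full mesh edges with $\Gamma_B\subsetneq\partial\o$ (a single boundary edge suffices), obtain the inf-sup over $\f{V}_h^{TH}\cap\f{H}^1_{\Gamma_B}(\o)$ with the same pressure space $V_h^{TH}$, and then observe that the supremum over the larger, unconstrained space $\f{V}_h^{TH}$ is at least the supremum over this subspace, so the second inf-sup condition of Lemma \ref{Lemma_Arnold_15} holds with the same constant $C_S$ for all $h\leq h_{\max}$. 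With that replacement your argument is complete and coincides with the paper's.
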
 
	
	\begin{remark}
		\label{remark_stability}
		An interesting point of the mixed formulation is the possibility to reach stability w.r.t. $\lambda$, which gets important in case of nearly incompressible materials. In view of Lemma \ref{Lemma_Arnold_15} we need to guarantee $\f{I} \in \t{V}_{h,2}^w$ which is equivalent to require $\textup{det}(\f{J})  \f{J}^{-1} \in \big( \widehat{\boldsymbol{V}}_{h,2}^2 \times \widehat{\boldsymbol{V}}_{h,2}^2 \big)$. This implies the conditions $(\f{J}_{22},-\f{J}_{21}) \in S_{p,p-1}^{r,r-1} \times S_{p-1,p}^{r-1,r}, \ (-\f{J}_{12},\f{J}_{11}) \in S_{p,p-1}^{r,r-1} \times S_{p-1,p}^{r-1,r} $. Thus, if we use B-spline parametrizations of the form $ \f{F} \in (S_{p,p}^{r,r})^2$    we obtain  $\f{I} \in \t{V}_{h,2}^w$. In particular, the stability condition fits to the isoparametric paradigm within IGA.
	\end{remark}

	\subsection{Error estimation}
	
	Because of  Theorem \ref{Theorem_2D_stability} and in view of the approximation properties of spline spaces, see Lemma \ref{lemma_approx_spline_spaces_2D}, we can estimate easily  the discretization error for the method \ref{dis_weak_form_weak_symmetry_contiuous}.	
	One obtains the following result.
	\begin{theorem}[Discretization error for weak symmetry]
		\label{Theorem_dis_error_2D}
		Let $p\geq2, r=0$ and choose discrete spaces for \eqref{dis_weak_form_weak_symmetry_contiuous} as  determined by \eqref{choice_1}-\eqref{choice_3} and \eqref{eq_theorem_choice}. \\
		Further let the exact solution of \eqref{weak_form_wek_symmetry_contiuous} satisfy the regularity conditions $\f{\sigma} \in \t{H}^{k}(\o,\d), \f{u} \in \f{H}^k(\o), \ \f{p} \in \t{H}^k(\o) \cap \t{L}^2(\o,\mathbb{K})$ with $p \geq k\geq 0$. \footnote{$\f{\tau} \in \t{H}^k(\o,\d)$ iff $\f{\tau} \in \t{H}^k(\o)$ and $ \nabla  \cdot \f{\tau} \in \f{H}^k(\o)$. Further, $\norm{{\f{\tau}}}_{\t{H}^k(\textup{div})}^2 \coloneqq \norm{ {\f{\tau}}}_{\t{H}^k}^2 + \norm{ \nabla \cdot {\f{\tau}}}_{\f{H}^k}^2$.} \\
		Then the estimates 
		\begin{align*}
			\norm{[\f{\sigma}-\f{\sigma}_h,\f{u}-\f{u}_h,\f{p}-\f{p}_h]}_\mathcal{B} &\leq C h^{k} \ \big( \norm{\f{\sigma}}_{\t{H}^k(\d)} + \norm{\f{u}}_{\f{H}^k} + \norm{\f{p}}_{\t{H}^k} \big),\\
			\n{\nabla \cdot \f{\sigma}-\nabla \cdot \f{\sigma}_h}_{\f{L}^2} &\leq C h^k \n{\nabla \cdot \f{\sigma}}_{\f{H}^k}
		\end{align*} 
		are valid for $h$ small enough (compare Lemma \ref{lemma_B2}).
	\end{theorem}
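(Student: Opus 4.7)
The plan is to invoke the quasi-optimal error bound \eqref{eq_quasi-optimal_weak} from Theorem \ref{Theorem_2D_stability} and then bound each of its three summands separately by the spline approximation estimates of Lemma \ref{lemma_approx_spline_spaces_2D} and \eqref{eq_approx_th}. Because $p\geq 2$ and $r=0$, the preceding theorem on discrete spaces for weak symmetry guarantees (S1)--(S2) for $h$ sufficiently small, so \eqref{eq_quasi-optimal_weak} is available.

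Picking the interpolants: for the stress, I would set $\f{\tau}_h$ to be the row-wise application of $\Pi_{h,2}^2$ to $\f{\sigma}$, which lies in $\t{V}_{h,2}^w=(\f{V}_{h,2}^2\times \f{V}_{h,2}^2)^T$. The second estimate of Lemma \ref{lemma_approx_spline_spaces_2D} applied row by row yields $\n{\f{\sigma}-\f{\tau}_h}_{\t{H}(\d)}\leq C h^k \n{\f{\sigma}}_{\t{H}^k(\d)}$. For the displacement, I would take $\f{v}_h$ to be the component-wise image of $\f{u}$ under $\Pi_{h,3}^2$, producing $\n{\f{u}-\f{v}_h}_{\f{L}^2}\leq C h^k \n{\f{u}}_{\f{H}^k}$. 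For the multiplier, since $\f{p}\in \t{L}^2(\o,\k)$, I write $\f{p}=\textup{Skew}(\pi)$ for a unique $\pi\in L^2(\o)$ and set $\f{q}_h\coloneqq \textup{Skew}(\Pi_h^{TH}\pi)\in \t{V}_{h,3}^w$; then \eqref{eq_approx_th} together with the norm equivalence $\n{\textup{Skew}(\cdot)}_{\t{L}^2}=\sqrt{2}\n{\cdot}_{L^2}$ gives $\n{\f{p}-\f{q}_h}_{\t{L}^2}\leq C h^k \n{\f{p}}_{\t{H}^k}$. Summing the three bounds and inserting into \eqref{eq_quasi-optimal_weak} delivers the first claimed inequality.

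For the divergence estimate I would exploit the compatibility in Lemma \ref{Lemma_2D_spaces}: applied row-wise, it shows $\nabla\cdot \t{V}_{h,2}^w\subset \f{V}_{h,3}^w$. Hence the second line of \eqref{dis_weak_form_weak_symmetry_contiuous}, combined with the continuous identity $\nabla\cdot\f{\sigma}=\f{f}$, forces $\nabla\cdot\f{\sigma}_h$ to coincide with the $\f{L}^2$-orthogonal projection of $\nabla\cdot\f{\sigma}$ onto $\f{V}_{h,3}^w$. Best-approximation in $\f{L}^2$, together with the third estimate of Lemma \ref{lemma_approx_spline_spaces_2D} applied component-wise via $\Pi_{h,3}^2$, then produces $\n{\nabla\cdot\f{\sigma}-\nabla\cdot\f{\sigma}_h}_{\f{L}^2}\leq C h^k \n{\nabla\cdot\f{\sigma}}_{\f{H}^k}$.

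The only mildly delicate point is the treatment of the multiplier term: the Taylor--Hood projection $\Pi_h^{TH}$ does not originate from a commuting de Rham chain, so its use here is somewhat ad hoc. However, since I only need an $L^2$-continuous projection onto $V_h^{TH}$ with the rate $h^k$, estimate \eqref{eq_approx_th} is already sufficient and no commutativity of $\Pi_h^{TH}$ with differential operators enters the argument; the remaining bookkeeping is routine.
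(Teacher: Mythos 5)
Your proposal is correct and follows essentially the same route as the paper: quasi-optimality from Theorem \ref{Theorem_2D_stability} combined with the row-/component-wise projections $\Pi_{h,2}^2$, $\Pi_{h,3}^2$ and $\textup{Skew}(\Pi_h^{TH}\cdot)$ for the first bound, and the Galerkin orthogonality $\langle \nabla\cdot\f{\sigma}-\nabla\cdot\f{\sigma}_h,\f{v}_h\rangle=0$ together with $\nabla\cdot\t{V}_{h,2}^w\subset\f{V}_{h,3}^w$ for the second. Your phrasing of the divergence estimate via the $\f{L}^2$-orthogonal-projection characterization is just a repackaging of the paper's Cauchy--Schwarz argument, and your remark that no commutativity of $\Pi_h^{TH}$ is needed matches the paper's use of \eqref{eq_approx_th}.
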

	\begin{proof}
		First let us generalize the projections $\Pi_{h,i}^2$  from Section \ref{subsec:iag_spaces} to the matrix setting, vector setting respectively, by a row-wise definition, i.e. $\Pi_{h,3}^2 \f{u} \coloneqq (\Pi_{h,3}^2 u_1, \Pi_{h,3}^2 u_2)^T$ if $ \f{u} = (u_1,u_2)^T$ and  $\Pi_{h,2}^2 \f{\sigma} \coloneqq 
		\begin{bmatrix}
			\Pi_{h,2}^2 \f{\sigma}_1^T \ \Pi_{h,2}^2 \f{\sigma}_2^T
		\end{bmatrix}^T,$  if $ \f{\sigma} =  \begin{bmatrix}
		\f{\sigma}_1 \ \f{\sigma}_2	
		\end{bmatrix}^T.$  Moreover, for $\f{p}= \textup{Skew}(p)$ we define  $\Pi_h^{TH}\f{p} \coloneqq \textup{Skew}(\Pi_h^{TH}p)$. Then, the first estimate is a  consequence of Theorem \ref{Theorem_2D_stability}, Lemma \ref{lemma_approx_spline_spaces_2D} and \eqref{eq_approx_th}. More precisely, it is
		\begin{align*}
			\norm{[\f{\sigma}-\f{\sigma}_h,\f{u}-\f{u}_h,\f{p}-\f{p}_h]}_\mathcal{B}  &\leq C \norm{[\f{\sigma}-\Pi_{h,2}^2 \f{\sigma},\f{u}-\Pi_{h,3}^2\f{u},\f{p}-\Pi_{h}^{TH}\f{p}]}_\mathcal{B}  \\
			& \leq  C \Big( \norm{\f{\sigma}-\Pi_{h,2}^2 \f{\sigma}}_{\t{H}(\d)}  + \norm{\f{u}-\Pi_{h,3}^2\f{u}}_{\f{L}^2} + \norm{\f{p}-\Pi_{h}^{TH}\f{p}}_{\t{L}^2}  \Big) \\
			& \leq C h^{k} \ \big( \norm{\f{\sigma}}_{\t{H}^k}  + \norm{\f{u}}_{\f{H}^k} + \norm{\f{p}}_{\t{H}^k} \big).
		\end{align*}
		The second estimate of the assertion can be obtained by a standard approach. Due to the second lines of \eqref{dis_weak_form_weak_symmetry_contiuous} and \eqref{weak_form_wek_symmetry_contiuous} together with $\Pi_{h,3}^2\nabla \cdot \f{\sigma} - \nabla \cdot \f{\sigma}_h \in \f{V}_{h,3}^w$,    we have 
		\begin{align*}
			\n{\nabla \cdot \f{\sigma}-\nabla \cdot \f{\sigma}_h}^2_{\f{L}^2} &= \langle \nabla \cdot \f{\sigma}-\nabla \cdot \f{\sigma}_h  , \nabla \cdot \f{\sigma}-\nabla \cdot \f{\sigma}_h \rangle \\
			&= \langle \nabla \cdot \f{\sigma}-\Pi_{h,3}^2 \nabla \cdot \f{\sigma}, \nabla \cdot \f{\sigma}-\nabla \cdot \f{\sigma}_h \rangle  \\ &  \ \ \ \ + \underbrace{\langle \Pi_{h,3}^2\nabla \cdot \f{\sigma}-\nabla \cdot \f{\sigma}_h , \nabla \cdot \f{\sigma}-\nabla \cdot \f{\sigma}_h \rangle}_{=0} \\
			& \leq \n{\nabla \cdot \f{\sigma}- \Pi_{h,3}^2 \nabla \cdot \f{\sigma}}_{\f{L}^2} \ \n{ \nabla \cdot \f{\sigma} - \nabla \cdot \f{\sigma}_h}_{\f{L}^2}.
		\end{align*}
		Thus, again in view of Lemma  \ref{lemma_approx_spline_spaces_2D}, we get \newline  \hspace{1cm} \ \ \ $ \hspace{1cm} \n{\nabla \cdot \f{\sigma} -\nabla \cdot \f{\sigma}_h}_{\f{L}^2} \leq \n{\nabla \cdot \f{\sigma}- \Pi_{h,3}^2 \nabla \cdot \f{\sigma}}_{\f{L}^2} \leq C h^k  \n{\nabla \cdot \f{\sigma}}_{\f{H}^k}.$ 
	\end{proof}
	
	In the next section we try to generalize the discretization approach from above with weakly imposed symmetry to more general settings including  multi-patch parametrizations and the case with traction boundary conditions.
	
	\subsection{Remarks concerning generalizations}
	
	\subsubsection{Higher regularity splines}
	\label{subsub_sec_regu}
	A major advantage of the B-spline spaces is the possibility to increase the global smoothness without worsening the approximation capabilities,  where the polynomial degrees bound the regularity parameter. To be more precise,  one can choose for example    $  r \leq p-1$ in the definition of the isogeometric de Rham spaces.  Therefore, a generalization of the discretization method  for $r>0$ seems natural.  In fact, for most of the steps above we do not  need to set $r=0$ and a lot of  properties like the approximation estimates in Lemma \ref{lemma_approx_spline_spaces_2D} and \eqref{eq_approx_th} are still valid if the   regularity parameter is increased considering the underlying polynomial degrees. Only the inf-sup stability result from Lemma  \ref{lemma_B2} uses $r=0$. But in the IGA community there is numerical evidence  indicating the uniform inf-sup stability of the Taylor-Hood pair also for $r>0$; see \cite{Buffa2010IsogeometricAN}. Hence, if we make the reasonable assumption of inf-sup stability of the Taylor-Hood space pair for increased regularity parameter we obtain   error  estimates analogous to the ones in Theorem \ref{Theorem_dis_error_2D} also for $1 \leq r+1<p$. The numerical experiments we carried  out in Section \ref{sec_numerics} utilizing splines with higher regularity  support the assumption of stability.

	\subsubsection{Mixed boundary conditions}
	Up to now,  we supposed Dirichlet data for the displacement $\f{u}$ which  are incorporated in a weak sense. Consequently, also for $\f{u}_D =0$ the discrete solution might not be exactly zero on $\partial \o$ in contrast to the standard approximations via the primal weak form where zero boundary conditions (BCs) are fulfilled exactly; see \eqref{eq_primal_zero_BC}. Another central boundary condition within linear elasticity is the traction boundary condition corresponding to an application of a boundary force. In the mentioned primal formulation with unknown $\f{u}$ such force conditions can be incorporated in the sense of Neumann type BCs. From the mixed method point of view this changes and traction forces are now implemented   explicitly, in a strong sense, respectively. First of all, we consider the model problem \begin{alignat}{2}
		\f{A} \f{\sigma} &= \f{\varepsilon}(\f{u}),  \hspace{2.6cm} \nabla \cdot \f{\sigma} &&= \f{f} \ \ \ \ \textup{in} \ \ \o, \\
		\f{u}&=\f{u}_D \ \ \ \textup{on} \ \Gamma_D,  \hspace{1.5cm} \f{\sigma} \cdot \f{n} &&= \f{t}_n \ \ \ \textup{on}  \ \Gamma_t, 
	\end{alignat}  
	with $\Gamma_D \cap \Gamma_t = \emptyset, \ \overline{\Gamma_D} \cup \overline{\Gamma_t}= \Gamma \coloneqq \partial \o$ and $\f{n}$ denoting the outer unit normal to the domain boundary. Further we require that $\Gamma_D, \  \Gamma_t $ are composed of full  mesh edges of the initial  mesh and $\emptyset \neq \Gamma_D$. One notes the well-definedness of the boundary condition in case  of $\f{\sigma}\in \t{H}(\d,\o)$ and $\f{t}_n \in \f{H}^{-1/2}(\o)$ due to the next lemma; see e.g. \cite{ArnoldBook}. \footnote{$\f{H}^{-1/2}(\o)$ is the dual space of $\f{H}^{1/2}(\o)$ which is in turn the image of the classical (component-wise) trace operator $ \gamma \colon H^1(\o) \rightarrow H^{1/2}(\Gamma)$; see \cite[Section 2]{steinbach}. }
	
	\begin{lemma}[Normal trace space of $\f{H}(\d)$]
		\label{lemma_trace_theorem}
		There is a bounded trace operator $$\gamma_{n} \colon \f{H}(\o,\d) \rightarrow H^{-1/2}(\partial\o), $$\ 
		s.t. $\gamma_n(\f{v})= (\f{v} \cdot \f{n} )_{|\partial\o}$ for all $\f{v} \in C^1(\overline{\o})$.
	\end{lemma}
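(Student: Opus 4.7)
The plan is to follow the classical construction of the normal trace operator for $H(\textup{div})$-type spaces, adapted to our vector-valued setting. The starting point is the Green-type identity valid for $\f{v} \in C^1(\overline{\o})$ and $\phi \in H^1(\o)$, namely
\begin{equation*}
\int_{\o} \f{v} \cdot \nabla \phi \, d\f{x} \ + \ \int_{\o} (\nabla \cdot \f{v}) \phi \, d\f{x} \ = \ \int_{\partial \o} (\f{v} \cdot \f{n}) \, \phi \, ds,
\end{equation*}
which one obtains componentwise from the standard divergence theorem. For general $\f{v} \in \f{H}(\o,\d)$, where the right-hand side has no immediate classical meaning, I would use the identity as a \emph{definition} by testing against an extension of boundary data.

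More precisely, I would fix a bounded right-inverse $E \colon H^{1/2}(\partial \o) \to H^1(\o)$ of the classical trace operator (whose existence is guaranteed by the trace theorem on Lipschitz domains, see \cite{steinbach}) and set
\begin{equation*}
\langle \gamma_n(\f{v}), g \rangle_{\partial \o} \ \coloneqq \ \int_{\o} \f{v} \cdot \nabla (Eg) \, d\f{x} \ + \ \int_{\o} (\nabla \cdot \f{v})\, (Eg) \, d\f{x}, \qquad g \in H^{1/2}(\partial \o).
\end{equation*}
Three properties need to be checked. First, well-definedness: the right-hand side is independent of the particular extension $E$. Indeed, the difference of two extensions lies in $H^1_0(\o)$, and on $H^1_0(\o)$ the analogous bilinear form vanishes by density of $C_c^\infty(\o)$ together with the Green identity applied to smooth compactly supported test functions. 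Second, boundedness: by Cauchy-Schwarz and the boundedness of $E$,
\begin{equation*}
|\langle \gamma_n(\f{v}), g \rangle_{\partial \o}| \ \leq \ \n{\f{v}}_{\f{L}^2(\o)} \n{\nabla(Eg)}_{\f{L}^2(\o)} \ + \ \n{\nabla \cdot \f{v}}_{L^2(\o)} \n{Eg}_{L^2(\o)} \ \leq \ C \n{\f{v}}_{\f{H}(\o,\d)} \n{g}_{H^{1/2}(\partial \o)},
\end{equation*}
so $\gamma_n(\f{v}) \in H^{-1/2}(\partial \o)$ with $\n{\gamma_n(\f{v})}_{H^{-1/2}(\partial \o)} \leq C \n{\f{v}}_{\f{H}(\o,\d)}$. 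Third, consistency: if $\f{v} \in C^1(\overline{\o})$, the classical Green identity shows that the defining expression equals $\int_{\partial \o} (\f{v} \cdot \f{n}) g \, ds$, which is exactly the duality pairing of the function $(\f{v} \cdot \f{n})_{|\partial \o}$ with $g$; hence $\gamma_n(\f{v}) = (\f{v} \cdot \f{n})_{|\partial \o}$ in that case.

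The main obstacle, and really the only non-trivial point, is the well-definedness step, which relies on the density of $C_c^\infty(\o)$ in $H^1_0(\o)$ and the density of smooth fields in $\f{H}(\o,\d)$ on a Lipschitz domain; both are standard but require the Lipschitz regularity of $\partial \o$ assumed throughout the paper. Everything else is a routine consequence of the identity, Cauchy-Schwarz, and the boundedness of the extension operator $E$.
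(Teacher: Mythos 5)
Your construction is correct and is the standard one: define $\gamma_n(\f{v})$ as a functional on $H^{1/2}(\partial\o)$ via the Green identity tested against a bounded extension, check independence of the extension, boundedness, and consistency with $(\f{v}\cdot\f{n})_{|\partial\o}$ for $C^1$ fields. The paper does not prove this lemma at all — it simply cites \cite{ArnoldBook} — and the argument given there is essentially the one you reproduce, so there is nothing to compare beyond noting that your write-up supplies the proof the paper omits. One small remark: the well-definedness step needs only the density of $C_c^\infty(\o)$ in $H^1_0(\o)$ together with the definition of the distributional divergence; the density of smooth vector fields in $\f{H}(\o,\d)$, which you also invoke, is not actually required for this approach.
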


\begin{remark}
		The last result can be generalized to matrix fields through a row-wise definition and we refer to \cite[Section 3.4]{ArnoldBook} for more information about spaces like $H^{-1/2}(\partial\o)$. Although the  mentioned reference concentrates on the three-dimensional setting, the $2D$ adaption is straight-forward. 
\end{remark}

	For $\f{t}_n = \f{0}$ the traction condition can be  fulfilled exactly in the numerical computations by dropping all basis functions violating it. 
	 Let  now $\f{t}_n$  be determined by the restriction  $\f{t}_n =  \tilde{\f{\sigma}} \cdot \f{n}$ of some given $ \tilde{\f{\sigma}} \in \t{H}(\o,\d)$ to the boundary, then we can easily adapt the problem to the homogeneous case by seeking $\f{\sigma}-\tilde{\f{\sigma}}$. If such an auxiliary  mapping is not known or if it   does not exist one can try to satisfy the traction BC approximately by computing a suitable approximate mapping $$\tilde{\f{t}}_n \approx \f{t}_n, \ \ \tilde{\f{t}}_n = \textstyle\sum_i c_i \f{\tau}_{h,i}, \ \ \f{\tau}_{h,i} \in \t{V}_{h,2}^{w},\  c_i \in \mathbb{R},$$ utilizing the underlying basis functions. However, in the rest of this section we assume $\f{t}_n=\f{0}$.
	Then the discrete
	spaces have to be changed according to 
	\begin{align}
		&\t{V}_{h,2}^w \rightarrow  \t{V}_{0,h,2}^w  \coloneqq \big(\f{V}_{0,h,2}^2 \times \f{V}_{0,h,2}^2  \big)^T, \ \ \f{V}_{0,h,2}^2 \coloneqq \f{V}_{h,2}^2 \cap \{  \f{v} | \f{v} \cdot \f{n} =0 \ \textup{on} \ \Gamma_t \},\\
		&\textup{and in \eqref{choice_3} we replace }
		\f{V}_{h}^{TH} \rightarrow \f{V}_{0,h}^{TH} \coloneqq \f{V}_{h}^{TH} \cap \{ \f{v} | \f{v} = \f{0} \ \textup{on} \  \Gamma_t \}.
	\end{align}
	Obviously, it is possible to apply Lemma \ref{Lemma_Arnold_15} for the new function spaces to check for inf-sup stability.
	The modification of the auxiliary test space $\f{V}_{h}^{TH}$ above is explained by the next lemma.
	\begin{figure}[h!]
		\centering
		\begin{minipage}{4cm}
			\begin{tikzpicture}[scale=2.6]
				
				\node[inner sep=0pt] (square) at (0.466,0.48)
				{\includegraphics[width=1.088\textwidth]{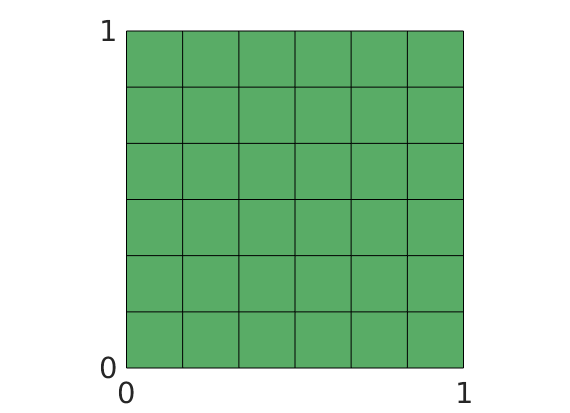}};
				\node[left] at (0,0.5) {$\hat{\Gamma}_1$};
				\node[right] at (1,0.5) {$\hat{\Gamma}_2$};
				\node[below] at (0.5,0) {$\hat{\Gamma}_3$};
				\node[above] at (0.5,1) {$\hat{\Gamma}_4$};
				\node[left,blue] at (-0.05,-0.1) {$\hat{\Gamma}_t$};
				\node[above,red] at (0.85,1) {$\hat{\Gamma}_D$};
				
				\draw[thick, ->] (0,0) -- (1.3,0);
				\draw[thick, ->] (0,0) -- (0,1.3);
				\draw[very thick,blue] (0,0) --(1,0);
				\draw[very thick,blue] (1,0) --(1,1);
				\draw[very thick,blue] (0,0) --(0,1);
				
				\draw[very thick,red] (0,1) --(1,1);
				\node[below] at (1.2,0) {$\zeta_1$};
				\node[left] at (0,1.2) {$\zeta_2$};
			\end{tikzpicture}
		\end{minipage}	
		\hspace{0.3cm}
		\begin{minipage}{1.5cm}
			\begin{tikzpicture}
				\draw[-> ,very thick] (0,0) --(1.5,0);
				\node at (0.75,0.2) {$\f{F}$};
			\end{tikzpicture}
		\end{minipage}			
		\begin{minipage}{5cm}
			\begin{tikzpicture}
				\node[inner sep=0pt] (ring) at (0,0)
				{\includegraphics[width=1.1\textwidth]{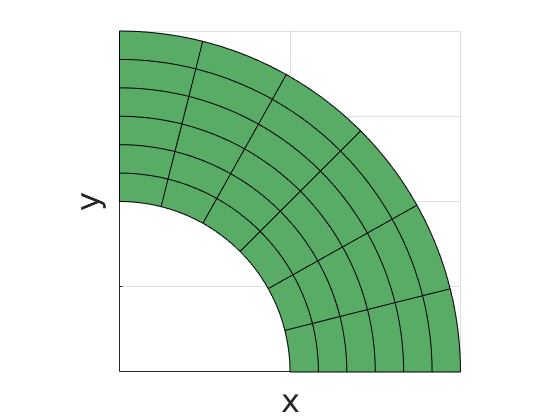}};
				\draw[very thick,blue] (0.1,-1.6) --(1.78,-1.6);
				\draw[very thick,red] (-1.57,0.05) --(-1.57,1.75);
				\draw [blue,very thick,domain=0:90] plot ({1.67*cos(\x)-1.57}, {1.67*sin(\x)-1.6});
				\draw [blue,very thick,domain=0:90] plot ({3.35*cos(\x)-1.57}, {3.35*sin(\x)-1.6});
				\node[left] at (-0.3,-0.5) {${\Gamma}_1$};
				\node[left] at (1.5,0.92) {${\Gamma}_2$};
				\node[left] at (1.38,-1.9) {${\Gamma}_3$};
				\node[left] at (-1.5,1) {${\Gamma}_4$};
			\end{tikzpicture}
		\end{minipage}
		\caption{Notation for the boundary edges. We assume  $\Gamma_i = \f{F}(\hat{\Gamma}_i)$. \footnotesize (Figure generated utilizing \cite{geopdes3.0}.)}
		\label{Fig:boundaries}
	\end{figure}

	\begin{lemma}
		Assume $\f{v} \coloneqq (v_1,v_2)^T \in \f{V}_{0,h}^{TH}$. Then we have $\textup{curl}(v_i) \in \f{V}_{0,h,2}^2$.
	\end{lemma}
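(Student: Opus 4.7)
The plan is to reduce the claim to two ingredients: (i) that $\textup{curl}(v_i)$ already lies in the unconstrained space $\f{V}_{h,2}^2$, and (ii) that the homogeneous boundary condition $\f{v}=\f{0}$ on $\Gamma_t$ forces the normal trace $\textup{curl}(v_i)\cdot\f{n}$ to vanish on $\Gamma_t$. Ingredient (i) is immediate: since $\f{v}\in\f{V}_{0,h}^{TH}\subset\f{V}_{h}^{TH}\subset\f{H}^1(\o)$, each component $v_i$ is $H^1$-regular, and Lemma \ref{Lemma_2D_spaces} already gives $\textup{curl}(v_i)\in\f{V}_{h,2}^2$. Hence only the boundary behavior on $\Gamma_t$ needs attention.

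For ingredient (ii), I would argue pointwise along (almost every point of) $\Gamma_t$. Because $v_i$ vanishes identically along $\Gamma_t$, its tangential derivative is zero there, i.e.\ $\nabla v_i \cdot \f{t}=0$ on $\Gamma_t$, where $\f{t}$ denotes the unit tangent. Consequently $\nabla v_i$ is parallel to the outer unit normal $\f{n}$ on $\Gamma_t$. Now observe that by the very definition $\textup{curl}(v_i)=(\partial_2 v_i,-\partial_1 v_i)^T$ is just a $90^\circ$ rotation of $\nabla v_i=(\partial_1 v_i,\partial_2 v_i)^T$. Thus $\textup{curl}(v_i)$ is parallel to $\f{t}$ on $\Gamma_t$, and so $\textup{curl}(v_i)\cdot\f{n}=0$ on $\Gamma_t$. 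Combined with (i) this yields $\textup{curl}(v_i)\in\f{V}_{0,h,2}^2$, as required.

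The only subtlety is to make sense of "$v_i=0$ on $\Gamma_t$ implies zero tangential derivative" at the appropriate level of regularity. Since $v_i\in V_{h}^{TH}$ is piecewise polynomial and globally $C^0$, and since $\Gamma_t$ is a union of full mesh edges, the restriction $v_i|_{\Gamma_t}$ is a piecewise polynomial along $\Gamma_t$, identically zero; hence the tangential derivative vanishes in the classical sense on each edge (and at vertices by $C^0$-continuity), so the rotation argument is applied trace-wise without any distributional difficulties. This is the only step where one must be mildly careful; everything else is bookkeeping on the pullback/pushforward of the splines and the definitions of the constrained spaces.
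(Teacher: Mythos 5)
Your proposal is correct and follows essentially the same argument as the paper: both rest on the observation that the normal component of $\textup{curl}(v_i)$ is the tangential derivative of $v_i$ (since the $2D$ curl is the rotated gradient), which vanishes because $v_i$ is identically zero on $\Gamma_t$. The only cosmetic difference is that you argue directly on the curved physical boundary, whereas the paper first reduces to the flat edge $\hat{\Gamma}_1$ of the parametric square via the boundary-condition compatibility of $\mathcal{Y}_1$ and $\mathcal{Y}_3$ and then makes the identical tangential-derivative observation there.
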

	\begin{proof}
		Since it is $\textup{curl}(v_i) \in \f{V}_{h,2}^2$ and due to the fact that $\mathcal{Y}_1$ and $\mathcal{Y}_3$ preserve the Dirichlet BC, the zero normal trace, respectively, it is enough to verify the assertion for $\f{F}= \textup{id}$. We check it for the case $\Gamma_t \subset \hat{\Gamma}_1 \coloneqq \{ (0,\zeta_2) \ | \zeta_2 \in [0,1] \}$ of $\widehat{\o}$. And indeed, if $v_i=0 $ on ${\Gamma}_t$, then it is $\hat{\partial}_2 v_i=0$ on ${\Gamma}_t$, where $\h_j $ denotes the derivative w.r.t. the $j$-th parametric coordinate. Thus, $\widehat{\textup{curl}}(v_i) \cdot (-1,0)^T=-\hat{\partial}_2v_i = 0 $ on ${\Gamma}_t$. Other cases follow by a similar reasoning.
	\end{proof}
	
	In view of latter lemma it remains to check whether the both inf-sup conditions of Lemma \ref{Lemma_Arnold_15} are still valid for the spaces with BCs. Currently, we are only able to prove the inf-stability if we make the next assumption. 
	
	\begin{assumption}
		\label{assumption_boundary}
		Let us assume 
		that  there is a boundary edge which is not part of $\Gamma_t$, i.e. we have an edge $\Gamma_i$ s.t. $\Gamma_i \subset \Gamma_D $; see Fig. \ref{Fig:boundaries}.
	\end{assumption}

	Nevertheless, as long as $\Gamma_t \subsetneq \partial \o$, we think that stability is still valid.

	\begin{lemma}
		\label{Lemma_mixed_boubdary_weak_sym}
		Let $\Gamma_D$ fulfill Assumption \ref{assumption_boundary} and $p\geq 2, \ r=0$. Then, for the choice \eqref{choice_2}, \\  $\f{W}_h^1  \coloneqq \f{V}_{0,h,2}^2$ and $\f{W}^0_h  \times W^3_h \coloneqq \big(	\boldsymbol{V}_{0,h}^{TH},	V_{h}^{TH} \big)$ both inf-sup conditions of Lemma 	\ref{Lemma_Arnold_15} are satisfied. 
	\end{lemma}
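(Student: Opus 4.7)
The plan is to verify the two inf-sup conditions of Lemma \ref{Lemma_Arnold_15} separately. The second inf-sup condition, involving the Taylor-Hood pair $(\f{V}_{0,h}^{TH}, V_h^{TH})$, is the easier one: I would simply invoke Lemma \ref{lemma_B2} with $\Gamma_B \coloneqq \Gamma_t$. This is legitimate because Assumption \ref{assumption_boundary} guarantees at least one full edge $\Gamma_i \subset \Gamma_D$, so $\Gamma_t \subsetneq \partial\o$, and by the standing discretization hypothesis $\Gamma_t$ is a union of full mesh edges. Since $\f{V}_{0,h}^{TH} = \f{V}_h^{TH} \cap \f{H}^1_{\Gamma_t}(\o)$ is precisely the subspace considered in Lemma \ref{lemma_B2}, the desired inf-sup with constant $C_S > 0$ independent of $h$ is immediate for $h \leq h_{\max}$.

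For the first inf-sup condition, the goal is to prove that for every $p_h \in V_{h,3}^2$ there is $\f{v}_h \in \f{V}_{0,h,2}^2$ with $\nabla \cdot \f{v}_h = p_h$ and $\norm{\f{v}_h}_{\f{H}(\d)} \leq C \norm{p_h}_{L^2}$, which gives the inf-sup via the choice of $\f{v}_h$ in the supremum. I would first establish a continuous right-inverse of the divergence on the space $\f{H}_{\Gamma_t,n}(\o,\d) \coloneqq \{\f{v} \in \f{H}(\o,\d) \mid \f{v}\cdot \f{n} = 0 \text{ on } \Gamma_t\}$: given $p_h \in L^2(\o)$, solve the mixed elliptic problem
\begin{equation*}
-\Delta \phi = p_h \text{ in } \o, \quad \phi = 0 \text{ on } \Gamma_D, \quad \partial_n \phi = 0 \text{ on } \Gamma_t,
\end{equation*}
and set $\f{v} \coloneqq -\nabla \phi$. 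Well-posedness with the bound $\norm{\phi}_{H^1} \leq C \norm{p_h}_{L^2}$ follows from the Lax-Milgram theorem and a Poincaré-type inequality; here Assumption \ref{assumption_boundary} is crucial since it guarantees $\Gamma_D$ has positive measure and contains a full edge, ruling out a nontrivial kernel. By construction $\nabla \cdot \f{v} = p_h$, $\f{v}\cdot\f{n} = 0$ on $\Gamma_t$, and $\norm{\f{v}}_{\f{H}(\d)} \leq C \norm{p_h}_{L^2}$.

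Next I would construct a Fortin-type projection $\Pi_F \colon \f{H}_{\Gamma_t,n}(\o,\d) \to \f{V}_{0,h,2}^2$ satisfying the commutation $\nabla \cdot \Pi_F \f{v} = \Pi_{h,3}^2 \nabla \cdot \f{v}$ together with the stability bound $\norm{\Pi_F \f{v}}_{\f{H}(\d)} \leq C \norm{\f{v}}_{\f{H}(\d)}$, uniformly in $h$. This is a suitably modified version of the projection $\Pi_{h,2}^2$ from Section \ref{subsec:iag_spaces} that additionally preserves the vanishing normal trace on $\Gamma_t$. Since $\Gamma_t$ is composed of full mesh edges and the B-spline basis functions supported only on $\Gamma_t$ carry the normal-trace degrees of freedom (via the spline structure of the divergence-conforming space), such a boundary-compatible projection can be obtained by a standard localization/modification of the quasi-interpolants used for the planar de Rham chain in \cite{Buffa2011IsogeometricDD}. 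Combining these steps, $\f{v}_h \coloneqq \Pi_F \f{v}$ satisfies $\nabla \cdot \f{v}_h = \Pi_{h,3}^2 p_h = p_h$ (since $p_h \in V_{h,3}^2$) and $\norm{\f{v}_h}_{\f{H}(\d)} \leq C \norm{p_h}_{L^2}$, yielding the first inf-sup.

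The principal obstacle is the construction of the BC-preserving commuting projection $\Pi_F$: one must ensure simultaneously the commutation with $\Pi_{h,3}^2$, uniform $\f{H}(\d)$-boundedness, and preservation of the zero normal trace on the mixed boundary part $\Gamma_t$. This is the step where Assumption \ref{assumption_boundary} could, in principle, be relaxed, but in its current form the assumption safeguards both the ellipticity of the continuous right-inverse problem and the availability of a global Poincaré constant independent of $h$. All other ingredients, such as the regularity of splines, the tensor-product structure near boundary edges, and the exactness of the discrete de Rham subcomplex in Fig.\ \ref{Fig:diagram2}, are already available from Section \ref{subsec:iag_spaces}.
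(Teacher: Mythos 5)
Your treatment of the second inf-sup condition matches the paper exactly: Assumption \ref{assumption_boundary} gives $\Gamma_t \subsetneq \partial\o$ with $\Gamma_t$ a union of full mesh edges, and Lemma \ref{lemma_B2} with $\Gamma_B = \Gamma_t$ finishes that half. The issue is the first inf-sup condition, where your argument has a genuine gap. Your whole construction hinges on a Fortin operator $\Pi_F \colon \f{H}_{\Gamma_t,n}(\o,\d) \to \f{V}_{0,h,2}^2$ that simultaneously (i) commutes with the divergence, (ii) is uniformly $\f{H}(\d)$-bounded, and (iii) preserves the vanishing normal trace on $\Gamma_t$. You assert this follows by a ``standard localization/modification'' of the quasi-interpolants of \cite{Buffa2011IsogeometricDD}, but it does not come for free: those projectors are built from locally supported dual functionals $\lambda_i^p$ (integral-type, not point evaluations), so for a univariate $v$ with $v(0)=0$ one generally has $\widehat{\Pi}_p v(0) = \lambda_1^p(v) \neq 0$; consequently the tensor-product projector onto $\widehat{\f{V}}_{h,2}^2$ does not map fields with zero normal trace on a parametric edge into the trace-constrained subspace. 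Modifying the boundary functionals so that the trace is preserved while retaining both the commutation with $\Pi_{h,3}^2$ and the $h$-uniform stability is precisely the hard technical content a proof along your lines would have to supply — it is the well-known difficulty of bounded commuting projections with essential boundary conditions — and leaving it as an assertion leaves the first inf-sup condition unproved.

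The paper avoids both the continuous mixed Poisson solve and the Fortin operator entirely by an explicit discrete construction in the parametric domain: for $\hat{p} \in \widehat{V}_{h,3}^2$ it defines $\hat{v}_1 \coloneqq \tfrac12\big(\int_0^{\zeta_1}\hat p - \int_0^1 \hat p\,d\eta\,d\tau\big)$ and $\hat{v}_2 \coloneqq \tfrac12\big(\int_0^{\zeta_2}\hat p + \int_0^1 \hat p\,d\eta\,d\tau\big)$, checks by integrating the B-spline basis that $\hat{\f{v}} \in \widehat{\f{V}}_{h,2}^2$, that $\widehat\nabla\cdot\hat{\f{v}} = \hat p$ exactly, that $\n{\hat{\f{v}}}_{\f{H}(\widehat{\d})} \leq \sqrt{5}\,\n{\hat p}_{L^2}$, and — this is what replaces your boundary-compatible projection — that $\hat v_1(0,\cdot) = \hat v_1(1,\cdot) = 0$ and $\hat v_2(\cdot,0) = 0$, so the constructed field automatically lies in the trace-constrained space whenever $\hat\Gamma_t \subset \hat\Gamma_1\cup\hat\Gamma_2\cup\hat\Gamma_3$ (which one may assume after relabeling, using Assumption \ref{assumption_boundary}). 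The result in the physical domain then follows from the commutativity \eqref{eq_trans_compa} and norm equivalences under $\mathcal{Y}_3,\mathcal{Y}_4$. If you want to salvage your route, you would need to actually construct and analyze the trace-preserving commuting projection; otherwise the direct antiderivative construction is both shorter and complete.
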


	\begin{proof}
		W.l.o.g. we suppose $\Gamma_t =  \f{F}(\hat{\Gamma}_t) \subset \f{F}(\hat{\Gamma}_1) \cup \f{F}(\hat{\Gamma}_2) \cup \f{F}(\hat{\Gamma}_3) $; see Fig. \ref{Fig:boundaries}. \\
		Let $\hat{p} \in \widehat{V}_{h,3}^2$ arbitrary but fixed.\\
		Define $\hat{\f{v}} \coloneqq (\hat{v}_{1},\hat{v}_2)^T$ by \begin{align*}
			\hat{v}_{1}(\zeta_1,\zeta_2) &\coloneqq \frac{1}{2} \Big(\int_{0}^{\zeta_1} \hat{p}(\tau,\zeta_2) - \int_{0}^{1} \hat{p}(\eta,\zeta_2) d\eta \ d\tau \Big),  \\
			\hat{v}_{2}(\zeta_1,\zeta_2) &\coloneqq \frac{1}{2} \Big(\int_{0}^{\zeta_2} \hat{p}(\zeta_1,\tau) + \int_{0}^{1} \hat{p}(\eta,\tau) d\eta \ d\tau \Big).
		\end{align*}
		Since $\hat{p} $ is the linear combination of product spline functions of the form  \\ $\widehat{B}_{\f{i},\f{p}}(\zeta_1,\zeta_2)=\widehat{B}_{i_1,p-1}(\zeta_1) \cdot \widehat{B}_{i_2,p-1}(\zeta_2) \in S_{p-1,p-1}^{-1,-1}$. And since we have for such a spline
		\begin{align*}
			\int_{0}^{\zeta_1} \widehat{B}_{i_1,p}(\tau) \cdot  \widehat{B}_{i_2,p}(\zeta_2) d\tau &= \int_{0}^{\zeta_1} \widehat{B}_{i_1,p}(\tau) d \tau  \cdot \widehat{B}_{i_2,p}(\zeta_2) \in S_{p,p-1}^{0,-1} ,\\
			\int_{0}^{\zeta_1} \int_{0}^{1} \widehat{B}_{i_1,p}(\eta) \cdot  \widehat{B}_{i_2,p}(\zeta_2) d \eta d\tau &= \int_{0}^{1} \widehat{B}_{i_1,p}(\eta) \ d\eta \cdot \int_{0}^{\zeta_1} \widehat{B}_{i_2,p}(\zeta_2) d \tau,  \\
			&= \zeta_1  \Big( \int_{0}^{1} \widehat{B}_{i_1,p}(\eta) \ d\eta \Big) \cdot \widehat{B}_{i_2,p}(\zeta_2)  \in S_{p,p-1}^{0,-1}, \\
			\int_{0}^{\zeta_2} \widehat{B}_{i_1,p}(\zeta_1) \cdot  \widehat{B}_{i_2,p}(\tau) d \tau&=   \widehat{B}_{i_1,p}(\zeta_1)  \cdot \int_{0}^{\zeta_2}   \widehat{B}_{i_2,p}(\tau) d \tau \in S_{p-1,p}^{-1,0}, \\
			\int_{0}^{\zeta_2} \int_{0}^{1} \widehat{B}_{i_1,p}(\eta) \cdot  \widehat{B}_{i_2,p}(\tau) d \eta d\tau	&=	\int_{0}^{1} \widehat{B}_{i_1,p}(\eta) d \eta \cdot 	\int_{0}^{\zeta_2} \widehat{B}_{i_2,p}(\tau) d \tau \in S_{p-1,p}^{-1,0},
		\end{align*} we obtain $\hat{\f{v}} \in \widehat{\f{V}}_{h,2}^2$. Using the Cauchy-Schwarz inequality one can check that  $\n{\hat{\f{v}}}_{\f{L}^2} \leq 2 \n{\hat{p}}_{L^2}.$ Further, we have
		$$\widehat{\nabla} \cdot \hat{\f{v}}  = \frac{1}{2} \Big(\hat{p}- \int_{0}^{1} \hat{p}(\eta,\zeta_2) d \eta + \hat{p} + \int_{0}^{1} \hat{p}(\eta,\zeta_2) d \eta \Big)= \hat{p}.$$  Consequently, $\n{\hat{\f{v}}}_{\f{H}(\widehat{\d})} \leq \sqrt{5} \n{\hat{p}}_{L^2}$ and it is  
		\begin{align*}
			{ \langle \nabla \cdot \hat{\f{v}}, \hat{p} \rangle } \geq  \frac{1}{\sqrt{5}} \norm{\hat{p}}_{L^2} \  \norm{\hat{\f{v}}}_{\f{H}(\widehat{\d})} .
		\end{align*}
		By the arbitrariness of $\hat{p}$ and the fact that 
		\begin{align*}
			\hat{v}_1(0,\zeta_2) = \hat{v}_1(1,\zeta_2)= 0  \ \ \ \textup{and} \ \ \  \hat{v}_2(\zeta_1,0)=0,
		\end{align*} we get with $\widehat{\f{V}}_{0,h,2}^2 \coloneqq \mathcal{Y}_3({\f{V}}_{0,h,2}^2)$:
		\begin{align*}
			\underset{\hat{\f{w}} \in \widehat{\f{V}}_{0,h,2}^2}{\sup}	\frac{ \langle \nabla \cdot \hat{\f{w}}, \hat{q} \rangle }{ \norm{\hat{\f{w}}}_{\f{H}(\widehat{\d})}  } \geq  \frac{1}{\sqrt{5}} \norm{\hat{q}}_{L^2}, \hspace{0.3cm} \forall \hat{q} \in \widehat{V}_{h,3}^2 .
		\end{align*}
		Next we use the commutativity property \eqref{eq_trans_compa} of the pullbacks $\mathcal{Y}_i$. We note that for $\hat{\f{v}}= \mathcal{Y}_3(\f{v}) \in \widehat{\f{V}}_{0,h,2}^2 $ and $ \hat{p}= \mathcal{Y}_4(p) \in \widehat{V}_{h,3}^2$ we have $$ \n{\f{v}}_{\f{H}(\d)}  \leq C \n{\hat{\f{v}}}_{\f{H}(\widehat{\d})}, \hspace{1cm} C_1 \n{\hat{p}}_{L^2} \geq\n{p}_{L^2} \geq C_2 \n{\hat{p}}_{L^2}, \ $$
		where $C, C_i$  are suitable constants depending only on $\f{F}$ and $\o$. Hence, one obtains for a $q= \mathcal{Y}_4(\hat{q}) \in V_{h,3}^{2}$ arbitrary 
		\begin{align*}
			\underset{{\f{w}} \in {\f{V}}_{0,h,2}^2}{\sup}	\frac{ \langle \nabla \cdot {\f{w}}, {q} \rangle }{ \norm{{\f{w}}}_{\f{H}(\d)}  } \geq  C  \underset{\hat{\f{w}} \in \widehat{\f{V}}_{0,h,2}^2}{\sup}	\frac{ \langle \nabla \cdot \hat{\f{w}}, \hat{q} \rangle }{ \norm{\hat{\f{w}}}_{\f{H}(\widehat{\d})}  }  \geq C \n{\hat{q}}_{L^2} \geq C \n{q}_{L^2}.
		\end{align*}
		And thus  the first inf-sup condition  follows. \\
		Now we face the second inf-sup condition. By assumption there is an edge ${\Gamma}_i$ which is not part of the traction boundary, i.e. $\Gamma_t \neq \partial  \o$. But then the application of Lemma \ref{lemma_B2} yields the wanted estimate if we set $\Gamma_B= \Gamma_t$. \\
		This finishes the proof.
	\end{proof}

	In view of the Lemma above, the mixed formulation with mixed boundary condition is well-posed, at least if $\f{t}_n =  \tilde{\f{\sigma}} \cdot \f{n}$ for some given $\tilde{\f{\sigma}} \in \t{H}(\o,\d)$ and if Assumption  \ref{assumption_boundary} is fulfilled. We have the subsequent result:
	
	\begin{corollary}
		Let the Assumption \ref{assumption_boundary} be valid and let the boundary data  regular enough, i.e. the exact solution exists and the problem can be reduced to the case of $\f{t}_n = \f{0}$ on $\Gamma_t$. Then, the discrete problem with mixed boundary conditions is well-posed, at least for $h$ small enough. 
	\end{corollary}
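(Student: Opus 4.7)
The plan is to reduce the inhomogeneous traction problem to a homogeneous one, verify that Brezzi's conditions (S1) and (S2) remain valid for the boundary-constrained discrete spaces, and finally invoke Theorem \ref{Theorem_2D_stability} to obtain well-posedness of the discrete system with the modified test spaces.

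First, by the hypothesis on the boundary data the problem can be reduced to $\f{t}_n = \f{0}$: one subtracts an auxiliary stress $\tilde{\f{\sigma}} \in \t{H}(\o,\d)$ realizing the prescribed traction, so that the shifted unknown $\f{\sigma}-\tilde{\f{\sigma}}$ has vanishing normal trace on $\Gamma_t$. The corresponding discrete stress space is $\t{V}_{0,h,2}^w$, whose members satisfy this essential boundary condition exactly. The remaining task is therefore to apply Lemma \ref{Lemma_Arnold_15} to the quadruple of boundary-adapted spaces $\f{W}_h^1 = \f{V}_{0,h,2}^2$, $W_h^2 = V_{h,3}^2$, $\f{W}^0_h = \f{V}_{0,h}^{TH}$, $W_h^3 = V_{h}^{TH}$.

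Second, I would verify the three hypotheses of Lemma \ref{Lemma_Arnold_15} in turn. The two inf-sup conditions are precisely the content of Lemma \ref{Lemma_mixed_boubdary_weak_sym}, where Assumption \ref{assumption_boundary} ensures that Lemma \ref{lemma_B2} is applicable with $\Gamma_B = \Gamma_t \subsetneq \partial\o$. The curl compatibility $\textup{curl}(w_{h,i}) \in \f{V}_{0,h,2}^2$ for $\f{w}_h \in \f{V}_{0,h}^{TH}$ was already established in the lemma immediately preceding Lemma \ref{Lemma_mixed_boubdary_weak_sym}, via the observation that $\mathcal{Y}_1$ and $\mathcal{Y}_3$ transport the zero normal trace to the parametric domain. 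The divergence compatibility $\nabla \cdot \f{V}_{0,h,2}^2 \subset V_{h,3}^2$ is inherited from its unconstrained counterpart $\nabla \cdot \f{V}_{h,2}^2 \subset V_{h,3}^2$ in Lemma \ref{Lemma_2D_spaces}, since $\f{V}_{0,h,2}^2$ is merely a subspace of $\f{V}_{h,2}^2$.

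Third, Lemma \ref{Lemma_Arnold_15} then certifies that the discrete spaces defined through \eqref{eq_theorem_choice} with the boundary-adapted choices satisfy (S1) and (S2), and Theorem \ref{Theorem_2D_stability} concludes that the resulting saddle-point problem is well-posed; the restriction ``$h$ small enough'' is inherited from the threshold $h_{\max}$ of Lemma \ref{lemma_B2}. The genuinely delicate step has already been carried out in Lemma \ref{Lemma_mixed_boubdary_weak_sym}, so the main obstacle that remains for this corollary is really only the reduction to $\f{t}_n=\f{0}$: if a lifting $\tilde{\f{\sigma}} \in \t{H}(\o,\d)$ of the traction does not exist in closed form one must instead resort to an approximate lifting $\tilde{\f{t}}_n \approx \f{t}_n$ in $\t{V}_{h,2}^w$ and absorb the resulting consistency error, which is the scenario mentioned in the paragraph preceding the corollary and which is excluded here by the regularity assumption on the boundary data.
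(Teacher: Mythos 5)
Your proposal is correct and follows essentially the same route the paper intends: the corollary is presented as a direct consequence of Lemma \ref{Lemma_mixed_boubdary_weak_sym} (the two inf-sup conditions for the boundary-adapted spaces), the preceding curl-compatibility lemma, and the divergence compatibility inherited from Lemma \ref{Lemma_2D_spaces}, combined via Lemma \ref{Lemma_Arnold_15} and Theorem \ref{Theorem_2D_stability} after reducing to $\f{t}_n=\f{0}$. Your identification of the mesh-size threshold as coming from Lemma \ref{lemma_B2} also matches the paper's reasoning.
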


	\subsubsection{Multi-patch parametrizations}
	
	Up to now we considered parametrizations of the form $\f{F} \colon [0,1]^2 \rightarrow \mathbb{R}^2$ meaning we have a so-called single-patch parametrization. Frequently in applications computational domains have to be considered which can not be parameterized accurately by means of a single patch. To achieve more flexibility one decomposes $\Omega$ into several simpler shapes $\o_i$ and defines a  parametrization for each patch separately. In other words, in the multi-patch framework one has 
	\begin{align*}
		\overline{\o} = \bigcup_{i=1}^{m} \overline{\o}_i, \ \ \  \ \ \  \hspace{0.1cm}  \f{F}_i \colon [0,1]^2 \rightarrow \overline{\o}_i,
	\end{align*} 
with $m \in \mathbb{N}$ denoting the number of patches.
	For reasons of simplification we restrict ourselves to a special case of  multi-patch discretizations, i.e. we make  the next  assumptions.
	
	\begin{assumption}
		\label{assumption_multi-patch}
		 Each patch parametrization $\f{F}_i$ fulfills Assumption \ref{assum_reg_mesh}. Moreover, for $i \neq j$ it is either $\overline{\o_i} \cap \overline{\o_j} = \emptyset$ or the two patches share a boundary edge or a single vertex. Further, we  assume  for each patch $\Omega_i$ the existence of an  edge $S^{(i)} \subset \partial \o_i $ s.t. $S^{(i)} \subset \partial \o$. 
	\end{assumption}
    	 For the discretization we further assume:  
    \begin{assumption}
    	\label{assumption_multi-patch_2}
    	For each patch we have the same underlying polynomial degree $p$, regularity parameter $r=0$ and the same parametric mesh. 
    \end{assumption}
	The multi-patch spaces are then defined by means of the single-patch spaces. For example,  we have the multi-patch Taylor-Hood pair
	\begin{align*}
		\f{V}_h^{TH,M} &\coloneqq \{ \f{v}  \in \f{H}^1(\o) \ | \ \f{v}_{|\o_i}  \in \f{V}_h^{TH,i},  \ \forall i\}, \\
				{V}_h^{TH,M} &\coloneqq \{ {v}  \in {H}^1(\o) \ | \ {v}_{|\o_i}  \in {V}_h^{TH,i},  \ \forall i \},
	\end{align*}
	where the upper index $i$ is used to identify the IGA spaces in the $i$-th patch, i.e. $\f{V}_h^{TH,i} \coloneqq (S_{p,p}^{r,r}  \times S_{p,p}^{r,r} ) \circ \f{F}_i^{-1}, \  $ $ V_{h,3}^{2,i} \coloneqq \widehat{V}_{h,3}^2 \circ \f{F}_i^{-1} $ and so on. The rest of the multi-patch spaces  $$ \t{V}_{h,2}^{w,M} \coloneqq \{ \f{\tau}  \in \t{H}(\o,\d) \ | \ \f{\tau}_{|\o_i}  \in \t{V}_{h,2}^{w,i},  \ \forall i \}, \ {V}_{h,3}^{2,M} \coloneqq \{ {q}  \in {L}^2(\o) \ | \ q_{|\o_i}  \in {V}_{h,3}^{2,i},  \ \forall i \},  \ \ \dots $$ are defined in an analogous manner.

	In this subsection we are again in the context of pure displacement boundary conditions. A combination of mixed boundary conditions with multi-patch parametrizations seems possible, but is not considered here; see Remark \ref{remark_gener}.
	\begin{figure}
	\end{figure}
	To obtain a well-posed method in the multi-patch framework, we have to check again the requirements in Lemma \ref{Lemma_Arnold_15}, where the appearing spaces have to be replaced by the multi-patch pendants. We do not  prove  the multi-patch  inf-sup stability in general, but with the additional restrictions determined by the above  assumptions we succeed. On the one hand we have the result:
	\begin{lemma}
		\label{Lemma_multi-patch_1}
		Let the multi-patch discretization be regular as described in Assumption \ref{assumption_multi-patch} and Assumption \ref{assumption_multi-patch_2}.  \\
		Then there exist mesh size independent constants  $ h_{\textup{max}}>0$, \ $C_S >0$ s.t. 
		\begin{align}
			\underset{q_h \in V_{h}^{TH,M} \textbackslash \{ 0\}}{\inf} \  \underset{\f{w}_h \in \f{V}_{h}^{TH,M} }{\sup}	\frac{ \langle \nabla \cdot \f{w}_h, q_h \rangle }{ \norm{\f{w}_h}_{\boldsymbol{H}^1} \ \norm{q_h}_{L^2} } \geq C_S ,  \ \ \forall h \leq h_{max}.
		\end{align}	
	\end{lemma}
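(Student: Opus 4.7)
The strategy is to verify the discrete inf-sup by Fortin's criterion, then to construct the Fortin operator patch-by-patch using Lemma \ref{lemma_B2}. I would first use that for any bounded Lipschitz $\o$ the divergence is surjective from $\f{H}^1(\o)$ onto $L^2(\o)$ with a bounded right inverse (splitting a general $q$ into its mean plus a mean-zero part and applying, respectively, an explicit primitive and Bogovskii's operator). Hence for every $q_h \in V_h^{TH,M}$ there exists $\f{v}_{q_h} \in \f{H}^1(\o)$ with $\nabla \cdot \f{v}_{q_h} = q_h$ and $\n{\f{v}_{q_h}}_{\f{H}^1} \leq C\n{q_h}_{L^2}$. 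By Fortin's lemma it then suffices to exhibit an operator $\Pi_F \colon \f{H}^1(\o) \to \f{V}_h^{TH,M}$ with $\n{\Pi_F \f{v}}_{\f{H}^1} \leq C \n{\f{v}}_{\f{H}^1}$ and
\[
\langle \nabla \cdot (\f{v} - \Pi_F \f{v}), q_h \rangle = 0 \quad \forall q_h \in V_h^{TH,M}, \ \f{v} \in \f{H}^1(\o),
\]
so that $\f{w}_h \coloneqq \Pi_F \f{v}_{q_h}$ realises the inf-sup.

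I would construct $\Pi_F = \Pi_0 + \Pi_1$ in two stages. For $\Pi_0$ I take a multi-patch Scott--Zhang-type quasi-interpolation into $\f{V}_h^{TH,M}$, built patch-wise from single-patch projections of the kind in Lemma \ref{lemma_approx_spline_spaces_2D} and then symmetrically averaged on degrees of freedom sitting on shared interfaces; Assumption \ref{assumption_multi-patch_2} ensures matching parametric meshes and degrees on interfaces so that this averaging is well-defined and globally $\f{H}^1$-stable. Writing $\f{r} \coloneqq \f{v} - \Pi_0 \f{v}$, I set $\Pi_1 \f{v} \coloneqq \sum_i \f{c}_h^i$, where each $\f{c}_h^i \in \f{V}_h^{TH,i} \cap \f{H}^1_{\partial \o_i}(\o_i)$ solves
\[
\langle \nabla \cdot \f{c}_h^i, q_h \rangle_{\o_i} = \langle \nabla \cdot \f{r}, q_h \rangle_{\o_i} \quad \forall q_h \in V_h^{TH,i}/\mathbb{R},
\]
with $\n{\f{c}_h^i}_{\f{H}^1(\o_i)} \leq C \n{\f{r}}_{\f{H}^1(\o_i)}$. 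Such a $\f{c}_h^i$ exists by the patch-level zero-Dirichlet Taylor--Hood inf-sup, obtained from Lemma \ref{lemma_B2} on $\o_i$ with $\Gamma_B$ equal to the union of the \emph{interface} edges of $\partial \o_i$: Assumption \ref{assumption_multi-patch} provides at least one exterior edge $S^{(i)}$, so $\Gamma_B \subsetneq \partial \o_i$. Since $\f{c}_h^i$ vanishes on $\partial \o_i$, extension by zero produces an element of $\f{V}_h^{TH,M}$.

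The main obstacle is the patch-constant residual not controlled by $\Pi_1$. By $H^1$-continuity of $V_h^{TH,M}$ together with Assumption \ref{assumption_multi-patch}, any patch-wise constant mode in $V_h^{TH,M}$ is forced to be constant across every shared interface edge, hence globally constant on each connected component of $\o$. This reduces the remaining Fortin condition to a finite-dimensional constraint, for which I would fix once and for all a global field $\f{w}_h^{\mathrm{const}} \in \f{V}_h^{TH,M}$ with $\int_\o \nabla \cdot \f{w}_h^{\mathrm{const}} \neq 0$, obtained by prescribing a non-trivial normal trace supported on the exterior edge $S^{(i)}$ provided by Assumption \ref{assumption_multi-patch}. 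Adding to $\Pi_F \f{v}$ a scalar multiple of $\f{w}_h^{\mathrm{const}}$, linear and continuous in $\f{v}$, removes the remaining constant-mode residual. Finiteness of the patch set together with the common parametric mesh (Assumption \ref{assumption_multi-patch_2}) makes all constants $h$-independent, and $h_{\max}$ can be taken as the minimum of the patch-wise thresholds from Lemma \ref{lemma_B2}.
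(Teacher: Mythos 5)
Your Fortin strategy could in principle work, but as written it has two concrete problems. The first is an internal inconsistency in the local correction: you place $\f{c}_h^i$ in $\f{V}_h^{TH,i}\cap\f{H}^1_{\partial\o_i}(\o_i)$ (zero trace on the \emph{whole} patch boundary) and test against $V_h^{TH,i}/\mathbb{R}$, yet you justify solvability by Lemma \ref{lemma_B2} with $\Gamma_B$ equal to the interface edges only. Lemma \ref{lemma_B2} requires $\Gamma_B\subsetneq\partial\o_i$ and gives stability for fields vanishing on $\Gamma_B$ alone; it does not supply the fully clamped, mean-zero-pressure inf-sup you actually invoke, and the paper provides no such statement. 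The second, more serious, gap is the constant-mode repair. After $\Pi_1$ the residual functional is $L(q_h)=\sum_i \bar q_{h,i}\,d_i$ with $d_i=\int_{\o_i}\nabla\cdot(\f{v}-\Pi_0\f{v})$ and $\bar q_{h,i}$ the mean of $q_h$ over $\o_i$. Global $H^1$-continuity of $q_h$ does not tie these means together --- they range over essentially all of $\mathbb{R}^m$ --- so although the patchwise-constant subspace of $V_h^{TH,M}$ is indeed one-dimensional per connected component, $L$ does not factor through that subspace. A single field $\f{w}_h^{\mathrm{const}}$ annihilates only one linear combination of the $d_i$; you need $m$ independent flux carriers, one per patch, to force every $d_i=0$.

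The natural fix is exactly what Assumption \ref{assumption_multi-patch} is designed for, and it collapses your construction into the paper's much shorter argument: on each patch apply Lemma \ref{lemma_B2} with $\Gamma_B=\partial\o_i\setminus S^{(i)}$, so the local velocity fields vanish on all interfaces (hence extend by zero to elements of $\f{V}_h^{TH,M}$) but are free on the exterior edge $S^{(i)}$. Because $\Gamma_B\subsetneq\partial\o_i$, the patch inf-sup then holds against the \emph{full} patch pressure space, constants included --- the flux escapes through $S^{(i)}$ --- and no Fortin operator, quasi-interpolant, Bogovskii right inverse, or separate coarse/constant problem is needed; summing the patchwise bounds and taking $C_S=\min_i C_i$, $h_{\max}=\min_i h_{\max,i}$ gives the claim.
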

	
	\begin{proof}
		Unfortunately, we can not apply Lemma  \ref{lemma_B2} directly, since the $H^1$-condition at the interfaces has to be taken into account. Therefore, we first consider the auxiliary space $$\tilde{\f{V}}_h \coloneqq  \{ \f{v}  \in \f{L}^2(\o) \ | \ \f{v}_{|\o_i} \in \f{V}_{h}^{TH,i} \cap \f{H}^1_{\Gamma_{D}^i}(\o_i) \},$$ 
		where $\f{V}_{h}^{TH,i}$ is just the space corresponding to the parametrization $\f{F}_i$ and with  $\Gamma_{D}^i \coloneqq \partial\o_i \textbackslash S^{(i)}$.
		Then, the global $H^1$-regularity is implied, i.e.  $\tilde{\f{V}}_h \subset \f{V}_{h}^{TH,M} $.
		Hence, we obtain  by means of  Lemma \ref{lemma_B2} constants  $C_i$ and mesh sizes $h_{\textup{max},i}$ such that
		\begin{align*}
			\underset{\f{w} \in \tilde{\f{V}}_h }{\sup}	\frac{ \langle \nabla \cdot \f{w}, q \rangle_{\o_i} }{ \norm{\f{w}}_{\boldsymbol{H}^1(\o_i)}  } \geq  \n{q}_{L^2(\o_i)} \geq C_i > 0,  \ \ \forall q \in V_{h}^{TH,i} , \ \ h \leq h_{max,i}.
		\end{align*}	
		Let now $ q \in V_{h}^{TH,M} \textbackslash \{0  \}$ arbitrary but fixed and $J \subset \{1, \dots ,m \}$ the set of all indices $j$ with $q_{|\o_j} \neq 0$. For $h \leq \min\{h_{\textup{max},i}  \}$ we obtain:
		\begin{align*}
			\underset{\f{w} \in \f{V}_{h}^{TH,M} }{\sup}	\frac{ \langle \nabla \cdot \f{w}, q \rangle_{\o} }{ \norm{\f{w}}_{\boldsymbol{H}^1(\o)}  } &\geq \sum_{j \in J}   \  \underset{\f{w}_j \in \f{V}_{h}^{TH,j} \cap \f{H}^1_{\Gamma_D^j}(\o_j) }{\sup}	\frac{ \langle \nabla \cdot \f{w}_j, q \rangle_{\o_j} }{ \norm{\f{w}_j}_{\boldsymbol{H}^1(\o_j)} \  }  \\
			&\geq \sum_{j \in J}  C_j \ \n{q}_{L^2(\o_j)} \geq \underset{i}{\textup{min}}\{C_i\}  \n{q}_{L^2(\o)}.
		\end{align*}
		Consequently, the inequality in the assertion is true for $C_S = \underset{i}{\textup{min}}\{C_i\}>0$ and $h_{\max}= \underset{i}{\textup{min}}\{h_{\max,i} \}$.
	\end{proof}

On the other hand one sees:
	
	\begin{lemma}
		\label{Lemma_multi_patch2}
		Under the same conditions as the previous Lemma \ref{Lemma_multi-patch_1}, the first inf-sup condition from Lemma \ref{Lemma_Arnold_15} is satisfied in the multi-patch setting, i.e.  there is a constant $0< C_P$ s.t.
		\begin{equation*}
				\underset{p_h \in V_{h,3}^{2,M} \textbackslash \{ 0\}}{\inf} \  \underset{\f{v}_h \in \f{V}_{h,2}^{2,M} }{\sup}	\frac{ \langle \nabla \cdot \f{v}_h, p_h \rangle }{ \norm{\f{v}_h}_{\boldsymbol{H}(\d)} \ \norm{p_h}_{L^2} } \geq C_P
		\end{equation*}
	\end{lemma}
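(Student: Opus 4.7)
The plan is to mimic the proof of Lemma \ref{lemma_B1} by constructing a multi-patch Fortin operator
\[
    \Pi^{M}\colon \f{H}^1(\o)\cap\f{H}(\o,\d)\longrightarrow \f{V}_{h,2}^{2,M}
\]
that commutes with the divergence in the sense $\nabla\cdot\Pi^{M}\f{v}=\Pi_{h,3}^{2,M}(\nabla\cdot\f{v})$, where $\Pi_{h,3}^{2,M}$ denotes the patchwise $L^2$-projection onto $V_{h,3}^{2,M}$, and which satisfies the uniform bound $\|\Pi^{M}\f{v}\|_{\f{H}(\d)}\le C\|\f{v}\|_{\f{H}^1(\o)}$. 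Once such a $\Pi^{M}$ is at hand, the inf-sup follows in a standard manner: given $p_h\in V_{h,3}^{2,M}\setminus\{0\}$, the Bogovskii-type right inverse of the divergence on the Lipschitz domain $\o$ produces $\f{v}\in\f{H}^1(\o)$ with $\nabla\cdot\f{v}=p_h$ and $\|\f{v}\|_{\f{H}^1}\le C\|p_h\|_{L^2}$; setting $\f{v}_h\coloneqq \Pi^{M}\f{v}\in\f{V}_{h,2}^{2,M}$ then yields $\nabla\cdot\f{v}_h=\Pi_{h,3}^{2,M}p_h=p_h$ together with $\|\f{v}_h\|_{\f{H}(\d)}\le C\|p_h\|_{L^2}$, so the supremum in the lemma is bounded below by $\|p_h\|_{L^2}/C$, giving $C_P=1/C>0$.

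The construction of $\Pi^{M}$ is the heart of the proof. On each patch $\o_i$, the single-patch operator $\Pi_{h,2}^{2,i}$ from the commutative diagram of Figure \ref{Fig:diagram2} (applied via the parametrisation $\f{F}_i$) delivers the desired divergence-commutation locally, but the resulting patchwise field will in general exhibit a nonzero normal-trace jump across each shared interface. Thanks to Assumption \ref{assumption_multi-patch_2}, the parametric meshes and polynomial degrees match on each interface, so the normal traces from the two sides live in the same univariate B-spline space along the interface; the jump can therefore be cancelled by a correction supported in a one-element-wide strip on one side of the interface. Such a correction is realised as a discrete $\textup{curl}$ of a scalar $V_{h,1}^{2,i}$-spline with prescribed tangential trace on the interface and vanishing trace on the opposite boundary of the strip, whose existence is guaranteed by the patchwise exactness of the planar de Rham sequence of Figure \ref{Fig:diagram2}. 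Being a curl, the correction is automatically divergence-free and thus does not disturb the commutation property; by standard spline inverse and trace estimates, its $\f{H}(\d)$-norm is controlled by the $L^2$-norm of the jump on the interface, which in turn is dominated by $\|\f{v}\|_{\f{H}^1(\o)}$ via the classical trace theorem.

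The main obstacle is precisely this interface-correction step, in particular its consistency at vertices where three or more patches meet (which Assumption \ref{assumption_multi-patch} does allow). There, the order in which interface jumps are cancelled matters, because a correction added at one interface can in principle alter the normal trace at an adjacent one through its support overlap. A careful localisation of the stream-function supports, exploiting the matching parametric meshes and the flexibility given by the vanishing-trace condition on the far side of each strip, is what makes the bookkeeping work and is where most of the technical effort will lie.
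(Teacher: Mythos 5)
Your overall strategy (Fortin operator plus Bogovskii right inverse) is legitimate in principle, but the proof is not complete: the entire burden sits in the interface-correction step, which you yourself flag as the ``main obstacle'' and leave unexecuted. That step faces concrete obstructions. A correction of the form $\textup{curl}(\phi)$ has normal trace on an interface edge $e$ equal to the tangential derivative of $\phi$ along $e$; to cancel a jump $g$ you need $\phi$ to be an antiderivative of $g$ along $e$, and if you also want $\phi$ to vanish at both endpoints of $e$ (as required for consistency at patch vertices) you need $\int_e g = 0$, which is not automatic for the normal-trace jump of the patchwise projections. On top of that, the $\f{H}(\d)$-norm of a stream-function correction supported in a one-element strip scales like $h^{-1}$ times the stream function, so obtaining a bound that is uniform in $h$ requires scaled trace estimates rather than ``the classical trace theorem,'' and the vertex bookkeeping where three or more patches meet is left entirely open. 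As written, the argument establishes the result only modulo a construction that is at least as hard as the lemma itself.

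The paper avoids all of this by exploiting a hypothesis you never use: Assumption \ref{assumption_multi-patch} guarantees that every patch $\o_i$ has an edge $S^{(i)}\subset\partial\o$ that is \emph{not} an interface. One can therefore apply Lemma \ref{Lemma_mixed_boubdary_weak_sym} patchwise with the interface edges playing the role of $\Gamma_t$: the explicit antiderivative construction there produces, for each $p_h$, a patch-local field with $\nabla\cdot\f{v}_h = p_h$ on $\o_i$, norm control, and \emph{vanishing normal trace on every interface edge}. The patchwise fields then glue into a globally $\f{H}(\o,\d)$-conforming function with no jump to correct, and summing over patches as in the proof of Lemma \ref{Lemma_multi-patch_1} gives the inf-sup constant. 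This is both shorter and free of the compatibility and vertex issues your route runs into; if you want to salvage your approach you would at minimum need to prove the mean-value compatibility of the jumps and the uniform strip estimates, which the free-edge assumption lets you bypass entirely.
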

	\begin{proof}
	We only give the proof idea since the approach is very similar to the steps in proof of Lemma \ref{Lemma_multi-patch_1}.\\
 By assumption,  we have for each patch an edge which is free, i.e. which is not part of an interface. Then the patch-wise application of  Lemma \ref{Lemma_mixed_boubdary_weak_sym} implies  the wanted inf-sup condition. 
	\end{proof}

	With a row-wise \textup{curl} application we further see easily $\textup{curl}(\f{V}_{h}^{TH,M}) \subset \t{V}_{h,2}^{w,M}$.   Thus,	
	using Lemmas \ref{Lemma_multi-patch_1} and  \ref{Lemma_multi_patch2}, we see with Lemma \ref{Lemma_Arnold_15}:
	\begin{corollary}
	Let Assumptions  \ref{assumption_multi-patch} and  \ref{assumption_multi-patch_2} hold. Then the straight-forward  adaption of the discrete mixed system \eqref{dis_weak_form_weak_symmetry_contiuous} to the multi-patch setting leads to a well-posed  problem provided that $h$ is small enough; see Lemma \ref{Lemma_multi-patch_1}. 
	\end{corollary}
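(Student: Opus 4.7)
The plan is to reduce the claim to the hypotheses of Lemma \ref{Lemma_Arnold_15}, with the single-patch spaces replaced by their multi-patch counterparts, and then invoke Theorem \ref{Theorem_2D_stability} to obtain well-posedness. Concretely, I would take $\f{W}_h^1 \coloneqq \f{V}_{h,2}^{2,M}$, $W_h^2 \coloneqq V_{h,3}^{2,M}$, $\f{W}_h^0 \coloneqq \f{V}_h^{TH,M}$ and $W_h^3 \coloneqq V_h^{TH,M}$, and define the multi-patch stress, displacement and Lagrange multiplier test spaces through the recipe \eqref{eq_theorem_choice} applied to these pairs.

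The two inf-sup conditions demanded by Lemma \ref{Lemma_Arnold_15} are already available: Lemma \ref{Lemma_multi_patch2} provides the first, and Lemma \ref{Lemma_multi-patch_1} the second, both uniformly for $h \leq h_{\max}$. What remains is to check the two compatibility inclusions. For $\nabla \cdot \f{V}_{h,2}^{2,M} \subset V_{h,3}^{2,M}$, I would combine the patchwise inclusion $\nabla \cdot \f{V}_{h,2}^{2,i} \subset V_{h,3}^{2,i}$ from Lemma \ref{Lemma_2D_spaces} with the observation that every element of $\f{V}_{h,2}^{2,M}$ already sits in $\t{H}(\o,\d)$ by construction, so its global distributional divergence is $L^2$ and agrees patchwise with the single-patch divergence. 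Similarly, for $\f{w} = (w_1,w_2)^T \in \f{V}_h^{TH,M}$ each component $w_i$ is globally $H^1(\o)$, whence $\textup{curl}(w_i) \in \f{H}(\o,\d)$ with vanishing divergence; since patchwise $\textup{curl}(w_i)_{|\o_j} \in \f{V}_{h,2}^{2,j}$ by Lemma \ref{Lemma_2D_spaces}, the global curl lies in $\f{V}_{h,2}^{2,M}$.

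With all four prerequisites of Lemma \ref{Lemma_Arnold_15} in place, the Brezzi stability conditions (S1) and (S2) would hold for the multi-patch test spaces, and Theorem \ref{Theorem_2D_stability} would then give existence, uniqueness and quasi-optimality. The genuinely delicate step is not this final assembly but keeping the two inf-sup constants bounded away from zero \emph{uniformly} after gluing along interfaces; that difficulty has been handled in Lemmas \ref{Lemma_multi-patch_1} and \ref{Lemma_multi_patch2} by exploiting Assumption \ref{assumption_multi-patch} (in particular the per-patch free boundary edge $S^{(i)}$) together with Assumption \ref{assumption_multi-patch_2}, which together let one localize to each patch and fall back on the single-patch results of the previous subsections.
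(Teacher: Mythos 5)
Your proposal matches the paper's own argument: the corollary is obtained exactly by feeding the multi-patch inf-sup results of Lemmas \ref{Lemma_multi-patch_1} and \ref{Lemma_multi_patch2} into Lemma \ref{Lemma_Arnold_15}, together with the (patchwise plus global conformity) compatibility inclusions for $\nabla\cdot$ and $\textup{curl}$, and then invoking Theorem \ref{Theorem_2D_stability}. Your added detail on why the global distributional divergence and curl agree with their patchwise counterparts only makes explicit what the paper calls "easily seen," so the two proofs are essentially identical.
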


	\begin{remark}
		\label{remark_gener}
		Obviously, it makes sense to combine the different generalization steps, meaning we have for example multi-patch parametrizations endowed with mixed boundary conditions. However, we restricted ourselves to special situations; see  Assumption \ref{assumption_boundary} and Assumptions \ref{assumption_multi-patch} and \ref{assumption_multi-patch_2}.  Although a strict proof  is missing, we think that  stability and convergence of the discretization method can be achieved also for more general parametrizations and mixed boundary conditions. 
	\end{remark}

	After we studied the mixed system with weakly imposed symmetry we face the more complicated situation of test spaces with strong symmetry.

	\section{Mixed formulation with strong symmetry}
	\label{section_strong}
	Here we want to show the existence of suitable  spline spaces for the case of planar linear elasticity with  strong symmetry. Later, the actual proofs  are within the scope of pure Dirichlet problems, meaning $\Gamma_D = \partial \o$.   But before we are able to define proper discrete spaces we first look at an auxiliary problem   endowed  with a particular mixed boundary condition.  Besides, in the whole section we consider only single-patch parametrizations.

	\subsection{An auxiliary mixed BC  problem }
	\label{section_dirichlet_Neumann}
	To begin with, we assume for the underlying problem:
	\begin{assumption}[Mixed boundary conditions]
			\label{eq_mixed_BC_strong_sym}
		We have the mixed boundary conditions 
		\begin{align}
			\f{\sigma} \cdot \f{n} = \f{0} \ \ \textup{on} \    \ \Gamma_t= \f{F}(\hat{\Gamma}_1), \ \ \f{u} = \f{u}_D \in \f{H}^{1/2}(\partial \o) \ \ \textup{on} \   \Gamma_D= \partial\Omega \textbackslash \Gamma_t.
		\end{align}
		For the labeling of the boundary edges we refer to Fig. \ref{Fig:boundaries} and we require $\f{F}^{-1} \in C^3(\overline{\o}), \ \f{F} \in C^3(\overline{\widehat{\o}})$.
	\end{assumption}
	In other words we consider the weak formulation:
	\emph{Find} $(\f{\sigma},\f{u})  \in  \t{H}_{\Gamma_1}(\o,\d,\mathbb{S}) \times \f{L}^2(\Omega) $ such that	
	\begin{alignat}{4}
		\label{weak_form_strong_symmetry_contiuous_BC}
		&\langle \f{A} \boldsymbol{\sigma} , \boldsymbol{\tau} \rangle \ + \ &&\langle \f{u}, \nabla \cdot \boldsymbol{\tau}  \rangle \  \color{black}  &&= \langle \boldsymbol{\tau} \cdot \f{n}; \f{u}_D \rangle_{\Gamma}, \ \hspace{0.4cm} &&\forall \boldsymbol{\tau} \in \boldsymbol{\mathcal{{H}}}_{\Gamma_1}(\Omega,\textup{div},\s), \nonumber\\
		&\langle \nabla \cdot \boldsymbol{\sigma} , \f{v} \rangle &&  &&= \langle \f{f},\f{v}\rangle, \ \hspace{0.4cm} && \forall \f{v} \in \boldsymbol{L}^2(\Omega),
	\end{alignat}
	with 
	\begin{align}
		\boldsymbol{\mathcal{{H}}}_{\Gamma_1}(\Omega,\textup{div},\s) \coloneqq    \overline{\{  \f{\tau} \in \boldsymbol{\mathcal{{H}}}(\Omega,\textup{div},\s) \cap C^1(\overline{\o}) \ | \   \f{\tau} \cdot \f{n}=\f{0} \ \textup{on} \ \Gamma_1  \}}^{\t{H}(\d)},
	\end{align}
	where  on right-hand side of the last line we have  the closure of the space in brackets w.r.t. the norm $\n{\cdot}_{\t{H}(\d)}$. For the sake of clarity  we write $\t{H}(\widehat{\o},\widehat{\d},\mathbb{S})$ and $\t{H}_{\hat{\Gamma}_1}(\widehat{\o},\widehat{\d},\mathbb{S})$ for the corresponding spaces on the parametric domain. For more information about such spaces with boundary conditions we refer to the articles \cite{Pauly_2022_2,Pauly_2022_1}. In  latter references the explanations are for the three-dimensional case, but they can be adapted easily to  planar domains.  
	To simplify the expressions we will use the \emph{Einstein summation convention}, where the different  indices take the values $1$ and $2$. Further, we use several times the dot "$\cdot$" to indicate a matrix multiplication or scalar multiplication. The usage of the dot sign should  clarify the appearing longer terms.
	
	Basic idea of our approach is to work in the parametric domain and to use in some sense proper pullback and push-forward operations to handle the situation in $\o$. Therefore,
	we  define the next  two transformations 
	\begin{align}
		\label{eq_def_gamma_2_1}
		\mathcal{Y}_{2,\Gamma_1}^s &\colon \t{L}^2(\Omega,\mathbb{S}) \rightarrow \t{L}^2(\widehat{\Omega},\mathbb{S}),   \\ &\f{S} \mapsto \textup{det}(\f{J})^2 \f{J}^{-1} (\f{S} \circ \f{F}) \f{J}^{-T} + \widehat{\textup{Airy}}({F}_n) \cdot \Big( \int_{0}^{\zeta_1}   (-1)^{n+1}  \   \textup{det}(\f{J}) \ J_{2l}^{-1}  \ (S_{\sigma(n)l} \circ \f{F})  \ d1\Big), \nonumber
	\end{align}
	with $\sigma(n) \coloneqq 3-n, \ \f{F}=(F_1,F_2)^T$ and 
	$$\widehat{\textup{Airy}}(\hat{\phi}) \coloneqq 
	\begin{pmatrix}
		\hat{\partial}_{22} \hat{\phi}  & -\hat{\partial}_{12} \hat{\phi} \\ -\hat{\partial}_{21} \hat{\phi} & \hat{\partial}_{11} \hat{\phi}
	\end{pmatrix}, \ \ \ \ \textup{Airy}({\phi}) \coloneqq
	\begin{pmatrix}
		{\partial}_{22} {\phi}  & -{\partial}_{12} {\phi} \\ -{\partial}_{21} {\phi} & {\partial}_{11} {\phi}
	\end{pmatrix}, \ \ \textup{respectively}. $$
	Above "$ \ d1$" means that we integrate w.r.t. to the first parametric coordinate, i.e. for some $g=g(\zeta_1,\zeta_2)$ we have $\int_{0}^{\zeta_1} g  \ d1 \coloneqq \int_{0}^{\zeta_1}g(\tau,\zeta_2) d\tau$.
	Besides, $\h_i, \ \partial_i$  denote the derivatives w.r.t. to the $i$-th parametric coordinate, $i$-th physical coordinate respectively, and we write $\h_{ij}, \ \partial_{ij}$ for $\h_i \h_j$ and $\partial_i \partial_j$.  We remark that $J_{ij}^{-1}= J_{ij}^{-1}(\f{x})$ stands for the entry  $(i,j)$ of the Jacobian $\f{J}^{-1}$ of $\f{F}^{-1}$.  
	With the relation $\f{F}(\zeta_1,\zeta_2)=\f{x}$ we interpret the integrand functions in the whole section as mappings defined on $\widehat{\o}$. For example $\int_{0}^{\zeta_1} J_{ij}^{-1}  \ d1 \coloneqq \int_{0}^{\zeta_1} J_{ij}^{-1} \circ \f{F}(\tau,\zeta_2) d\tau$.
	With the relation for the adjugate matrix $\textup{adj}(\f{J}) = \textup{det}(\f{J}) (\f{J}^{-1}\circ \f{F})$ and  using $\tilde{\f{J}} \coloneqq \textup{adj}(\f{J})$ we can write
	\begin{equation}
		\label{eq_def_gamma_1_E_2}
		\mathcal{Y}_{2,\Gamma_1}^s (\f{S}) =  \tilde{\f{J}} (\f{S} \circ \f{F}) \tilde{\f{J}}^{T} + \widehat{\textup{Airy}}({F}_n) \cdot \Big( \int_{0}^{\zeta_1}   (-1)^{n+1}  \  \tilde{{J}}_{2l}  \ (S_{\sigma(n)l} \circ \f{F})  \ d1 \Big).
	\end{equation} 
	One notes the simple structure of $\J$, namely $$\J\coloneqq \textup{adj}(\f{J}) = \begin{bmatrix}
		J_{22} & -J_{12} \\
		-J_{21} & J_{11}
	\end{bmatrix}.$$
	Secondly, we introduce the mapping 
	\begin{equation}
		\mathcal{Y}_3^s \colon \f{L}^2(\Omega) \rightarrow \f{L}^2(\widehat{\Omega}) \ , \ \f{v} \mapsto \textup{det}(\f{J}) \ \tilde{\f{J}} \cdot (\f{v}\circ \f{F}) + \hat{\partial}_1 \tilde{\f{J}} \cdot \int_{0}^{\zeta_1}    \textup{det}(\f{J}) \ (\f{v}\circ \f{F})    \ d1,
	\end{equation}
	where $\hat{\partial}_1 \tilde{\f{J}}$ means that each entry of $ \tilde{\f{J}}$ is differentiated w.r.t. the first parametric coordinate. 
	Later we will use the equivalent formulation  \begin{equation}
		\label{eq:gamma_2_E_equivalent_form}
		\mathcal{Y}_3^s(\f{v}) =  \h_1 \Big[  \J \cdot \int_{0}^{\zeta_1}    \textup{det}(\f{J}) \ (\f{v}\circ \f{F})    \ d1  \Big].
	\end{equation}
	
	Next we face  several lemmas involving the $\mathcal{Y}_{2,\Gamma_1}^s$ and $\mathcal{Y}_{3}^s$. Basically, these auxiliary results are needed to justify the appropriateness of the mentioned transformations for the discretization.

	We start with the the $L^2$-boundedness.

	\begin{lemma}[Boundedness of  the transformations]
		\label{Lemma_strong_sym_L2}
		The mappings $\mathcal{Y}_{2,\Gamma_1}^s$ and $\mathcal{Y}_3^s$ are bounded with respect to the $L^2$-norm.
	\end{lemma}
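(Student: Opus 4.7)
The plan is to split each mapping into a ``pointwise'' transformation part and an ``integral'' part, bound each piece separately, and then combine them via the triangle inequality. The regularity $\f{F} \in C^3(\overline{\widehat\Omega})$ (together with $\f{F}^{-1} \in C^3(\overline\Omega)$) is exactly what is needed to ensure all prefactors and their derivatives are uniformly bounded.

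First I would handle the pointwise parts, i.e.\ $\tilde{\f{J}}(\f{S}\circ\f{F})\tilde{\f{J}}^T$ and $\det(\f{J})\,\tilde{\f{J}}\cdot(\f{v}\circ\f{F})$. For these, the standard change of variables $\int_{\widehat\Omega}|g\circ\f{F}|^2\,d\zeta = \int_\Omega |g|^2 \,|\det(\f{J}^{-1})|\,dx$ together with $L^\infty$-bounds on the entries of $\tilde{\f{J}}$ and $\det(\f{J})$ (which exist by the $C^3$-regularity of $\f{F}$ and the diffeomorphism assumption) immediately yield $\|\tilde{\f{J}}(\f{S}\circ\f{F})\tilde{\f{J}}^T\|_{\t{L}^2(\widehat\Omega)} \leq C\|\f{S}\|_{\t{L}^2(\Omega)}$ and similarly for $\mathcal{Y}_3^s$.

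The integral parts are the real content. I would single out the linear operator
\begin{equation*}
T \colon L^2(\widehat\Omega) \to L^2(\widehat\Omega), \qquad (Tg)(\zeta_1,\zeta_2) \coloneqq \int_0^{\zeta_1} g(\tau,\zeta_2)\,d\tau,
\end{equation*}
and show it is bounded with $\|T\|\leq 1$: by Cauchy--Schwarz, $|(Tg)(\zeta_1,\zeta_2)|^2 \leq \zeta_1\int_0^{\zeta_1}|g(\tau,\zeta_2)|^2\,d\tau \leq \int_0^1|g(\tau,\zeta_2)|^2\,d\tau$, and integrating over $\widehat\Omega$ gives $\|Tg\|_{L^2}\leq \|g\|_{L^2}$. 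Applying this componentwise with $g=\tilde{J}_{2l}(S_{\sigma(n)l}\circ \f{F})$ (respectively $g=\det(\f{J})(v_i\circ\f{F})$) again gives a pre-composition estimate controlled by $\|\f{S}\|_{\t{L}^2(\Omega)}$ (resp.\ $\|\f{v}\|_{\f{L}^2(\Omega)}$).

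Finally, I multiply these integrals by $\widehat{\textup{Airy}}(F_n)$ in the first case and by $\hat\partial_1\tilde{\f{J}}$ in the second. Both of these prefactors depend only on second derivatives of the components of $\f{F}$, which are continuous on the compact set $\overline{\widehat\Omega}$ by the $C^3$-assumption, and hence bounded in $L^\infty$. Pulling these $L^\infty$-bounds out in front of the $L^2$-norm and summing with the pointwise contribution completes the proof. The main ``obstacle''---really just a care-in-bookkeeping---is to make sure every factor that appears (determinants, entries of $\tilde{\f{J}}$, second derivatives of $F_n$) is covered by the regularity hypothesis; no delicate analysis is required beyond the one-line Cauchy--Schwarz bound for $T$.
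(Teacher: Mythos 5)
Your argument is correct and is essentially the paper's own proof, only written out in more detail: the paper likewise combines the $C^3$-regularity of $\f{F}$ and $\f{F}^{-1}$ (to bound all geometric prefactors and to change variables) with exactly the same one-line Cauchy--Schwarz estimate $|\int_0^{\zeta_1}\hat\phi(\tau,\zeta_2)\,d\tau|^2 \leq \int_0^1|\hat\phi(\tau,\zeta_2)|^2\,d\tau$ for the integral terms. No substantive difference.
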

	\begin{proof}
		Since we assumed that $\f{F} \in C^3(\overline{\widehat{\o}})$ with $\f{F}^{-1} \in C^3(\overline{\widehat{\o}})$ and in view of the Cauchy-Schwarz inequality which yields  $$ |\int_{0}^{\zeta_1}  \hat{\phi}(\tau,\zeta_2) d\tau|^2 \leq \int_{0}^{\zeta_1}  |\hat{\phi}(\tau,\zeta_2)|^2 d \tau \cdot \int_{0}^{\zeta_1}  1 d\tau \leq \int_{0}^{1}  |\hat{\phi}(\tau,\zeta_2)|^2 d\tau  \ \ \textup{for} \  \ \hat{\phi} \in L^2(\widehat{\o}),$$
		one  sees easily the boundedness w.r.t. the $L^2$-norm.  
	\end{proof}

	\begin{lemma}[Divergence compatibility]
		\label{Lemmma_strong_sym_div}
		For $\f{S} \in \t{H}_{\Gamma_1}(\o,\d,\mathbb{S})$ we have the  relation
		$$\widehat{\nabla} \cdot \mathcal{Y}^{s}_{2,\Gamma_1}(\f{S}) = \mathcal{Y}^{s}_{3}(\nabla \cdot \f{S}).$$  
		In particular, with Lemma \ref{Lemma_strong_sym_L2} we get indeed $\mathcal{Y}^{s}_{2,\Gamma_1}\big( \t{H}_{\Gamma_1}(\o,\d,\mathbb{S}) \big) \subset \t{H}(\widehat{\o},\widehat{\d},\mathbb{S})  $.
	\end{lemma}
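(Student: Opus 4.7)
I would argue by direct pointwise calculation, first reducing by a density argument to smooth $\f{S}\in\t{H}_{\Gamma_1}(\o,\d,\s)\cap C^1(\overline{\o})$, for which every expression below makes classical sense. Split
\[
  \mathcal{Y}_{2,\Gamma_1}^s(\f{S}) \;=\; \f{T}_1+\f{T}_2,\qquad \f{T}_1\coloneqq\J(\f{S}\circ\f{F})\J^{T},\qquad \f{T}_2\coloneqq\widehat{\textup{Airy}}(F_n)\,g_n,
\]
where $g_n\coloneqq\int_{0}^{\zeta_1}(-1)^{n+1}\tilde{J}_{2l}(S_{\sigma(n)l}\circ\f{F})\,d1$ and Einstein summation over $n,l$ is in force. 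Three structural identities will do the bulk of the work: the chain rule $\h_j(S_{kl}\circ\f{F})=(\partial_m S_{kl}\circ\f{F})\,J_{mj}$; the algebraic identity $J_{mj}\tilde{J}_{jl}=\det(\f{J})\,\delta_{ml}$ coming from $\f{J}\J=\det(\f{J})\f{I}$; and the Piola identity $\h_j\tilde{J}_{jl}=0$, which in two dimensions is just the equality of mixed second partials of $\f{F}$.

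For the row-wise divergence of $\f{T}_1$ the product rule gives nine terms. Using the Piola identity the contribution hitting the rightmost $\tilde{J}_{jl}$ drops out, and combining the chain rule with the algebraic identity $J_{mj}\tilde{J}_{jl}=\det(\f{J})\delta_{ml}$ in the middle contribution collapses it to a single term, leaving
\[
  \bigl(\widehat{\nabla}\cdot\f{T}_1\bigr)_i \;=\; \det(\f{J})\,\tilde{J}_{ik}\bigl((\nabla\cdot\f{S})_k\circ\f{F}\bigr)\;+\;R_i,\qquad R_i\coloneqq(\h_j\tilde{J}_{ik})\,(S_{kl}\circ\f{F})\,\tilde{J}_{jl}.
\]
The first summand is exactly the pointwise leading part of $\mathcal{Y}_3^s(\nabla\cdot\f{S})=\det(\f{J})\J((\nabla\cdot\f{S})\circ\f{F})+\h_1\J\cdot\int_{0}^{\zeta_1}\det(\f{J})(\nabla\cdot\f{S})\circ\f{F}\,d1$. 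For $\f{T}_2$ I exploit that each row of $\widehat{\textup{Airy}}(F_n)$ is a parametric curl, so $\widehat{\nabla}\cdot\widehat{\textup{Airy}}(F_n)=0$ row-wise and the product rule reduces to $(\widehat{\nabla}\cdot\f{T}_2)_i=\widehat{\textup{Airy}}(F_n)_{ij}\,\h_j g_n$. The fundamental theorem of calculus yields $\h_1 g_n=(-1)^{n+1}\tilde{J}_{2l}(S_{\sigma(n)l}\circ\f{F})$ pointwise, while $\h_2 g_n$ stays under the integral sign and is rewritten using $\h_2\tilde{J}_{2l}=-\h_1\tilde{J}_{1l}$ (Piola once more) together with the chain rule on $\h_2(S_{\sigma(n)l}\circ\f{F})$.

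The heart of the proof is then the residual identity
\[
  R_i \;+\; \widehat{\textup{Airy}}(F_n)_{ij}\,\h_j g_n \;=\; (\h_1\tilde{J}_{ik})\int_{0}^{\zeta_1}\det(\f{J})\bigl((\nabla\cdot\f{S})_k\circ\f{F}\bigr)\,d1.
\]
I would verify it by inserting the closed forms $\tilde{J}_{ik}=(-1)^{i+k}\h_{\sigma(i)}F_{\sigma(k)}$ and $\widehat{\textup{Airy}}(F_n)_{ij}=(-1)^{i+j}\h_{\sigma(i)\sigma(j)}F_n$, so that every quantity becomes a signed second partial of $\f{F}$ multiplied by either a point value or an antiderivative of $\f{S}\circ\f{F}$. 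The identity then naturally splits into two parts: the purely pointwise pieces $R_i$ and $\widehat{\textup{Airy}}(F_n)_{i1}\,\h_1 g_n$ must cancel (since the right-hand side contains no pointwise contribution), and this cancellation is driven by the same Schwarz symmetries of $\f{F}$ as the Piola identity together with the symmetry $S_{kl}=S_{lk}$. The remaining integral piece $\widehat{\textup{Airy}}(F_n)_{i2}\,\h_2 g_n$ collapses, after a further use of $\h_j\tilde{J}_{jl}=0$ inside the integrand, onto the target right-hand side.

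The main obstacle is precisely this last bookkeeping; the Airy correction in the definition of $\mathcal{Y}_{2,\Gamma_1}^s$ was engineered so that in two dimensions these signed second partials align exactly. Once the pointwise identity is established, the containment $\mathcal{Y}_{2,\Gamma_1}^s(\t{H}_{\Gamma_1}(\o,\d,\s))\subset\t{H}(\widehat{\o},\widehat{\d},\s)$ follows by combining the identity with the $L^2$-boundedness of $\mathcal{Y}_3^s$ supplied by Lemma~\ref{Lemma_strong_sym_L2} and the definition of $\t{H}_{\Gamma_1}(\o,\d,\s)$ as a closure, so that the equality extends from smooth fields to the full space.
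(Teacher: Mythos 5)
Your overall architecture matches the paper's proof in the appendix: density reduction to $C^1$ fields, the split of $\mathcal{Y}^s_{2,\Gamma_1}(\f{S})$ into $\J(\f{S}\circ\f{F})\J^T$ plus the Airy correction, the Piola identity, the row-wise divergence-freeness of $\widehat{\textup{Airy}}(F_n)$, the fundamental theorem of calculus applied to $\h_1 g_n$, and keeping $\h_2 g_n$ under the integral sign. The gap lies in your claimed mechanism for the residual identity. You assert that the pointwise pieces $R_i$ and $\widehat{\textup{Airy}}(F_n)_{i1}\,\h_1 g_n$ cancel outright, driven by Schwarz symmetry of $\f{F}$ and the symmetry $S_{kl}=S_{lk}$. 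They do not. Writing $\hat{S}_{kl}\coloneqq S_{kl}\circ\f{F}$, one finds $\widehat{\textup{Airy}}(F_n)_{i1}\,\h_1 g_n=-\h_2\j_{ik}\cdot\hat{S}_{kl}\cdot\j_{2l}$, which cancels only the $j=2$ part of $R_i=\h_j\j_{ik}\cdot\hat{S}_{kl}\cdot\j_{jl}$; the $j=1$ part $\h_1\j_{ik}\cdot\j_{1l}\cdot\hat{S}_{kl}$ survives and is nonzero for a generic parametrization and a generic symmetric field, no matter how the signed second partials of $\f{F}$ are arranged.

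This surviving term is exactly where the hypothesis $\f{S}\in\t{H}_{\Gamma_1}(\o,\d,\s)$ must enter, and your proposal never invokes it except to run the density argument. The traction condition $\f{S}\cdot\f{n}=\f{0}$ on $\Gamma_1$ translates into $\j_{1l}(0,\cdot)\,\hat{S}_{kl}(0,\cdot)=0$, so the leftover pointwise term can be rewritten by the fundamental theorem of calculus as $\h_1\j_{ik}\cdot\int_0^{\zeta_1}\h_1[\j_{1l}\,\hat{S}_{kl}]\,d1$ with no boundary contribution at $\zeta_1=0$; only after this conversion do the remaining integral terms recombine, via $\h_1\j_{1l}=-\h_2\j_{2l}$ and the chain rule, into $\h_1\j_{ik}\cdot\int_0^{\zeta_1}\det(\f{J})\,[(\nabla\cdot\f{S})\circ\f{F}]_k\,d1$, i.e.\ the integral part of $\mathcal{Y}^s_3(\nabla\cdot\f{S})$. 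Without the boundary condition the asserted commutation is simply false: the paper records the exact defect term $\h_1\j_{mi}\cdot\j_{1l}(0,\cdot)\cdot\hat{S}_{il}(0,\cdot)$ at the start of Section 4.2, and the modified map $\mathcal{Y}^s_{2}=\mathcal{Y}^s_{2,\Gamma_1}-\mathcal{Y}^s_{2,A}$ exists precisely to compensate for it. As written, your argument would establish the identity for arbitrary $\f{S}\in\t{H}(\o,\d,\s)$, which cannot be correct, so the cancellation step needs to be replaced by the boundary-condition argument above.
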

	
	\begin{proof}
		For the sake of clarity we moved the proof of the lemma to the appendix; see Section  \ref{Proof_1_appendix}. 
	\end{proof}

	\begin{lemma}[Invertibility of $\mathcal{Y}_{2,\Gamma_1}^s$]
		\label{Lemma_invertebility_Gamma_2_1}
		The mapping $\mathcal{Y}_{2,\Gamma_1}^s$ is invertible with \begin{equation}
			\label{eq_inv_gamma_1}
			(\mathcal{Y}_{2,\Gamma_1}^s)^{-1} (\tilde{\f{S}}) =  \tilde{\f{J}}^{-1} (\tilde{\f{S}} \circ \f{F}^{-1}) \tilde{\f{J}}^{-T} + \textup{Airy}(F_k^{-1}) \cdot \Big(\int_{0}^{\zeta_1}   (-1)^{k+1}  \    \ \tilde{S}_{2\sigma(k)}  \ d1 \Big) \circ \f{F}^{-1}.
		\end{equation}
	\end{lemma}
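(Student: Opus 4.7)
The plan is to verify \eqref{eq_inv_gamma_1} by direct computation. Let $T$ denote the operator on the right-hand side of \eqref{eq_inv_gamma_1}. My first step would be to observe that $T \colon \t{L}^2(\widehat{\o},\s) \to \t{L}^2(\o,\s)$ is itself $L^2$-bounded via the same Cauchy--Schwarz argument on the inner $\int_0^{\zeta_1}\cdots d1$ integral as in Lemma \ref{Lemma_strong_sym_L2}, combined with the $C^3$-regularity of $\f{F}^{-1}$ from Assumption \ref{eq_mixed_BC_strong_sym} and the bi-Lipschitz nature of $\f{F}$ granted by Assumption \ref{assum_reg_mesh}. Consequently both candidate compositions $T\circ\mathcal{Y}_{2,\Gamma_1}^s$ and $\mathcal{Y}_{2,\Gamma_1}^s\circ T$ are continuous, so it suffices to check that each agrees with the identity on the dense subspace of smooth symmetric matrix fields.

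For such a smooth $\f{S}$ I would decompose $\tilde{\f{S}}\coloneqq \mathcal{Y}_{2,\Gamma_1}^s(\f{S}) = \f{P}[\f{S}] + \f{A}[\f{S}]$, with $\f{P}[\f{S}]\coloneqq \tilde{\f{J}}(\f{S}\circ\f{F})\tilde{\f{J}}^T$ and $\f{A}[\f{S}]$ the Airy-correction summed over $n$ and $l$, and decompose $T$ analogously as $T_P + T_A$. The adjugate-to-adjugate contribution $T_P(\f{P}[\f{S}])$ reduces via $\tilde{\f{J}}^{-1}\tilde{\f{J}}=\f{I}$ and $\f{F}^{-1}\circ\f{F} = \textup{id}$ to $\f{S}$, so the identity $T(\mathcal{Y}_{2,\Gamma_1}^s(\f{S}))=\f{S}$ is equivalent to the cancellation
\begin{equation*}
    T_P(\f{A}[\f{S}]) + T_A(\f{P}[\f{S}] + \f{A}[\f{S}]) = \f{0}.
\end{equation*}
This cancellation is the technical heart of the proof, and I would verify it using three tools: (i) the fundamental theorem of calculus $\hat{\partial}_1\int_0^{\zeta_1} g\,d1 = g$, which is precisely what calibrates the $2\sigma(k)$-entries of $\tilde{\f{S}}$ in the inverse formula to recover the forward Airy integrand; (ii) the chain rule $\partial_i = J^{-1}_{ji}\hat{\partial}_j$ together with the cofactor identity $\tilde{\f{J}}\f{J} = \det(\f{J})\,\f{I}$, which converts $\textup{Airy}(F_k^{-1})$ (second derivatives in physical coordinates) into pullbacks of $\widehat{\textup{Airy}}(F_n)$ (second derivatives in parametric coordinates), up to adjugate prefactors that exactly cancel the forward $\tilde{\f{J}}(\cdot)\tilde{\f{J}}^T$ weights; and (iii) the vanishing normal trace $\tilde{\f{S}}\cdot\f{n}|_{\hat{\Gamma}_1}=\f{0}$ encoded in $\t{H}_{\Gamma_1}(\o,\d,\s)$, which is what discards the lower endpoint terms when the $\int_0^{\zeta_1}$-integrals are unwound.

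The converse composition $\mathcal{Y}_{2,\Gamma_1}^s \circ T = \textup{id}$ is verified by the symmetric argument with the roles of $\f{F}$ and $\f{F}^{-1}$ swapped. The main obstacle I anticipate is the book-keeping in step (ii): matching all second-derivative cross terms along with the determinant-and-adjugate coefficients without losing signs, and reconciling the summation indices $n,l,k$ in the forward and inverse formulas. I expect this to be a lengthy but essentially mechanical calculation, which—like the related divergence identity in Lemma \ref{Lemmma_strong_sym_div}—is naturally relegated to an appendix rather than carried out in the main text.
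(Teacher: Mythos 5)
Your strategy is the same as the paper's appendix proof (Section \ref{Lemma_proof_appendix_2}): check both compositions on smooth fields, reduce the adjugate--adjugate part to the identity, and cancel the remaining cross terms using the product rule together with the fundamental theorem of calculus (your tool (i)) and the Hessian chain-rule identity $\J^{-1}\,(\widehat{\textup{Airy}}(F_n)\circ\f{F}^{-1})\,\J^{-T}=-\,\textup{Airy}(F_k^{-1})\cdot(J_{nk}\circ\f{F}^{-1})$, i.e.\ exactly your tool (ii); the reverse composition is then handled by the symmetric computation. The preliminary reduction to smooth fields via $L^2$-boundedness of both maps is also consistent with how the paper treats regularity.

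Your tool (iii), however, is a misdiagnosis, and you should not build the cancellation on it. The lemma asserts invertibility of $\mathcal{Y}_{2,\Gamma_1}^s$ as a map on all of $\t{L}^2(\o,\mathbb{S})$ (see the definition \eqref{eq_def_gamma_2_1}); no normal-trace condition is assumed, and the paper later applies $(\mathcal{Y}_{2,\Gamma_1}^s)^{-1}$ to matrix fields that are not a priori traces of elements of $\t{H}_{\Gamma_1}(\o,\d,\s)$ (e.g.\ inside $\tilde{\mathcal{Y}}_{2,A}^{s}$). The boundary condition is what drives the \emph{divergence} identity of Lemma \ref{Lemmma_strong_sym_div} and the trace-compatibility of Lemma \ref{Lemma_compatibility_BC_strong_sym}, but it plays no role in the invertibility. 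The lower endpoint terms disappear for a much more elementary reason: every antiderivative arising in the computation has the form $G(\zeta_1,\zeta_2)=J_{nk}\cdot\int_{0}^{\zeta_1}h_n\ d1$ with $G(0,\cdot)=0$, so that $\int_{0}^{\zeta_1}\h_1 G\ d1=G$ carries no boundary contribution. If your argument genuinely needed the vanishing normal trace, you would only have proved invertibility on the subspace $\t{H}_{\Gamma_1}(\o,\d,\s)$, which is strictly weaker than the statement and insufficient for its later uses. With that correction the plan goes through as in the paper.
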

	\begin{proof}
		We refer to Section \ref{Lemma_proof_appendix_2} in the appendix for  the detailed proof steps. Since the proof is technical we omit it here.
	\end{proof}

	\begin{lemma}[Invertibility of $\mathcal{Y}_3^s$]
		\label{Lemma_invertibility_gamma_3}
		The mapping $\mathcal{Y}_3^s$ is invertible, where
		\begin{equation}
			\label{inv_gamma_3}
			(\mathcal{Y}_3^s)^{-1} (\tilde{\f{v}})= \textup{det}(\f{J}^{-1}) \ \tilde{\f{J}}^{-1} \cdot (\tilde{\f{v}} \circ \f{F}^{-1}) +  \Big( \frac{\hat{\partial}_1  \big[ \tilde{\f{J}}^{-1} \circ \f{F}\big]}{\textup{det}(\f{J})} \cdot \int_{0}^{\zeta_1}    \tilde{\f{v}}    \ d1 \Big) \circ \f{F}^{-1}.
		\end{equation}
	\end{lemma}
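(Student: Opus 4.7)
The plan is to exploit the compact form \eqref{eq:gamma_2_E_equivalent_form} of $\mathcal{Y}_3^s$ and essentially \emph{solve} the resulting first-order system in the first parametric coordinate. Specifically, for any $\f{v} \in \f{L}^2(\Omega)$, set
\begin{equation*}
  \f{w}(\zeta_1,\zeta_2) \coloneqq \int_{0}^{\zeta_1} \textup{det}(\f{J})\,(\f{v}\circ \f{F})\,d1,
\end{equation*}
so that $\f{w}(0,\zeta_2)=\f{0}$ and $\mathcal{Y}_3^s(\f{v}) = \hat\partial_1\bigl[\J\f{w}\bigr]$ by \eqref{eq:gamma_2_E_equivalent_form}. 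The first main step is to invert this single-variable antiderivative: given $\tilde{\f{v}} \coloneqq \mathcal{Y}_3^s(\f{v})$, integrating in $\zeta_1$ from $0$ and using $\J(0,\zeta_2)\f{w}(0,\zeta_2)=\f{0}$ yields
\begin{equation*}
  \J\,\f{w} \;=\; \int_{0}^{\zeta_1} \tilde{\f{v}}\,d1,
  \qquad\text{hence}\qquad
  \f{w} \;=\; \J^{-1}\int_{0}^{\zeta_1} \tilde{\f{v}}\,d1,
\end{equation*}
where $\J^{-1}$ (defined on $\widehat\Omega$) makes sense because $\f{F}$ is a diffeomorphism, so $\textup{det}(\f{J})\neq 0$.

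The second step is to differentiate back: from $\hat\partial_1\f{w}=\textup{det}(\f{J})(\f{v}\circ\f{F})$ and the Leibniz rule applied to $\J^{-1}\int_0^{\zeta_1}\tilde{\f{v}}\,d1$,
\begin{equation*}
  \textup{det}(\f{J})\,(\f{v}\circ \f{F}) \;=\; \J^{-1}\tilde{\f{v}} \;+\; \hat\partial_1\!\bigl[\J^{-1}\bigr]\cdot\int_{0}^{\zeta_1}\tilde{\f{v}}\,d1.
\end{equation*}
Dividing by $\textup{det}(\f{J})$, composing with $\f{F}^{-1}$, and using the two elementary identities $\bigl(\J^{-1}/\textup{det}(\f{J})\bigr)\circ\f{F}^{-1} = \textup{det}(\f{J}^{-1})\,\tilde{\f{J}}^{-1}$ (both sides equal $\textup{adj}(\f{J}^{-1})/\textup{det}(\f{J})\circ\f{F}^{-1}$, since $\textup{adj}(\f{J})^{-1}$ evaluated at $\boldsymbol{\zeta}$ agrees with $\textup{adj}(\f{J}^{-1})$ at $\f{F}(\boldsymbol{\zeta})$) and $\J^{-1} = \tilde{\f{J}}^{-1}\circ\f{F}$, we obtain precisely the right-hand side of \eqref{inv_gamma_3}. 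This proves that the displayed formula is a left inverse of $\mathcal{Y}_3^s$.

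To conclude invertibility, I would verify that the same formula is also a right inverse. The cleanest route is to reverse the computation: for arbitrary $\tilde{\f{v}}\in\f{L}^2(\widehat\Omega)$, define $\f{v}$ by the right-hand side of \eqref{inv_gamma_3} and retrace the algebraic identities backwards, noting that
\begin{equation*}
  \textup{det}(\f{J})(\f{v}\circ \f{F}) \;=\; \hat\partial_1\Bigl[\J^{-1}\int_{0}^{\zeta_1}\tilde{\f{v}}\,d1\Bigr]
\end{equation*}
by the product rule, so that the antiderivative $\f{w}\coloneqq\J^{-1}\int_0^{\zeta_1}\tilde{\f{v}}\,d1$ satisfies $\hat\partial_1[\J\f{w}] = \tilde{\f{v}}$, which is $\mathcal{Y}_3^s(\f{v})=\tilde{\f{v}}$. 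Boundedness of the candidate inverse in $L^2$ follows by the same Cauchy--Schwarz estimate used in Lemma \ref{Lemma_strong_sym_L2}, combined with $\f{F}\in C^3(\overline{\widehat\Omega})$ and the bounds on $\f{J}^{\pm 1}$.

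The main obstacle, in my view, is bookkeeping rather than analysis: one must keep track of which quantities live on $\Omega$ versus $\widehat\Omega$, and in particular be careful when identifying $\tilde{\f{J}}^{-1}$ (physical) with $\textup{adj}(\f{J})^{-1}\circ\f{F}^{-1}$ (parametric composed with $\f{F}^{-1}$). For $\f{v}$ merely in $\f{L}^2(\Omega)$ one also has to interpret $\hat\partial_1$ in a weak/distributional sense; this is harmless, however, because the antiderivative $\int_0^{\zeta_1}\cdot\,d1$ and its derivative $\hat\partial_1$ are inverse operators on one-dimensional fibers almost everywhere, and all multiplicative factors ($\J$, $\hat\partial_1\J$, $\textup{det}(\f{J})$) are smooth by Assumption \ref{eq_mixed_BC_strong_sym}.
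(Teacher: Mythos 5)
Your proof is correct, and it takes a genuinely different route from the paper's. The paper \emph{verifies} the formula: it names the right-hand side of \eqref{inv_gamma_3} $\tilde{\mathcal{Y}}$, expands both compositions $\tilde{\mathcal{Y}} \circ \mathcal{Y}_3^s$ and $\mathcal{Y}_3^s \circ \tilde{\mathcal{Y}}$ entrywise, and shows that the residual terms $(L3,1)+(L3,2)$ and $(L3,3)+(L3,4)$ cancel using the product-rule identity $0 = \h_1\big[(\j_{mk}^{-1}\circ\f{F})\cdot\j_{kl}\big]$. You instead \emph{derive} the inverse: the conservative form \eqref{eq:gamma_2_E_equivalent_form} exhibits $\mathcal{Y}_3^s(\f{v})$ as $\h_1$ applied to $\J$ times an antiderivative vanishing at $\zeta_1=0$, so $\h_1$ and $\int_0^{\zeta_1}\cdot\,d1$ undo each other exactly and the whole inversion collapses to two applications of the fundamental theorem of calculus plus one Leibniz rule; the left- and right-inverse claims then become symmetric (run the two steps in the opposite order), whereas the paper needs two separate index computations. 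The price of your route is the bookkeeping you flag yourself: one must justify the identification $\tilde{\f{J}}^{-1} = \textup{adj}(\f{J})^{-1}\circ\f{F}^{-1} = \textup{adj}(\f{J}^{-1})$ (correct for $2\times 2$ matrices, as you note) and interpret $\h_1$ fiberwise for $\f{v}\in\f{L}^2$, which is harmless since $\J$, $\h_1\J$ and $\textup{det}(\f{J})$ are smooth by assumption. The paper's entrywise verification avoids ever inverting $\J$ as a standalone object and is stylistically uniform with the neighboring proofs of Lemmas \ref{Lemma_invertebility_Gamma_2_1} and \ref{Lemma_invertebility_Gamma_2_2}, where no such clean conservative form is available; your argument is shorter and more conceptual for this particular operator.
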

	
	\begin{proof}
		In this proof we write  $\tilde{\mathcal{Y}} = \tilde{\mathcal{Y}}(\tilde{\f{v}})$ for the mapping determined  by the right-hand side of \eqref{inv_gamma_3}. Further, we use the notation  $\hat{\f{v}} \coloneqq \f{v} \circ \f{F}$. To begin with, we look at $\tilde{\mathcal{Y}} \circ {\mathcal{Y}}_3^s$.
		\begin{align*}
			\tilde{\mathcal{Y}} \circ {\mathcal{Y}}_3^s(\f{v}) &= \f{v} + \Big(\underbrace{(\tilde{\f{J}}^{-1} \circ \f{F}) \cdot  \frac{\hat{\partial}_1 \tilde{\f{J}}}{\textup{det}(\f{J})} \cdot \int_{0}^{\zeta_1}   \tilde{g}    \ d1}_{\eqqcolon(L3,1)} \Big) \circ \f{F}^{-1} \\
			& +  \Big(\underbrace{\frac{\hat{\partial}_1  \big[ \tilde{\f{J}}^{-1} \circ \f{F}\big]}{\textup{det}(\f{J})} \cdot \int_{0}^{\zeta_1}   {\mathcal{Y}}_3^s(\f{v})   \ d1 }_{\eqqcolon(L3,2)}\Big) \circ \f{F}^{-1},
		\end{align*}
		where we set $\tilde{g} \coloneqq    \textup{det}(\f{J}) \ (\f{v}\circ \f{F})  $. Using the relations
		\begin{align*}
			\big(  {\mathcal{Y}}_3^s(\f{v}) \big)_j &= \textup{det}(\f{J}) \cdot \j_{jk} \cdot \hat{v}_k  + \hat{\partial}_1\j_{jl} \cdot  \int_{0}^{\zeta_1} \tilde{g}_l  \ d1,\\
			0&=\h_1\big[ (\j_{mk}^{-1} \circ \f{F}) \cdot \j_{kl}\big] = \h_1[\j_{mk}^{-1} \circ \f{F}] \cdot \j_{kl} + [\j_{mk}^{-1} \circ \f{F}] \cdot \h_1\j_{kl},
		\end{align*}
		we obtain for the $m$-th entry of $	(L3,1)+(L3,2)$:
		\begin{align*}
			\big[(L3,1)+(L3,2)\big]_m &= \frac{ \j_{mj}^{-1}\circ \f{F}}{\textup{det}(\f{J})} \cdot  \h_1\j_{jl} \cdot  \int_{0}^{\zeta_1} \tilde{g}_l  \ d1+ \frac{\h_1 [\j_{mj}^{-1}\circ \f{F}] }  {\textup{det}(\f{J})} \cdot  \int_{0}^{\zeta_1}   ({\mathcal{Y}}_3^s(\f{v}))_j   \ d1 \\
			&= -\frac{ \h_1[\j_{mj}^{-1} \circ \f{F}]}{\textup{det}(\f{J})} \cdot  \j_{jl} \cdot  \int_{0}^{\zeta_1} \tilde{g}_l  \ d1 \\  & \hspace{0.5cm} + \frac{\h_1 [\j_{mj}^{-1} \circ \f{F}] }{\textup{det}(\f{J})} \cdot     \int_{0}^{\zeta_1} \textup{det}(\f{J}) \cdot \j_{jk} \cdot \hat{v}_k    + \hat{\partial}_1\j_{jl} \cdot  \tilde{\tilde{g}} \ d1, \\
			& \ \ \ \textup{with} \  \tilde{\tilde{g}} \coloneqq \int_{0}^{\zeta_1} \tilde{g}_l \ d1. \ \textup{This implies } \\ 
			\big[(L3,1)+(L3,2)\big]_m	&= -\frac{ \h_1[\j_{mj}^{-1}\circ \f{F}]}{\textup{det}(\f{J})} \cdot  \j_{jl} \cdot  \int_{0}^{\zeta_1} \tilde{g}_l  \ d1 +\frac{ \h_1[\j_{mj}^{-1}\circ \f{F}]}{\textup{det}(\f{J})} \cdot  \j_{jl} \cdot  \int_{0}^{\zeta_1} \tilde{g}_l  \ d1 \\
			& \hspace{0.5cm} +\frac{ \h_1[\j_{mj}^{-1}\circ \f{F}]}{\textup{det}(\f{J})} \cdot   \int_{0}^{\zeta_1}  \textup{det}(\f{J}) \cdot \j_{jk} \cdot \hat{v}_k - \j_{jl} \cdot \tilde{g}_l  \ d1 \\
			&= 0 + \frac{ \h_1[\j_{mj}^{-1}\circ \f{F}]}{\textup{det}(\f{J})} \cdot   \int_{0}^{\zeta_1}  \underbrace{\textup{det}(\f{J}) \cdot \j_{jk} \cdot \hat{v}_k - \j_{jl} \cdot \textup{det}(\f{J}) \cdot  \hat{v}_l}_{=0}   \ d1 =0.
		\end{align*}
		Thus we showed that $\tilde{\mathcal{Y}}$ is the left inverse of $\mathcal{Y}_3^s$. In particular we showed  injectivity. Hence it remains   to show the surjectivity of $\mathcal{Y}_3^{s}$. \\
		Therefore consider
		\begin{align*}
				  {\mathcal{Y}}_3^s \circ 	\tilde{\mathcal{Y}}(\tilde{\f{v}}) &=  \tilde{\f{v}} + \Big(\underbrace{ \h_1\tilde{\f{J}}  \cdot   \int_{0}^{\zeta_1}  \textup{det}(\f{J}) \cdot   \big( \tilde{\mathcal{Y}}(\f{v}) \circ \f{F} \big)    \ d1}_{\eqqcolon(L3,3)} \Big)  \\
			& +  \ \ \ \underbrace{ \textup{det}(\f{J}) \cdot \J \cdot \frac{\hat{\partial}_1  \big[ \tilde{\f{J}}^{-1} \circ \f{F}\big]}{\textup{det}(\f{J})} \cdot \int_{0}^{\zeta_1}    \tilde{\f{v}}   \ d1 }_{\eqqcolon(L3,4)}.
		\end{align*}
	We can simplify 
	\begin{align*}
		[(L3,4)]_m &= -\h_1\j_{ml} \cdot   [\j_{lk}^{-1} \circ \f{F}] \cdot \int_{0}^{\zeta_1}    \tilde{{v}}_k   \ d1, \ \ \textup{since} \ \  \h_1\j_{ml} \cdot   [\j_{lk}^{-1} \circ \f{F}] = - \j_{ml} \cdot   \h_1[\j_{lk}^{-1} \circ \f{F}], \\
			\textup{and expl}&\textup{oiting the product rule we obtain }\\
		[(L3,3)]_m &= \h_1\j_{ml} \cdot   \int_{0}^{\zeta_1} \textup{det}(\f{J}) \cdot   \big( \tilde{\mathcal{Y}}(\f{v}) \circ \f{F} \big)_l    \ d1  \\ 
		& =  \h_1\j_{ml} \cdot \int_{0}^{\zeta_1}   {[\j^{-1}_{lk} \circ \f{F}] \cdot \tilde{v}_k}   \ d1 -  \h_1\j_{ml} \cdot \int_{0}^{\zeta_1}   [\j^{-1}_{lk} \circ \f{F}] \cdot \tilde{v}_k    \ d1 \\ & \ \ \ +  \h_1\j_{ml} \cdot [\j_{lk}^{-1} \circ \f{F}] \cdot   \int_{0}^{\zeta_1}    \tilde{{v}}_k   \ d1 \\
		& = - [(L3,4)].
	\end{align*}
	 Consequently, $\mathcal{Y}_3^s$ is bijective with inverse mapping $\tilde{\mathcal{Y}}$.
	\end{proof}

	\begin{lemma}[Boundary condition compatibility]
		\label{Lemma_compatibility_BC_strong_sym}
		$\mathcal{Y}_{2,\Gamma_1}^s$ preserves  the traction boundary condition for our choice in Assumption \ref{eq_mixed_BC_strong_sym}. More precisely, we have
		$$\mathcal{Y}_{2,\Gamma_1}^s(\t{H}_{\Gamma_1}(\o,\d,\mathbb{S})) = \t{H}_{\hat{\Gamma}_1}(\widehat{\o},\widehat{\d},\mathbb{S}).$$
	\end{lemma}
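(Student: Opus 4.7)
The plan is to establish both inclusions $\mathcal{Y}_{2,\Gamma_1}^s(\t{H}_{\Gamma_1}(\o,\d,\mathbb{S})) \subset \t{H}_{\hat{\Gamma}_1}(\widehat{\o},\widehat{\d},\mathbb{S})$ and its reverse using the explicit inverse from Lemma \ref{Lemma_invertebility_Gamma_2_1}. Combining Lemmas \ref{Lemma_strong_sym_L2} and \ref{Lemmma_strong_sym_div}, the map $\mathcal{Y}_{2,\Gamma_1}^s$ is continuous from $\t{H}(\o,\d,\mathbb{S})$ into $\t{H}(\widehat{\o},\widehat{\d},\mathbb{S})$, and the same $L^2$ and divergence-compatibility arguments applied to \eqref{eq_inv_gamma_1} give continuity of $(\mathcal{Y}_{2,\Gamma_1}^s)^{-1}$ in the opposite direction. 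Since $\t{H}_{\Gamma_1}(\o,\d,\mathbb{S})$ is by definition the $\t{H}(\d)$-closure of smooth symmetric fields vanishing in normal trace on $\Gamma_1$, it suffices to verify the two inclusions on such smooth representatives and then pass to the closure.

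For a smooth $\f{S}$ with $\f{S}\f{n}=\f{0}$ on $\Gamma_1$ I would simply evaluate $\mathcal{Y}_{2,\Gamma_1}^s(\f{S})$ pointwise on $\hat{\Gamma}_1 = \{\zeta_1 = 0\}$. Each summand of the correction term in \eqref{eq_def_gamma_2_1} carries the factor $\int_0^{\zeta_1}\cdots\,d1$, which vanishes identically when $\zeta_1 = 0$, so only $\tilde{\f{J}}(\f{S}\circ\f{F})\tilde{\f{J}}^T$ survives on $\hat{\Gamma}_1$. The outer unit parametric normal there is $\hat{\f{n}} = (-1,0)^T$, and the standard $H(\d)$-Piola identity (the very reason why $\mathcal{Y}_3$ preserves vanishing normal traces) gives a nonzero scalar $\alpha$ with $\tilde{\f{J}}^T\hat{\f{n}} = \alpha\,(\f{n}\circ\f{F})$ on $\hat{\Gamma}_1$. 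Therefore
\[
\mathcal{Y}_{2,\Gamma_1}^s(\f{S})\,\hat{\f{n}}\big|_{\hat{\Gamma}_1} \;=\; \alpha\,\tilde{\f{J}}\bigl((\f{S}\f{n})\circ\f{F}\bigr)\big|_{\hat{\Gamma}_1} \;=\; \f{0}.
\]

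The reverse inclusion is structurally identical, using \eqref{eq_inv_gamma_1}: for $\f{x}\in\Gamma_1$ we have $F_1^{-1}(\f{x})=0$, so the integral term in the inverse vanishes, the surviving piece reads $\tilde{\f{J}}^{-1}(\tilde{\f{S}}\circ\f{F}^{-1})\tilde{\f{J}}^{-T}$, and contracting with $\f{n}$ produces $\hat{\f{n}}$ up to a nonzero factor by the dual relation $\tilde{\f{J}}^{-T}\f{n}\propto\hat{\f{n}}$, which kills the expression against $\tilde{\f{S}}\hat{\f{n}}|_{\hat{\Gamma}_1}=\f{0}$. The only non-routine step, and where I expect to spend most care, is pinning down the geometric identity $\f{n}\circ\f{F}\propto \tilde{\f{J}}^T\hat{\f{n}}$ on $\hat{\Gamma}_1$ with the correct orientation; this one can either invoke from the standard Piola framework or verify directly by observing that $\tilde{\f{J}}^T(-1,0)^T = -(J_{22},-J_{12})^T$ is perpendicular to the tangent $\partial_{\zeta_2}\f{F} = (J_{12},J_{22})^T$ along $\hat{\Gamma}_1$. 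Once this is available, the two pointwise computations and the density argument close the proof.
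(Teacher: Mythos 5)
Your proposal is correct and follows essentially the same route as the paper's own proof: both arguments work on smooth representatives, observe that the $\int_0^{\zeta_1}\cdots\,d1$ correction terms vanish on $\hat{\Gamma}_1$ (respectively on $\Gamma_1$ for the inverse), kill the surviving term $\tilde{\f{J}}(\f{S}\circ\f{F})\tilde{\f{J}}^T$ by the Piola/adjugate identity relating $\tilde{\f{J}}^T\hat{\f{n}}$ to $\f{n}\circ\f{F}$, and close by density. Your explicit verification that $\tilde{\f{J}}^T(-1,0)^T$ is orthogonal to the tangent $\partial_{\zeta_2}\f{F}$ is exactly the content the paper invokes as "the transformation rule for normal vectors."
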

	\begin{proof}
		Let us assume  $\f{S} \in \t{H}_{\Gamma_1}(\o,\d,\mathbb{S}) \cap C^1(\overline{\o})$. Then we obtain  $\mathcal{Y}_{2,\Gamma_1}^s(\f{S})  \in \t{H}(\widehat{\o},\widehat{\d},\mathbb{S}) \cap C^1(\overline{\widehat{\o}})$.  Writing $\hat{\f{n}}_1$ for the outer unit normal to the edge $\hat{\Gamma}_1$,  it is due to  the divergence preserving  transform 
		 $\f{v} \mapsto \textup{det}(\f{J}) \f{J}^{-1} (\f{v}\circ \f{F})$: \
		$  (\textup{det}(\f{J}) \ \f{J}^{-1} \cdot (\f{S}_i \circ \f{F})) \cdot \hat{\f{n}}_1= -\j_{1j} \cdot \hat{S}_{ij}=0 $ on $\hat{\Gamma}_1$. Note $\hat{\f{S}} \coloneqq \f{S} \circ \f{F}$. Consequently, 
		$$ \big( \mathcal{Y}_{2,\Gamma_1}^s(\f{S})  \big)_m \cdot \hat{\f{n}}_1= -\j_{1j} \cdot \hat{S}_{ij} \cdot \j_{mi}=0 \cdot \j_{mi}=0 \ \  \textup{on} \ \hat{\Gamma}_1.$$
		One observes that the integral terms in the definition of $\mathcal{Y}_{2,\Gamma_1}^s$ (see \ref{eq_def_gamma_2_1}) vanish on $\hat{\Gamma}_1$. Hence $\mathcal{Y}_{2,\Gamma_1}^s(\f{S})  \in \t{H}_{\hat{\Gamma}_1}(\widehat{\o},\widehat{\d},\mathbb{S})$. Due to Lemma \ref{Lemmma_strong_sym_div}, Lemma \ref{Lemma_strong_sym_L2} and a density argument we have indeed   $\mathcal{Y}_{2,\Gamma_1}^s(\t{H}_{\Gamma_1}(\o,\d,\mathbb{S})) \subset \t{H}_{\hat{\Gamma}_1}(\widehat{\o},\widehat{\d},\mathbb{S})$.\\
		On the other hand let us assume $\tS \in \t{H}_{\hat{\Gamma}_1}(\widehat{\o},\widehat{\d},\mathbb{S}) \cap C^1(\overline{\widehat{\o}})$. One shows
		\begin{align*}
			\big( (\mathcal{Y}_{2,\Gamma_1}^s)^{-1}(\tilde{\f{S}})  \big)_m \cdot {\f{n}}_1 &= 	 \big[ \j_{mi}^{-1} \cdot \big(\J^{-1} \cdot (\tS^i \circ \f{F}^{-1}) \big)^T\big] \cdot {\f{n}}_1 \\
			&= \j_{mi}^{-1}  \cdot \big[ 	 \big(\J^{-1} \cdot (\tS^i \circ \f{F}^{-1}) \big)^T \cdot {\f{n}}_1	\big]	= \j_{mi}^{-1} \cdot 0 =0 \ \ \textup{on} \ \ \Gamma_1.
		\end{align*}
		In the last lines we wrote ${\f{n}}_1$ for the outer unit normal to the edge $\Gamma_1= \f{F}(\hat{\Gamma}_1)$ and we use the fact $ \big(\J^{-1} \cdot (\tS^i \circ \f{F}^{-1}) \big)^T \cdot {\f{n}}_1	=0$ which is a consequence of the transformation rule for normal vectors. 
		 This implies  $(\mathcal{Y}_{2,\Gamma_1}^s)^{-1}(\tilde{\f{S}}) \in \t{H}_{\Gamma_1}(\o,\d,\mathbb{S}) \cap C^1(\overline{\o})$.  The general case of $\tS \in \t{H}_{\hat{\Gamma}_1}(\widehat{\o},\widehat{\d},\mathbb{S})$ follows by a density argument.
	\end{proof}
	
	If we use the Lemma \ref{Lemma_compatibility_BC_strong_sym}  we see that we have the commuting diagram
	\begin{figure}[h!]
		\centering
		\begin{tikzpicture}
			\node at (0.8,1.4) {$\f{L}^2(\o)$};

			\draw[->] (-0.8,1.4) to (-0.72+0.8,1.4);		
			
			\node at (-0.4,1.65) {$\nabla \cdot$};			
			\node at (-2.25,1.4) {$\t{H}_{\Gamma_1}({\o},\d,\mathbb{S})$};			
			
			\draw[->] (-2.25,1.1) -- (-2.25,0.2);			
			\draw[->] (0.95,1.1) -- (0.95,0.2);	
			\node at (0.8,-0.1) {$\f{L}^2(\widehat{\o})$};			
			
			\draw[->] (-0.8,-0.1) to (-0.72+0.8,-0.1);
			
			\node at (-0.4,0.15) {$\widehat{\nabla} \cdot $};			
			\node at (-2.25,-0.1) {$\t{H}_{\hat{\Gamma}_1}(\widehat{\o},\widehat{\d},\mathbb{S})$};			
			\node[left] at (1,0.7) { \small $\mathcal{Y}_{3}^{s}$};	
			\node[left] at (-2.2,0.7) { \small $\mathcal{Y}_{2,\Gamma_1}^{s}$};
		\end{tikzpicture}
		\caption{The diagram commutes.}
		\label{Fig:com_diagram_5}
	\end{figure}

	Next we look at the classical planar Dirichlet problem and use the results derived in Section \ref{section_dirichlet_Neumann}.

	\begin{remark}
		The results of this subsection imply the well-posedness of the weak form \eqref{weak_form_strong_symmetry_contiuous_BC}.  Namely, due to the commutativity of the diagram in Fig. \ref{Fig:com_diagram_5}, Lemma \eqref{Lemmma_strong_sym_div} -  \eqref{Lemma_invertibility_gamma_3}  it is enough to prove the well-posedness for $\f{F} = \textup{id}$. In fact,  the latter can be shown   with an ansatz similar to the one in the proof of Lemma \ref{Lemma_exact_dis}. To be more precise, in view of the choice \eqref{eq_construction_1}-\eqref{eq_construction_3} for a general $ \tilde{\f{v}} \in  \f{L}^2(\widehat{\o})$ it is easy to check that for all such $\tilde{\f{v}}$ there is a $\tilde{\f{\tau}} \in \t{H}_{\hat{\Gamma}_1}(\widehat{\o},\widehat{\d},\mathbb{S})$ s.t. $\widehat{\nabla} \cdot \tilde{\f{\tau}} = \tilde{\f{v}} $ and $\n{\tilde{\f{\tau}}}_{\t{L}^2} \leq \n{\tilde{\f{v}}}_{\f{L}^2}$. This leads to  the classical  Brezzi stability conditions for the saddle-point problem  $\eqref{weak_form_strong_symmetry_contiuous_BC}$. Since we want to study the case with pure displacement boundary conditions, we omit here further details. 
	\end{remark}
	
	\subsection{The Dirichlet problem}
	\label{section_dirichlet_problem}
	In this section we suppose $\Gamma_D = \partial \o$.
	Further, for the moment we assume   always $\f{S} \in C^1(\overline{\o},\s) \coloneqq  \t{L}^2(\o,\s)  \cap C^1(\overline{\o}) $ and
	$\tilde{\f{S}} \in C^1(\overline{\widehat{\o}},\s) \coloneqq \t{L}^2(\widehat{\o},\s) \cap C^1(\overline{\widehat{\o}})$. 
	 The  smoothness assumption is made to guarantee  the well-definedness of  subsequent  calculations. Analyzing the proof of Lemma \ref{Lemmma_strong_sym_div} we see that there is only one step where the underlying boundary conditions are needed, namely the rearrangement from line  \eqref{eq_Lemma_BC_line1} to \eqref{eq_Lemma_BC_line2}. Hence, if we do not require the boundary conditions in Assumption \ref{eq_mixed_BC_strong_sym}, we get an additional term. More precisely, with $\hat{\S} \coloneqq \S \circ \f{F}$ it is
	\begin{align*}
		\widehat{\nabla} \cdot \mathcal{Y}^{s}_{2,\Gamma_1}(\f{S}) = \mathcal{Y}^{s}_{3}(\nabla \cdot \f{S}) + \begin{bmatrix}
			\h_1 \j_{1i} \cdot \j_{1l}(0, \cdot) \cdot \hat{S}_{il}(0,\cdot) \\
			\h_1 \j_{2i} \cdot \j_{1l}(0, \cdot) \cdot \hat{S}_{il}(0,\cdot)
		\end{bmatrix}.
	\end{align*} 
	Above and in the following we write $g(0,\cdot)$ to set the first parametric coordinate of a mapping $g$ to zero, i.e. for example $\int_{0}^{\zeta_1} g(0,\cdot) d1 \coloneqq \int_{0}^{\zeta_1} g(0,\zeta_2) d\tau$. 
	Thus we define 
	\begin{align}
		\label{eq_def_Gamma_2s}
		\mathcal{Y}^{s}_{2}(\f{S})  &\coloneqq \mathcal{Y}^{s}_{2, \Gamma_1}(\f{S}) - \mathcal{Y}^{s}_{2,A}(\f{S}), \ \  \forall \f{S} \in C^1(\overline{\o},\s),  \ \textup{where} \\
		\mathcal{Y}^{s}_{2,A}(\f{S})  &\coloneqq \begin{bmatrix}
			\int_{0}^{\zeta_1} \h_1 \j_{1i} \cdot \j_{1l}(0, \cdot) \cdot \hat{S}_{il}(0,\cdot)  \ d1 & 0 \\
			0 &  \int_{0}^{\zeta_2} \h_1 \j_{2i} \cdot \j_{1l}(0, \cdot) \cdot \hat{S}_{il}(0,\cdot) \ d2 
		\end{bmatrix}.\nonumber
	\end{align}
	
	Obviously, we have the next Lemma.
	\begin{lemma}[Divergence compatibility]
		\label{Lemma_compatibility_strong}
		It holds $\widehat{\nabla} \cdot \mathcal{Y}_{2}^{s}(\f{S}) = \mathcal{Y}^{s}_{3}(\nabla \cdot \f{S}), \ \ \forall \S  \in C^1(\overline{\o},\s)$.
	\end{lemma}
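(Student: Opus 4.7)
The key observation is that the correction $\mathcal{Y}^{s}_{2,A}(\f{S})$ has been designed precisely so that its parametric divergence cancels the boundary leftover already identified in the paragraph preceding \eqref{eq_def_Gamma_2s}. The plan is therefore to combine two ingredients: the identity
\begin{equation*}
\widehat{\nabla} \cdot \mathcal{Y}^{s}_{2,\Gamma_1}(\f{S}) \;=\; \mathcal{Y}^{s}_{3}(\nabla \cdot \f{S}) \;+\; \begin{bmatrix} \h_1 \j_{1i} \cdot \j_{1l}(0,\cdot) \cdot \hat{S}_{il}(0,\cdot) \\ \h_1 \j_{2i} \cdot \j_{1l}(0,\cdot) \cdot \hat{S}_{il}(0,\cdot) \end{bmatrix},
\end{equation*}
which the excerpt obtained by revisiting the proof of Lemma \ref{Lemmma_strong_sym_div} with the boundary condition $\f{\tau} \cdot \f{n} = \f{0}$ on $\Gamma_1$ dropped (recall that this hypothesis entered the argument only in the passage from \eqref{eq_Lemma_BC_line1} to \eqref{eq_Lemma_BC_line2}), and a direct row-wise evaluation of $\widehat{\nabla} \cdot \mathcal{Y}^{s}_{2,A}(\f{S})$.

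For the second ingredient I would note that $\mathcal{Y}^{s}_{2,A}(\f{S})$ is diagonal, so its row-wise divergence reduces to $\h_1$ applied to the $(1,1)$-entry in row one and $\h_2$ applied to the $(2,2)$-entry in row two. In the $(1,1)$-entry the two boundary factors $\j_{1l}(0,\cdot)$ and $\hat{S}_{il}(0,\cdot)$ do not depend on the integration variable (the first parametric coordinate), so the fundamental theorem of calculus gives
\begin{equation*}
\h_1 \int_0^{\zeta_1} \h_1 \j_{1i} \cdot \j_{1l}(0,\cdot) \cdot \hat{S}_{il}(0,\cdot)\, d1 \;=\; \h_1 \j_{1i}(\zeta_1,\zeta_2) \cdot \j_{1l}(0,\zeta_2) \cdot \hat{S}_{il}(0,\zeta_2),
\end{equation*}
and the analogous computation with $\h_2$ acting on the integral in $d2$ produces the second row. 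Subtracting these two expressions from the identity above cancels the boundary defect and yields $\widehat{\nabla} \cdot \mathcal{Y}^{s}_{2}(\f{S}) = \mathcal{Y}^{s}_{3}(\nabla \cdot \f{S})$, as claimed.

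The proof is essentially book-keeping and I do not anticipate any real obstacle, beyond checking that the sign and index conventions in the off-diagonal integrands of $\mathcal{Y}^{s}_{2,A}(\f{S})$ match those of the defect terms produced in Lemma \ref{Lemmma_strong_sym_div}. One subtle point worth flagging is that $\mathcal{Y}^{s}_{2,A}(\f{S})$ is diagonal and hence symmetric, so $\mathcal{Y}^{s}_{2}(\f{S})$ still takes values in the space of symmetric matrix fields as required; moreover the standing assumption $\f{S} \in C^1(\overline{\o},\mathbb{S})$ is precisely what makes each of the pointwise derivatives and boundary evaluations well-defined.
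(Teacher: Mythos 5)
Your argument is correct and is exactly the justification the paper has in mind: the paper presents the lemma as an immediate consequence of the construction of $\mathcal{Y}^{s}_{2,A}$, and your row-wise divergence computation via the fundamental theorem of calculus (using that the boundary factors $\j_{1l}(0,\cdot)$ and $\hat{S}_{il}(0,\cdot)$ do not depend on the respective integration variable) makes that cancellation explicit. The only cosmetic remark is that $\mathcal{Y}^{s}_{2,A}(\f{S})$ has no off-diagonal integrands to check --- it is diagonal by definition --- so the sign and index verification you flag concerns only the two diagonal entries matched against the two components of the defect vector.
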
	
	Another point we want to mention is the fact that $\mathcal{Y}_{2}^{s}(\f{S}) \in C^1(\overline{\widehat{\o}},\s)$ if $\f{S} \in C^1(\overline{\o},\s)$. This follows easily by the defintion of the mappings.	
	Furthermore we set 
	\begin{align*}
		\tilde{\mathcal{Y}}_2^{s}(\tS) &\coloneqq (\mathcal{Y}_{2,\Gamma_1}^s)^{-1} (\tS) - \tilde{\mathcal{Y}}_{2,A}^{s}(\tS), \ \tS \in C^1(\overline{\widehat{\o}},\s),
	\end{align*}
	with 
	\begin{align*}
		\tilde{\mathcal{Y}}_{2,A}^{s}(\tS) &\coloneqq (\mathcal{Y}^s_{2,\Gamma_1})^{-1}   \Big( \begin{bmatrix}
			\int^{\zeta_1}_{0}  \big[\mathcal{Y}^s_{3}(T(\tS)) \big]_1   \ d1 & 0 \\
			0 & \int^{\zeta_2}_{0}  \big[\mathcal{Y}^s_{3}(T(\tS)) \big]_2  \ d2
		\end{bmatrix}\Big),\\
		T(\tS) &\coloneqq 
		\begin{bmatrix}
			\partial_k \tilde{J}^{-1}_{1i} \cdot \j^{-1}_{k1} \cdot (\tilde{S}_{i1}(0,\cdot) \circ \f{F}^{-1}) \\
			\partial_k \tilde{J}^{-1}_{2i} \cdot \j^{-1}_{k1} \cdot (\tilde{S}_{i1}(0,\cdot) \circ \f{F}^{-1})
		\end{bmatrix}.
	\end{align*}
	Again one can verify the well-definedness of $\tilde{\mathcal{Y}}_{2}^{s}(\tS)$ and that $\tilde{\mathcal{Y}}_{2}^{s}(\tS) \in C^1(\overline{\o},\s)$ in case of $\tS \in C^1(\overline{\widehat{\o}},\s)$. Next we show the invertibility of the introduced map $\mathcal{Y}_2^{s}$. 
	
	\begin{lemma}[Invertibility of $\mathcal{Y}_2^{s}$]
		\label{Lemma_invertebility_Gamma_2_2}
		The mapping $\mathcal{Y}_2^{s} \colon C^1(\overline{\o},\s) \rightarrow C^1(\overline{\widehat{\o}},\s)$ determined by \eqref{eq_def_Gamma_2s} is invertible with 
		$$(\mathcal{Y}_2^{s})^{-1} = \tilde{\mathcal{Y}}_2^{s}.$$
	\end{lemma}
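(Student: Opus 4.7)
The plan is to verify both $\tilde{\mathcal{Y}}_2^{s}\circ\mathcal{Y}_2^{s}=\textup{id}$ and $\mathcal{Y}_2^{s}\circ\tilde{\mathcal{Y}}_2^{s}=\textup{id}$ by direct computation, leveraging the already-established invertibility of $\mathcal{Y}_{2,\Gamma_1}^{s}$ in Lemma \ref{Lemma_invertebility_Gamma_2_1}. Using linearity I would first expand
\begin{align*}
\tilde{\mathcal{Y}}_2^{s}\big(\mathcal{Y}_2^{s}(\S)\big)
 &= (\mathcal{Y}_{2,\Gamma_1}^{s})^{-1}\big(\mathcal{Y}_{2,\Gamma_1}^{s}(\S)-\mathcal{Y}_{2,A}^{s}(\S)\big) -\tilde{\mathcal{Y}}_{2,A}^{s}\big(\mathcal{Y}_2^{s}(\S)\big)\\
 &= \S \;-\; (\mathcal{Y}_{2,\Gamma_1}^{s})^{-1}\big(\mathcal{Y}_{2,A}^{s}(\S)\big) \;-\; \tilde{\mathcal{Y}}_{2,A}^{s}\big(\mathcal{Y}_2^{s}(\S)\big),
\end{align*}
so the task reduces to showing that the two correction terms cancel.

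Since $\tilde{\mathcal{Y}}_{2,A}^{s}$ is itself defined as $(\mathcal{Y}_{2,\Gamma_1}^{s})^{-1}$ applied to a diagonal matrix built from $T$ and $\mathcal{Y}_3^{s}$, the injectivity of $\mathcal{Y}_{2,\Gamma_1}^{s}$ allows me to strip off that outer inverse and reduce the cancellation to the pointwise identity
\begin{equation*}
\mathcal{Y}_{2,A}^{s}(\S) \;=\; -\begin{bmatrix}
\int_{0}^{\zeta_1}\bigl[\mathcal{Y}_3^{s}(T(\mathcal{Y}_2^{s}(\S)))\bigr]_1\,d1 & 0\\
0 & \int_{0}^{\zeta_2}\bigl[\mathcal{Y}_3^{s}(T(\mathcal{Y}_2^{s}(\S)))\bigr]_2\,d2
\end{bmatrix}.
\end{equation*}
The central observation is that $T$ depends only on the first column of its argument restricted to $\zeta_1=0$, and at $\zeta_1=0$ both the integral term of $\mathcal{Y}_{2,\Gamma_1}^{s}$ and the diagonal correction $\mathcal{Y}_{2,A}^{s}$ vanish, so the first column of $\mathcal{Y}_2^{s}(\S)$ at $\zeta_1=0$ reduces to $[\tilde{\f J}(0,\cdot)\,\hat{\f S}(0,\cdot)\,\tilde{\f J}(0,\cdot)^{T}]^{1}$. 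Substituting this into $T$ and exploiting the matrix identity $\partial_k\tilde J^{-1}_{mi}\,\tilde J_{ij}=-\tilde J^{-1}_{mi}\,\partial_k\tilde J_{ij}$ (obtained from differentiating $\tilde{\f J}^{-1}\tilde{\f J}=\f I$) together with the chain rule relating $\partial_k$ on the physical side and $\hat\partial_1$ on the parametric side should collapse the right-hand side precisely to $-\mathcal{Y}_{2,A}^{s}(\S)$.

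Finally, the converse identity $\mathcal{Y}_2^{s}\circ\tilde{\mathcal{Y}}_2^{s}=\textup{id}$ would be checked by an entirely analogous expansion, with the roles of $\mathcal{Y}_{2,A}^{s}$ and $\tilde{\mathcal{Y}}_{2,A}^{s}$ interchanged; here the boundary-trace computation is symmetric to the one above, because $(\mathcal{Y}_{2,\Gamma_1}^{s})^{-1}$ evaluated at $\zeta_1=0$ is again governed by $\tilde{\f J}^{-1}$ and the integral correction in $\tilde{\mathcal{Y}}_{2,A}^{s}$ likewise vanishes on $\hat\Gamma_1$. The main obstacle is purely technical, namely the careful bookkeeping of indices in the product $\partial_k\tilde J^{-1}_{\cdot i}\,\j^{-1}_{k1}$ appearing in $T$ and matching it term-by-term with the expression $\hat\partial_1\j_{\cdot i}\cdot\j_{1l}(0,\cdot)\cdot\hat S_{il}(0,\cdot)$ occurring in $\mathcal{Y}_{2,A}^{s}$; once those tensor contractions are aligned, the desired cancellation is automatic, and the smoothness assumption $\f F,\f F^{-1}\in C^{3}$ together with $\S\in C^{1}(\overline{\o},\s)$ guarantees that every integrand is well defined and the target space $C^{1}(\overline{\widehat\o},\s)$ is respected.
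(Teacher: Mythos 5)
Your plan coincides with the paper's own proof in Section \ref{Lemma_proof_appendix_3}: the same expansion $\tilde{\mathcal{Y}}_2^{s}\circ\mathcal{Y}_2^{s}(\S)=\S-(\mathcal{Y}_{2,\Gamma_1}^{s})^{-1}(\mathcal{Y}_{2,A}^{s}(\S))-\tilde{\mathcal{Y}}_{2,A}^{s}(\mathcal{Y}_2^{s}(\S))$, the same reduction via linearity of $(\mathcal{Y}_{2,\Gamma_1}^{s})^{-1}$ to a cancellation of the two diagonal correction matrices, the same key observation that $T$ only sees the first column at $\zeta_1=0$ where $\mathcal{Y}_2^{s}(\S)$ reduces to $\tilde{\f J}\hat{\f S}\tilde{\f J}^{T}$, the same chain-rule/fundamental-theorem collapse of the integrand, and the same symmetric boundary-trace argument (via Lemma \ref{Lemma_compatibility_BC_strong_sym}) for the right inverse. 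The argument is correct and essentially identical to the paper's.
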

	
	\begin{proof}
		A detailed proof for the assertion can be found in the appendix; see Section \ref{Lemma_proof_appendix_3}.
	\end{proof}

	\begin{lemma}[Boundedness w.r.t. $H^1$-norm]
		\label{Lemma_Boundedness_strong}
		The mappings ${\mathcal{Y}}_2^{s} \colon C^1(\overline{\o},\s) \rightarrow C^1(\overline{\widehat{\o}},\s)$ and $\tilde{\mathcal{Y}}_2^{s} \colon C^1(\overline{\widehat{\o}},\s) \rightarrow C^1(\overline{{\o}},\s)$ are bounded in the sense  \begin{center}$\n{{\mathcal{Y}}_2^{s}(\S) }_{\t{H}(\widehat{\d})} \leq C \n{\S}_{\t{H}^1}$  and $\n{\tilde{\mathcal{Y}}_2^{s}(\tS) }_{\t{H}({\d})} \leq C \n{\tS}_{\t{H}^1}$.  \end{center}
		In particular, ${\mathcal{Y}}_2^{s}$ and $\tilde{\mathcal{Y}}_2^{s}$ have unique bounded extensions on $\t{H}^1(\o) \cap \t{L}^2(\o,\s)$, $\t{H}^1(\widehat{\o}) \cap \t{L}^2(\widehat{\o},\s)$ respectively. Hence, it is justified to write also  \begin{align*}
			{\mathcal{Y}}_2^{s} &\colon \t{H}^1(\o) \cap \t{L}^2(\o,\s) \rightarrow  \t{H}(\widehat{\o},\widehat{\d},\mathbb{S}), \\
			\tilde{\mathcal{Y}}_2^{s} &\colon \t{H}^1(\widehat{\o}) \cap \t{L}^2(\widehat{\o},\s) \rightarrow  \t{H}({\o},{\d},\mathbb{S}).
		\end{align*}
	\end{lemma}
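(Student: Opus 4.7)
The plan is to decompose the target $\t{H}(\widehat{\d})$-norm into its $\t{L}^2$-part and the $\f{L}^2$-norm of the divergence and to estimate each separately.

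For the $\t{L}^2$-bound on $\mathcal{Y}_{2}^s = \mathcal{Y}_{2,\Gamma_1}^s - \mathcal{Y}_{2,A}^s$, the first summand is already $\t{L}^2$-bounded by Lemma \ref{Lemma_strong_sym_L2}. For the correction $\mathcal{Y}_{2,A}^s(\S)$, note that in the $(1,1)$-entry the factors $\j_{1l}(0,\cdot)$ and $\hat{S}_{il}(0,\cdot)$ depend only on $\zeta_2$, so the integral collapses to $\j_{1l}(0,\zeta_2)\,\hat{S}_{il}(0,\zeta_2)\,\bigl(\j_{1i}(\zeta_1,\zeta_2)-\j_{1i}(0,\zeta_2)\bigr)$; an analogous argument (with Cauchy--Schwarz in the $\zeta_2$-integration) handles the $(2,2)$-entry. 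Since $\f{F}\in C^3(\overline{\widehat{\o}})$ makes all Jacobian factors uniformly bounded, we get $\n{\mathcal{Y}_{2,A}^s(\S)}_{\t{L}^2(\widehat{\o})} \leq C\,\n{\hat{S}_{il}(0,\cdot)}_{L^2(\hat{\Gamma}_1)}$ (with the analogous contribution from $\hat{\Gamma}_3$). The classical trace theorem on $\widehat{\o}$ gives $\n{\hat{S}_{il}(0,\cdot)}_{L^2(\hat{\Gamma}_1)} \leq C\,\n{\hat{S}_{il}}_{H^1(\widehat{\o})}$, and since the $C^3$-diffeomorphism $\f{F}$ induces equivalent $H^1$-norms between $\o$ and $\widehat{\o}$, we arrive at $\n{\mathcal{Y}_{2,A}^s(\S)}_{\t{L}^2} \leq C\,\n{\S}_{\t{H}^1}$.

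For the divergence part, Lemma \ref{Lemma_compatibility_strong} yields $\widehat{\nabla}\cdot \mathcal{Y}_2^s(\S) = \mathcal{Y}_3^s(\nabla \cdot \S)$, and combining $\f{L}^2$-boundedness of $\mathcal{Y}_3^s$ (Lemma \ref{Lemma_strong_sym_L2}) with $\n{\nabla\cdot \S}_{\f{L}^2} \leq \n{\S}_{\t{H}^1}$ closes the first estimate. The argument for $\tilde{\mathcal{Y}}_2^s$ is completely analogous: $\t{L}^2$-boundedness of $(\mathcal{Y}_{2,\Gamma_1}^s)^{-1}$ can be read off the explicit formula in Lemma \ref{Lemma_invertebility_Gamma_2_1} (Cauchy--Schwarz on the integral term and uniform boundedness of $\mathrm{Airy}(F_k^{-1})$ and $\J^{-1}$), the correction $\tilde{\mathcal{Y}}_{2,A}^s$ is controlled via the trace $\tilde{S}_{i1}(0,\cdot)$ on $\hat{\Gamma}_1$ by the same trace-theorem argument (here the boundedness of $\mathcal{Y}_3^s$ and of the integration operator are applied to the inner building block $T(\tS)$), and the divergence part is handled by reading the commuting diagram in Figure \ref{Fig:com_diagram_5} in the reverse direction. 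Finally, density of $C^1(\overline{\o},\s)$ in $\t{H}^1(\o)\cap \t{L}^2(\o,\s)$ together with the Banach-space property of $\t{H}(\widehat{\o},\widehat{\d},\mathbb{S})$ provides unique bounded linear extensions of $\mathcal{Y}_2^s$ and $\tilde{\mathcal{Y}}_2^s$ to the full $H^1$-spaces via the bounded-linear-transformation theorem.

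The central technical point is the correction $\mathcal{Y}_{2,A}^s$: in contrast to $\mathcal{Y}_{2,\Gamma_1}^s$, which is $\t{L}^2$-bounded, the correction evaluates $\S$ on the boundary $\hat{\Gamma}_1$, so its $\t{L}^2$-norm is only controllable through a trace. This forces the use of the $\t{H}^1$-norm on the domain side and explains why the extension cannot be pushed below $\t{H}^1$; the rest of the proof is a careful bookkeeping of uniformly bounded $C^2$-factors coming from $\f{F}$ and routine Cauchy--Schwarz estimates on one-dimensional integrals.
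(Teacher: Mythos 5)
Your proposal is correct and follows essentially the same route as the paper: both reduce the claim to an $\t{L}^2$-bound on the correction term $\mathcal{Y}_{2,A}^s$ (via Cauchy--Schwarz on the one-dimensional integrals, uniform boundedness of the Jacobian factors, and a trace estimate yielding the $\t{H}^1$-norm on the right-hand side), while the divergence part is handled through Lemma \ref{Lemma_compatibility_strong} combined with the $L^2$-boundedness from Lemma \ref{Lemma_strong_sym_L2}. Your observation that the $(1,1)$-entry collapses exactly because $\j_{1l}(0,\cdot)$ and $\hat{S}_{il}(0,\cdot)$ depend only on $\zeta_2$ is a harmless refinement of the paper's Cauchy--Schwarz step and does not change the argument.
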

	
	\begin{proof}
		We show the assertion only for ${\mathcal{Y}}_2^{s}$ since the second part can be seen with a similar reasoning.\\
		Due to Lemma \ref{Lemma_strong_sym_L2} and Lemma \ref{Lemma_compatibility_strong} it is enough to check  $ \n{{\mathcal{Y}}_{2,A}^s(\S)}_{\t{L}^2} \leq C \n{\S}_{\t{H}^1}$. 
		Since we assume $\f{F}, \f{F}^{-1}$ to be three times continuously differentiable we have $|\h_1\j_{mi}| \in C^0(\overline{\widehat{\o}})$ and in view of the Cauchy-Schwarz inequality it is for some constants $C<\infty$, i.e. $C$ may vary at different occurrences,
		
		\begin{align*}
			\Big|\int_{0}^{\zeta_1} \h_1 \j_{1i} \cdot \j_{1l}(0, \cdot) \cdot \hat{S}_{il}(0,\cdot)  \ d1\Big|^2 \leq C \int_{0}^{\zeta_1} \big|    \hat{\f{S}}(0,\cdot) \big|^2   \ d1, \ \ \ \hat{\S}\coloneqq \S \circ \f{F} .
		\end{align*}
		Then applying a standard trace estimate one sees 
		\begin{align*}
			\int_{0}^{1}	\Big|\int_{0}^{\zeta_1} \h_1 \j_{1i} \cdot \j_{1l}(0, \cdot) \cdot \hat{S}_{il}(0,\cdot)  \ d1\Big|^2  d2&\leq C   \int_{0}^{1} \big|    \hat{\f{S}}(0,\cdot) \big|^2   \ d2\  \\
			& \leq C \n{\hat{\S}}_{\t{H}^1}^2 
			\leq C \n{\S}_{\t{H}^1}^2 .
		\end{align*}
		And in an analogous manner we get 
		\begin{align*}
			\Big|\int_{0}^{\zeta_2} \h_1 \j_{2i} \cdot \j_{1l}(0, \cdot) \cdot \hat{S}_{il}(0,\cdot) \ d2\Big|^2 \leq  C \n{\S}_{\t{H}^1}^2.
		\end{align*}
		Latter two estimates yield the boundedness result for ${\mathcal{Y}}_{2,A}^{s}$. 
	\end{proof}

\subsection{Discretization}
Let us apply the  $\mathcal{Y}_i^s$ for the discretization of the mixed elasticity system with strong symmetry.
Our starting point is the next definition of  spline spaces:
\begin{align*}
	\widehat{\t{V}}_{h,2}^s &= \widehat{\t{V}}_{h,2}^s(p,r) \coloneqq   \textup{SYM}(	S_{p+1,p-1}^{r+1,r-1},S_{p,p}^{r,r}, S_{p-1,p+1}^{r-1,r+1})   , \\
	\ \ \ \widehat{\f{V}}_{h,3}^s&=\widehat{\f{V}}_{h,3}^s(p,r) \coloneqq \big(S_{p,p-1}^{r,r-1} \times S_{p-1,p}^{r-1,r}\big)^T\textup{with} \ \ p>r\geq 1.
\end{align*}
We note that with $r \geq 1$ we get  directly $\widehat{\t{V}}_{h,2}^s \subset \t{H}(\widehat{\o},\widehat{\d},\mathbb{S}) \cap \t{H}^1(\widehat{\o})$. And the isogeometric test spaces are defined by means of  $\tilde{\mathcal{Y}}^s_2, \ (\mathcal{Y}^s_3)^{-1}$, namely
\begin{align}
	{\t{V}}_{h,2}^s \coloneqq \tilde{\mathcal{Y}}_2^s \big(\widehat{\t{V}}_{h,2}^s\big), \ \ \ \ {\f{V}}_{h,3}^s \coloneqq (\mathcal{Y}_3^s)^{-1} \big(\widehat{\f{V}}_{h,3}^s\big).
\end{align}
Exploiting the piecewise smoothness of splines one obtains the next properties.
\begin{lemma}
	\label{lemma_sch1}
	It holds ${\mathcal{Y}}_2^{s} \circ \tilde{\mathcal{Y}}_2^{s}( \hat{\f{\tau}}_h) = \hat{\f{\tau}}_h, \ \forall \hat{\f{\tau}}_h \in \widehat{\t{V}}_{h,2}^s$ and $(\widehat{\nabla} \cdot) \circ  {\mathcal{Y}}_2^{s} = {\mathcal{Y}}_3^{s} \circ {\nabla} \cdot\ $ on ${\t{V}}_{h,2}^s$.
\end{lemma}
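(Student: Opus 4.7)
The plan is to exploit the piecewise $C^\infty$ regularity of spline functions, since the pointwise algebraic identities from Lemma \ref{Lemma_invertebility_Gamma_2_2} and Lemma \ref{Lemma_compatibility_strong} were only established for globally $C^1$ inputs, a condition that spline fields in $\widehat{\t{V}}_{h,2}^s$ in general do not meet.

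First I would observe that because $r\geq 1$ one has $\widehat{\t{V}}_{h,2}^s \subset \t{H}^1(\widehat{\o})\cap \t{L}^2(\widehat{\o},\s)$, so the bounded extension constructed in Lemma \ref{Lemma_Boundedness_strong} makes $\tilde{\mathcal{Y}}_2^s$ well-defined on $\widehat{\t{V}}_{h,2}^s$ with values in $\t{H}(\o,\d,\s)$. Moreover, each $\hat{\f{\tau}}_h \in \widehat{\t{V}}_{h,2}^s$ is $C^\infty$ on every open parametric mesh element. Combining this with the explicit formula for $\tilde{\mathcal{Y}}_2^s$ and the $C^3$-smoothness of $\f{F}, \f{F}^{-1}$ from Assumption \ref{eq_mixed_BC_strong_sym}, one reads off that $\tilde{\mathcal{Y}}_2^s(\hat{\f{\tau}}_h)$ is itself $C^\infty$ on each open physical mesh element. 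Hence $\mathcal{Y}_2^s$ can be evaluated on $\tilde{\mathcal{Y}}_2^s(\hat{\f{\tau}}_h)$ element-by-element via its original pointwise formula, even though globally $\tilde{\mathcal{Y}}_2^s(\hat{\f{\tau}}_h)$ need not lie in $\t{H}^1(\o)$.

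For the first identity I would argue by density. Choose a sequence $(\tS_n)\subset C^1(\overline{\widehat{\o}},\s)$ with $\tS_n \to \hat{\f{\tau}}_h$ in the $\t{H}^1$-norm, for example by mollification after a small extension. Lemma \ref{Lemma_invertebility_Gamma_2_2} gives $\mathcal{Y}_2^s\circ \tilde{\mathcal{Y}}_2^s(\tS_n) = \tS_n$ pointwise. The algebraic manipulations carried out in the proof of that lemma hold a.e.\ whenever the appearing derivatives and traces make sense, which is the case for the piecewise-$C^\infty$ field $\tilde{\mathcal{Y}}_2^s(\hat{\f{\tau}}_h)$ on each element; together with the $\t{H}(\d)$-boundedness in Lemma \ref{Lemma_Boundedness_strong}, passing to the $\t{L}^2$-limit yields $\mathcal{Y}_2^s\circ \tilde{\mathcal{Y}}_2^s(\hat{\f{\tau}}_h) = \hat{\f{\tau}}_h$ a.e.\ on $\widehat{\o}$.

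The divergence identity then follows from Lemma \ref{Lemma_compatibility_strong} applied piecewise: since $\tilde{\mathcal{Y}}_2^s(\hat{\f{\tau}}_h)$ is $C^\infty$ on each open physical element, the relation $\widehat{\nabla}\cdot \mathcal{Y}_2^s(\tilde{\mathcal{Y}}_2^s(\hat{\f{\tau}}_h)) = \mathcal{Y}_3^s(\nabla\cdot \tilde{\mathcal{Y}}_2^s(\hat{\f{\tau}}_h))$ holds pointwise within every element, and because $\tilde{\mathcal{Y}}_2^s(\hat{\f{\tau}}_h) \in \t{H}(\o,\d,\s)$ both sides are controlled in $\f{L}^2(\widehat{\o})$, yielding the asserted global identity on $\t{V}_{h,2}^s$. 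The main obstacle I expect is that the extension $\tilde{\mathcal{Y}}_2^s$ provided by Lemma \ref{Lemma_Boundedness_strong} is bounded only into $\t{H}(\d)$ and not into $\t{H}^1$, so a priori the composition with $\mathcal{Y}_2^s$ is not even formally defined; the resolution is precisely the piecewise smoothness of the spline-valued outputs, which permits evaluating $\mathcal{Y}_2^s$ via its defining pointwise expression element-by-element.
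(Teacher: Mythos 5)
Your proposal is correct and follows essentially the same route as the paper: the paper's own (very terse) proof likewise invokes the piecewise smoothness of splines together with the containments $\widehat{\t{V}}_{h,2}^s \subset \t{H}(\widehat{\o},\widehat{\d},\mathbb{S}) \cap \t{H}^1(\widehat{\o})$ and ${\t{V}}_{h,2}^s \subset \t{H}({\o},{\d},\mathbb{S}) \cap \t{H}^1({\o})$ (from $r\geq 1$) to conclude that the computations in the proofs of Lemma \ref{Lemma_invertebility_Gamma_2_2} and Lemma \ref{Lemma_compatibility_strong} remain valid for these test functions. Your element-by-element evaluation and density argument simply make explicit what the paper leaves implicit.
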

\begin{proof}
	Above in Section \ref{section_dirichlet_problem} we assumed several times the mappings $\S$ and $\tS$ to be smooth enough, namely $C^1$-regular. By the piecewise smoothness of spline functions and the fact that $\widehat{\t{V}}_{h,2}^s \subset \t{H}(\widehat{\o},\widehat{\d},\mathbb{S}) \cap \t{H}^1(\widehat{\o})$ and ${\t{V}}_{h,2}^s \subset \t{H}({\o},{\d},\mathbb{S}) \cap \t{H}^1({\o})$ the proof steps for Lemma \ref{Lemma_invertebility_Gamma_2_2} (see Section \ref{Lemma_proof_appendix_3}) and the  divergence relation from    Lemma  \ref{Lemma_compatibility_strong} are still valid for our choice of test functions.
\end{proof}
We get the next useful result:
\begin{lemma}
	\label{Lemma_exact_dis}
	The diagram in Fig. \ref{Fig:com_diagram_6} commutes and it is exact meaning  the right column corresponds to the  image of the left column w.r.t. the divergence operator. Moreover we have $\widehat{\nabla} \cdot \big(\widehat{\t{V}}_{h,2}^s \cap \t{H}_{\hat{\Gamma}_1}(\widehat{\Omega},\widehat{\d},\s)\big) = \widehat{\f{V}}_{h,3}^s$.
\end{lemma}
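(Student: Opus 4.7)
The plan is to reduce everything to the parametric setting via Lemma~\ref{lemma_sch1} and then construct an explicit divergence preimage using integration of splines along a coordinate direction.

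First I would handle commutativity. By definition every $\f{\tau}_h \in \t{V}_{h,2}^s$ is of the form $\tilde{\mathcal{Y}}_2^s(\hat{\f{\tau}}_h)$ with $\hat{\f{\tau}}_h \in \widehat{\t{V}}_{h,2}^s$, and Lemma~\ref{lemma_sch1} gives $\mathcal{Y}_2^s(\f{\tau}_h) = \hat{\f{\tau}}_h \in \widehat{\t{V}}_{h,2}^s$ together with $\widehat{\nabla}\cdot\mathcal{Y}_2^s(\f{\tau}_h) = \mathcal{Y}_3^s(\nabla\cdot\f{\tau}_h)$. A direct degree/regularity count on $\widehat{\t{V}}_{h,2}^s = \textup{SYM}(S_{p+1,p-1}^{r+1,r-1},S_{p,p}^{r,r},S_{p-1,p+1}^{r-1,r+1})$ shows that $\widehat{\nabla}\cdot\widehat{\t{V}}_{h,2}^s \subset S_{p,p-1}^{r,r-1}\times S_{p-1,p}^{r-1,r} = \widehat{\f{V}}_{h,3}^s$, since e.g.\ $\h_1 S_{p+1,p-1}^{r+1,r-1} \subset S_{p,p-1}^{r,r-1}$ and $\h_2 S_{p,p}^{r,r} \subset S_{p,p-1}^{r,r-1}$. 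Applying $(\mathcal{Y}_3^s)^{-1}$ to both sides and using the commutativity just established yields $\nabla\cdot\t{V}_{h,2}^s \subset \f{V}_{h,3}^s$, so that the diagram is indeed well-posed and commutes.

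The core work is exactness, and in fact I would establish the stronger statement $\widehat{\nabla}\cdot\bigl(\widehat{\t{V}}_{h,2}^s \cap \t{H}_{\hat{\Gamma}_1}(\widehat{\o},\widehat{\d},\s)\bigr) = \widehat{\f{V}}_{h,3}^s$ directly, from which the unconstrained parametric exactness is immediate and the exactness of the physical top row then follows by transporting back through $\tilde{\mathcal{Y}}_2^s$ and $(\mathcal{Y}_3^s)^{-1}$ together with the commuting diagram. Given $\hat{\f{v}} = (v_1,v_2)^T \in \widehat{\f{V}}_{h,3}^s$, I would define
\begin{align*}
s_1(\zeta_1,\zeta_2) &\coloneqq \int_{0}^{\zeta_1} v_1(\tau,\zeta_2)\,d\tau, \quad s_2 \equiv 0, \quad s_3(\zeta_1,\zeta_2) \coloneqq \int_{0}^{\zeta_2} v_2(\zeta_1,\tau)\,d\tau,
\end{align*}
and set $\hat{\f{\tau}} \coloneqq \textup{SYM}(s_1,s_2,s_3)$. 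Then $\widehat{\nabla}\cdot\hat{\f{\tau}} = (\h_1 s_1 + \h_2 s_2, \h_1 s_2 + \h_2 s_3)^T = (v_1,v_2)^T = \hat{\f{v}}$ by the fundamental theorem of calculus, and by construction $s_1(0,\cdot) = 0 = s_2$, so on $\hat{\Gamma}_1 = \{0\}\times(0,1)$ the normal trace $\hat{\f{\tau}}\cdot\hat{\f{n}} = -(s_1(0,\cdot), s_2(0,\cdot))^T$ vanishes pointwise; since the splines are at least continuous for $r\geq 1$, this coincides with the normal trace in $\f{H}^{-1/2}(\hat{\Gamma}_1)$, placing $\hat{\f{\tau}}$ in $\t{H}_{\hat{\Gamma}_1}(\widehat{\o},\widehat{\d},\s)$.

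The only non-trivial check is that $\hat{\f{\tau}}$ actually sits in $\widehat{\t{V}}_{h,2}^s$, which I would verify using the standard fact that $\zeta_1$-antidifferentiation maps $S_{p,q}^{r,s}$ into $S_{p+1,q}^{r+1,s}$ (and analogously for $\zeta_2$). Applied direction-wise this gives $s_1 \in S_{p+1,p-1}^{r+1,r-1}$, $s_3 \in S_{p-1,p+1}^{r-1,r+1}$, and trivially $s_2 = 0 \in S_{p,p}^{r,r}$, matching the three slots of $\widehat{\t{V}}_{h,2}^s$ exactly. I expect this bookkeeping on spline spaces to be the only delicate point; once it is settled, the commutativity already established upgrades the parametric surjectivity statement onto $\t{V}_{h,2}^s \twoheadrightarrow \f{V}_{h,3}^s$, completing both the commutativity and exactness of Fig.~\ref{Fig:com_diagram_6}.
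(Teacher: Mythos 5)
Your proposal is correct and follows essentially the same route as the paper: the inclusion $\widehat{\nabla}\cdot\widehat{\t{V}}_{h,2}^s \subset \widehat{\f{V}}_{h,3}^s$ by a degree/regularity count, the explicit preimage $\textup{SYM}\bigl(\int_0^{\zeta_1}\tilde v_1\,d1,\;0,\;\int_0^{\zeta_2}\tilde v_2\,d2\bigr)$ for surjectivity with the vanishing normal trace on $\hat{\Gamma}_1$, and Lemma~\ref{lemma_sch1} to transfer commutativity and exactness to the physical row. Your additional remarks (the $\f{H}^{-1/2}$ trace identification and the antidifferentiation rule for spline spaces) only make explicit what the paper leaves implicit.
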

\begin{proof}
	We show the last equation of the assertion and  $\widehat{\nabla} \cdot \big(\widehat{\t{V}}_{h,2}^s \big) \subset \widehat{\f{V}}_{h,3}^s$. Then the commutativity and exactness of the diagram in Fig. \ref{Fig:com_diagram_6} follows by Lemma  \ref{lemma_sch1}. \\ Since   \begin{align*}
		\widehat{\nabla} \cdot \big[S_{p+1,p-1}^{r+1,r-1} \times S_{p,p}^{r,r} \big]&= \h_1\big(S_{p+1,p-1}^{r+1,r-1}\big) + \h_2\big(S_{p,p}^{r,r}\big)\subset S_{p,p-1}^{r,r-1} , \\\widehat{\nabla} \cdot \big[S_{p,p}^{r,r} \times S_{p-1,p+1}^{r-1,r+1} \big]&= \h_1\big(S_{p,p}^{r,r}\big) + \h_2\big(S_{p-1,p+1}^{r-1,r+1}\big) \subset  S_{p-1,p}^{r-1,r},
	\end{align*}
	we obtain  indeed $\widehat{\nabla} \cdot \big(\widehat{\t{V}}_{h,2}^s \big) \subset \widehat{\f{V}}_{h,3}^s$.\\
	Now let $\tilde{\f{v}} \coloneqq (\tilde{v}_1,\tilde{v}_2)^T \in \widehat{\f{V}}_{h,3}^s$ arbitrary but fixed. Then define $\tilde{\f{\tau}}= (\tilde{\tau}_{ij})$ through \begin{align} \label{eq_construction_1}
		\tilde{\tau}_{11} &\coloneqq \int_{0}^{\zeta_1} \tilde{v}_1 \ d1 \in \int_{0}^{\zeta_1} S_{p,p-1}^{r,r-1} \ d1 \subset S_{p+1,p-1}^{r+1,r-1} , \\ \ \tilde{\tau}_{22} &\coloneqq \label{eq_construction_2} \int_{0}^{\zeta_2} \tilde{v}_2 \ d2 \in \int_{0}^{\zeta_2} S_{p-1,p}^{r-1,r} \ d2 \subset S_{p-1,p+1}^{r-1,r+1} , \\ \ \tilde{\tau}_{12}&=\tilde{\tau}_{21} \coloneqq 0. \label{eq_construction_3}
	\end{align}
	Clearly, we have $\tilde{\f{\tau}} \in \widehat{\t{V}}_{h,2}^s \cap \t{H}_{\hat{\Gamma}_1}(\widehat{\Omega},\widehat{\d},\s)$ and $\widehat{\nabla} \cdot \tilde{\f{\tau}} = \tilde{\f{v}}$. The arbitrariness of $\tilde{\f{v}}$ finishes the proof. 
\end{proof}
\begin{figure}[h!]
	\centering
	\begin{tikzpicture}
		\node at (0,1.4) {${\f{V}}_{h,3}^s$};

		\draw[->] (-1.6,1.4) to (-0.72,1.4);		
		
		\node at (-1.2,1.65) {$\nabla \cdot$};			
		\node at (-2.25,1.4) {${\t{V}}_{h,2}^s$};			
		
		\draw[->] (-2.25,1.1) -- (-2.25,0.2);			
		\draw[->] (0.95-0.8,1.1) -- (0.95-0.8,0.2);	
		\node at (0,-0.2) {$\widehat{\f{V}}_{h,3}^s$};			
		
		\draw[->] (-1.6,-0.2) to (-0.72,-0.2);
		
		\node at (-1.2,0.15) {$\widehat{\nabla} \cdot $};			
		\node at (-2.25,-0.2) {$\widehat{\t{V}}_{h,2}^s$};			
		\node[left] at (-2.2,0.7) { \small $\mathcal{Y}_{2}^{s}$};
		\node[left] at (0.15,0.7) { \small $\mathcal{Y}_3^{s}$};	
		
	\end{tikzpicture}
	\caption{The diagram commutes and it is exact.}
	\label{Fig:com_diagram_6}
\end{figure}

\begin{theorem}[Well-posedness of the mixed formulation with strong symmetry]
	\label{theorem:well-posedness}
	The weak formulation \eqref{weak_form_strong_symmetry_contiuous_1} with strong symmetry  is well-posed. In other words,  there exists a unique solution and the underlying  inf-sup condition is satisfied. 
\end{theorem}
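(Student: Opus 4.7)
The strategy is to verify the two structural hypotheses of Brezzi's theorem for the saddle-point problem \eqref{weak_form_strong_symmetry_contiuous_1}. Continuity of $a(\f{\sigma},\f{\tau}) \coloneqq \langle \f{A}\f{\sigma},\f{\tau}\rangle$ and $b(\f{\tau},\f{v}) \coloneqq \langle \nabla\cdot\f{\tau},\f{v}\rangle$ on $\boldsymbol{\mathcal{H}}(\Omega,\d,\s) \times \f{L}^2(\Omega)$ is immediate from the assumed $L^2$-boundedness of $\f{A}$, Cauchy--Schwarz, and the definition of the $\t{H}(\d)$-norm. The remaining two conditions to check are kernel-coercivity of $a$ and the surjectivity-type inf-sup estimate for $b$.

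For kernel-coercivity, observe that the kernel $K \coloneqq \{\f{\tau} \in \boldsymbol{\mathcal{H}}(\Omega,\d,\s) : \nabla\cdot\f{\tau}=\f{0}\}$ satisfies $\norm{\f{\tau}}_{\t{H}(\d)} = \norm{\f{\tau}}_{\t{L}^2}$ for every $\f{\tau} \in K$. Since $\f{A}$ is symmetric, positive definite and bounded on $\t{L}^2(\Omega,\s)$, there exists $\alpha>0$ with $\langle \f{A}\f{\tau},\f{\tau}\rangle \geq \alpha\norm{\f{\tau}}_{\t{L}^2}^2 = \alpha\norm{\f{\tau}}_{\t{H}(\d)}^2$ for all $\f{\tau} \in K$. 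This is the standard argument and requires no new ingredients.

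The heart of the proof is the inf-sup condition: given any $\f{v} \in \f{L}^2(\Omega)$, produce $\f{\tau} \in \boldsymbol{\mathcal{H}}(\Omega,\d,\s)$ with $\nabla\cdot\f{\tau} = \f{v}$ and $\norm{\f{\tau}}_{\t{H}(\d)} \leq C\norm{\f{v}}_{\f{L}^2}$. Here I would transport the problem to the parametric domain via the maps of Section~\ref{section_dirichlet_problem}. Set $\tilde{\f{v}} \coloneqq \mathcal{Y}_3^s(\f{v}) \in \f{L}^2(\widehat{\o})$; by Lemma~\ref{Lemma_strong_sym_L2} we have $\norm{\tilde{\f{v}}}_{\f{L}^2} \leq C\norm{\f{v}}_{\f{L}^2}$. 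In $\widehat{\o}$, apply the same diagonal ansatz used in the proof of Lemma~\ref{Lemma_exact_dis}, i.e.\ define $\tilde{\f{\tau}} = \textup{SYM}(\tilde{\tau}_{11},0,\tilde{\tau}_{22})$ with $\tilde{\tau}_{11}(\zeta_1,\zeta_2) \coloneqq \int_0^{\zeta_1}\tilde{v}_1\, d1$ and $\tilde{\tau}_{22}(\zeta_1,\zeta_2) \coloneqq \int_0^{\zeta_2}\tilde{v}_2\, d2$. The Cauchy--Schwarz estimate used throughout Section~\ref{section_dirichlet_Neumann} yields $\norm{\tilde{\f{\tau}}}_{\t{L}^2} \leq \norm{\tilde{\f{v}}}_{\f{L}^2}$ and, by construction, $\widehat{\nabla}\cdot\tilde{\f{\tau}} = \tilde{\f{v}}$, so $\tilde{\f{\tau}} \in \t{H}(\widehat{\o},\widehat{\d},\s)$ with $\norm{\tilde{\f{\tau}}}_{\t{H}(\widehat{\d})} \leq \sqrt{2}\,\norm{\tilde{\f{v}}}_{\f{L}^2}$.

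Pulling back, I set $\f{\tau} \coloneqq \tilde{\mathcal{Y}}_2^s(\tilde{\f{\tau}})$. By Lemma~\ref{Lemma_compatibility_strong} (applied after a density argument, using that smooth symmetric tensors are dense in $\t{H}(\widehat{\o},\widehat{\d},\s)$) together with the commuting diagram in Fig.~\ref{Fig:com_diagram_5}, the identity $\nabla\cdot\f{\tau} = (\mathcal{Y}_3^s)^{-1}(\widehat{\nabla}\cdot\tilde{\f{\tau}}) = (\mathcal{Y}_3^s)^{-1}(\tilde{\f{v}}) = \f{v}$ holds in $\f{L}^2(\Omega)$, so $\f{\tau} \in \boldsymbol{\mathcal{H}}(\Omega,\d,\s)$. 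The norm estimate combines the $\t{L}^2$-boundedness of $\tilde{\mathcal{Y}}_2^s$ (visible from the explicit formula \eqref{eq_inv_gamma_1} together with the regularity of $\f{F}$) and the $\f{L}^2$-boundedness of $(\mathcal{Y}_3^s)^{-1}$ stated in Lemma~\ref{Lemma_invertibility_gamma_3}, yielding $\norm{\f{\tau}}_{\t{H}(\d)} \leq C\norm{\tilde{\f{\tau}}}_{\t{H}(\widehat{\d})} \leq C\norm{\f{v}}_{\f{L}^2}$ with a constant depending only on $\f{F}$ and $\Omega$. The hardest part is the technical justification of the $\t{H}(\widehat{\d}) \to \t{H}(\d)$ boundedness of $\tilde{\mathcal{Y}}_2^s$: Lemma~\ref{Lemma_Boundedness_strong} only supplies $\t{H}^1 \to \t{H}(\d)$ boundedness, so one has to bypass it by bounding the $\t{L}^2$-part of $\f{\tau}$ directly from the explicit formula and controlling the divergence separately via the commutativity identity; this is where density of smooth tensors and the structure of $\mathcal{Y}_{2,A}^s$ (note its boundary dependence in $\zeta_1=0$) need some care. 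Once all three conditions are in hand, Brezzi's theorem yields existence, uniqueness and continuous dependence on the data $(\f{f},\f{u}_D)$, which is the claim.
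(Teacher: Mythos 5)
Your proposal is correct in substance, but it takes a genuinely different route from the paper: the paper's entire proof of Theorem~\ref{theorem:well-posedness} is a citation to \cite[Section 8.8]{ArnoldBook} and \cite[Section 4.4.2]{ArnoldBook}, i.e.\ it delegates the continuous well-posedness to the standard literature, whereas you verify Brezzi's conditions directly using the paper's own pullback machinery. Your construction of a divergence preimage — transport $\f{v}$ to $\widehat{\o}$ via $\mathcal{Y}_3^s$, build the diagonal field $\tilde{\f{\tau}}$ by integration as in \eqref{eq_construction_1}--\eqref{eq_construction_3}, pull back — is not what the paper does for this theorem, but it is exactly the argument the paper deploys one theorem later for the \emph{discrete} inf-sup condition (Theorem~\ref{theorem_ discretization_strong_sym}) and sketches in the remark on well-posedness of the mixed-BC problem \eqref{weak_form_strong_symmetry_contiuous_BC}; so your proof is a legitimate self-contained alternative, at the price of being tied to the single-patch diffeomorphic parametrization, while the cited reference argument is geometry-independent. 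On the one technical point you flag as hard: the clean resolution is not to fight for $\t{H}(\widehat{\d})\to\t{H}(\d)$ boundedness of $\tilde{\mathcal{Y}}_2^s$ in general, but to observe that your $\tilde{\f{\tau}}$ has vanishing first column on $\hat{\Gamma}_1$ (since $\tilde{\tau}_{11}(0,\cdot)=\tilde{\tau}_{21}=0$), hence lies in $\t{H}_{\hat{\Gamma}_1}(\widehat{\o},\widehat{\d},\s)$, the correction term $\tilde{\mathcal{Y}}_{2,A}^s$ — which depends only on $\tilde{S}_{i1}(0,\cdot)$ — vanishes, and therefore $\tilde{\mathcal{Y}}_2^s(\tilde{\f{\tau}})=(\mathcal{Y}_{2,\Gamma_1}^s)^{-1}(\tilde{\f{\tau}})$, for which $L^2$-boundedness (Lemma~\ref{Lemma_strong_sym_L2} and the explicit formula \eqref{eq_inv_gamma_1}) and divergence compatibility (Lemma~\ref{Lemmma_strong_sym_div}) are available; this is precisely the shortcut the paper takes in the proof of Theorem~\ref{theorem_ discretization_strong_sym}. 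With that repair your plan closes completely; the continuity and kernel-coercivity steps are standard under the paper's assumption that $\f{A}$ is bounded and positive definite.
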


\begin{proof}
	Follows directly by the explanations in   \cite[Section 8.8]{ArnoldBook} and \cite[Section 4.4.2]{ArnoldBook}. 
\end{proof}

Finally we can prove now that the weak formulation  with Dirichlet BC and  strong symmetry can be discretized properly by means of the spaces ${\t{V}}_{h,2}^s, \ {\f{V}}_{h,3}^s$. 

\begin{theorem}[Discrete method in the strong symmetry case]
	\label{theorem_ discretization_strong_sym}
	Let $(\f{\sigma},\f{u})$ be the exact solution of the weak form with strong symmetry \eqref{weak_form_strong_symmetry_contiuous_1} equipped with Dirichlet data $\f{u}_D \in \f{H}^{1/2}(\Gamma)$. The discretized weak form that seeks for $\f{\sigma}_h \in {\t{V}}_{h,2}^s, \ \f{u}_h \in {\f{V}}_{h,3}^s $ s.t.
	\begin{alignat}{4}
		\label{dis_weak_form_strong_symmetry_contiuous}
		&\langle \p{A} \boldsymbol{\sigma}_h , \boldsymbol{\tau}_h \rangle \ + \ &&\langle \f{u}_h , \nabla \cdot \boldsymbol{\tau}_h  \rangle \  \color{black}  &&= \langle \boldsymbol{\tau}_h \cdot \p{n} ; \f{u}_D\rangle_{\Gamma}, \ \hspace{0.4cm} &&\forall \boldsymbol{\tau}_h \in {\t{V}}_{h,2}^s, \nonumber\\
		&\langle \nabla \cdot \boldsymbol{\sigma}_h , \f{v}_h \rangle &&  &&= \langle \f{f},\f{v}_h\rangle, \ \hspace{0.4cm} && \forall \f{v}_h \in {\f{V}}_{h,3}^s,
	\end{alignat}
	is inf-sup stable. More precisely, there are constants $0<\gamma_1, \gamma_2$ independent from $h$ satisfying 
	\begin{align}
		\underset{\f{v}_h \in {\f{V}}_{h,3}^s\textbackslash \{0\}}{\inf}  \  \ \underset{\f{\tau}_h \in {\t{V}}_{h,2}^s}{\sup} \	\frac{ \langle \nabla \cdot \f{\tau}_h, \f{v}_h \rangle }{ \norm{\f{\tau}_h}_{\t{H}(\d)} \norm{\f{v}_h}_{\f{L}^2} } \geq \gamma_1 > 0 
	\end{align}
	and 
	\begin{align*}
		\langle \p{A} \boldsymbol{\tau}_h , \boldsymbol{\tau}_h \rangle &\geq  \gamma_2  \norm{\f{\tau}_h}_{\t{H}(\d)}^2, \ \ \forall \f{\tau}_h \in B_h, \ \textup{with} \ \\
		B_h&\coloneqq \{ \f{\tau}_h \in {\t{V}}_{h,2}^s \ | \ \langle \nabla \cdot \f{\tau}_h, \f{v}_h \rangle = 0 , \ \forall \f{v}_h \in {\f{V}}_{h,3}^s \}.
	\end{align*}

\end{theorem}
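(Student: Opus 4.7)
The plan is to verify the two Brezzi conditions by working in the parametric domain through the pullbacks $\mathcal{Y}_2^s$ and $\mathcal{Y}_3^s$. The coercivity half is the easy one. I would first observe that $\nabla\cdot\t{V}_{h,2}^s \subset \f{V}_{h,3}^s$: for any $\f{\tau}_h = \tilde{\mathcal{Y}}_2^s(\hat{\f{\tau}}_h) \in \t{V}_{h,2}^s$, Lemma~\ref{lemma_sch1} gives $\mathcal{Y}_3^s(\nabla\cdot\f{\tau}_h) = \widehat{\nabla}\cdot\hat{\f{\tau}}_h$, which lies in $\widehat{\f{V}}_{h,3}^s$ by Lemma~\ref{Lemma_exact_dis}, so $\nabla\cdot\f{\tau}_h \in (\mathcal{Y}_3^s)^{-1}(\widehat{\f{V}}_{h,3}^s) = \f{V}_{h,3}^s$. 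Because $\nabla\cdot\f{\tau}_h$ already sits in $\f{V}_{h,3}^s$, the defining orthogonality of $B_h$ (take $\f{v}_h = \nabla\cdot\f{\tau}_h$) forces $\nabla\cdot\f{\tau}_h = 0$, hence $\n{\f{\tau}_h}_{\t{H}(\d)} = \n{\f{\tau}_h}_{\t{L}^2}$, and the coercivity bound follows from the assumed uniform positive-definiteness of $\f{A}$ with $\gamma_2$ depending only on $\f{A}$.

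For the inf-sup I would construct an explicit right-inverse of the divergence. Given $\f{v}_h \in \f{V}_{h,3}^s$, push it to $\hat{\f{v}}_h := \mathcal{Y}_3^s(\f{v}_h) \in \widehat{\f{V}}_{h,3}^s$ and, following the construction in the proof of Lemma~\ref{Lemma_exact_dis}, set
\begin{equation*}
\tilde{\tau}_{11} := \int_0^{\zeta_1} \hat{v}_1 \, d1, \qquad \tilde{\tau}_{22} := \int_0^{\zeta_2} \hat{v}_2 \, d2, \qquad \tilde{\tau}_{12} = \tilde{\tau}_{21} := 0,
\end{equation*}
so that $\tilde{\f{\tau}} \in \widehat{\t{V}}_{h,2}^s$ with $\widehat{\nabla}\cdot\tilde{\f{\tau}} = \hat{\f{v}}_h$. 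Defining $\f{\tau}_h := \tilde{\mathcal{Y}}_2^s(\tilde{\f{\tau}}) \in \t{V}_{h,2}^s$ and invoking Lemma~\ref{lemma_sch1} once more, I obtain $\mathcal{Y}_3^s(\nabla\cdot\f{\tau}_h) = \widehat{\nabla}\cdot\tilde{\f{\tau}} = \hat{\f{v}}_h$, hence $\nabla\cdot\f{\tau}_h = \f{v}_h$, which already yields $\langle \nabla\cdot\f{\tau}_h, \f{v}_h\rangle = \n{\f{v}_h}_{\f{L}^2}^2$ and $\n{\nabla\cdot\f{\tau}_h}_{\f{L}^2} = \n{\f{v}_h}_{\f{L}^2}$.

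The hard part is the $\t{L}^2$-bound $\n{\f{\tau}_h}_{\t{L}^2} \leq C\n{\f{v}_h}_{\f{L}^2}$ with $C$ independent of $h$, because Lemma~\ref{Lemma_Boundedness_strong} controls $\tilde{\mathcal{Y}}_2^s$ only through the $\t{H}^1$-norm of its argument, and $\n{\tilde{\f{\tau}}}_{\t{H}^1}$ cannot be bounded by $\n{\hat{\f{v}}_h}_{\f{L}^2}$ uniformly in $h$. The key observation is that my $\tilde{\f{\tau}}$ was designed so that its first column vanishes on $\hat{\Gamma}_1$: $\tilde{S}_{11}(0,\cdot) = 0$ because the integral is empty, and $\tilde{S}_{21}(0,\cdot) \equiv 0$ since $\tilde{\tau}_{21} \equiv 0$. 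Plugging this into the definition of $T(\cdot)$ yields $T(\tilde{\f{\tau}}) = 0$, hence $\tilde{\mathcal{Y}}_{2,A}^s(\tilde{\f{\tau}}) = 0$, and therefore $\f{\tau}_h = (\mathcal{Y}_{2,\Gamma_1}^s)^{-1}(\tilde{\f{\tau}})$. The explicit formula \eqref{eq_inv_gamma_1} together with the same Cauchy--Schwarz argument used in the proof of Lemma~\ref{Lemma_strong_sym_L2} gives $\n{\f{\tau}_h}_{\t{L}^2} \leq C\n{\tilde{\f{\tau}}}_{\t{L}^2}$, and a further Cauchy--Schwarz applied to the defining integrals of $\tilde{\tau}_{11}, \tilde{\tau}_{22}$ yields $\n{\tilde{\f{\tau}}}_{\t{L}^2} \leq C\n{\hat{\f{v}}_h}_{\f{L}^2} \leq C\n{\f{v}_h}_{\f{L}^2}$ via the $L^2$-boundedness of $\mathcal{Y}_3^s$. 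Combining everything, $\n{\f{\tau}_h}_{\t{H}(\d)} \leq C\n{\f{v}_h}_{\f{L}^2}$, so the inf-sup quotient is bounded below by $\gamma_1 = 1/C > 0$ uniformly in $h$.
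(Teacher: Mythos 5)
Your proposal is correct and follows essentially the same route as the paper's own proof: coercivity on $B_h$ via the inclusion $\nabla\cdot\t{V}_{h,2}^s\subset\f{V}_{h,3}^s$ from Lemma~\ref{Lemma_exact_dis}, and the inf-sup bound via the antiderivative construction \eqref{eq_construction_1}--\eqref{eq_construction_3} in the parametric domain pulled back through $(\mathcal{Y}_{2,\Gamma_1}^s)^{-1}$ with the $L^2$-bounds of Lemma~\ref{Lemma_strong_sym_L2}. You in fact make explicit a detail the paper leaves implicit, namely that $T(\tilde{\f{\tau}})=0$ because the first column of $\tilde{\f{\tau}}$ vanishes on $\hat{\Gamma}_1$, so that the constructed test function genuinely lies in $\t{V}_{h,2}^s$.
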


\begin{proof}
	Due to Lemma \ref{Lemma_exact_dis} it is $\norm{{\f{s}}}_{\t{H}({\d})}= \norm{{\f{s}}}_{\t{L}^2}$ for all ${\f{s}} \in B_h$.  Then it follows easily $$\langle \p{A}{\f{s}} , {\f{s}} \rangle \geq  \gamma_2  \norm{\f{s}}_{\t{L}^2}^2 =  \gamma_2 \norm{\f{s}}_{\t{H}({\d})}^2, \ \ \forall \f{s} \in B_h,$$
	for some $\p{A}$-dependent constant $\gamma_2$. \\
	Now  we show the inf-sup condition first in the parametric domain, i.e. if $\f{F}= \textup{id}$. \\
	Let $\tilde{\f{v}} \in \widehat{\f{V}}_{h,3}^s \textbackslash \{0 \}$  arbitrary but fixed. With the choice \eqref{eq_construction_1} - \eqref{eq_construction_3} in proof of the mentioned lemma we see that there is a $\tilde{\f{\tau}}=\tilde{\f{\tau}}(\tilde{\f{v}})  \in \widehat{\t{V}}_{h,2}^s \cap \t{H}_{\hat{\Gamma}_1}(\widehat{\Omega},\widehat{\d},\s)$ s.t. $\widehat{\nabla} \cdot \tilde{\f{\tau}} = \tilde{\f{v}}$ and $\n{\tilde{\f{\tau}}}_{\t{L}^2} \leq \n{\tilde{\f{v}}}_{\f{L}^2}$.    And one obtains
	\begin{align*}
		\underset{\tilde{\f{s}} \in \widehat{\t{V}}_{h,2}^s }{\sup}	\frac{ \langle \nabla \cdot \tilde{\f{s}}, \tilde{\f{v}} \rangle }{ \norm{\tilde{\f{s}}}_{\t{H}(\widehat{\d})}  } \geq \frac{ \langle \nabla \cdot \tilde{\f{\tau}}, \tilde{\f{v}} \rangle }{ \norm{\tilde{\f{\tau}}}_{\t{H}(\widehat{\d})}  } \geq \frac{1}{\sqrt{2}} \frac{ \n{\tilde{\f{v}}}_{\f{L}^2}^2}{ \norm{\tilde{\f{v}}}_{\f{L}^2}  } \geq \frac{1}{\sqrt{2}}  \norm{\tilde{\f{v}}}_{\f{L}^2}.
	\end{align*}
	The arbitrariness of $\tilde{\f{v}} $ shows us the validity of the inf-sup stability result in the parametric case.
	
	The general case is now shown utilizing  Lemma \ref{Lemma_compatibility_BC_strong_sym} and Lemma \ref{Lemmma_strong_sym_div}.
	Let $\f{v} \in {\f{V}}_{h,3}^s \textbackslash \{0 \}$ and $\tilde{\f{v}} \coloneqq \mathcal{Y}^s_3(\f{v})$ arbitrary but fixed. Then if  $\tilde{\f{\tau}}$ is chosen as above and setting $\f{\tau} \coloneqq (\mathcal{Y}_{2,\Gamma_1}^s)^{-1} \tilde{\f{\tau}}$ we get $ \nabla \cdot \f{\tau}= {\f{v}}$. One observes $\tilde{\f{\tau}} \in \t{H}_{\hat{\Gamma}_1}(\widehat{\Omega},\widehat{\d},\s)$. By Lemma \ref{Lemma_strong_sym_L2} and the invertibility of $\mathcal{Y}_{2,\Gamma_1}^s$ there are constants $C >0$ s.t.  $$ \norm{\f{\tau} }_{\t{L}^2} \leq C \norm{\tilde{\f{\tau}} }_{\t{L}^2} \leq  C   \norm{\tilde{\f{v}} }_{\f{L}^2} \leq  C   \norm{{\f{v}} }_{\f{L}^2}  .  $$ Consequently,
	\begin{align*}
		\underset{{\f{s}} \in {\t{V}}_{h,2}^s}{\sup}	\frac{ \langle \nabla \cdot {\f{s}}, {\f{v}} \rangle }{ \norm{{\f{s}}}_{\t{H}(\d)}  } \geq \frac{ \langle \nabla \cdot {\f{\tau}}, {\f{v}} \rangle }{ \norm{{\f{\tau}}}_{\t{H}(\d)}  } \geq \frac{\norm{{\f{v}}}_{\f{L}^2}^2 }{C \ \sqrt{2} \ \norm{{\f{v}}}_{\f{L}^2}} \geq \frac{1}{C \ \sqrt{2}}\norm{{\f{v}}}_{\f{L}^2} \ \ .
	\end{align*}
	Latter estimate yields the inf-sup stability result.
\end{proof}

\subsection{A first error estimate}
In this last theory section we look at an error estimate for the Dirichlet problem with strong symmetry. We use a standard approach involving spline projections and orient ourselves towards \cite{Buffa2011IsogeometricDD}. 

\begin{definition}
	For $p_1-1 \geq r_1 \geq -1, \ p_2-1 \geq r_2 \geq 0$ and $p_3-1 \geq r_3 \geq 1$ we set
	\begin{align}
		\widehat{\Pi}_{p_1} &\colon L^2((0,1)) \rightarrow S_{p_1}^{r_1}, \  v \mapsto  \sum_i (\lambda_i^{p_1}v)\widehat{B}_{i,p_1},  \ \ \textup{analogous to }(3.6)  \ \textup{in \cite{Buffa2011IsogeometricDD}} \nonumber, \\
		\widehat{\Pi}_{p_2-1}^{c,1} &\colon L^2((0,1)) \rightarrow S_{p_2-1}^{r_2-1}, \ v \mapsto \frac{d }{d \zeta} \widehat{\Pi}_{p_2} \int_{0}^{\zeta_1} v(\tau) \ \textup{d} \tau,  \ \ \textup{analogous to } (3.9) \ \textup{in \cite{Buffa2011IsogeometricDD}}, \nonumber \\
		\widehat{\Pi}_{p_3-2}^{c,2} &\colon L^2((0,1)) \rightarrow S_{p_3-2}^{r_3-2}, \ v \mapsto \frac{d }{d \zeta} \widehat{\Pi}_{p_3-1}^{c,1} \int_{0}^{\zeta_1} v(\tau) \ \textup{d} \tau  = \frac{d^2}{d \zeta^2} \widehat{\Pi}_{p_3} \int_{0}^{\zeta_1} \int_{0}^{\tau} v(r) \ \textup{d}r\, \textup{d} \tau. \nonumber	
	\end{align}
	Above $\lambda_i^p$ denote the canonical dual basis functionals corresponding to the spline basis functions, i.e. $\lambda_i^p( \widehat{B}_{j,p}) = \delta_{ij}$. For more information we refer to \cite[Section 4.6]{schumaker_2007}.
\end{definition}
Below, we assume for the underlying regularity parameter always $r_i=r$.
Useful for our purposes are the subsequent properties.
\begin{lemma}
	\label{lemma:univariate_spline_proj_prop}
	The above  projections of the univariate case satisfy the following  properties, where we assume $r \geq -1, \ p \geq 0$ for the first two lines and $p -1 \geq r \geq 0$ otherwise and let $I \coloneqq (\psi_i,\psi_{i+1})$ denote an arbitrary sub-interval induced by the discretization.
	\begin{align}
		\widehat{\Pi}_{p}s &= s  & \forall s \in S^{r}_{p} , \label{eq:lemma_uni_spline_prop_0}\\		
		|\widehat{\Pi}_{p} v|_{H^l(I)} &\leq C \ 	|v|_{H^l(\tilde{I})}  & \forall v \in H^l((0,1)), \ 0 \leq l \leq p+1, \nonumber \\
		\label{eq:lemma_uni_spline_prop_1}
		\widehat{\Pi}_{p-1}^{c,1}s &= s  & \forall s \in S^{r-1}_{p-1} , \\
		\label{eq:lemma_uni_spline_prop_2}
		\widehat{\Pi}_{p-1}^{c,1} \partial_{\zeta} v &= \partial_{\zeta} \widehat{\Pi}_{p}  v  & \forall v \in H^1((0,1)), \\
		\label{eq:lemma_uni_spline_prop_3}
		|\widehat{\Pi}_{p-1}^{c,1} v|_{H^l(I)} &\leq C \ 	| v|_{H^l(\tilde{I})}  & \forall v \in H^l((0,1)), \ 0 \leq l \leq p,
	\end{align}
	for some constant $C$ independent of mesh refinement.\\
	And if it is $r \geq 1, \ p \geq 2$ we  further have 
	\begin{align}
		\label{eq:lemma_uni_spline_prop_4}
		\widehat{\Pi}_{p-2}^{c,2}s &= s  & \forall s \in S^{r-2}_{p-2} , \\
		\label{eq:lemma_uni_spline_prop_5}
		\widehat{\Pi}_{p-2}^{c,2} \partial_{\zeta} v &= \partial_{\zeta} \widehat{\Pi}_{p-1}^{c,1}  v & \forall v \in H^1((0,1)), \\	
		|\widehat{\Pi}_{p-2}^{c,2} v|_{H^l(I)} &\leq C \ 	| v|_{H^l(\tilde{I})}  & \forall v \in H^l((0,1)), \ 0 \leq l \leq p-1. \label{eq:lemma_uni_spline_prop_6}
	\end{align}
	Above $|\cdot|_{H^l}$ denotes  the standard Sobolev seminorm.
\end{lemma}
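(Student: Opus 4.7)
The plan is to treat the three projectors hierarchically: first accept \eqref{eq:lemma_uni_spline_prop_0} and the local stability bound for $\widehat{\Pi}_p$ as classical facts (they follow from the local support of the B-splines and the boundedness of the dual basis functionals $\lambda_i^p$, see \cite[Section~3]{Buffa2011IsogeometricDD} and \cite[Section~4.6]{schumaker_2007}), and then derive the corresponding properties for $\widehat{\Pi}_{p-1}^{c,1}$ and $\widehat{\Pi}_{p-2}^{c,2}$ by exploiting their definition as integration-projection-differentiation sandwiches.

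For the properties \eqref{eq:lemma_uni_spline_prop_1}--\eqref{eq:lemma_uni_spline_prop_3} of $\widehat{\Pi}_{p-1}^{c,1}$, the crucial preliminary observation is that the antiderivative operator $I \colon v \mapsto \int_0^\zeta v(\tau)\,d\tau$ maps $S^{r-1}_{p-1}$ into $S^{r}_{p}$; this can be read off from the B-spline derivative rule \eqref{eq:soline_der}, which is essentially antidifferentiation on the basis. I would then proceed in three short steps. First, for the reproduction property \eqref{eq:lemma_uni_spline_prop_1}: if $s \in S^{r-1}_{p-1}$ then $I(s) \in S^r_p$, hence $\widehat{\Pi}_p I(s) = I(s)$ by \eqref{eq:lemma_uni_spline_prop_0}, and differentiating gives $\widehat{\Pi}_{p-1}^{c,1} s = s$. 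Second, for the commutation \eqref{eq:lemma_uni_spline_prop_2}, use the identity $I(\partial_\zeta v) = v - v(0)$ together with the reproduction of constants by $\widehat{\Pi}_p$ (constants lie in $S_p^r$) to conclude
\begin{equation*}
\widehat{\Pi}_{p-1}^{c,1}\partial_\zeta v \;=\; \partial_\zeta \widehat{\Pi}_p (v - v(0)) \;=\; \partial_\zeta \widehat{\Pi}_p v.
\end{equation*}
Third, for the local bound \eqref{eq:lemma_uni_spline_prop_3}, chain the stability estimate for $\widehat{\Pi}_p$ with the seminorm identities $|I(v)|_{H^{l+1}} = |v|_{H^l}$ and $|\partial_\zeta u|_{H^l} = |u|_{H^{l+1}}$; this yields $|\widehat{\Pi}_{p-1}^{c,1} v|_{H^l(I)} \leq C |v|_{H^l(\tilde{I})}$.

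The properties \eqref{eq:lemma_uni_spline_prop_4}--\eqref{eq:lemma_uni_spline_prop_6} of $\widehat{\Pi}_{p-2}^{c,2}$ follow by a verbatim repetition of the same three-step argument, now applied with $\widehat{\Pi}_{p-1}^{c,1}$ in the role of $\widehat{\Pi}_p$ and $S^{r-1}_{p-1}$ in the role of $S^r_p$. Indeed, by construction $\widehat{\Pi}_{p-2}^{c,2} v = \partial_\zeta \widehat{\Pi}_{p-1}^{c,1} I(v)$, the antiderivative maps $S^{r-2}_{p-2}$ into $S^{r-1}_{p-1}$, and reproduction, commutation and local stability are all inherited automatically from the previous level.

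I expect the genuine obstacle to be minor. It lies in the commutation step, where the constant $v(0)$ produced by $I(\partial_\zeta v)$ has to be shown to disappear after the outer differentiation; this is precisely where the hypothesis that all three spline spaces contain the constants (guaranteed by the degree/regularity bounds in the statement) is essential. A secondary bookkeeping point is that the slightly different support extensions arising for the three projectors can all be bounded by a single $\tilde{I}$, since the three spline spaces share the same underlying mesh.
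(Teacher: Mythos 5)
Your proposal is correct and follows essentially the same route as the paper: the paper cites the first five properties directly from \cite{Buffa2011IsogeometricDD} and then proves \eqref{eq:lemma_uni_spline_prop_4}--\eqref{eq:lemma_uni_spline_prop_6} by exactly the integrate--project--differentiate bootstrap you describe (reproduction via $\int_0^{\zeta}S^{r-2}_{p-2}\subset S^{r-1}_{p-1}$, commutation via absorbing the integration constant into the reproduced spline space, and stability via $|I(v)|_{H^{l+1}}=|v|_{H^l}$ chained with the previous level's bound). The only cosmetic difference is that you re-derive the level-one properties from the base projector instead of citing them, and the paper handles the constant in \eqref{eq:lemma_uni_spline_prop_5} through the double-integral representation $\frac{d^2}{d\zeta^2}\widehat{\Pi}_p\circ I\circ I$ rather than your single-level recursion; both are the same idea.
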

\begin{proof}
	The first five statements correspond to the properties (3.7),\ (3.8) and (3.10)-(3.12) in \cite{Buffa2011IsogeometricDD} if we keep the regular mesh assumption in mind.
	Hence we only check the points \eqref{eq:lemma_uni_spline_prop_4}-\eqref{eq:lemma_uni_spline_prop_6}.\\
	In the following we use (3.4) of \cite{Buffa2011IsogeometricDD}, which gives immediately that $\partial_{\zeta}^2 S_{p}^{r} = S_{p-2}^{r-2}$. Firstly, with property \eqref{eq:lemma_uni_spline_prop_1} we have
	\begin{align*}
		\widehat{\Pi}_{p-2}^{c,2}s =  \frac{d }{d \zeta} \widehat{\Pi}_{p-1}^{c,1} \underbrace{\int_{0}^{\zeta_1} s(\tau) \ \textup{d} \tau}_{\in S_{p-1}^{r-1}} =  \frac{d }{d \zeta}  {\int_{0}^{\zeta_1} s(\tau) \ \textup{d} \tau} =  s.
	\end{align*}
	In view of \eqref{eq:lemma_uni_spline_prop_0} and \eqref{eq:lemma_uni_spline_prop_2} one gets 
	\begin{align*}
		\widehat{\Pi}_{p-2}^{c,2} \partial_{\zeta} v &= \frac{d^2}{d \zeta^2} \widehat{\Pi}_p \int_{0}^{\zeta_1}  v(\tau) + a \ \textup{d}\tau\, = \frac{d^2}{d \zeta^2} \widehat{\Pi}_p \int_{0}^{\zeta_1}   v(\tau) \ \textup{d}\tau = \frac{d}{d \zeta} \widehat{\Pi}_{p-1}^{c,1} v.
	\end{align*}
	In the last line we wrote $a \in \mathbb{R}$ for a suitable constant of integration. Hence \eqref{eq:lemma_uni_spline_prop_5} follows.\\ 
	The last point can be proven in a similar fashion like equation 3.12 in \cite{Buffa2011IsogeometricDD} using the fact that if $v \in H^l((0,1)) $ then $w(\zeta)  \coloneqq \int_{0}^{\zeta_1} v \textup{d} \tau  \in H^{l+1}((0,1))$ and using estimates \eqref{eq:lemma_uni_spline_prop_3}, \eqref{eq:lemma_uni_spline_prop_5}. It is
	$$|\widehat{\Pi}_{p-2}^{c,2} v|_{H^l(I)} \leq |\widehat{\Pi}_{p-1}^{c,1} w|_{H^{l+1}(I)} \leq C \, |w|_{H^{l+1}(\tilde{I})}  \leq C \, |v|_{H^{l}( \tilde{I})} .$$
\end{proof}

\begin{lemma}[Projections on the parametric domain]
	We define the next projections on the parametric domain 
	\begin{align}
		\widehat{\Pi}_{h,2}^s = \widehat{\Pi}_{h,2}^s(p,r) &\coloneqq \textup{SYM}(\widehat{\Pi}_{p+1} \otimes \widehat{\Pi}_{p-1}^{c,2}, \  \widehat{\Pi}_{p}^{c,1} \otimes \widehat{\Pi}_{p}^{c,1} ,  \ \widehat{\Pi}_{p-1}^{c,2} \otimes \widehat{\Pi}_{p+1}) ,\\
		\widehat{\Pi}_{h,3}^s = \widehat{\Pi}_{h,3}^s(p,r) & \coloneqq \big(\widehat{\Pi}_{p}^{c,1} \otimes \widehat{\Pi}_{p-1}^{c,2}, \ \widehat{\Pi}_{p-1}^{c,2} \otimes \widehat{\Pi}_{p}^{c,1} \big)^T.
	\end{align}
	It holds the commutativity relation 
	$$(\widehat{\nabla} \cdot ) \circ \widehat{\Pi}_{h,2}^s = \widehat{\Pi}_{h,3}^s \circ (\widehat{\nabla} \cdot )  \ \  \textup{on} \ \t{H}(\widehat{\o},\widehat{\d},\s). $$
	If $\hat{\f{\tau}} \coloneqq\textup{SYM}(\hat{\tau}_{11},\hat{\tau}_{12},\hat{\tau}_{22}) \in {\t{H}}(\widehat{\o},\widehat{\d}, \mathbb{S}) \cap \t{H}^k(\widehat{\o})$  and $\hat{\f{v}} \in \f{H}^k(\widehat{\o})$ we have for some constant $C$ independent of $h$  the estimates
	\begin{align}
		\norm{\hat{\f{\tau}}- \widehat{\Pi}_{h,2}^s\hat{\f{\tau}}}_{\t{H}^1}  &\leq C h^{k-1}    \norm{\hat{\f{\tau}}}_{\t{H}^k} , p \geq k \geq 1, \ p> r \geq 1, \\
		\norm{\hat{\f{v}}-\widehat{\Pi}_{h,3}^s\hat{\f{v}}}_{\f{L^2}} &\leq C  h^k   \norm{\hat{\f{v}}}_{\f{H}^k}  , \ p \geq  k \geq 0.
	\end{align}
\end{lemma}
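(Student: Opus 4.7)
My plan is to split the statement into three parts: (i) verifying that $\widehat{\Pi}_{h,2}^{s}\hat{\f{\tau}}$ is indeed symmetric, (ii) establishing the commutativity identity, and (iii) proving the two approximation estimates. Part (i) is immediate from the tensor structure of the definition of $\widehat{\Pi}_{h,2}^{s}$: each of the three scalar projections is applied componentwise to $\hat{\tau}_{11},\hat{\tau}_{12}=\hat{\tau}_{21},\hat{\tau}_{22}$, so the result is automatically in $\textup{SYM}(\cdots)$.

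For the commutativity in (ii) I would simply compute both sides on an arbitrary $\hat{\f{\tau}}=\textup{SYM}(\hat{\tau}_{11},\hat{\tau}_{12},\hat{\tau}_{22}) \in \t{H}(\widehat{\o},\widehat{\d},\s)$. The first component of $\widehat{\nabla}\cdot(\widehat{\Pi}_{h,2}^{s}\hat{\f{\tau}})$ equals
\begin{equation*}
\hat{\partial}_{1}\bigl[(\widehat{\Pi}_{p+1}\otimes\widehat{\Pi}_{p-1}^{c,2})\hat{\tau}_{11}\bigr] + \hat{\partial}_{2}\bigl[(\widehat{\Pi}_{p}^{c,1}\otimes\widehat{\Pi}_{p}^{c,1})\hat{\tau}_{12}\bigr].
\end{equation*}
Using \eqref{eq:lemma_uni_spline_prop_2} (shifted by one, $\hat{\partial}\widehat{\Pi}_{p+1}=\widehat{\Pi}_{p}^{c,1}\hat{\partial}$) on the first term and \eqref{eq:lemma_uni_spline_prop_5} (shifted by one, $\hat{\partial}\widehat{\Pi}_{p}^{c,1}=\widehat{\Pi}_{p-1}^{c,2}\hat{\partial}$) on the second term, both summands collapse to the single operator $\widehat{\Pi}_{p}^{c,1}\otimes\widehat{\Pi}_{p-1}^{c,2}$ applied to $\hat{\partial}_{1}\hat{\tau}_{11}+\hat{\partial}_{2}\hat{\tau}_{12}$, which is exactly the first component of $\widehat{\Pi}_{h,3}^{s}(\widehat{\nabla}\cdot\hat{\f{\tau}})$. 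The second component follows by the symmetric computation using $\widehat{\Pi}_{p-1}^{c,2}\otimes\widehat{\Pi}_{p}^{c,1}$. Here the univariate commutation relations from Lemma~\ref{lemma:univariate_spline_proj_prop} are exactly what the degrees in the definition of $\widehat{\Pi}_{h,2}^{s}$ and $\widehat{\Pi}_{h,3}^{s}$ were designed for, so this part is essentially a bookkeeping exercise.

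For part (iii) I would follow the standard tensor-product Bramble--Hilbert route of \cite{Buffa2011IsogeometricDD}. For the $\t{L}^{2}$ bound on $\hat{\f{v}}-\widehat{\Pi}_{h,3}^{s}\hat{\f{v}}$ I would work componentwise: write the error of, e.g., $\widehat{\Pi}_{p}^{c,1}\otimes\widehat{\Pi}_{p-1}^{c,2}$ as a telescoping sum $(\mathrm{id}-\widehat{\Pi}_{p}^{c,1}\otimes\mathrm{id})+(\widehat{\Pi}_{p}^{c,1}\otimes\mathrm{id})(\mathrm{id}-\mathrm{id}\otimes\widehat{\Pi}_{p-1}^{c,2})$, then combine the local stability of each univariate projection on the support extension $\tilde I$ (properties \eqref{eq:lemma_uni_spline_prop_3} and \eqref{eq:lemma_uni_spline_prop_6}) with the fact that both projections reproduce polynomials of the relevant degree (properties \eqref{eq:lemma_uni_spline_prop_1}, \eqref{eq:lemma_uni_spline_prop_4}), and apply the Bramble--Hilbert lemma element-by-element before summing over all mesh cells. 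Under Assumption~\ref{assum_reg_mesh} the support extensions overlap uniformly boundedly, yielding the claimed $h^{k}$ rate. For the $\t{H}^{1}$ bound on $\hat{\f{\tau}}-\widehat{\Pi}_{h,2}^{s}\hat{\f{\tau}}$ I would do the same componentwise on $\hat{\tau}_{11},\hat{\tau}_{12},\hat{\tau}_{22}$, but now in $H^{1}$; the rate drops by one because stability of the derivative of the tensor-product projection reduces to the stability of the lower-degree spaces ($\widehat{\Pi}_{p-1}^{c,2}$ being the limiting factor), giving $h^{k-1}$.

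The only genuine obstacle I anticipate is the $\t{H}^{1}$ estimate for the $(1,2)$-entry, where the projection is $\widehat{\Pi}_{p}^{c,1}\otimes\widehat{\Pi}_{p}^{c,1}$; here one must verify that the $H^{1}$-stability of $\widehat{\Pi}_{p}^{c,1}$ indeed follows from the seminorm inequalities in \eqref{eq:lemma_uni_spline_prop_3} together with the regular-mesh assumption, so that the tensor-product Bramble--Hilbert argument goes through componentwise. Once this piece is in place, summing the three component contributions gives the announced bound.
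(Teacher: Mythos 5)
Your treatment of the commutativity identity is essentially the paper's own proof: both compute the first component of $\widehat{\nabla}\cdot(\widehat{\Pi}_{h,2}^{s}\hat{\f{\tau}})$ and collapse the two summands to $\widehat{\Pi}_{p}^{c,1}\otimes\widehat{\Pi}_{p-1}^{c,2}$ applied to $\widehat{\nabla}\cdot[\hat{\tau}_{11},\hat{\tau}_{12}]^{T}$ via the univariate relations \eqref{eq:lemma_uni_spline_prop_2} and \eqref{eq:lemma_uni_spline_prop_5}, so nothing to add there. For the approximation estimates, however, you take a genuinely different route. You propose a direct tensor-product Bramble--Hilbert argument: telescoping the error of each componentwise projector, invoking local stability on support extensions together with spline reproduction, and summing over elements. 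The paper instead inserts a best spline approximant $\hat{\f{s}}\in\widehat{\t{V}}_{h,2}^{s}$ from Lemma 3.1 of the reference on anisotropic spline approximation, uses the spline-reproduction property to write $\hat{\f{\tau}}-\widehat{\Pi}_{h,2}^{s}\hat{\f{\tau}}=(\hat{\f{\tau}}-\hat{\f{s}})+\widehat{\Pi}_{h,2}^{s}(\hat{\f{s}}-\hat{\f{\tau}})$, and then controls the second term by an inverse estimate $\|\cdot\|_{\t{H}^1}\leq Ch^{-1}\|\cdot\|_{\t{L}^2}$ followed by the $\t{L}^2$-stability of the projector. The practical difference is precisely the obstacle you flag at the end: your route requires establishing $H^1$-stability of the tensor-product projectors (in particular for the $(1,2)$-entry $\widehat{\Pi}_{p}^{c,1}\otimes\widehat{\Pi}_{p}^{c,1}$), which is doable --- use the commutation $\hat{\partial}\widehat{\Pi}_{p}^{c,1}=\widehat{\Pi}_{p-1}^{c,2}\hat{\partial}$ and the $L^2$-stability \eqref{eq:lemma_uni_spline_prop_6}, available since $r\geq 1$ --- but adds bookkeeping; the paper's inverse-estimate trick sidesteps $H^1$-stability entirely at the cost of importing an external best-approximation result. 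Both arguments are valid and yield the same rates, so your plan closes once you carry out the stability verification you already identified.
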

\begin{proof}
	We begin with the commutativity relation. Basically, the proof steps are the same as of Lemma 4.3 in \cite{Buffa2011IsogeometricDD}, but for reasons of completeness, we sketch the proof steps. Therefore, we first assume that we have a smooth mapping  $\hat{\f{\tau}} \in \t{H}(\widehat{\o},\widehat{\d},\s)$. We exploit now  Lemma \ref{lemma:univariate_spline_proj_prop} and the tensor product structure of the multivariate spline projections. For the latter we refer to the explanations in Section 4.1. of the  mentioned paper \cite{Buffa2011IsogeometricDD}. This makes the next equality chain reasonable.
	\begin{align*}
		\Big[(\widehat{\nabla} \cdot ) \circ \widehat{\Pi}_{h,2}^s \hat{\f{\tau}}  \Big]_1 &= \h_1\big( \widehat{\Pi}_{p+1} \otimes \widehat{\Pi}_{p-1}^{c,2}\big)\hat{\tau}_{11}  + \h_2 \big(\widehat{\Pi}_{p}^{c,1} \otimes \widehat{\Pi}_{p}^{c,1} \big) \hat{\tau}_{12} \\
		&= \big( \h_1\widehat{\Pi}_{p+1} \otimes \widehat{\Pi}_{p-1}^{c,2}\big)\hat{\tau}_{11}  +  \big(\widehat{\Pi}_{p}^{c,1} \otimes \h_2\widehat{\Pi}_{p}^{c,1} \big) \hat{\tau}_{12} \\
		&=\big( \widehat{\Pi}_{p}^{c,1}\h_1 \otimes \widehat{\Pi}_{p-1}^{c,2}\big)\hat{\tau}_{11}  +  \big(\widehat{\Pi}_{p}^{c,1} \otimes \widehat{\Pi}_{p-1}^{c,2}\h_2 \big) \hat{\tau}_{12} \\
		&= \big( \widehat{\Pi}_{p}^{c,1} \otimes \widehat{\Pi}_{p-1}^{c,2}\big)\h_1\hat{\tau}_{11}  +  \big(\widehat{\Pi}_{p}^{c,1} \otimes \widehat{\Pi}_{p-1}^{c,2} \big) \h_2\hat{\tau}_{12} = \big( \widehat{\Pi}_{p}^{c,1} \otimes \widehat{\Pi}_{p-1}^{c,2}\big) (\widehat{\nabla} \cdot [\hat{\tau}_{11},\hat{\tau}_{12}]^T).
	\end{align*}
	A similar calculation can be applied for the second entry of $(\widehat{\nabla} \cdot ) \circ \widehat{\Pi}_{h,2}^s \hat{\f{\tau}} $. Consequently, the commutativity is clear.\\
	The estimates  for the projection are based on the boundedness and spline-preserving properties of the univariate projections in Lemma  \ref{lemma:univariate_spline_proj_prop}. The proof is analogous to the proof of  Lemma 5.1 in \cite{Buffa2011IsogeometricDD} and we state briefly the underlying idea. Let $\hat{\f{\tau}} \in {\t{H}}(\widehat{\o},\widehat{\d}, \mathbb{S}) \cap \t{H}^k(\widehat{\o}), \ p \geq k \geq 1, \ p > r \geq 1$. One uses Lemma 3.1 of \cite{IGA3} which gives us the existence of a spline mapping $\f{s} \in \widehat{\t{V}}_{h,2}^s$ such that $$\norm{\hat{\f{s}}-\hat{\f{\tau}}}_{\t{H}^1} \leq C  h^{k-1}  \norm{\hat{\f{\tau}}}_{\t{H}^k}, \ \ \ \norm{\hat{\f{s}}-\hat{\f{\tau}}}_{\t{L}^2} \leq C  h^{k}  \norm{\hat{\f{\tau}}}_{\t{H}^k},$$
	for a suitable  constant $C$. Since the univariate projections are spline preserving and the tensor-product structure of the multivariate projections fits to the product structure of the spline spaces we get directly the spline preserving property of $\widehat{\Pi}_{h,2}^s$ and $\widehat{\Pi}_{h,3}^s$. Thus,  an application of the triangle inequality and a standard inverse estimate for polynomials yields
	\begin{align*}
		\norm{\hat{\f{\tau}}- \widehat{\Pi}_{h,2}^s\hat{\f{\tau}}}_{\t{H}^1}  & \leq 	\norm{\hat{\f{\tau}}- \hat{\f{s}}}_{\t{H}^1}  + 	\norm{\widehat{\Pi}_{h,2}^s \big(\hat{\f{s}}- \hat{\f{\tau}} \big)}_{\t{H}^1}   \\
		& \leq C  h^{k-1}  \norm{\hat{\f{\tau}}}_{\t{H}^k} + \frac{C}{h} \norm{ \hat{\f{s}}- \hat{\f{\tau}} }_{\t{L}^2}\\
		& \leq C  h^{k-1}  \norm{\hat{\f{\tau}}}_{\t{H}^k}.
	\end{align*}
	Above we used the boundedness of the projection w.r.t. to the $\t{L}^2$-norm following from the boundedness properties in Lemma \ref{lemma:univariate_spline_proj_prop}.
\end{proof}

\begin{lemma}
	\label{lemma:improved_estimate}
	If $\f{\tau} \in \t{H}({\o},{\d},\s) \cap \t{H}^{k+1}(\o)$ and $\f{v} \in \f{H}^k(\o), \ k\geq 0$, then it holds 
	\begin{align*}
		\norm{\mathcal{Y}_2^s(\f{\tau})}_{\t{H}^k}  &\leq C    \norm{{\f{\tau}}}_{\t{H}^{k+1}} , \\
		\norm{\mathcal{Y}_3^s(\f{{v}})}_{\f{H}^k} &\leq C  \norm{{\f{v}}}_{\f{H}^k},
	\end{align*}
	assuming a regular enough $\f{F}$; e.g. $\f{F} \in C^{k+2}(\overline{\widehat{\o}})$ and $  \f{F}^{-1} \in C^{k+2}({\overline{\o}})$. The constant $C$ depends on the parametrization.
\end{lemma}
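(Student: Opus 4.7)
The plan is to expand $\mathcal{Y}_2^s(\f{\tau})$ and $\mathcal{Y}_3^s(\f{v})$ via the defining expressions \eqref{eq_def_gamma_1_E_2}, \eqref{eq:gamma_2_E_equivalent_form} and \eqref{eq_def_Gamma_2s}, and to bound each summand using three recurring ingredients. Ingredient~(i) is the classical chain-rule estimate $\n{g\circ\f{F}}_{H^k(\widehat{\o})}\le C\,\n{g}_{H^k(\o)}$, valid whenever $\f{F}\in C^{k+1}$. Ingredient~(ii) is the Leibniz rule together with uniform $C^k$-bounds on the geometric factors $\J$, $\det(\f{J})$, $\h_1\J$ and $\widehat{\textup{Airy}}(F_n)$, which are guaranteed by the assumption $\f{F}\in C^{k+2}$; the presence of \emph{second} derivatives of $\f{F}$ in the Airy factor is precisely why the regularity required on the parametrization has to jump from $k+1$ to $k+2$. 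Ingredient~(iii) is the stability, already used in its $L^2$-form in the proof of Lemma~\ref{Lemma_strong_sym_L2}, of the linear operator
\[
T\colon g\longmapsto G,\qquad G(\zeta_1,\zeta_2)\coloneqq\int_0^{\zeta_1}g(\tau,\zeta_2)\,\textup{d}\tau,
\]
as a bounded map $H^k(\widehat{\o})\to H^k(\widehat{\o})$; this follows because $\partial_{\zeta_2}$ commutes with the $\zeta_1$-integral, $\partial_{\zeta_1}G=g$, and the $L^2$-bound $\n{G}_{L^2}\le\n{g}_{L^2}$ is an immediate consequence of Cauchy--Schwarz.

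With these tools at hand, I would first dispatch the bound for $\mathcal{Y}_3^s$. The pullback summand $\det(\f{J})\J(\f{v}\circ\f{F})$ is controlled in $\f{H}^k$ by $C\,\n{\f{v}}_{\f{H}^k(\o)}$ using (i) and (ii). For the remaining term $\h_1\J\cdot\int_0^{\zeta_1}\det(\f{J})(\f{v}\circ\f{F})\,\textup{d}1$ I would apply (iii) to $g=\det(\f{J})(\f{v}\circ\f{F})$ and then multiply by the $C^k$-smooth matrix $\h_1\J$ via (ii). Crucially no boundary trace of $\f{v}$ enters, which is exactly why the right-hand side involves only $\n{\f{v}}_{\f{H}^k}$ and no loss of regularity occurs.

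For $\mathcal{Y}_2^s=\mathcal{Y}_{2,\Gamma_1}^s-\mathcal{Y}_{2,A}^s$ I would first handle $\mathcal{Y}_{2,\Gamma_1}^s$ by the same scheme: both the pullback part $\J(\f{\tau}\circ\f{F})\J^T$ and the integral correction in \eqref{eq_def_gamma_1_E_2} yield the stronger estimate $\n{\mathcal{Y}_{2,\Gamma_1}^s(\f{\tau})}_{\t{H}^k}\le C\,\n{\f{\tau}}_{\t{H}^k}$, so this piece alone does not force an extra derivative. The main obstacle, and the sole origin of the regularity loss, is the boundary correction $\mathcal{Y}_{2,A}^s$, whose entries are $\zeta_j$-integrals of expressions of the form $\h_1\j_{ji}(\cdot,\cdot)\,\j_{1l}(0,\cdot)\,\hat{\tau}_{il}(0,\cdot)$ with $\hat{\f{\tau}}\coloneqq\f{\tau}\circ\f{F}$. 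After distributing derivatives via Leibniz and invoking (iii), bounding $\n{\mathcal{Y}_{2,A}^s(\f{\tau})}_{\t{H}^k}$ will reduce to controlling the trace $\hat{\tau}_{il}(0,\cdot)$ in $H^k(\hat{\Gamma}_1)$, and the standard trace theorem will then give
\[
\n{\hat{\tau}_{il}(0,\cdot)}_{H^k(\hat{\Gamma}_1)}\le C\,\n{\hat{\f{\tau}}}_{\t{H}^{k+1}(\widehat{\o})}\le C\,\n{\f{\tau}}_{\t{H}^{k+1}(\o)}.
\]
This single step is precisely what accounts for the appearance of $\n{\f{\tau}}_{\t{H}^{k+1}}$ rather than $\n{\f{\tau}}_{\t{H}^{k}}$ on the right-hand side of the first inequality, and it is the only delicate point in the whole argument; once it is in place, both claimed bounds follow by a routine assembly of the pieces described above.
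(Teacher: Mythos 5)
Your proposal is correct and follows essentially the same route as the paper: you isolate the boundary-correction term $\mathcal{Y}_{2,A}^s$ as the sole source of the derivative loss and control its trace data via the trace theorem, which is exactly how the paper's proof obtains $\n{\mathcal{Y}_{2,A}^s(\f{\tau})}_{\t{H}^k} \leq C\,\n{\f{\tau}}_{\t{H}^{k+1}}$, while the remaining pullback and integral terms are handled by the same smooth-geometry and Cauchy--Schwarz arguments. The only cosmetic difference is that the paper works first with smooth $\f{\tau}$ and concludes by density, a step you leave implicit.
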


\begin{proof}
	Again we concentrate on the first estimate, since the second one is proven with a similar approach. Further, we first assume $\f{\tau} \in  C^{k+1}(\overline{\o}) \cap \t{L}^2(\o,\s)$ and start with the case $k=1$. Note that the assertion is clear for $k=0$; see proof of Lemma \ref{Lemma_Boundedness_strong}. With the definition \eqref{eq_def_Gamma_2s} of $\mathcal{Y}_2^s$, problems may arise due to  the term $\mathcal{Y}_{2,A}^s$ because of the evaluation on the boundary part $\hat{\Gamma}_1$. Applying derivatives $\h_1, \h_2$ to $\mathcal{Y}_{2,A}^s(\f{\tau})$ we obtain expressions involving values and first derivatives $\partial_m \f{\tau} \circ \f{F}(0,\cdot)$ on the boundary edge $\hat{\Gamma}_1$ which are  multiplied by geometry dependent terms. By the trace theorem (see \cite[Chapter 2]{steinbach}) we obtain \begin{align*}
		\norm{{\tau_{ij}} \circ \f{F}}_{{L}^2(\hat{\Gamma}_1)} &\leq C \norm{{\tau_{ij}} \circ \f{F}}_{{H}^1(\widehat{\o})} \leq C \norm{{\tau_{ij}}}_{{H}^1({\o})}, \\  \norm{\partial_m{\tau_{ij}}\circ \f{F}}_{{L}^2(\hat{\Gamma}_1)} &\leq C \norm{\partial_m{\tau_{ij}}\circ \f{F}}_{{H}^1(\widehat{\o})} \leq C \norm{{\tau_{ij}}}_{{H}^2(\o)}.
	\end{align*}
	Consequently, if the geometry function $\f{F}$ is regular enough,  one can  see that $$\norm{\mathcal{Y}_{2,A}^s(\f{\tau})}_{\t{H}^1(\widehat{\o})} \leq C \norm{\f{\tau}}_{\t{H}^2({\o})}$$ and clearly $\norm{\mathcal{Y}_{2}^s(\f{\tau})}_{\t{H}^1(\widehat{\o})} \leq C \norm{\f{\tau}}_{\t{H}^2({\o})}$.
	Since the smooth mappings are dense in $\t{H}^2(\o)$ the inequality is still valid for a general element in $\t{H}^2(\o)$.
	A similar reasoning can be used for the cases $k\geq 2$.

\end{proof}

\begin{lemma}[Projections on the physical domain]
	\label{lemma:projection_physical}
	Utilizing the mappings $\mathcal{Y}_2^s, \ \mathcal{Y}_3^s$ we define the projections onto  the actual test function spaces
	\begin{align*}
		{\Pi}_{h,2}^s &\colon \t{H}(\o,\d,\s) \cap \t{H}^1(\o) \rightarrow \t{V}_{h,2}^s  \ , \  \f{\tau}  \mapsto \tilde{\mathcal{Y}}_2^s\circ \widehat{\Pi}_{h,2}^s \circ \mathcal{Y}_2^s(\f{\tau}), \\ {\Pi}_{h,3}^s &\colon \f{L}^2(\o) \rightarrow \f{V}_{h,3}^s \ , \ \f{v} \mapsto (\mathcal{Y}_3^s)^{-1} \circ \widehat{\Pi}_{h,3}^s \circ \mathcal{Y}_3^s (\f{v}).
	\end{align*}
	Let $\f{\tau} \in  {\t{H}}({\o},\d, \mathbb{S}) \cap \t{H}^{k+1}(\o), \ \f{v} \in \f{H}^k(\o)$ , where $1\leq k \leq p, \ 1 \leq r < p$. Then the next inequalities hold:
	\begin{align*}
		\norm{{\f{\tau}}- {\Pi}_{h,2}^s{\f{\tau}}}_{\t{H}(\d)}  &\leq C  h^{k-1}     \norm{{\f{\tau}}}_{\t{H}^{k+1}} , \\
		\norm{\f{{v}}-{\Pi}_{h,3}^s{\f{v}}}_{\f{L^2}} &\leq C  h^k  \norm{{\f{v}}}_{\f{H}^k}.
	\end{align*}
\end{lemma}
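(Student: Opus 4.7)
The strategy is to pull the errors back to the parametric domain through $\mathcal{Y}_2^s$ and $\mathcal{Y}_3^s$, apply the already-established spline approximation estimates for $\widehat{\Pi}_{h,2}^s$ and $\widehat{\Pi}_{h,3}^s$ in the parametric setting, and then push the bound back to $\o$ using the boundedness of $\tilde{\mathcal{Y}}_2^s$ (Lemma \ref{Lemma_Boundedness_strong}) and of $(\mathcal{Y}_3^s)^{-1}$. Because $\tilde{\mathcal{Y}}_2^s$ inverts $\mathcal{Y}_2^s$ (Lemma \ref{Lemma_invertebility_Gamma_2_2}) and $(\mathcal{Y}_3^s)^{-1}$ is available explicitly from Lemma \ref{Lemma_invertibility_gamma_3}, we have the decompositions
\begin{align*}
\f{\tau} - \Pi_{h,2}^s \f{\tau} &= \tilde{\mathcal{Y}}_2^s \bigl( \mathcal{Y}_2^s \f{\tau} - \widehat{\Pi}_{h,2}^s \mathcal{Y}_2^s \f{\tau} \bigr), \\
\f{v} - \Pi_{h,3}^s \f{v} &= (\mathcal{Y}_3^s)^{-1} \bigl( \mathcal{Y}_3^s \f{v} - \widehat{\Pi}_{h,3}^s \mathcal{Y}_3^s \f{v} \bigr).
\end{align*}

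For the first estimate I would apply Lemma \ref{Lemma_Boundedness_strong}, which extends $\tilde{\mathcal{Y}}_2^s$ to a bounded map from $\t{H}^1(\widehat{\o}) \cap \t{L}^2(\widehat{\o},\s)$ into $\t{H}(\o,\d,\s)$, to the first display above. Chaining this with the parametric $\t{H}^1$-approximation estimate for $\widehat{\Pi}_{h,2}^s$ (valid under $p>r\geq 1$, $p\geq k \geq 1$, from the preceding lemma) and Lemma \ref{lemma:improved_estimate} yields
\begin{equation*}
\n{\f{\tau} - \Pi_{h,2}^s \f{\tau}}_{\t{H}(\d)}
\leq C \n{\mathcal{Y}_2^s \f{\tau} - \widehat{\Pi}_{h,2}^s \mathcal{Y}_2^s \f{\tau}}_{\t{H}^1}
\leq C h^{k-1} \n{\mathcal{Y}_2^s \f{\tau}}_{\t{H}^k}
\leq C h^{k-1} \n{\f{\tau}}_{\t{H}^{k+1}},
\end{equation*}
where the last inequality invokes the first part of Lemma \ref{lemma:improved_estimate}.

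For the second estimate I would note that $\mathcal{Y}_3^s$ is an $L^2$-isomorphism (combining Lemma \ref{Lemma_strong_sym_L2} with Lemma \ref{Lemma_invertibility_gamma_3}), so $(\mathcal{Y}_3^s)^{-1}$ is $L^2$-bounded; together with the $L^2$-approximation estimate for $\widehat{\Pi}_{h,3}^s$ from the preceding lemma and the second assertion of Lemma \ref{lemma:improved_estimate} this gives
\begin{equation*}
\n{\f{v} - \Pi_{h,3}^s \f{v}}_{\f{L}^2}
\leq C \n{\mathcal{Y}_3^s \f{v} - \widehat{\Pi}_{h,3}^s \mathcal{Y}_3^s \f{v}}_{\f{L}^2}
\leq C h^k \n{\mathcal{Y}_3^s \f{v}}_{\f{H}^k}
\leq C h^k \n{\f{v}}_{\f{H}^k}.
\end{equation*}

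No really novel ingredient is required beyond the results of this section, so the principal difficulty is careful bookkeeping on function spaces: one has to verify that $\mathcal{Y}_2^s \f{\tau}$ actually lies in $\t{H}(\widehat{\o},\widehat{\d},\s) \cap \t{H}^k(\widehat{\o})$ so that the parametric projection estimate can be triggered (this is precisely what Lemma \ref{lemma:improved_estimate} combined with Lemma \ref{Lemmma_strong_sym_div} delivers), and that $\widehat{\Pi}_{h,2}^s \mathcal{Y}_2^s \f{\tau}$ lies in the $\t{H}^1$-domain on which the boundedness of $\tilde{\mathcal{Y}}_2^s$ extends---this is automatic once $r\geq 1$, since the spline image is then globally $C^0$ and piecewise smooth, hence in $\t{H}^1$.
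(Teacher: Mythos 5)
Your proposal is correct and follows essentially the same route as the paper: write $\f{\tau}-\Pi_{h,2}^s\f{\tau}=\tilde{\mathcal{Y}}_2^s\bigl(\mathcal{Y}_2^s\f{\tau}-\widehat{\Pi}_{h,2}^s\mathcal{Y}_2^s\f{\tau}\bigr)$, bound via Lemma \ref{Lemma_Boundedness_strong}, the parametric projection estimate, and Lemma \ref{lemma:improved_estimate}, and treat the $L^2$ case analogously. Your extra remark on verifying that $\widehat{\Pi}_{h,2}^s\mathcal{Y}_2^s\f{\tau}$ lies in the $\t{H}^1$-domain of $\tilde{\mathcal{Y}}_2^s$ is a detail the paper leaves implicit, but it is consistent with the intended argument.
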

\begin{proof}
	We show the inequality for $\f{\tau}$. Set $\tilde{\f{\tau}} \coloneqq \mathcal{Y}_2^s(\f{\tau})$. Combining Lemma \ref{Lemma_invertebility_Gamma_2_2}, Lemma \ref{Lemma_Boundedness_strong}  and Lemma \ref{lemma:improved_estimate}  one can write
	\begin{align*}
		\norm{{\f{\tau}}- {\Pi}_{h,2}^s{\f{\tau}}}_{\t{H}(\d)} & = \norm{\tilde{\mathcal{Y}}_2^s \circ \mathcal{Y}_2^s({\f{\tau}})- \tilde{\mathcal{Y}}_2^s \circ\widehat{\Pi}_{h,2}^s \tilde{\f{\tau}}}_{\t{H}(\d)}\\
		& \leq  C \norm{\tilde{\f{\tau}}- \widehat{\Pi}_{h,2}^s\tilde{\f{\tau}}}_{\t{H}^1}  \\
		& \leq C h^{k-1} \norm{\tilde{\f{\tau}}}_{\t{H}^k} \\ 
		& \leq C h^{k-1} \norm{\f{\tau}}_{\t{H}^{k+1}}.
	\end{align*}
	The second estimate follows by similar steps.
\end{proof}

\begin{theorem}[Convergence for the strong symmetry case]
	It holds the quasi-optimal error estimate 
	\begin{align*}
		\norm{\f{\sigma}- \f{\sigma}_h}_{\t{H}(\d)} + \norm{\f{u}-\f{u}_h}_{\f{L}^2} \leq C \underset{\f{\tau}_h \in {\t{V}}_{h,2}^s, \ \f{v}_h \in {\f{V}}_{h,3}^s}{\inf} \hspace{0.1cm}	\norm{\f{\sigma}- \f{\tau}_h}_{\t{H}(\d)} + \norm{\f{u}-\f{v}_h}_{\f{L}^2} 
	\end{align*} 
	between the exact  $(\f{\sigma},\f{u})$ and  discrete  $(\f{\sigma}_h,\f{u}_h)$ solution of the mixed formulation with  strong symmetry and boundary condition $\f{u}=\f{u}_D $ on $\partial \o$. In case of  sufficiently smooth exact solution, meaning  $ \f{\sigma}  \in {\t{H}}({\o},{\d}, \mathbb{S}) \cap \t{H}^{k+1}(\o), \ \f{u} \in \f{H}^k({\o}), \ 1 \leq k \leq p, \ 1 \leq r < p $, we can specify the convergence behavior  via the estimate 
	\begin{align}
		\label{theorem_eq_conv}
		\norm{\f{\sigma}- \f{\sigma}_h}_{\t{H}(\d)} + \norm{\f{u}-\f{u}_h}_{\f{L^2}} \leq C  h^{k-1}  \big(   \norm{\f{\sigma}}_{\t{H}^{k+1}} + \norm{\f{u}}_{\f{H}^k} \big).
	\end{align}
	Again $C< \infty$ is independent of the mesh size $h$.
\end{theorem}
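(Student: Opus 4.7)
The plan is to combine the abstract Brezzi theory for saddle-point problems with the approximation estimates derived for the projection operators built from $\mathcal{Y}_2^s$ and $\mathcal{Y}_3^s$. The two structural ingredients are already in place: Theorem \ref{theorem_ discretization_strong_sym} provides the discrete inf-sup condition and the coercivity of $\langle \p{A}\cdot,\cdot\rangle$ on the discrete kernel $B_h$, both with constants independent of $h$. These are precisely Brezzi's two conditions for the bilinear form underlying \eqref{dis_weak_form_strong_symmetry_contiuous}, so the standard abstract theory yields a unique discrete solution together with a quasi-optimal bound of the form
\begin{equation*}
    \norm{\f{\sigma}-\f{\sigma}_h}_{\t{H}(\d)} + \norm{\f{u}-\f{u}_h}_{\f{L}^2} \leq C \underset{\f{\tau}_h \in {\t{V}}_{h,2}^s, \ \f{v}_h \in {\f{V}}_{h,3}^s}{\inf} \bigl(\norm{\f{\sigma}-\f{\tau}_h}_{\t{H}(\d)} + \norm{\f{u}-\f{v}_h}_{\f{L}^2}\bigr),
\end{equation*}
with $C$ depending only on the stability constants $\gamma_1,\gamma_2$ and on $\p{A}$. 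This gives the first assertion of the theorem.

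For the convergence rate I would specialize the infimum by inserting the projections $\f{\tau}_h \coloneqq \Pi_{h,2}^s\f{\sigma}$ and $\f{v}_h \coloneqq \Pi_{h,3}^s\f{u}$ from Lemma \ref{lemma:projection_physical}. Their approximation properties (under the regularity $1 \leq k \leq p$, $1 \leq r < p$) yield
\begin{align*}
    \norm{\f{\sigma}-\Pi_{h,2}^s\f{\sigma}}_{\t{H}(\d)} &\leq C h^{k-1}\norm{\f{\sigma}}_{\t{H}^{k+1}},\\
    \norm{\f{u}-\Pi_{h,3}^s\f{u}}_{\f{L}^2} &\leq C h^{k}\norm{\f{u}}_{\f{H}^k}.
\end{align*}
Inserting both into the quasi-optimal bound and absorbing the higher-order $h^{k}$-term into the dominant $h^{k-1}$-term yields exactly \eqref{theorem_eq_conv}.

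The only substantive step is the verification that Brezzi's abstract framework applies with constants truly independent of $h$. Inf-sup stability was already shown in Theorem \ref{theorem_ discretization_strong_sym} through the explicit pullback construction \eqref{eq_construction_1}--\eqref{eq_construction_3}, and the kernel coercivity there exploited the key identity $\norm{\f{s}}_{\t{H}(\d)}=\norm{\f{s}}_{\t{L}^2}$ on $B_h$, which is a consequence of the exactness of the diagram in Fig.~\ref{Fig:com_diagram_6}. Hence no additional work beyond invoking these two lemmas together with Lemma \ref{lemma:projection_physical} is required.

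The main obstacle is conceptual rather than computational: one must make sure that the projection $\Pi_{h,2}^s$ actually delivers the claimed $h^{k-1}$-rate in the full $\t{H}(\d)$-norm, not only in $\t{L}^2$. This is where the commutativity $(\widehat{\nabla}\cdot)\circ\widehat{\Pi}_{h,2}^s = \widehat{\Pi}_{h,3}^s\circ(\widehat{\nabla}\cdot)$ in the parametric domain, together with Lemma \ref{lemma:improved_estimate} bounding $\mathcal{Y}_2^s$ and $\mathcal{Y}_3^s$ in higher Sobolev norms, is essential; both facts are already provided in the preceding lemmas, so the argument reduces to bookkeeping.
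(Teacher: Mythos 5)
Your proposal is correct and takes essentially the same route as the paper: the quasi-optimal bound is obtained from classical Brezzi theory using the discrete inf-sup stability and kernel coercivity of Theorem \ref{theorem_ discretization_strong_sym} together with the well-posedness of the continuous problem (Theorem \ref{theorem:well-posedness}), and the rate follows by inserting the projections ${\Pi}_{h,2}^s\f{\sigma}$, ${\Pi}_{h,3}^s\f{u}$ from Lemma \ref{lemma:projection_physical} into the infimum and absorbing the higher-order $h^{k}$ term. No gaps.
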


\begin{proof} 
	In view of classical results for mixed problems (see e.g. \cite{Brezzi}) the quasi-optimal error estimate is a consequence of the inf-sup stability of the discrete method, see Theorem \ref{theorem_ discretization_strong_sym}, and the well-posedness of the continuous problem; see Theorem \ref{theorem:well-posedness}. Therefore it is enough to show the convergence rates w.r.t. $h$. But this is clear since we can apply the result of the previous Lemma \ref{lemma:projection_physical} to the quasi-optimal error estimate, namely 
	\begin{align*}
		\norm{\f{\sigma}- \f{\sigma}_h}_{\t{H}(\d)} + \norm{\f{u}-\f{u}_h}_{\f{L}^2} &\leq C \Big(  \norm{\f{\sigma}- {\Pi}_{h,2}^s\f{\sigma}}_{\t{H}(\d)} + \norm{\f{u}-{\Pi}_{h,3}^s\f{u}}_{\f{L}^2} \Big) \\
		&\leq  C  h^{k-1}   \big(   \norm{\f{\sigma}}_{\t{H}^{k+1}} + \norm{\f{u}}_{\f{H}^k} \big).
	\end{align*}

\end{proof}

	\begin{remark}
			The error estimate \eqref{theorem_eq_conv} of the last theorem shows an order reduction and further we have to require a relatively smooth exact solution. Hence, the approximation estimate is certainly not entirely satisfactory. We hope that this first estimate is too pessimistic and it is advisable to study numerical examples in order to check the actual approximation performance. Problems within our estimate derivation are caused by the boundedness result for the mapping $\mathcal{Y}_2^s$ (see Lemma \ref{Lemma_Boundedness_strong}) where the $\t{H}^1$-norm appears instead of the wanted $\t{H}(\d)$-norm. At least in the case $\h_1\J=\f{0}$ one can prove boundedness w.r.t. the $\t{H}(\d)$-norm. The latter implies an improved error estimate of the form
		\begin{align}
			\label{eq_improved_estimate}
			\norm{\f{\sigma}- \f{\sigma}_h}_{\t{H}(\d)} + \norm{\f{u}-\f{u}_h}_{\f{L^2}} \leq C  h^{k}  \big(   \norm{\f{\sigma}}_{\t{H}^{k}(\d)} + \norm{\f{u}}_{\f{H}^k} \big).
		\end{align}
		We obtain such an improved estimate also in case of the mixed boundary conditions from Assumption \ref{eq_mixed_BC_strong_sym}. In other words, one can verify that in situations where the additional term $\mathcal{Y}^{s}_{2,A}$ in \eqref{eq_def_Gamma_2s} vanishes, estimates of the form  \eqref{eq_improved_estimate} are valid.
	\end{remark}

	\section{Numerical examples for the weak symmetry case}
	\label{sec_numerics}
	Here we consider different numerical examples  for the discretization method from Section \ref{section_weak_form_weak_symmetry} with weakly imposed symmetry.  The  case of strong symmetry  requires more effort and  certainly deserves a detailed numerical study. Therefore we postpone numerical experiments for the method of Section \ref{section_strong}  to a further article. 
	
	In the following we assume a  homogeneous, isotropic body, i.e. the Lam\'{e} coefficients $\lambda$ and $\mu$ characterize the material. Further, all meshes in the parametric domain are uniform in the sense that for mesh size $h=1/n$ the parametric mesh partitions the domain $\widehat{\o}= (0,1)^2$  into $n^2$  equal squares; see e.g. Fig. \ref{Fig:boundaries} for $h=1/6$. Up to a constant scaling factor latter parametric mesh size is equivalent to the actual mesh size in $\o$. \\
	All numerical computations below have been carried out utilizing MATLAB \cite{MATLAB:2020} together with the GeoPDEs package \cite{geopdes,geopdes3.0}. The plots and figures are created by means of the mentioned software, too. 
	
	\subsection{Convergence tests}
	We start with problems having smooth exact solutions $(\f{\sigma},\f{u},\f{p})$ to check the convergence rates suggested by the theory. We face three different examples. 
	First we look at the single-patch case with homogeneous displacement boundary conditions. The computational domain is  displayed  in Fig. \ref{Fig1} (a) and we define the  right-hand side, i.e. the source function $\f{f}$,   in such a way that the exact solution for the displacement $\f{u}= (u_1,u_2)^T$ is $$u_1 = \big( \sin(\pi \zeta_1)  \sin(\pi \zeta_2)  \big) \circ \f{F}^{-1}, \ \ \ u_2 = -u_1 \ \ \textup{and} \ \ \f{F}(\zeta_1,\zeta_2) \coloneqq (\zeta_1 , \zeta_1^2 + \zeta_2)^T.$$  Besides we set $\lambda=2, \mu=1$.
	\begin{figure}[H]
		\hspace{-1.37cm}
		\begin{minipage}{4.5cm}
			\begin{tikzpicture}
				\node[inner sep=0pt] (ring) at (0,0)
				{\includegraphics[height=5.17cm,width=7.15cm]{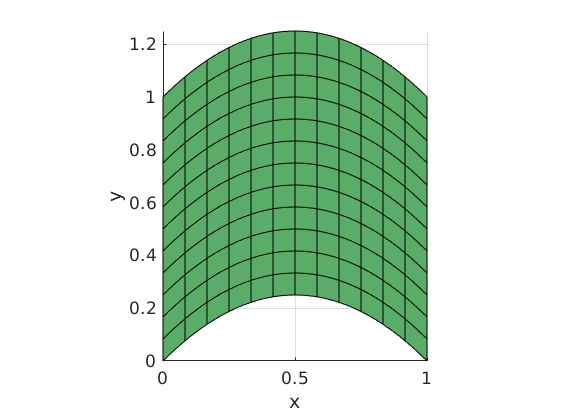}};
				\node at (1.1,2.3) {$\Gamma_4$};
				\node at (0.2,-1.4) {$\Gamma_3$};
			\end{tikzpicture}
			\hspace*{1.25cm}\centering \footnotesize  (a) Mesh for $h=1/12$.
		\end{minipage}
		\hspace{0.65cm}
		\begin{minipage}{4.5cm}
			\begin{tikzpicture}
				\node[inner sep=0pt] (ring) at (0,0)
				{\includegraphics[height=5.17cm,width=7.15cm]{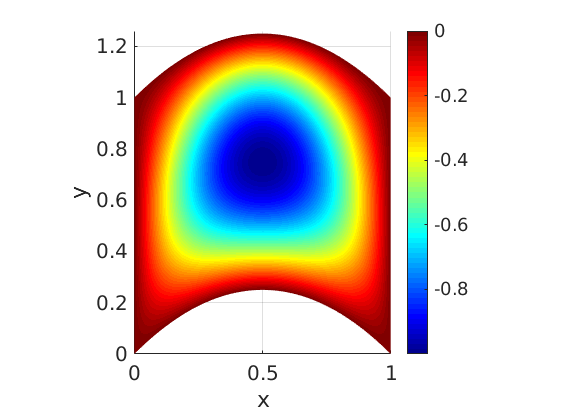}};
				\node at (-3.21,0.1) {$\Gamma_2$};
			\end{tikzpicture}
			\centering
			\hspace*{0.8cm}	\footnotesize (b) $y$-displacement.
		\end{minipage}
		\hspace{1.13cm}
		\begin{minipage}{4.5cm}
			\includegraphics[height=5.17cm,width=7.15cm]{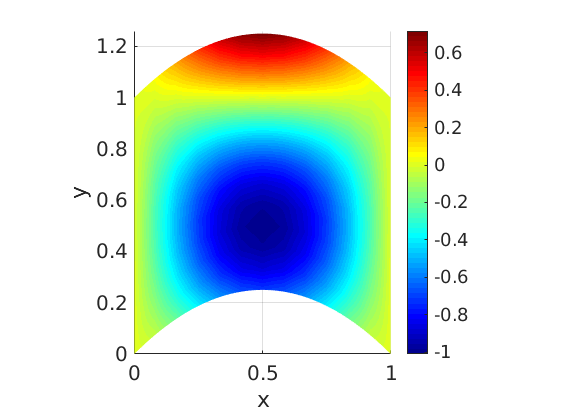} \centering \hspace*{0.8cm} \footnotesize    (c) $y$-displacement.
		\end{minipage}
		\caption{An example with a curved boundary domain. On the left we see the underlying mesh for $h=1/12$. Using  this mesh for  the first test example with homogeneous displacement BC, we obtain the $y$-displacement  shown in the middle ($r=1, p=3$). The $y$-displacement for the second example with traction boundary conditions is displayed on the right.  }
		\label{Fig1}
	\end{figure}
	Applying the weak symmetry method with spaces from Section \ref{section_discretization} we obtain for polynomial degrees $p=2,3,4, $ regularity $r=0$ and for $r=p-2$ (see Section \ref{subsub_sec_regu}) the errors  shown in Fig. \ref{Fig2} and Fig. \ref{Fig3}. For both  cases we see a good accordance with the theoretically predicated convergence orders $\mathcal{O}(h^p)$; see Theorem \ref{Theorem_dis_error_2D}.

	\begin{figure}[H]
		\begin{minipage}{4.5cm}
			\includegraphics[height=5cm,width=5.5cm]{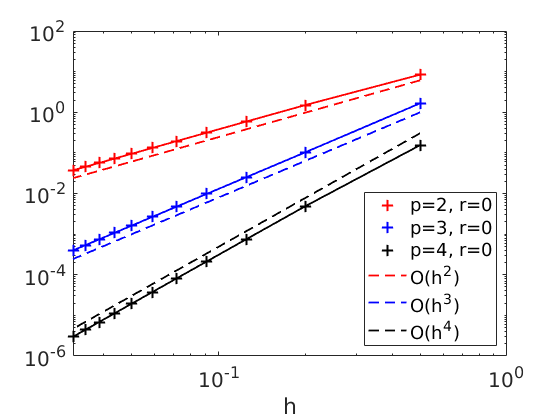}
			\footnotesize \centering \hspace*{-0.8cm} (a)  $\n{\f{\sigma}- \f{\sigma}_h}_{\t{H}(\d)}.$
		\end{minipage}
		\hspace{0.42cm}
		\begin{minipage}{4.5cm}
			\includegraphics[height=5cm,width=5.5cm]{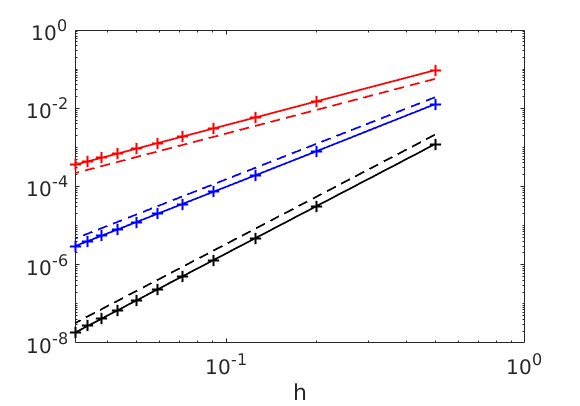}
			\footnotesize \centering \hspace*{-0.8cm} (b)   $\n{\f{u}- \f{u}_h}_{\f{L}^2}.$
		\end{minipage}
		\hspace{0.42cm}
		\begin{minipage}{4.5cm}
			\includegraphics[height=5cm,width=5.5cm]{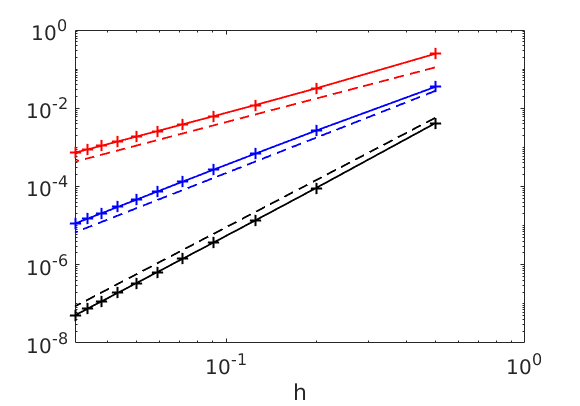}
			\footnotesize \centering  \hspace*{-0.8cm} (c) $\n{\f{p}- \f{p}_h}_{\t{L}^2}.$
		\end{minipage}
		\caption{Error decrease for the case $r=0$ and the first convergence example.}
		\label{Fig2}
	\end{figure}

	\begin{figure}[ht]
		\begin{minipage}{4.5cm}
			\includegraphics[height=5cm,width=5.5cm]{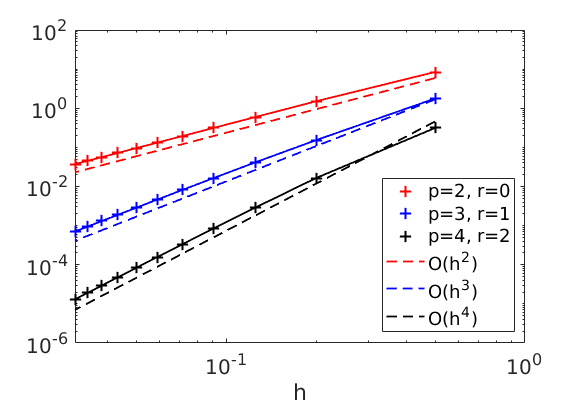} \centering
			\footnotesize \hspace*{-0.8cm} (a)  $\n{\f{\sigma}- \f{\sigma}_h}_{\t{H}(\d)}.$
		\end{minipage}
		\hspace{0.41cm}
		\begin{minipage}{4.5cm}
			\includegraphics[height=5cm,width=5.5cm]{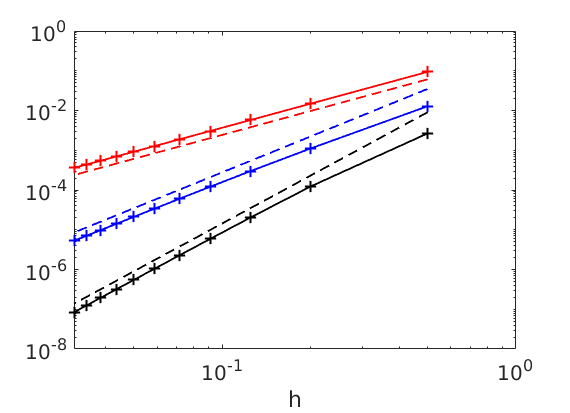}  \centering
			\footnotesize \hspace*{-0.8cm}  (b) $\n{\f{u}- \f{u}_h}_{\f{L}^2}.$
		\end{minipage}
		\hspace{0.41cm}
		\begin{minipage}{4.5cm}
			\includegraphics[height=5cm,width=5.5cm]{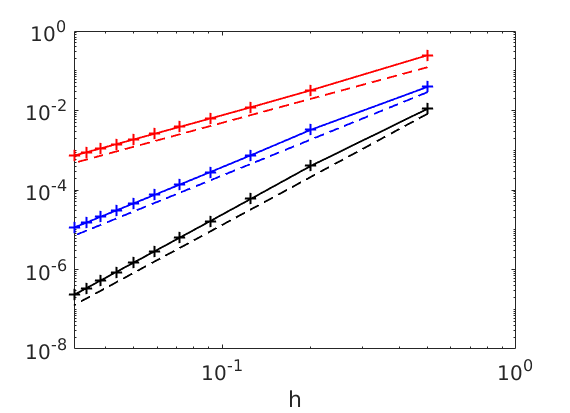}  \centering
			\footnotesize \hspace*{-0.8cm} (c)  $\n{\f{p}- \f{p}_h}_{\t{L}^2}.$
		\end{minipage}
		\caption{Convergence behavior for the first test example with increased spline regularity, meaning   $r=p-2$.}
		\label{Fig3}
	\end{figure}

	\begin{figure}[ht]
		\begin{minipage}{4.5cm}
			\includegraphics[height=5cm,width=5.5cm]{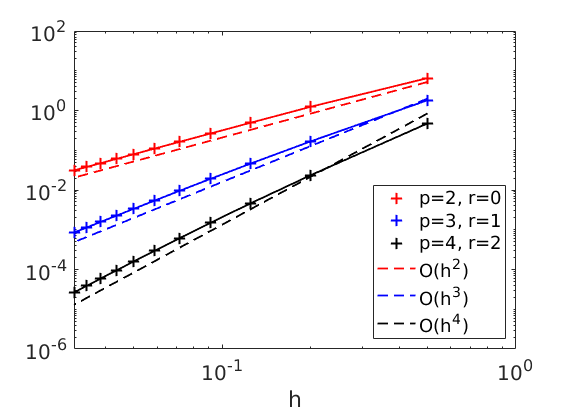} \centering
			\footnotesize \hspace*{-0.8cm} (a)  $\n{\f{\sigma}- \f{\sigma}_h}_{\t{H}(\d)}.$
		\end{minipage}
		\hspace{0.41cm}
		\begin{minipage}{4.5cm}
			\includegraphics[height=5cm,width=5.5cm]{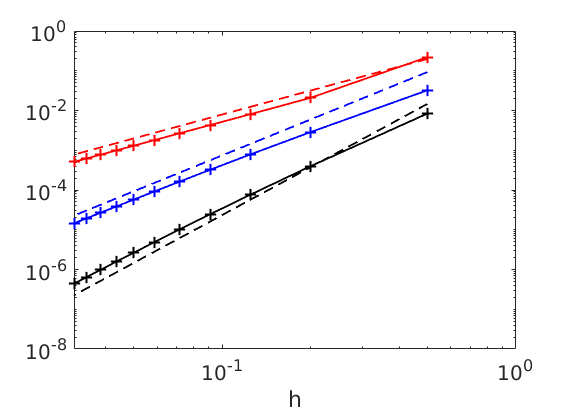} \centering
			\footnotesize \hspace*{-0.8cm} (b)  $\n{\f{u}- \f{u}_h}_{\f{L}^2}.$
		\end{minipage}
		\hspace{0.41cm}
		\begin{minipage}{4.5cm}
			\includegraphics[height=5cm,width=5.5cm]{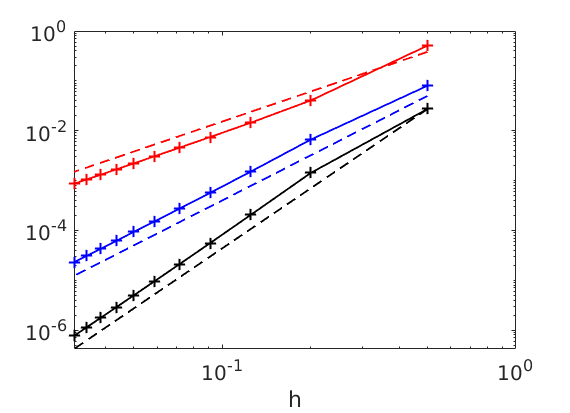} \centering
			\footnotesize \hspace*{-0.8cm} (c)  $\n{\f{p}- \f{p}_h}_{\t{L}^2}.$
		\end{minipage}
		\caption{The errors for the second test example with mixed boundary conditions.}
		\label{Fig5}
	\end{figure}
	
	Then we modify the last test, namely we have a new source function corresponding to  the  exact solution  $$u_1 =  \sin(\pi x)  \sin(\pi y)  , \ \ \ u_2 = -u_1,$$ together with  traction boundary conditions on the right, bottom and top edge of the domain, i.e. $\Gamma_2, \Gamma_3, \Gamma_4$ in Fig. \ref{Fig1} (a). The exact normal components $\f{\sigma} \cdot \f{n}$ are approximated by a $L^2$-projection onto  the respective boundary spline spaces. In particular we reduce the problem virtually to the case of zero traction  on the $\Gamma_t$ part. The plots in  Fig. \ref{Fig5} show the different errors for the choice $r=p-2$ and the error decrease is comparable to the homogeneous displacement BC situation.

	Next we face a numerical example within the framework of multi-patch parametrizations. We divide the square $[-1,1]^2$ into four patches which have  curved interfaces, see Fig. \ref{Figmp}.
	As manufactured solution we use again   $$u_1 = \big( \sin(\pi x) \ \sin(\pi y)  \big) , \ \ \ u_2 = -u_1 \ \ \textup{and} \ \ \lambda = 2, \mu = 1.$$ 
	Thus, we can  suppose a homogeneous displacement boundary condition.
	We refer to Fig. \ref{Fig4} for the approximation errors. One notes the error reduction of about $\mathcal{O}(h^p)$ which might indicate a good performance of  the method for  multi-patch parametrizations.

	\begin{figure}
		\begin{minipage}{4.5cm}
			\includegraphics[height=5cm,width=5.5cm]{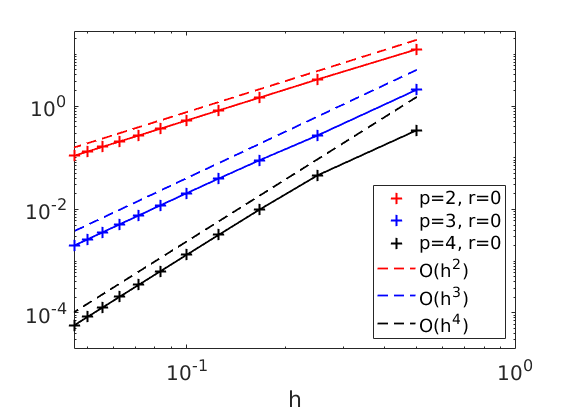}
			\centering
			\footnotesize \hspace*{-0.8cm} (a)  $\n{\f{\sigma}- \f{\sigma}_h}_{\t{H}(\d)}.$
		\end{minipage}
		\hspace{0.41cm}
		\begin{minipage}{4.5cm}
			\includegraphics[height=5cm,width=5.5cm]{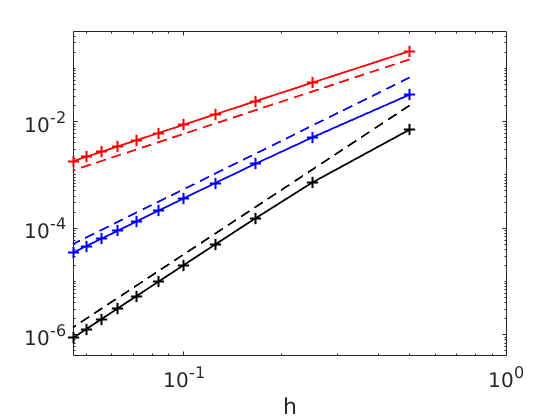}
			\centering
			\footnotesize \hspace*{-0.8cm} (b) $\n{\f{u}- \f{u}_h}_{\f{L}^2}.$
		\end{minipage}
		\hspace{0.41cm}
		\begin{minipage}{4.5cm}
			\includegraphics[height=5cm,width=5.5cm]{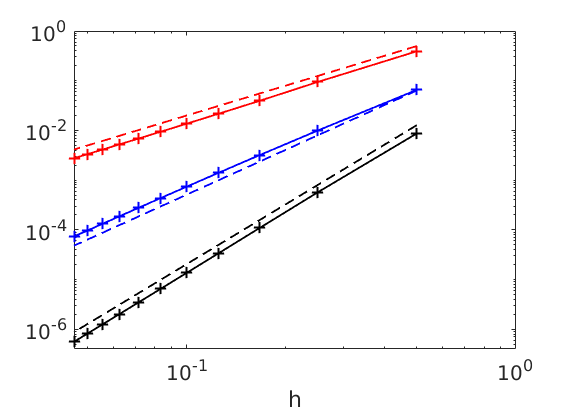}
			\centering
			\footnotesize \hspace*{-0.8cm} (c)  $\n{\f{p}- \f{p}_h}_{\t{L}^2}.$
		\end{minipage}
		\caption{The  method with weak symmetry can be applied to the multi-patch setting and the convergence behavior is similar to the single-patch case. }
		\label{Fig4}
	\end{figure}

	\begin{figure}[H]
		\hspace{1cm}
		\begin{minipage}{4.5cm}
			\includegraphics[height=5cm,width=6.5cm]{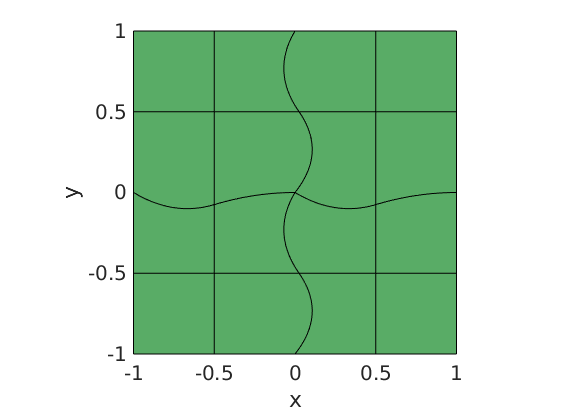}
		\end{minipage}
		\hspace{1.8cm}
		\begin{minipage}{4.5cm}
			\includegraphics[height=5cm,width=6.5cm]{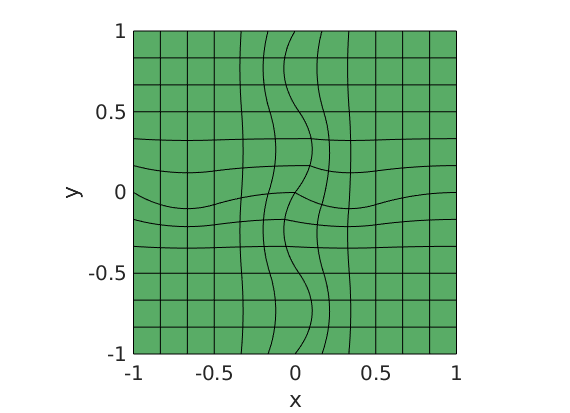}
		\end{minipage}
		\caption{On the left we have the initial mesh for our four-patch example ($h=1/2$), where the patch interfaces are given by the curved lines. On the right we see the mesh for $h=1/6$.  }
		\label{Figmp}
	\end{figure}

	\subsection{Quasi-Incompressible case}
	Here we want to study a stability test with respect to the  Lamé coefficient $\lambda$. Therefore, we apply the \emph{Quasi-Incompressible Regime} example from \cite[Section 7.2]{Rettung} to the single-patch and multi-patch parametrizations from above. More precisely, we  set $\mu =1$ and $\lambda = 10^{10}$ and  adapt the source functions such that we obtain the exact solution 
	\begin{align*}
		u_1 &= \big( \cos (2\pi x)-1\big)\sin(2\pi y) + \frac{ \sin(\pi x) \sin(\pi y)}{1+ \lambda},\\
		u_2 &=  \big( 1-\cos (\pi y)\big)\sin(2\pi x) + \frac{2 \sin(\pi x) \sin(\pi y)}{1+ \lambda}.
	\end{align*}
	
	On the one hand we compute numerical solutions for the domain in Fig. \ref{Fig1}, together with mixed boundary conditions. Apart  from the left edge on the $y$-axis we have traction boundary conditions.  And on the other hand we utilize the four-patch parametrization (see Fig. \ref{Figmp}) with zero displacement BC but equal source function $\f{f}$.  
	See Fig. \ref{Fig7} for the single-patch results and Fig. \ref{Fig6} for the multi-patch domain. For both tests we use regularity $r=0$. We observe an convergence comparable with the compressible regime ($\lambda =2, \mu = 1$) shown in \ref{Fig6} (d) - (f) and \ref{Fig7} (d) - (f). 
		The   single-patch parametrization has the underlying Jacobian 
	$$\f{J}(\zeta_1,\zeta_2)= \begin{bmatrix}
		1 & 0 \\ 2\zeta_1 & 1
	\end{bmatrix}, $$
	and thus from the theoretical point we should have stability w.r.t. $\lambda$ for all  $p\geq 2 $; see Theorem  \ref{Theorem_2D_stability} and Remark \ref{remark_stability}. 
	In the multi-patch example the patches are parameterized by means of globally $C^1$-regular bi-quadratic splines. Consequently, the convergence is stable for $p \geq 2$, too. Our numerical study confirms this point.

	\begin{figure}
		\begin{minipage}{4.5cm}
			\includegraphics[height=5cm,width=5.5cm]{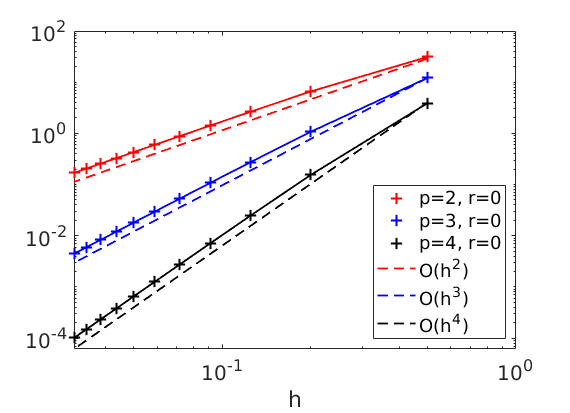}
			\footnotesize \centering \hspace*{-0.8cm}  (a)  $\n{\f{\sigma}- \f{\sigma}_h}_{\t{H}(\d)}.$
		\end{minipage}
		\hspace{0.41cm}
		\begin{minipage}{4.5cm}
			\includegraphics[height=5cm,width=5.5cm]{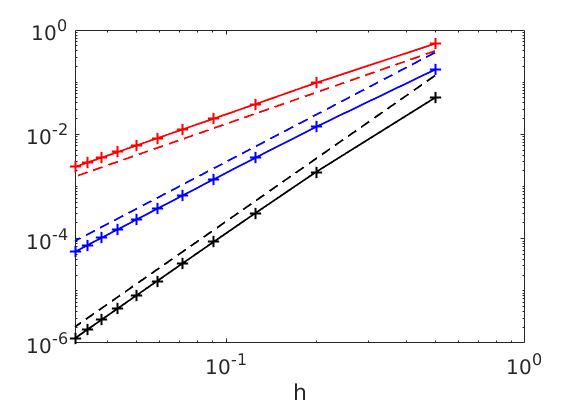}
			\footnotesize \centering \hspace*{-0.8cm}  (b)  $\n{\f{u}- \f{u}_h}_{\f{L}^2}.$
		\end{minipage}
		\hspace{0.41cm}
		\begin{minipage}{4.5cm}
			\includegraphics[height=5cm,width=5.5cm]{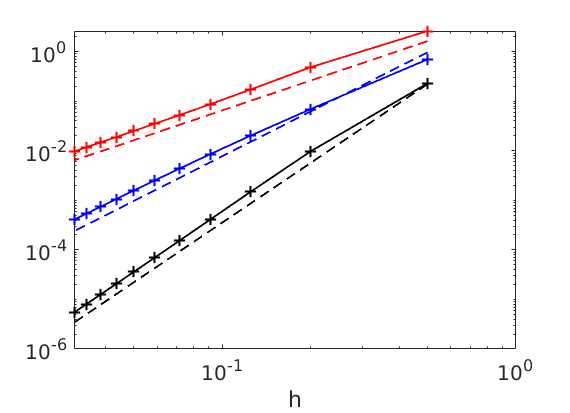}
			\footnotesize \centering \hspace*{-0.8cm}  (c)  $\n{\f{p}- \f{p}_h}_{\t{L}^2}.$
		\end{minipage}

		\begin{minipage}{4.5cm}
			\includegraphics[height=5cm,width=5.5cm]{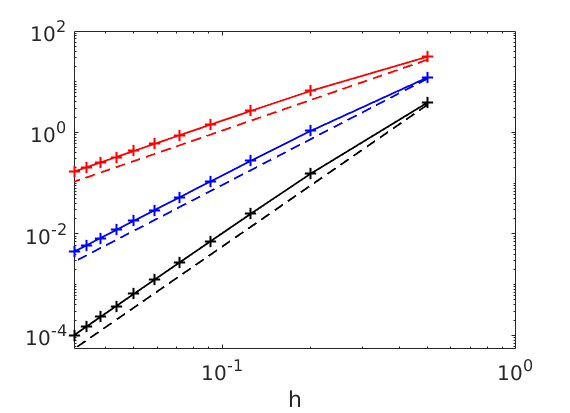}
			\footnotesize \centering \hspace*{-0.8cm}  (d) $\n{\f{\sigma}- \f{\sigma}_h}_{\t{H}(\d)}.$
		\end{minipage}
		\hspace{0.41cm}
		\begin{minipage}{4.5cm}
			\includegraphics[height=5cm,width=5.5cm]{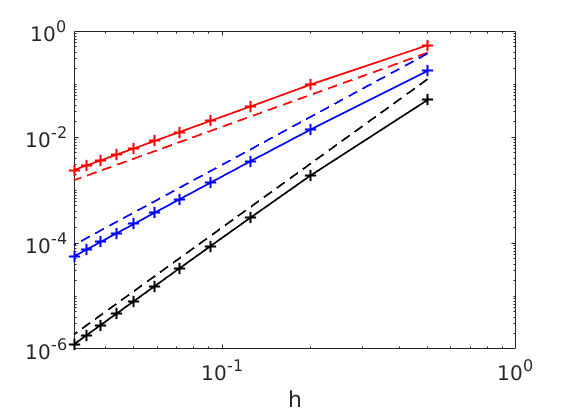}
			\footnotesize \centering \hspace*{-0.8cm}  (e)  $\n{\f{u}- \f{u}_h}_{\f{L}^2}.$
		\end{minipage}
		\hspace{0.41cm}
		\begin{minipage}{4.5cm}
			\includegraphics[height=5cm,width=5.5cm]{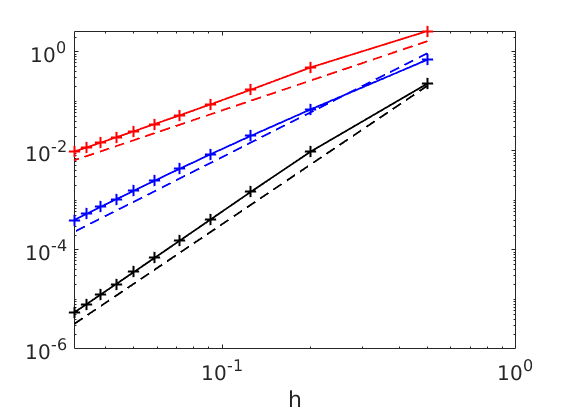}
			\footnotesize \centering  \hspace*{-0.8cm}  (f)  $\n{\f{p}- \f{p}_h}_{\t{L}^2}.$
		\end{minipage}
		
		\caption{ In the top row we see the convergence plots for the domain in Fig. (a) and a \emph{Quasi-Incompressible Regime} example similar to \cite[Section 7.2]{Rettung};($\lambda = 10^{10}, \mu = 1$). In the bottom row we have the results for the compressible case with $\lambda=2, \mu =1$. }
		\label{Fig7}
	\end{figure}

	\begin{figure}
		\begin{minipage}{4.5cm}
			\includegraphics[height=5cm,width=5.5cm]{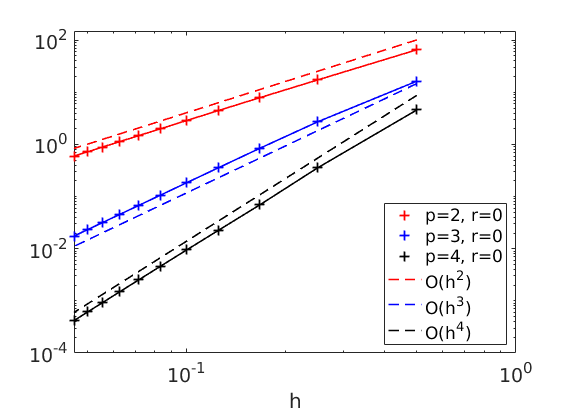}
			\footnotesize \centering \hspace*{-0.8cm} (a)    $\n{\f{\sigma}- \f{\sigma}_h}_{\t{H}(\d)}.$
		\end{minipage}
		\hspace{0.41cm}
		\begin{minipage}{4.5cm}
			\includegraphics[height=5cm,width=5.5cm]{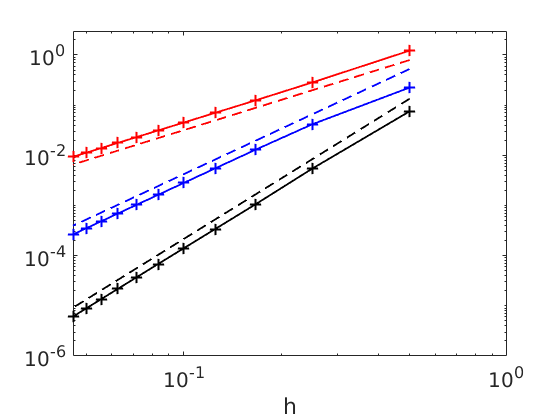}
			\footnotesize  \centering \hspace*{-0.8cm} (b)    $\n{\f{u}- \f{u}_h}_{\f{L}^2}.$
		\end{minipage}
		\hspace{0.41cm}
		\begin{minipage}{4.5cm}
			\includegraphics[height=5cm,width=5.5cm]{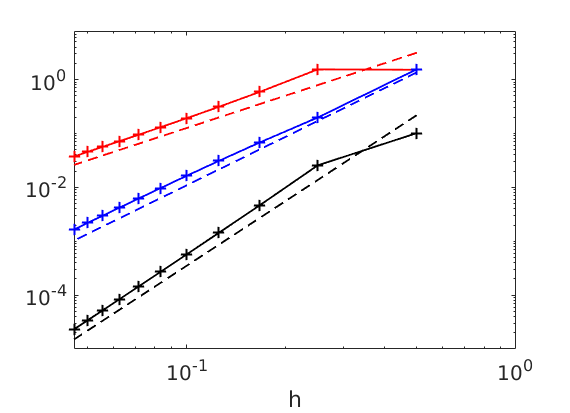}
			\footnotesize  \centering \hspace*{-0.8cm} (c)    $\n{\f{p}- \f{p}_h}_{\t{L}^2}.$
		\end{minipage}

		\begin{minipage}{4.5cm}
			\includegraphics[height=5cm,width=5.5cm]{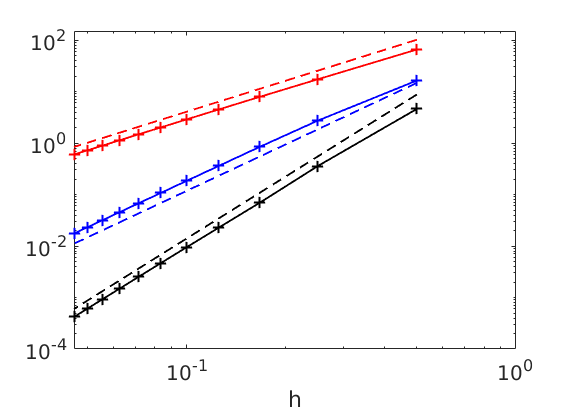}
			\footnotesize  \centering \hspace*{-0.8cm} (d)    $\n{\f{\sigma}- \f{\sigma}_h}_{\t{H}(\d)}.$
		\end{minipage}
		\hspace{0.41cm}
		\begin{minipage}{4.5cm}
			\includegraphics[height=5cm,width=5.5cm]{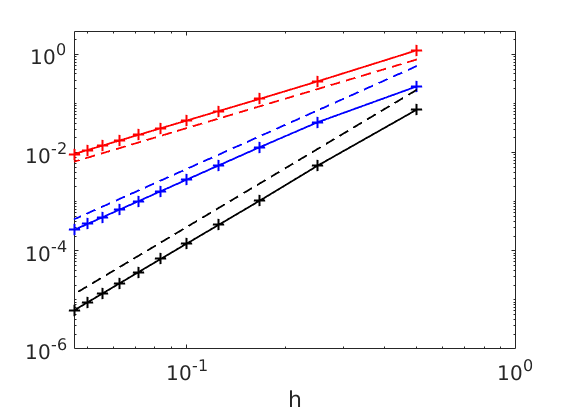}
			\footnotesize  \centering \hspace*{-0.8cm} (e)   $\n{\f{u}- \f{u}_h}_{\f{L}^2}.$
		\end{minipage}
		\hspace{0.41cm}
		\begin{minipage}{4.5cm}
			\includegraphics[height=5cm,width=5.5cm]{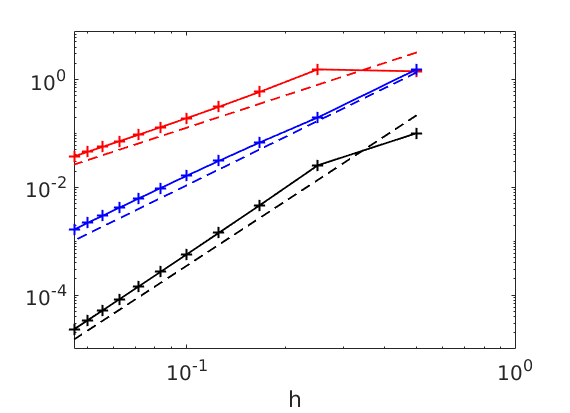}
			\footnotesize  \centering \hspace*{-0.8cm} (f)   $\n{\f{p}- \f{p}_h}_{\t{L}^2}.$
		\end{minipage}
		\caption{Above we have the error decrease for the \emph{Quasi-Incompressible Regime} example  in the four-patch square domain; see Fig \ref{Figmp}. In the top row we see the results for Lam\'{e} coefficients $\lambda = 10^{10}, \ \mu = 1$ and in the bottom we show for comparison the errors for the choice $\lambda = 2, \ \mu = 1$. The similarity of both rows indicate the stability w.r.t. to $\lambda$ also in the multi-patch framework.}
		\label{Fig6}
	\end{figure}

	\subsection{Loaded disk}
	\label{sec_disk}
We show an example for which we combine mixed boundary conditions   with another multi-patch situation.   To be more precise, we have a disk domain composed of $5$ patches, where on the top half $\Gamma_t $ of the disk  a traction force $\f{t}_n = (0,t_y)^T, \ \ t_y(x) = -0.1((2-x)^2(2+x)^2) $ is applied and on the lower boundary half $\Gamma_D$ we have a zero displacement condition; see  Fig \ref{Fig8} (a). For  $p=3,  r=0,  \lambda = 100, \mu =1 $  the  displacement components are given in Fig.  \ref{Fig8} (b) - (c).
	Strictly speaking, 
	 the parametrization does not meet Assumption \ref{assumption_multi-patch}, since there is an interior patch. Nevertheless, 
	   we want to demonstrate here that the weak symmetry method can be used also for more general situations.
	
	\begin{figure}
		\hspace{-1cm}
		\begin{minipage}{5cm}
			\begin{tikzpicture}
				\node[inner sep=0pt] (ring) at (0,0)
				{\includegraphics[height=4.49cm,width=6.21cm]{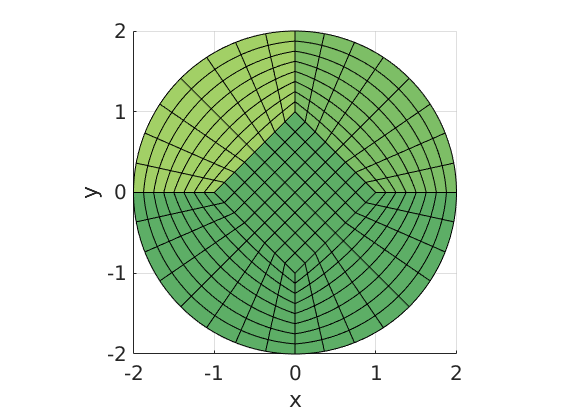}};
				\draw [blue,very thick,domain=0:180] plot ({1.76*cos(\x)+0.11}, {1.76*sin(\x)+0.15});
				\draw [red,very thick,domain=0:-180] plot ({1.76*cos(\x)+0.11}, {1.76*sin(\x)+0.15});
				\node[red] at (1.6,-1.4) {$\Gamma_D$};
				\node[blue] at (1.6,1.5) {$\Gamma_t$};
			\end{tikzpicture}
			\hspace*{0.3cm}	\footnotesize \centering (a) The disk mesh.
		\end{minipage}
		\begin{minipage}{5cm}
			\includegraphics[width=1.23\textwidth]{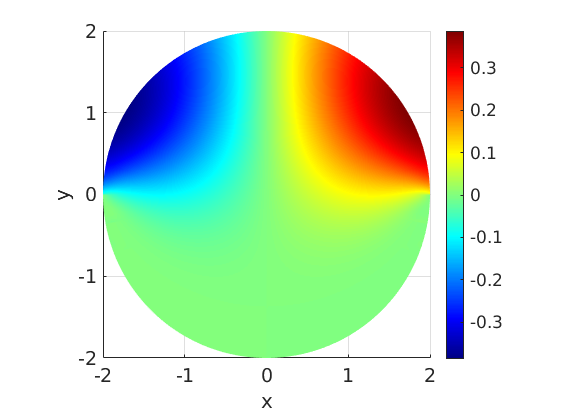}
			\footnotesize \centering  (b) $x$-displacement.
		\end{minipage}
		\hspace{0.61cm}
		\begin{minipage}{5cm}
			\includegraphics[width=1.23\textwidth]{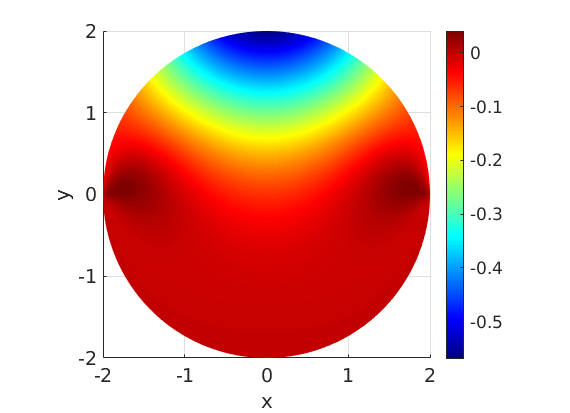}
			\footnotesize \centering  (c)  $y$-displacement.
		\end{minipage}
		\caption{ On $\Gamma_t$ a traction force is applied leading to a deformation. Utilizing the benefits of IGA we can handle a variety of geometries such as this disk shape.}
		\label{Fig8}
	\end{figure}

		\begin{figure}[h!]
		\centering
		\begin{minipage}{5cm}
			\includegraphics[width=1.23\textwidth]{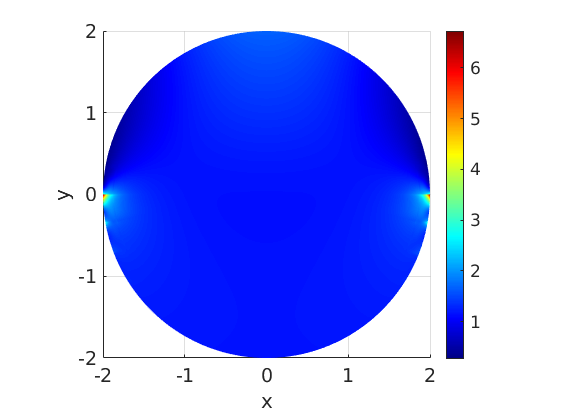}
			\hspace*{0.6cm}	\footnotesize \centering (a)  The stress magnitudes \hspace*{1.4cm} for the multi-patch disk.
		\end{minipage}
		\hspace{1.2cm}
		\begin{minipage}{7cm}
			\includegraphics[width=1\textwidth]{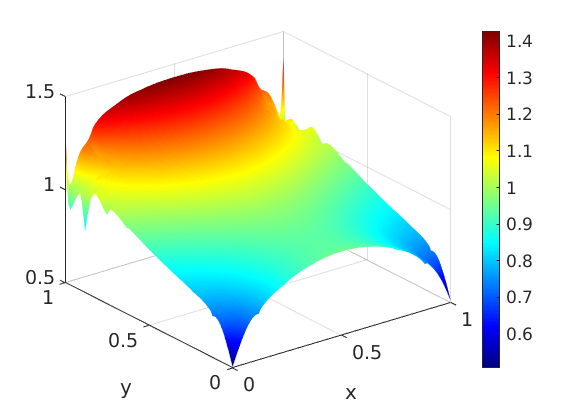}
			\footnotesize \hspace*{1.8cm} (b) Stress magnitudes.
		\end{minipage}

		\begin{minipage}{5cm}
			\includegraphics[width=1.26\textwidth]{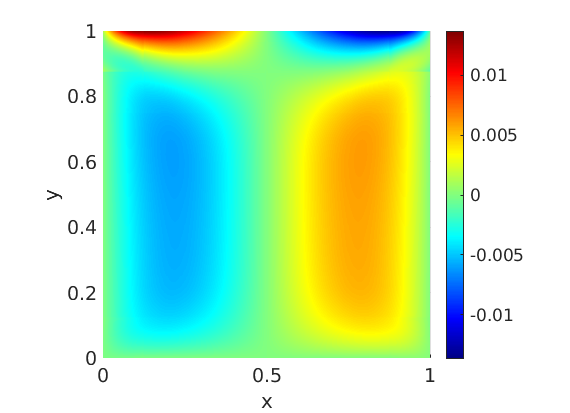}
			\footnotesize \centering  (c)  $x$-displacement.
		\end{minipage}
		\hspace{1.9cm}
		\begin{minipage}{5cm}
			\includegraphics[width=1.26\textwidth]{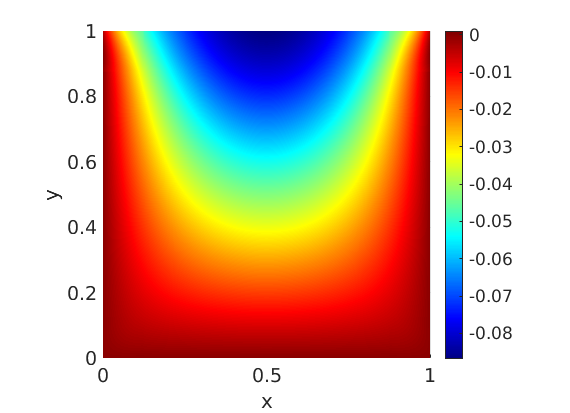}
			\footnotesize \centering  (d)  $y$-displacement.
		\end{minipage}
		\caption{In case of mixed boundary conditions we observe local stress oscillations near points in  $\overline{\Gamma_D} \cap \overline{\Gamma_t}$. In the top left figure  one can note the large stress magnitudes near the points $(-2,0)$ and $(0,2)$. If we have a square body with a uniform traction force applied on the top edge $(0,1)-(1,1)$ we see again local oscillations; compare the top right figure. As displayed in the bottom row, we still get reasonable displacements despite stress overshoots. }
		\label{Fig10}
	\end{figure}

	\subsection{Local stress oscillations}
	\label{sec_stress_o}
	One point we observed in numerical experiments and we want to mention here is the appearance of local overshoots and oscillations of the stress components in case of mixed boundary conditions. For example, if we come back to the multi-patch disk case from above (Section \ref{sec_disk}) with applied traction force, then we obtain for the  stress magnitudes, meaning the values $\sqrt{\sigma_{11}^2 +  2\sigma_{12}^2 + \sigma_{22}^2}$, the numerical solution in Fig. \ref{Fig10} (a).  Near the points $(-2,0)$ and $(2,0)$ where the  boundary part $\Gamma_D$ meets the traction boundary $\Gamma_t$ one can see  comparatively large stress values and an oscillatory behavior. In the general,  this  behavior can be observed in case of mixed-boundary conditions independent of the parametrization regularity, single- or multi-patch situation  and independent of the global spline regularity. 	
	To demonstrate this, we look at the  simplest single-patch square geometry, meaning $\f{F}= \textup{id}$, and enforce at the top edge a constant load $\f{t}_n = (-1,0)^T$. For polynomial degree $p=3$, $r=0$, a uniform mesh ($h=1/8$) and $\lambda =10, \ \mu =1$ we get the stress magnitudes in  Fig. \ref{Fig10} (b). In particular, we obtain again  local stress oscillations near the points $(0,1)$ and $(1,1)$. If we decrease $h$ or increase $p$ this stress outliers move closer to the mentioned boundary points where $\Gamma_D$ and $\Gamma_t$ meet. 
	One reason for the stress issue might be changed role of  boundary conditions for the mixed weak form compared to the classical primal weak elasticity formulation. In our experiments the traction BCs are explicitly enforced by prescribing  the respective stress components, but the displacement boundary conditions are only implemented in a weak sense. Thus, at points where Dirchlet and traction sides meet we specify the traction force while at the same time the displacement is enforced weakly on $\Gamma_D$ according to $\f{u}_D$. As a result  it seems that a non-smooth behavior of the solution  is triggered near such points. Nevertheless, even if we obtain such local stress wiggles  within numerical computations, we still get adequate solutions for the displacement; compare Fig. \ref{Fig10} (c) -(d). 
	However,  we do not have a strict proof or explanation to clarify this  issue with the stresses  at hand. Hence, a more detailed study of the stress  solutions  in the situation of mixed boundary conditions is advisable.

	\section{Conclusion}
	\label{sec_conc}
	
	In this article we defined discrete spaces utilizing B-splines for the approximation of the mixed weak form of planar linear elasticity with weakly imposed symmetry as well as for the  mixed formulation implied by the Hellinger-Reissner functional. The latter  requires symmetric stress fields and the  derivation of  suitable finite-dimensional test spaces needed more effort. For both approaches our choice of discretization spaces satisfies the Brezzi inf-sup condition and we obtain well-posed saddle-point problems together with a quasi-optimality estimate. Moreover, we showed the convergence of the methods at least if the underlying geometry and the exact solution are regular enough. And if we use B-spline parametrizations $\f{F} \in (S_{p,p}^{r,r})^2$, then the corresponding  inf-sup  constant for the weak symmetry case can be chosen independent of  $\lambda $   which indicates stability for the quasi-incompressible regime. Several numerical examples within the scope of weakly imposed symmetry  confirmed the  theoretical  error estimate.  However, for the ansatz with strong symmetry a numerical study is missing and might be a starting point of a further paper. Unfortunately, the space definitions in Section \ref{section_strong} are quite involved and thus symmetry preserving computations might be expensive. Another point that deserves a clarification is the issue of local stress oscillations in case of mixed boundary conditions; compare Section \ref{sec_stress_o}.  The main advantage of the proposed methods is the exploitation of results from IGA.  This is why we can  consider computational domains with curved boundaries which is not directly possible if classical FEM is used.

	\section{Appendix}
	
	For the sake of clarity  we add here the  missing proofs for the Lemmas \ref{Lemmma_strong_sym_div}, \ref{Lemma_invertebility_Gamma_2_1} and \ref{Lemma_invertebility_Gamma_2_2}.
	
	\subsection{Proof of Lemma \ref{Lemmma_strong_sym_div} \textmd{(Divergence compatibility of $\mathcal{Y}_{2,\Gamma_1}$)}}
	\label{Proof_1_appendix}	
	\begin{proof}
		Because of Lemma \ref{Lemma_strong_sym_L2} we can assume for this proof that $\f{S} \in \t{H}_{\Gamma_1}(\o,\d,\mathbb{S}) \cap C^1(\overline{\o})$. The general statement follows by a density argument. Firstly it is $$\mathcal{Y}^s_{2,\Gamma_1} (\f{S}) =   \underbrace{\tilde{\f{J}} (\f{S} \circ \f{F}) \tilde{\f{J}}^{T}}_{ \eqqcolon(L1,1)} + \underbrace{ \widehat{\textup{Airy}}({F}_n) \cdot \int_{0}^{\zeta_1} h_n  \ d1}_{\eqqcolon(L1,2)},$$ where we set $h_n \coloneqq    (-1)^{n+1}  \  \tilde{{J}}_{2l}  \ (S_{\sigma(n)l} \circ \f{F})$. And let  us define $\hat{\f{S}} \coloneqq \f{S} \circ \f{F}$. With the introduced notation we have  for the $m$-th entry of $\widehat{\nabla} \cdot (L1,1)$:
		\begin{align*}
			[\widehat{\nabla} \cdot (L1,1)]_m &=  \widehat{\nabla} \cdot \big(  \J  \cdot \hat{\f{S}} \cdot (\J_m)^T\big) \\
			&=	\widehat{\nabla} \cdot \big( \j_{mi} \ \J \cdot \hat{\f{S}}^i\big) \\
			&= (\widehat{\nabla}\j_{mi})^T \cdot \big[ \J \cdot \hat{\f{S}}_i\big] + \j_{mi} \ \widehat{\nabla} \cdot \big( \J \cdot \hat{\f{S}}_i \big).
		\end{align*}
		Since $\J = \textup{det}(\f{J}) \ \f{J}^{-1} $ we get with \eqref{eq_trans_compa}:
		\begin{equation}
			\label{eq_Lemma-div_compatibel_auxiliary_1}
			[\widehat{\nabla} \cdot (L1,1)]_m = \underbrace{\h_k \j_{mi} \cdot \j_{kl}  \cdot \hat{S}_{il}}_{\eqqcolon(L1,3)} + \big[\textup{det}(\f{J}) \ \J \cdot \big(\nabla \cdot \S \big) \circ \f{F} \big]_m.
		\end{equation}
		If  we keep the boundary condition in mind then the term $(L1,3)$ can be written as 
		\begin{align}
			\label{eq_Lemma_BC_line1}
			(L1,3) &= \h_1 \j_{mi} \cdot \j_{1l}  \cdot \hat{S}_{il}+ \h_2 \j_{mi} \cdot \j_{2l}  \cdot \hat{S}_{il}  \\ 	\label{eq_Lemma_BC_line2}
			&= \h_1 \j_{mi} \cdot  \int_{0}^{\zeta_1} \h_1[\j_{1l}  \cdot \hat{S}_{il}]  \ d1+ \h_2 \j_{mi} \cdot \j_{2l}  \cdot \hat{S}_{il} .
		\end{align}
		Note, by assumption $ \S  \cdot \f{n}= \f{0}$ on $\Gamma_1$  which implies $
		(1,0) \cdot (\J \cdot (\S_j\circ \f{F}))   = {0} , \ j \in \{1,2\} $  on $\hat{{\Gamma}}_1$. And thus $ \j_{1i} \cdot \hat{S}_{ij}(0, \zeta_2)=0 , \forall j $. \\
		On the other hand we have
		\begin{align*}
			[\widehat{\nabla} \cdot (L1,2)]_m &=    (\widehat{\textup{Airy}}(F_n))_m \cdot \Big(\widehat{\nabla}\int_{0}^{\zeta_1}h_n \ d1\Big) + \Big(\int_{0}^{\zeta_1}h_n \ d1 \Big)\cdot \big(  \underbrace{\widehat{\nabla}  \cdot \widehat{\textup{Airy}}(F_n)}_{=0} \big) \\
			&=  h_n \cdot (\widehat{\textup{Airy}}(F_n))_{m1} +  \Big(\int_{0}^{\zeta_1} \h_2h_n  \ d1 \Big) \cdot (\widehat{\textup{Airy}}(F_n))_{m2} \\
			&=    \underbrace{\tilde{{J}}_{2l}  \cdot \hat{S}_{2l} \cdot (\widehat{\textup{Airy}}(F_1))_{m1}-  \tilde{{J}}_{2l} \cdot  \hat{S}_{1l} \cdot (\widehat{\textup{Airy}}(F_2))_{m1}}_{  \eqqcolon (L1,4) }  \\
			& \ \ \ + \int_{0}^{\zeta_1} \h_2[\tilde{{J}}_{2l}  \cdot \hat{S}_{2l}]  \ d1 \cdot (\widehat{\textup{Airy}}(F_1))_{m2} - \int_{0}^{\zeta_1} \h_2[\tilde{{J}}_{2l}  \cdot \hat{S}_{1l}]  \ d1 \cdot (\widehat{\textup{Airy}}(F_2))_{m2} . 
		\end{align*}
		Hence, for $m=1$ we obtain for $(L1,4)$ from the second last line in detail:
		\begin{align*}
			[(L1,4)]_1 &=     \tilde{{J}}_{21}  \cdot \hat{S}_{21} \cdot (\widehat{\textup{Airy}}(F_1))_{11}-  \tilde{{J}}_{21} \cdot  \hat{S}_{11} \cdot (\widehat{\textup{Airy}}(F_2))_{11} +   \tilde{{J}}_{22}  \cdot \hat{S}_{22} \cdot (\widehat{\textup{Airy}}(F_1))_{11} \\ &  \ \ \ -  \tilde{{J}}_{22} \cdot  \hat{S}_{12} \cdot (\widehat{\textup{Airy}}(F_2))_{11} \\
			&= \tilde{{J}}_{21}  \cdot \hat{S}_{21} \cdot \h_2J_{12}-  \tilde{{J}}_{21} \cdot  \hat{S}_{11} \cdot \h_2J_{22} +   \tilde{{J}}_{22}  \cdot \hat{S}_{22} \cdot \h_2J_{12} \\ & \ \ \ -  \tilde{{J}}_{22} \cdot  \hat{S}_{12} \cdot \h_2J_{22} \\
			&=-\tilde{{J}}_{21}  \cdot \hat{S}_{21} \cdot \h_2\j_{12}-  \tilde{{J}}_{21} \cdot  \hat{S}_{11} \cdot \h_2\j_{11} -   \tilde{{J}}_{22}  \cdot \hat{S}_{22} \cdot \h_2\j_{12} \\ & \ \ \ -  \tilde{{J}}_{22} \cdot  \hat{S}_{12} \cdot \h_2\j_{11} \\
			&= -\h_2\j_{1i}  \cdot \hat{S}_{1i}  \cdot \j_{21}-  \h_2\j_{1i}  \cdot \hat{S}_{2i} \cdot  \j_{22} \\
			&= - \h_2\j_{1i}  \cdot \hat{S}_{il}  \cdot \j_{2l} \ .
		\end{align*}
		And for $m=2$ it is
		\begin{align*}
			[(L1,4)]_2 
			&= -\tilde{{J}}_{21}  \cdot \hat{S}_{21} \cdot \h_2J_{11}+  \tilde{{J}}_{21} \cdot  \hat{S}_{11} \cdot \h_2J_{21} -   \tilde{{J}}_{22}  \cdot \hat{S}_{22} \cdot \h_2J_{11} \\ &  \ \ \ +  \tilde{{J}}_{22} \cdot  \hat{S}_{12} \cdot \h_2J_{21}  \\
			&=-\tilde{{J}}_{21}  \cdot \hat{S}_{21} \cdot \h_2\j_{22}-  \tilde{{J}}_{21} \cdot  \hat{S}_{11} \cdot \h_2\j_{21} -   \tilde{{J}}_{22}  \cdot \hat{S}_{22} \cdot \h_2\j_{22} \\ &  \ \ \ -  \tilde{{J}}_{22} \cdot  \hat{S}_{12} \cdot \h_2\j_{21}  \\
			&= - \h_2\j_{2i}  \cdot \hat{S}_{il}  \cdot \j_{2l}.
		\end{align*}
		Comparing  $[(L1,4)]_m$ and  \eqref{eq_Lemma_BC_line2} we see that if we add \eqref{eq_Lemma_BC_line2} and $[\widehat{\nabla} \cdot (L1,2)]_m$ then the terms without integrals vanish and we get
		\begin{align*}
			[\widehat{\nabla} &\cdot (L1,2)]_m + [ (L1,3)]_m \\
			& = \int_{0}^{\zeta_1} \h_2[\tilde{{J}}_{2l}  \cdot \hat{S}_{2l}]  \ d1 \cdot (\widehat{\textup{Airy}}(F_1))_{m2} - \int_{0}^{\zeta_1} \h_2[\tilde{{J}}_{2l}  \cdot \hat{S}_{1l}] \ d1  \cdot (\widehat{\textup{Airy}}(F_2))_{m2}  \\
			& \ \ \ \ +\h_1 \j_{mi} \cdot  \int_{0}^{\zeta_1} \h_1[\j_{1l}  \cdot \hat{S}_{il}]  \ d1 .
		\end{align*}
		For $m=1$ we get for the last two lines 
		\begin{align*}
			[\widehat{\nabla} &\cdot (L1,2)]_1 + [ (L1,3)]_1 \\
			& =-\int_{0}^{\zeta_1} \h_2\tilde{{J}}_{2l}   \cdot \hat{S}_{2l}  + \tilde{{J}}_{2l}   \cdot \h_2\hat{S}_{2l} \ d1 \cdot \h_1J_{12} \\ &
			\ \  \  + \int_{0}^{\zeta_1} \h_2\tilde{{J}}_{2l}  \cdot \hat{S}_{1l} +  \tilde{{J}}_{2l}  \cdot \h_2\hat{S}_{1l}   \ d1 \cdot \h_1J_{22}  \\
			& \ \  \ +  \int_{0}^{\zeta_1} \h_1\j_{1l}  \cdot \hat{S}_{il}  \ d1  \cdot \h_1 \j_{1i} + \int_{0}^{\zeta_1} \j_{1l}  \cdot \h_1 \hat{S}_{il}  \ d1  \cdot \h_1 \j_{1i} \\
			&=  \underbrace{-\int_{0}^{\zeta_1} \h_2\tilde{{J}}_{2l}   \cdot \hat{S}_{2l}    \ d1 \cdot \h_1J_{12} 
				+ \int_{0}^{\zeta_1} \h_2\tilde{{J}}_{2l}  \cdot \hat{S}_{1l}   \ d1 \cdot \h_1J_{22}  +  \int_{0}^{\zeta_1} \h_1\j_{1l}  \cdot \hat{S}_{il}  \ d1  \cdot \h_1 \j_{1i} }_{\eqqcolon(L1,5)} \\
			&  \ \ \  \underbrace{ -\int_{0}^{\zeta_1} \tilde{{J}}_{2l}   \cdot \h_2\hat{S}_{2l}    \ d1 \cdot \h_1J_{12} 
				+ \int_{0}^{\zeta_1} \tilde{{J}}_{2l}  \cdot \h_2\hat{S}_{1l}   \ d1 \cdot \h_1J_{22}  +  \int_{0}^{\zeta_1} \j_{1l}  \cdot \h_1\hat{S}_{il}  \ d1  \cdot \h_1 \j_{1i}}_{\eqqcolon (L1,6)}
		\end{align*}
		With the relation $\h_1\j_{1l}=-\h_2\j_{2l}$ we get that $(L1,5)$ is zero since 
		\begin{align*}
			-& \int_{0}^{\zeta_1} \h_2\tilde{{J}}_{2l}   \cdot \hat{S}_{2l}   \ d1  \cdot \h_1J_{12} 
			+ \int_{0}^{\zeta_1} \h_2\tilde{{J}}_{2l}  \cdot \hat{S}_{1l}    \ d1 \cdot \h_1J_{22}  + \int_{0}^{\zeta_1} \h_1\j_{1l}  \cdot \hat{S}_{il} \ d1 \cdot \h_1 \j_{1i} \\
			& = \int_{0}^{\zeta_1}\h_2\tilde{{J}}_{2l}   \cdot \hat{S}_{2l}     \ d1\cdot \h_1\j_{12} 
			+ \int_{0}^{\zeta_1} \h_2\tilde{{J}}_{2l}  \cdot \hat{S}_{1l}    \ d1 \cdot \h_1\j_{11}  + \int_{0}^{\zeta_1} \h_1\j_{1l}  \cdot \hat{S}_{il}   \ d1  \cdot \h_1 \j_{1i}\\
			&= \int_{0}^{\zeta_1}\h_2\tilde{{J}}_{2l}   \cdot \hat{S}_{il}     \ d1 \cdot \h_1\j_{1i} 
			+  \int_{0}^{\zeta_1} \h_1\j_{1l}  \cdot \hat{S}_{il}   \ d1  \cdot \h_1 \j_{1i}\\
			&=0.
		\end{align*}
		Consequently, 
		\begin{align}
			[\widehat{\nabla} &\cdot (L1,2)]_1 + [ (L1,3)]_1 = (L1,6) \nonumber\\&=  \int_{0}^{\zeta_1} \tilde{{J}}_{2l}   \cdot \h_2\hat{S}_{2l}    \ d1 \cdot \h_1\j_{12} 
			+ \int_{0}^{\zeta_1} \tilde{{J}}_{2l}  \cdot \h_2\hat{S}_{1l}   \ d1 \cdot \h_1\j_{11}  +  \int_{0}^{\zeta_1} \j_{1l}  \cdot \h_1\hat{S}_{il}  \ d1  \cdot \h_1 \j_{1i}  \nonumber \\
			&= \int_{0}^{\zeta_1} \tilde{{J}}_{2l}   \cdot \h_2\hat{S}_{il}    \ d1 \cdot \h_1\j_{1i} 
			+  \int_{0}^{\zeta_1} \j_{1l}  \cdot \h_1\hat{S}_{il}  \ d1  \cdot \h_1 \j_{1i}   \nonumber\\
			&= \int_{0}^{\zeta_1} \tilde{{J}}_{kl}   \cdot \h_k\hat{S}_{il}    \ d1 \cdot \h_1\j_{1i}  = \int_{0}^{\zeta_1} \tilde{{J}}_{kl}   \cdot [\partial_j{S}_{il} \circ \f{F}] \cdot J_{jk}   \ d1 \cdot \h_1\j_{1i} \nonumber \\
			&= \int_{0}^{\zeta_1} \textup{det}(\f{J}) \cdot{{J}}_{kl}^{-1}   \cdot [\partial_j{S}_{il} \circ \f{F}] \cdot J_{jk}   \ d1 \cdot \h_1\j_{1i} \nonumber \\
			& = \int_{0}^{\zeta_1} \delta_{jl} \cdot  \textup{det}(\f{J})  \cdot [\partial_j{S}_{il}\circ \f{F}]   \ d1 \cdot \h_1\j_{1i} \nonumber \\
			&= \int_{0}^{\zeta_1}  \textup{det}(\f{J})   \cdot [\partial_l{S}_{il} \circ \f{F}]   \ d1 \cdot \h_1\j_{1i} = \int_{0}^{\zeta_1}  \textup{det}(\f{J})   \cdot [(\nabla \cdot \S) \circ \f{F}]_i   \ d1 \cdot \h_1\j_{1i}.
			\label{eq_gamma_1_div_term2_1}
		\end{align}
		In the fourth row above we used the chain rule and further we write $\delta_{ij}$ for the Kronecker delta. And one observes the relation $\textup{det}(\f{J}) \cdot{{J}}_{kl}^{-1}= \j_{kl} \circ \f{F}$.\\
		For reasons of completeness we show the proof steps also for the second entry, i.e. for  $m=2$, although the approach is the same:
		\begin{align*}
			[\widehat{\nabla} &\cdot (L1,2)]_2 + [ (L1,3)]_2 \\
			& =\int_{0}^{\zeta_1} \h_2\tilde{{J}}_{2l}   \cdot \hat{S}_{2l}  + \tilde{{J}}_{2l}   \cdot \h_2\hat{S}_{2l} \ d1 \cdot \h_1J_{11} \\ &
			\ \  \ - \int_{0}^{\zeta_1} \h_2\tilde{{J}}_{2l}  \cdot \hat{S}_{1l} +  \tilde{{J}}_{2l}  \cdot \h_2\hat{S}_{1l}   \ d1 \cdot \h_1J_{21}  \\
			& \ \  \ +  \int_{0}^{\zeta_1} \h_1\j_{1l}  \cdot \hat{S}_{il}  \ d1  \cdot \h_1 \j_{2i} + \int_{0}^{\zeta_1} \j_{1l}  \cdot \h_1 \hat{S}_{il}  \ d1  \cdot \h_1 \j_{2i} \\
			&=  \underbrace{\int_{0}^{\zeta_1} \h_2\tilde{{J}}_{2l}   \cdot \hat{S}_{2l}    \ d1 \cdot \h_1\j_{22} 
				+ \int_{0}^{\zeta_1} \h_2\tilde{{J}}_{2l}  \cdot \hat{S}_{1l}   \ d1 \cdot \h_1\j_{21}  +  \int_{0}^{\zeta_1} \h_1\j_{1l}  \cdot \hat{S}_{il}  \ d1  \cdot \h_1 \j_{2i} }_{\eqqcolon(L1,7)} \\
			& \ \ + \underbrace{ \int_{0}^{\zeta_1} \tilde{{J}}_{2l}   \cdot \h_2\hat{S}_{2l}    \ d1 \cdot \h_1\j_{22} 
				+ \int_{0}^{\zeta_1} \tilde{{J}}_{2l}  \cdot \h_2\hat{S}_{1l}   \ d1 \cdot \h_1\j_{21}  +  \int_{0}^{\zeta_1} \j_{1l}  \cdot \h_1\hat{S}_{il}  \ d1  \cdot \h_1 \j_{2i}}_{\eqqcolon (L1,8)}.
		\end{align*}
		The term of the second last row is zero, due to 
		\begin{align*}
			\int_{0}^{\zeta_1} \h_2\tilde{{J}}_{2l}  & \cdot \hat{S}_{2l}    \ d1 \cdot \h_1\j_{22} 
			+ \int_{0}^{\zeta_1} \h_2\tilde{{J}}_{2l}  \cdot \hat{S}_{1l}   \ d1 \cdot \h_1\j_{21}  +  \int_{0}^{\zeta_1} \h_1\j_{1l}  \cdot \hat{S}_{il}  \ d1  \cdot \h_1 \j_{2i} \\
			&= \int_{0}^{\zeta_1} \h_2\tilde{{J}}_{2l}   \cdot \hat{S}_{il}  +   \h_1\j_{1l}  \cdot \hat{S}_{il}    \ d1 \cdot \h_1 \j_{2i} \\
			&= \int_{0}^{\zeta_1} 0  \ d1 \cdot \h_1 \j_{2i} =0.
		\end{align*}
		And the term $(L1,8)$ can be rewritten similar to the term $(L1,6)$ as:
		\begin{align}
			(L1,8)& =\int_{0}^{\zeta_1} \tilde{{J}}_{2l}   \cdot \h_2\hat{S}_{2l}    \ d1 \cdot \h_1\j_{22} 
			+ \int_{0}^{\zeta_1} \tilde{{J}}_{2l}  \cdot \h_2\hat{S}_{1l}   \ d1 \cdot \h_1\j_{21} +  \int_{0}^{\zeta_1} \j_{1l}  \cdot \h_1\hat{S}_{il}  \ d1  \cdot \h_1 \j_{2i}  \nonumber \\
			&= \int_{0}^{\zeta_1} \tilde{{J}}_{kl}   \cdot \h_k\hat{S}_{il}    \ d1 \cdot \h_1\j_{2i}=\int_{0}^{\zeta_1} \tilde{{J}}_{kl}   \cdot J_{jk} \cdot [\partial_j{S}_{il} \circ \f{F} ]    \ d1 \cdot \h_1\j_{2i}  \nonumber\\
			&= \int_{0}^{\zeta_1}  \textup{det}(\f{J})   \cdot [\partial_l{S}_{il} \circ \f{F}]   \ d1 \cdot \h_1\j_{2i} = \int_{0}^{\zeta_1}  \textup{det}(\f{J})   \cdot [(\nabla \cdot \S) \circ \f{F}]_i   \ d1 \cdot \h_1\j_{2i}.
			\label{eq_gamma_1_div_term2_2}
		\end{align}
		Now, in view of the equations \eqref{eq_Lemma-div_compatibel_auxiliary_1}, \eqref{eq_gamma_1_div_term2_1} and \eqref{eq_gamma_1_div_term2_2} we get finally
		\begin{align*}
			[\widehat{\nabla} \cdot (L1,1)]_m &+ [\widehat{\nabla} \cdot (L1,2)]_m \\ &=  \big[\textup{det}(\f{J}) \ \J \cdot \big((\nabla \cdot \S) \circ \f{F} \big) \big]_m +  \h_1\j_{mi} \cdot \int_{0}^{\zeta_1}  \textup{det}(\f{J})   \cdot [(\nabla \cdot \S) \circ \f{F}]_i   \ d1 
		\end{align*}
		which finishes the proof.
	\end{proof}

	\subsection{Proof of Lemma \ref{Lemma_invertebility_Gamma_2_1} \textmd{(Invertibility of $\mathcal{Y}_{2,\Gamma_1}^s$)} }
	\label{Lemma_proof_appendix_2}
	\begin{proof}
		Here we write  $\tilde{\mathcal{Y}}= \tilde{\mathcal{Y}}(\S)$ for the  mapping defined by the expression on  right-hand side of \eqref{eq_inv_gamma_1}. Then we show first that $\tilde{\mathcal{Y}} \circ {\mathcal{Y}}_{2,\Gamma_1}^s= \textup{id}$. One sees easily
		\begin{align*}
			\tilde{\mathcal{Y}} \circ {\mathcal{Y}}_{2,\Gamma_1}^s(\f{S}) &= \f{S} +  \underbrace{\tilde{\f{J}}^{-1} (\widehat{\textup{Airy}}(F_n) \circ \f{F}^{-1}) \tilde{\f{J}}^{-T} \cdot \Big(\int_{0}^{\zeta_1}  h_n  \ d1 \Big) \circ \f{F}^{-1} }_{\eqqcolon (L2,1)}  \\
			&  \ \ \ + \underbrace{\textup{Airy}(F_k^{-1})
				\cdot \Big(\int_{0}^{\zeta_1}   (-1)^{k+1}  \    \ ({\mathcal{Y}}_{2,\Gamma_1}^s(\f{S}))_{2\sigma(k)}  \ d1 \Big) \circ \f{F}^{-1}}_{\eqqcolon(L2,2)},	\end{align*}
		again with the abbreviation $h_n \coloneqq    (-1)^{n+1}  \  \tilde{{J}}_{2l}  \ (S_{\sigma(n)l} \circ \f{F})$.
		Further we set \\ $\tilde{h}_k \coloneqq  (-1)^{k+1}  \    \ ({\mathcal{Y}}_{2,\Gamma_1}^s(\f{S}))_{2\sigma(k)}$ and $\hat{\f{S}} \coloneqq \f{S} \circ \f{F}$.
		
		Now we have to check that the terms $(L2,1)$ and $(L2,2)$ add up to zero. To see this, we use the  chain rule for the Hessian operator. It holds
		\begin{equation*}
			0={\nabla}^2(F_n \circ \f{F}^{-1})= \f{J}^{-T}  (\widehat{\nabla}^2F_n \circ \f{F}^{-1})  \f{J}^{-1} + \nabla^2F_k^{-1} \cdot (J_{nk} \circ \f{F}^{-1}),
		\end{equation*}
		which implies 
		the modified relation
		\begin{equation}
			\label{eq_chain_hessian_2}
			\J^{-1}  (\widehat{\textup{Airy}}(F_n) \circ \f{F}^{-1})  \J^{-T} = - \textup{Airy}(F_k^{-1}) \cdot (J_{nk} \circ \f{F}^{-1}).
		\end{equation}
		And with latter equation we can write 
		\begin{align}
			\label{eq_gamma_1_inv_1}
			(L2,1) = - \textup{Airy}(F_k^{-1}) \cdot (J_{nk} \circ \f{F}^{-1}) \cdot \Big(\int_{0}^{\zeta_1}  h_n  \ d1 \Big) \circ \f{F}^{-1}.
		\end{align}
		Next we look at the $\tilde{h}_k$. More precisely, utilizing the product rule we have
		\begin{align}
			\tilde{h}_1&= ({\mathcal{Y}}_{2,\Gamma_1}^s(\f{S}))_{22} = \j_{2i} \cdot \hat{S}_{ij} \cdot \j_{2j} + (\widehat{\textup{Airy}}(F_n))_{22} \cdot \int_{0}^{\zeta_1}{h}_n  \ d1 \nonumber \\
			&= \j_{2i} \cdot \hat{S}_{ij} \cdot \j_{2j} + \h_1J_{n1} \cdot \int_{0}^{\zeta_1}{h}_n  \ d1 \nonumber \\
			&= \j_{2i} \cdot \hat{S}_{ij} \cdot \j_{2j}- J_{n1} \cdot h_n + \h_1 \big[J_{n1} \cdot \int_{0}^{\zeta_1}  h_n  \ d1 \big]. \label{eq_L2_10}
		\end{align}
		One observes the equality chain
		\begin{align}
			J_{n1} \cdot h_n &= J_{11} \cdot \hat{S}_{2l}\cdot  \j_{2l} - J_{21} \cdot \hat{S}_{1l}\cdot  \j_{2l} \nonumber \\
			&= \j_{22} \cdot \hat{S}_{2l}\cdot  \j_{2l} + \j_{21} \cdot \hat{S}_{1l}\cdot  \j_{2l} \nonumber \\
			&= \j_{2i} \cdot \hat{S}_{il} \cdot \j_{2l} \label{eq_Lem2_11}.
		\end{align}
		Hence,  \eqref{eq_L2_10} and \eqref{eq_Lem2_11} yield
		\begin{align}
			\label{eq_gamma_1_inv_2}
			\int_{0}^{\zeta_1}\tilde{h}_1   \ d1 = J_{n1} \cdot \int_{0}^{\zeta_1}{h}_n      \ d1.
		\end{align}
		And on the other hand for $k=2$ we get 
		\begin{align*}
			\tilde{h}_2&= -({\mathcal{Y}}_{2,\Gamma_1}^s(\f{S}))_{21} =-\j_{2i} \cdot \hat{S}_{ij} \cdot \j_{1j} - (\widehat{\textup{Airy}}(F_n))_{12} \cdot \int_{0}^{\zeta_1}{h}_n  \ d1 \nonumber \\
			&= -\j_{2i} \cdot \hat{S}_{ij} \cdot \j_{1j} + \h_1J_{n2} \cdot \int_{0}^{\zeta_1}{h}_n  \ d1 \\
			&= -\j_{2i} \cdot \hat{S}_{ij} \cdot \j_{1j}- J_{n2} \cdot h_n + \h_1 \big[J_{n2} \cdot \int_{0}^{\zeta_1}  h_n  \ d1 \big]. \nonumber
		\end{align*} 
		Thus it is 
		\begin{align}
			\label{eq_gamma_1_inv_3}
			\int_{0}^{\zeta_1}\tilde{h}_2   \ d1 = J_{n2} \cdot \int_{0}^{\zeta_1}{h}_n      \ d1.
		\end{align}
		The last line is clear due to
		\begin{align*}
			J_{n2} \cdot h_n &= - \j_{12} \cdot  \hat{S}_{2l}\cdot  \j_{2l} - \j_{11}  \cdot \hat{S}_{1l}\cdot  \j_{2l} = -\j_{1i} \cdot  \hat{S}_{il}\cdot  \j_{2l}.
		\end{align*}
		In view of \eqref{eq_gamma_1_inv_1}, \eqref{eq_gamma_1_inv_2} and \eqref{eq_gamma_1_inv_3} we get
		\begin{align*}
			(L2,1)+(L2,2) &= - \textup{Airy}(F_k^{-1}) \cdot (J_{nk} \circ \f{F}^{-1}) \cdot \Big(\int_{0}^{\zeta_1}  h_n  \ d1 \Big) \circ \f{F}^{-1} \\
			& \ \ \ + \textup{Airy}(F_k^{-1}) \cdot (J_{nk} \circ \f{F}^{-1}) \cdot \Big(\int_{0}^{\zeta_1}  h_n  \ d1 \Big) \circ \f{F}^{-1}=0. 
		\end{align*}
		And thus $\tilde{\mathcal{Y}}$ is the left inverse of ${\mathcal{Y}}_{2,\Gamma_1}^s$. \\
		We continue to show that $\tilde{\mathcal{Y}}$ is also the right inverse.\\
		Writing $\tilde{\tilde{h}}_n \coloneqq  (-1)^{n+1}  \    \ \tilde{S}_{2\sigma(n)}$ one has
		\begin{align*}
			{\mathcal{Y}}_{2,\Gamma_1}^s \circ \tilde{\mathcal{Y}}(\tilde{\f{S}}) &= \tilde{\f{S}} +  \underbrace{\tilde{\f{J}} (\textup{Airy}(F_n^{-1}) \circ \f{F}) \tilde{\f{J}}^{T} \cdot \Big(\int_{0}^{\zeta_1}  \tilde{\tilde{h}}_n  \ d1 \Big)  }_{\eqqcolon (L2,3)}  \\
			&  \ \ \ + \underbrace{\widehat{\textup{Airy}}(F_k)
				\cdot \Big(\int_{0}^{\zeta_1}   (-1)^{k+1}  \    \  \big[(\tilde{\mathcal{Y}}(\tilde{\f{S}}))_{\sigma(k)l} \circ \f{F}\big]  \cdot \tilde{J}_{2l}  \ d1  \Big) }_{\eqqcolon(L2,4)}.
		\end{align*}
		Similar to \eqref{eq_chain_hessian_2} one can apply the chain rule and obtains
		\begin{align}
			\label{eq_L2_1}
			(L2,3)&= -\widehat{\textup{Airy}} (F_k) \cdot (J^{-1}_{nk} \circ \f{F}) \cdot \int_{0}^{\zeta_1}\tilde{\tilde{h}}_n  \ d1.
		\end{align}
		Below we write several times $\j^{-1}_{ij}$ instead of correctly $\j^{-1}_{ij} \circ \f{F}$ to shorten the expressions. In view of this remark  we can rearrange
		\begin{align*}
			\big[(\tilde{\mathcal{Y}}(\tilde{\f{S}}))_{\sigma(1)l} \circ \f{F}\big]  \cdot \tilde{J}_{2l} &= \big[(\tilde{\mathcal{Y}}(\tilde{\f{S}}))_{2l} \circ \f{F}\big]  \cdot \tilde{J}_{2l}    \\&= \j^{-1}_{2i} \cdot \tilde{S}_{ij} \cdot \underbrace{\j^{-1}_{lj} \cdot \j_{2l}}_{= \delta_{2j}}  + \tilde{J}_{2l} \cdot (\textup{Airy}(F_n^{-1}))_{2l} \cdot \int_{0}^{\zeta_1}\tilde{\tilde{h}}_n  \ d1 \\
			&= \j^{-1}_{2i} \cdot \tilde{S}_{i2} + \tilde{J}_{2l} \cdot (\textup{Airy}(F_n^{-1}))_{2l} \cdot \int_{0}^{\zeta_1}\tilde{\tilde{h}}_n  \ d1 \\
			&= \j^{-1}_{2i} \cdot \tilde{S}_{i2} - \tilde{J}_{21} \cdot \partial_1J^{-1}_{n2} \cdot \int_{0}^{\zeta_1}\tilde{\tilde{h}}_n  \ d1 + \tilde{J}_{22} \cdot \partial_1J^{-1}_{n1} \cdot \int_{0}^{\zeta_1}\tilde{\tilde{h}}_n  \ d1 \\
			&= \j^{-1}_{2i} \cdot \tilde{S}_{i2} + {J}_{i1} \cdot \underbrace{\partial_1J^{-1}_{ni}}_{=\partial_iJ^{-1}_{n1}} \cdot \int_{0}^{\zeta_1}\tilde{\tilde{h}}_n  \ d1  \\
			&= \j^{-1}_{2i} \cdot \tilde{S}_{i2} + \h_1\big[ J_{n1}^{-1}\circ \f{F}\big] \cdot \int_{0}^{\zeta_1}\tilde{\tilde{h}}_n  \ d1  \\
			&=\j^{-1}_{2i} \cdot \tilde{S}_{i2} - [ J_{n1}^{-1}\circ \f{F}\big] \cdot \tilde{\tilde{h}}_n + \h_1 \big[(J_{n1}^{-1}\circ \f{F}) \cdot \int_{0}^{\zeta_1}\tilde{\tilde{h}}_n  \ d1 \big].
		\end{align*}
		And with 
		\begin{align*}
			[ J_{n1}^{-1}\circ \f{F}\big] \cdot \tilde{\tilde{h}}_n = \tilde{J}^{-1}_{22} \cdot \tilde{S}_{22} + \tilde{J}_{21}^{-1} \cdot \tilde{S}_{21} =  \j^{-1}_{2i} \cdot \ts_{2i},
		\end{align*}
		we obtain
		\begin{align}
			\label{eq_L2_2}
			\int_{0}^{\zeta_1}\big[(\tilde{\mathcal{Y}}(\tilde{\f{S}}))_{\sigma(1)l} \circ \f{F}\big]  \cdot \tilde{J}_{2l}  \ d1&= (J_{n1}^{-1}\circ \f{F}) \cdot \int_{0}^{\zeta_1}\tilde{\tilde{h}}_n  \ d1.
		\end{align}
		With analogous steps  one has
		\begin{align}
			\big[(\tilde{\mathcal{Y}}(\tilde{\f{S}}))_{\sigma(2)l} \circ \f{F}\big]  \cdot \tilde{J}_{2l} &= \big[(\tilde{\mathcal{Y}}(\tilde{\f{S}}))_{1l} \circ \f{F}\big]  \cdot \tilde{J}_{2l}  \nonumber  \\&= \j^{-1}_{1i} \cdot \tilde{S}_{ij} \cdot \j^{-1}_{lj} \cdot \j_{2l} + \tilde{J}_{2l} \cdot (\textup{Airy}(F_n^{-1}))_{1l} \cdot \int_{0}^{\zeta_1}\tilde{\tilde{h}}_n  \ d1 \nonumber \\
			&= \j^{-1}_{1i} \cdot \tilde{S}_{i2} + \tilde{J}_{2l} \cdot (\textup{Airy}(F_n^{-1}))_{1l} \cdot \int_{0}^{\zeta_1}\tilde{\tilde{h}}_n  \ d1 \nonumber \\
			&= \j^{-1}_{1i} \cdot \tilde{S}_{i2} + \tilde{J}_{21} \cdot \partial_2J^{-1}_{n2} \cdot \int_{0}^{\zeta_1}\tilde{\tilde{h}}_n  \ d1 - \tilde{J}_{22} \cdot \partial_1J^{-1}_{n2} \cdot \int_{0}^{\zeta_1}\tilde{\tilde{h}}_n  \ d1 \nonumber \\
			&= \j^{-1}_{1i} \cdot \tilde{S}_{i2} - \h_1\big[ J_{n2}^{-1}\circ \f{F}\big] \cdot \int_{0}^{\zeta_1}\tilde{\tilde{h}}_n  \ d1 \nonumber \\
			&=\j^{-1}_{1i} \cdot \tilde{S}_{i2} + [ J_{n2}^{-1}\circ \f{F}\big] \cdot \tilde{\tilde{h}}_n - \h_1 \big[(J_{n2}^{-1}\circ \f{F}) \cdot \int_{0}^{\zeta_1}\tilde{\tilde{h}}_n  \ d1 \big]. \label{eq_L2_12}
		\end{align}
		Using the equality chain 
		\begin{align*}
			[ J_{n2}^{-1}\circ \f{F}\big] \cdot \tilde{\tilde{h}}_n &= - \j_{12}^{-1} \cdot \ts_{22}-\j_{11} \cdot \ts_{12} = -\j_{1i} \cdot \ts_{i2},
		\end{align*}
		and with \eqref{eq_L2_12} it is 
		\begin{align}
			\label{eq_L2_3}
			\int_{0}^{\zeta_1}	\big[(\tilde{\mathcal{Y}}(\tilde{\f{S}}))_{\sigma(2)l} \circ \f{F}\big]  \cdot \tilde{J}_{2l}   \ d1 &= -(J_{n2}^{-1}\circ \f{F}) \cdot \int_{0}^{\zeta_1}\tilde{\tilde{h}}_n  \ d1.
		\end{align}
		Looking at \eqref{eq_L2_1}, \eqref{eq_L2_2} and \eqref{eq_L2_3} one observes $(L2,3)= -(L2,4)$ and this finishes the proof.
	\end{proof}

	\subsection{Proof of Lemma \ref{Lemma_invertebility_Gamma_2_2} \textmd{(Invertibility of $\mathcal{Y}_2^s$)}}
	\label{Lemma_proof_appendix_3}
	\begin{proof}
		Using $\hat{\S} \coloneqq \f{S} \circ \f{F}$ we have 
		\begin{align}
			\label{eq:Lem17_1}
			\tilde{\mathcal{Y}}_2^{s} \circ {\mathcal{Y}}_2^{s}(\S)&= \S   - ({\mathcal{Y}}_{2,\Gamma_1}^{s})^{-1} \circ {\mathcal{Y}}_{2,A}^s(\S) - \tilde{\mathcal{Y}}_{2,A}^{s} \circ {\mathcal{Y}}_{2}^s(\S) \nonumber\\
			&= \S \nonumber \\ & \ \ \ - ({\mathcal{Y}}_{2,\Gamma_1}^{s})^{-1}  \Big(  \begin{bmatrix}
				\int_{0}^{\zeta_1} \h_1 \j_{1i} \cdot \j_{1l}(0, \cdot) \cdot \hat{S}_{il}(0,\cdot)  \ d1 & 0 \\
				0 &  \int_{0}^{\zeta_2} \h_1 \j_{2i} \cdot \j_{1l}(0, \cdot) \cdot \hat{S}_{il}(0,\cdot) \ d2
			\end{bmatrix}\Big) \nonumber\\
			& \ \ \ - ({\mathcal{Y}}_{2,\Gamma_1}^{s})^{-1} \Big(\begin{bmatrix}
				\int^{\zeta_1}_{0}  \big[\mathcal{Y}^s_{3}\big( T({\mathcal{Y}}_2^{s}[\S])\big) \big]_1   \ d1 & 0 \\
				0 & \int^{\zeta_2}_{0}  \big[\mathcal{Y}^s_{3}\big( T({\mathcal{Y}}_2^{s}[\S])\big) \big]_2  \ d2
			\end{bmatrix} \Big).
		\end{align}
		The $m$-th component of $T({\mathcal{Y}}_2^{s}[\S])$ is 
		\begin{align*}
			\big[T({\mathcal{Y}}_2^{s}[\S]) \big]_m&= \partial_k \tilde{J}^{-1}_{mi} \cdot \j^{-1}_{k1} \cdot (({\mathcal{Y}}_2^{s}[\S])_{i1}(0,\cdot) \circ \f{F}^{-1}).
		\end{align*}
		Note, we have 
		\begin{align*}
			(({\mathcal{Y}}_2^{s}[\S])_{i1}(0,\cdot) =  \j_{il}(0,\cdot) \cdot \hat{S}_{lj}(0,\cdot) \cdot \j_{1j}(0,\cdot).
		\end{align*}
		All the other terms vanish on $\hat{\Gamma}_1$.
		Thus, using \eqref{eq:gamma_2_E_equivalent_form}, we can write 
		\begin{align*}
			\Big[ \mathcal{Y}^s_{3}&\big(T({\mathcal{Y}}_2^{s}[\S]) \big) \Big]_m  \\
			&=  \h_1 \Big[  \j_{mn} \cdot \int_{0}^{\zeta_1}    \textup{det}(\f{J}) \ (\partial_k \tilde{J}^{-1}_{ni} \circ \f{F}) \cdot (\j^{-1}_{k1} \circ \f{F}) \cdot \j_{il}(0,\cdot) \cdot \hat{S}_{lj}(0,\cdot) \cdot \j_{1j}(0,\cdot)  \ d1  \Big].
		\end{align*}
		Since $\textup{det}(\f{J}) \cdot  (\j^{-1}_{k1} \circ \f{F}) = J_{k1}$ and due to 
		$$\h_1\big[\tilde{J}^{-1}_{ni} \circ \f{F} \big] = (\partial_k \tilde{J}^{-1}_{ni} \circ \f{F}) \cdot J_{k1} =  \textup{det}(\f{J}) \ (\partial_k \tilde{J}^{-1}_{ni} \circ \f{F}) \cdot (\j^{-1}_{k1} \circ \f{F}), $$ we can rearrange 
		\begin{align*}
			\Big[ \mathcal{Y}^s_{3}&\big(T({\mathcal{Y}}_2^{s}[\S]) \big) \Big]_m  \\
			&=  \h_1 \Big[  \j_{mn} \cdot \int_{0}^{\zeta_1}    \h_1\big[\tilde{J}^{-1}_{ni} \circ \f{F} \big]    \cdot \j_{il}(0,\cdot) \cdot \hat{S}_{lj}(0,\cdot) \cdot \j_{1j}(0,\cdot)  \ d1  \Big] \\
			&= \h_1 \Big[  \j_{mn} \cdot    (\tilde{J}^{-1}_{ni} \circ \f{F})     \cdot \j_{il}(0,\cdot) \cdot \hat{S}_{lj}(0,\cdot) \cdot \j_{1j}(0,\cdot) \   \Big] \\
			& \ \ \ - \h_1 \Big[  \j_{mn} \cdot    (\tilde{J}^{-1}_{ni} \circ \f{F}(0,\cdot))     \cdot \j_{il}(0,\cdot) \cdot \hat{S}_{lj}(0,\cdot) \cdot \j_{1j}(0,\cdot) \   \Big] \\
			&= \h_1 \Big[  \delta_{mi}    \cdot \j_{il}(0,\cdot) \cdot \hat{S}_{lj}(0,\cdot) \cdot \j_{1j}(0,\cdot) \   \Big] \\
			& \ \ \ - \h_1 \Big[  \j_{mn} \cdot    \delta_{nl} \cdot \hat{S}_{lj}(0,\cdot) \cdot \j_{1j}(0,\cdot) \   \Big]
			\\&=0 - \h_1 \Big[  \j_{mn}  \cdot \hat{S}_{nj}(0,\cdot) \cdot \j_{1j}(0,\cdot) \   \Big] \\
			&=- \h_1 \j_{mi} \cdot \j_{1l}(0,\cdot) \cdot  \hat{S}_{il}(0,\cdot) .
		\end{align*}
		To obtain the last line we changed the indices according to $n \rightarrow i, \ j \rightarrow l$.
		Hence, comparing the entries of the second and third matrix in \eqref{eq:Lem17_1}, we see that they add up to zero and we get indeed 
		$	\tilde{\mathcal{Y}}_2^{s} \circ {\mathcal{Y}}_2^{s}(\S)= \S $. \\
		On the other hand we obtain
		\begin{align*}
			{\mathcal{Y}}_2^{s} \circ \tilde{\mathcal{Y}}_2^{s}(\tS)&= \tS  -\begin{bmatrix}
				\int^{\zeta_1}_{0}  \big[\mathcal{Y}^s_{3}(T(\tS)) \big]_1   \ d1 & 0 \\
				0 & \int^{\zeta_2}_{0}  \big[\mathcal{Y}^s_{3}(T(\tS)) \big]_2  \ d2 
			\end{bmatrix} \\
		\end{align*} \vspace{-1cm} 
		$$
		- \underbrace{\begin{bmatrix}
				\int_{0}^{\zeta_1} \h_1 \j_{1i} \cdot \j_{1l}(0, \cdot) \cdot \big(\tilde{\mathcal{Y}}_2^{s}(\tS)\big)_{il} \circ \f{F}(0,\cdot)  \ d1 & 0 \\
				0 &  \int_{0}^{\zeta_2} \h_1 \j_{2i} \cdot \j_{1l}(0, \cdot) \cdot \big(\tilde{\mathcal{Y}}_2^{s}(\tS)\big)_{il} \circ \f{F}(0,\cdot) \ d2
		\end{bmatrix}}_{\eqqcolon (L3,1)}
		$$
		
		We concentrate now on the term $\big(\tilde{\mathcal{Y}}_2^{s}(\tS)\big)_{il} \circ \f{F}(0,\cdot)$.
		Due to Lemma \ref{Lemma_compatibility_BC_strong_sym} we have that \\ $\hat{\S}_H \circ \f{F}^{-1} \coloneqq \S_H \coloneqq \tilde{\mathcal{Y}}_{2,A}^{s}(\tS) \in \t{H}_{\Gamma_1}(\o,\d,\s)$. Thus this implies on $\hat{\Gamma}_1$
		\begin{align*}
			-\j_{1l}(0,\cdot) \cdot (\hat{S}_H)_{il}(0,\cdot)  =  \big(\J(0,\cdot) \cdot \hat{\S}_H^i(0,\cdot) \big)^T \cdot \hat{\f{n}}_1   =0,
		\end{align*}
		where $\hat{\f{n}}_1$ is the outer unit normal to $\hat{\Gamma}_1$.\\
		Consequently, we have $\int_{0}^{\zeta_1} \h_1 \j_{mi} \cdot \j_{1l}(0, \cdot) \cdot \big(\tilde{\mathcal{Y}}_{2,A}^{s}(\tS)\big)_{il} \circ \f{F}(0,\cdot)  \ d1=0$ for $m=1,2$ and
		\begin{equation*}
			\big(({\mathcal{Y}}_{2,\Gamma_1}^{s})^{-1}(\tS)\big)_{il} \circ \f{F}(0,\cdot) = \big(\j_{in}^{-1} \circ \f{F}(0,\cdot)\big) \cdot \ts_{nj}(0,\cdot) \cdot \big(\j_{lj}^{-1} \circ \f{F}(0, \cdot) \big).
		\end{equation*}
		Thus the integrand of the $m$-th diagonal entry of the third matrix $(L3,1)$ from above  is
		\begin{align}
			\h_1 \j_{mi} \cdot &\j_{1l}(0, \cdot) \cdot \big(\tilde{\mathcal{Y}}_2^{s}(\tS)\big)_{il} \circ \f{F}(0,\cdot) \nonumber\\ &= \h_1 \j_{mi} \cdot \j_{1l}(0, \cdot) \cdot \big(\j_{in}^{-1} \circ \f{F}(0,\cdot)\big) \cdot \ts_{nj}(0,\cdot) \cdot \big(\j_{lj}^{-1} \circ \f{F}(0, \cdot) \big) \nonumber \\&=
			\delta_{1j} \cdot \h_1 \j_{mi}  \cdot \big(\j_{in}^{-1} \circ \f{F}(0,\cdot)\big) \cdot \ts_{nj}(0,\cdot)   \nonumber\\ \label{eq_Lem17_1}
			&= \h_1 \j_{mi}  \cdot \big(\j_{in}^{-1} \circ \f{F}(0,\cdot)\big) \cdot \ts_{n1}(0,\cdot)  . 
		\end{align}
		Further, in view of \eqref{eq:gamma_2_E_equivalent_form} we can calculate
		\begin{align}
			\big[\mathcal{Y}^s_{3}(T(\tS)) \big]_m &= \h_1 \Big[  \j_{mn} \cdot \int_{0}^{\zeta_1}    \underbrace{  (\partial_k \tilde{J}^{-1}_{ni} \circ \f{F}) \cdot \textup{det}(\f{J}) \cdot (\j^{-1}_{k1} \circ \f{F})}_{\h_1\big[\j_{ni}^{-1} \circ \f{F} \big]} \cdot \tilde{S}_{i1}(0,\cdot)  \ d1  \Big] \nonumber\\
			&= \h_1 \Big[  \underbrace{\j_{mn} \cdot       (\j_{ni}^{-1} \circ \f{F})}_{=\delta_{mi}} \cdot \tilde{S}_{i1}(0,\cdot)  \Big] \nonumber \\
			& \ \ \ - \h_1 \Big[  \j_{mn} \cdot       (\j_{ni}^{-1} \circ \f{F}(0,\cdot)) \cdot \tilde{S}_{i1}(0,\cdot)  \Big] \nonumber  \\
			&= 0- \h_1\j_{mn} \cdot       (\j_{ni}^{-1} \circ \f{F}(0,\cdot)) \cdot \tilde{S}_{i1}(0,\cdot) \nonumber \\ \label{eq_Lem17_2}
			& =- \h_1\j_{mi} \cdot       (\j_{in}^{-1} \circ \f{F}(0,\cdot)) \cdot \tilde{S}_{n1}(0,\cdot) .
		\end{align}
		To get from the second last to the  last line we switched the indices $n \leftrightarrow i$.
		A comparison of  \eqref{eq_Lem17_1} and \eqref{eq_Lem17_2} yields the wanted result		
		$  {\mathcal{Y}}_2^{s} \circ \tilde{\mathcal{Y}}_2^{s}(\tS)= \tS $.		
		This finishes the proof.
	\end{proof}
	
	\newpage
	\newpage
	\nocite{*}
	\bibliographystyle{siam}
	\bibliography{Literatur}
\end{document}